\numberwithin{equation}{section}
\theoremstyle{plain}
\newtheorem{maintheorem}{Theorem}
\newtheorem{maincorollary}[maintheorem]{Corollary}
\newtheorem{theorem}{Theorem}[section]
\newtheorem{proposition}[theorem]{Proposition}
\newtheorem{lemma}[theorem]{Lemma}
\newtheorem{claim}[theorem]{Claim}
\newtheorem{corollary}[theorem]{Corollary}
\theoremstyle{definition} \theoremstyle{remark}
\newtheorem{remark}[theorem]{Remark}
\newtheorem{definition}[theorem]{Definition}
\newtheorem{conjecture}{Conjecture}
\newcommand{\ov}{\overline}
\newcommand{\un}{\underline}
\newcommand{\wt}{\widetilde}
\newcommand{\leb}{\operatorname{Leb}}
\newcommand{\Lip}{\operatorname{Lip}}
\newcommand{\dist}{\operatorname{dist}}
\newcommand{\inter}{\operatorname{int}}
\newcommand{\per}{\operatorname{Per}}
\newcommand{\supp}{\operatorname{supp}}
\renewcommand{\epsilon}{\varepsilon}
\newcommand{\vfi}{\varphi}
\def \wh {\widehat}
\def \EE {{\mathbb E}}
\def \RR {{\mathbb R}}
\def \ZZ {{\mathbb Z}}
\newcommand{\cT}{\EuScript{T}}
\newcommand{\cF}{\EuScript{F}}
\newcommand{\cE}{\EuScript{E}}
\newcommand{\F}{\EuScript{F}}
\newcommand{\cD}{\EuScript{D}}
\newcommand{\cP}{\EuScript{P}}
\newcommand{\cO}{\EuScript{O}}
\newcommand{\cC}{\EuScript{C}}
\newcommand{\cS}{\EuScript{S}}
\newcommand{\M}{\EuScript{M}}
\newcommand{\R}{\EuScript{R}}
\newcommand{\qand}{\quad\text{and}\quad}
\def \caC {{\mathcal C}}
\def \cD {{\mathcal D}}
\def \cE {{\mathcal E}}
\def \cF {{\mathcal F}}
\def \cO {{\mathcal O}}
\def \cP {{\mathcal P}}
\def \cQ {{\mathcal Q}}
\def \cR {{\mathcal R}}
\def \cS {{\mathcal S}}
\def \cT {{\mathcal T}}
\def \cU {{\mathcal U}}
\def \cV {{\mathcal V}}
\def \cW {{\mathcal W}}
\def \fX {{\mathfrak X}}
\title[Large deviations for singular-hyperbolic attracting sets]
{Upper large deviations bound for singular-hyperbolic
  attracting sets}
\date{\today}
\author{Vitor Araujo}
 \address[V.A.]{Instituto de Matem\'atica  e Estat\'{\i}stica,
  Universidade Federal da Bahia Av. Ademar de Barros s/n,
  40170-110 Salvador, Brazil.}
\email{vitor.d.araujo@ufba.br,
 www.sd.mat.ufba.br/$\sim$vitor.d.araujo}
\author{Andressa Souza}
\address[A.S.]{ Centro Multidisciplinar de Bom
  Jesus da Lapa , Universidade Federal do Oeste da Bahia,
  Av. Manuel Novais, s/n, Centro, 47600-000 Bom Jesus da
  Lapa, BA, Brazil}
\email{andressa.lima@ufob.edu.br,
  andressalima.mat@gmail.com}
\author{Edvan Trindade}
\address[E.T.]{Instituto de Matem\'atica e Estat\'{\i}stica,
  Universidade Federal da Bahia Av. Ademar de Barros s/n,
  40170-110 Salvador, Brazil.}
\email{trindade.matematica@gmail.com}
\thanks{V.A. was partially supported by CNPq-Brazil; and
  A.S. and E.T. were partially supported by CAPES-Brazil.}
\keywords{Singular-hyperbolic attracting set, large
  deviations, exponentially slow approximation, piecewise
  expanding transformation with singularities.  }
\subjclass[2010]{Primary: 37D30; Secondary:
37C40, 37C10, 37D45, 37D35, 37D25.}
\begin{document}

\begin{abstract}
  We obtain a exponential large deviation upper bound for
  continuous observables on suspension semiflows over a
  non-uniformly expanding base transformation with non-flat
  singularities and/or discontinuities, where the roof function
  defining the suspension behaves like the logarithm of the
  distance to the singular/discontinuous set of the base
  map.  To obtain this upper bound, we show that the base
  transformation exhibits exponential slow recurrence to the
  singular set. 

  The results are applied to semiflows modeling
  singular-hyperbolic attracting sets of $C^2$ vector
  fields. As corollary of the methods we obtain results on
  the existence of physical measures and their statistical
  properties for classes of piecewise $C^{1+}$ expanding
  maps of the interval with singularities and
  discontinuities. We are also able to obtain exponentially
  fast escape rates from subsets without full measure.
\end{abstract}

\maketitle
\tableofcontents

\section{Introduction}
\label{sec:introduction}

Arguably one of the most important concepts in Dynamical
Systems theory is the notion of physical (or $SRB$)
measure. We say that an invariant probability measure $\mu$
for a flow $X^t$ is \emph{physical} if the set
\[
B(\mu)=\left\{z\in M:
\lim_{t\to\infty}\frac{1}{t}\int_{0}^{t}\psi(X^{s}(z))\,ds=
\int\psi\, d\mu, \forall \psi\in C^0(M,\RR)\right\}
\]
has non-zero volume, with respect to any volume form on the
ambient compact manifold $M$. The set $B(\mu)$ is by
definition the \emph{basin} of $\mu$. It is assumed that
time averages of these orbits be observable if the flow
models a physical phenomenon.

On the existence of physical/SRB measures for uniformly
hyperbolic diffeomorphisms and flows we mention the works of
Sinai, Ruelle and Bowen
\cite{Bo75,BR75,Ru76,Ru78,Si72}. More recently, Alves,
Bonatti and Viana \cite{ABV00} obtained the existence of
physical measures for partial hyperbolic diffeomorphism and
non-uniformly expanding transformations. Many developments
along these lines for non uniformly hyperbolic systems have
been obtained; see
e.g. \cite{PS82,CT88,BeY92,BeY93,ArPa04}. Closer to our
setting, Araujo, Pacifico, Pujals and Viana \cite{APPV}
obtained physical measures for singular-hyperbolic
attractors.

It is natural to study statistical properties of physical
measures, such as the speed of convergence of the time
averages to the space average, among many other properties
which have been intensely studied recently; see
e.g. \cite{kifer90,Yo90,Yo98,BeY99,AA03,alves-luzzatto-pinheiro2005,CasVar13,MelNicol05,Mel07,FMT,HoMel,LuzzMelb13,AMV15,ArMel16}. The
main motivation behind all these results is that for chaotic
systems the family $\left\{ \psi\circ X^{t}\right\} _{t>0}$
asymptotically behaves like an i.i.d. family of random
variables.

One of the ways to quantify
this is the volume of the subset of points whose time
averages $\frac1T S_T\psi$ are away from the space average
$\mu(\psi)$ by a given amount. More precisely, fixing
$\epsilon>0$ as the error size, we consider the set
\[
B_{T}=\left\{ x\in
  M:\left|\frac{1}{T}\int_{0}^{T}\psi(X^{s}(x))ds-\int\psi\,d\mu
\right|>\epsilon\right\}
\]
and search for conditions under which the volume of this set
decays exponentially fast with $T$. That is, there are
constants $C,\xi>0$ so that
\[
Leb(B_{T})\leq Ce^{-\xi T},\mbox{ for all }T>0.
\]
The decay rate is related to the Thermodynamical Formalism,
first developed for hyperbolic diffeomorphisms by Bowen,
Ruelle and Sinai; see e.g. \cite{Bo75,BR75,Ru89,ruelle2004}.
However, in our setting Lebesgue measure is not necessarily
an invariant measure and so some tools from the
Thermodynamical Formalism are unavailable.

This work extends and corrects the proof of the first
author's result \cite{araujo2006a} of upper large deviation
estimate for the geometric Lorenz attractor to the
singular-hyperbolic attracting setting, encompassing a much
more general family of singular three-dimensional flows, not
necessarily transitive, with several singularities and with
higher dimensional stable direction.

This demanded, first, the construction of a global
Poincar\'e map as in \cite{APPV} through adapted
cross-sections to the flow obtained without assuming
transitivity; and, second, to deal with the possible
existence of finitely many distinct ergodic physical
measures whose convex linear combinations form the set $\EE$
of equilibrium states with respect to the $\log$ of the
central unstable Jacobian. This led us to adapt the strategy
of reduction of set of deviations for the flow to a set of
deviations for a one-dimensional map while still following the
general path presented in \cite{araujo2006a}.

This extension is done through a special choice of adapted
cross-sections in the definition of the global Poincar\'e
map which is shown to be always possible for
singular-hyperbolic attracting sets of $C^2$ flows.

Finally, and technically more delicate, we extend and
correct the proof of exponentially slow recurrence from
\cite[Section 6]{araujo2006a} for a piecewise expanding
one-dimensional interval map with only singular
discontinuities (with unbounded derivative) to allow
\emph{both singularities} (with unbounded derivative)
\emph{and discontinuities (with bounded derivative)} as
boundary points of the monotonicity intervals of the
one-dimensional map. In particular, this result allows us to
obtain many statistical properties of a new class of
piecewise expanding interval maps. This can be seen as an
extension of \cite{OHL2006} to H\"older-$C^1$ piecewise
expanding maps with singularities and discontinuities, and
also assuming strong interaction between them; see
Section~\ref{sec:coroll-extens} for more comments.


\subsection{The setting: singular-hyperbolicity} 
\label{sec:sing-hyperb}

We need some preliminary definitions.

From now on $M$ is a compact boundaryless $d$-dimensional
manifold; $\fX^2(M)$ is the set of $C^2$ vector
fields on $M$, endowed with the $C^2$ topology; we fix some
smooth Riemannian structure on $M$ and an induced normalized
volume form $\leb$ that we call Lebesgue measure; and we
write also $\dist$ for the induced distance on $M$.

Given $X\in{\fX}^2(M)$, we write $X^t$, $t \in \RR$
the flow induced by $X$ and, for $x \in M$ and
$[a,b]\subset \RR$ we set
$X^{[a,b]}(x)= \{X^t(x), a\leq t \leq b\}$.

For a compact invariant set $\Lambda$ for $X\in\fX^2(M)$, we
say that it is \emph{isolated} if we can find an open
neighborhood $U\supset \Lambda$ so that
$ \Lambda =\bigcap_{t\in\RR}X^t(U)$. If $U$ above also
satisfies $X^t(U)\subset U$ for $t>0$ then we say that
$\Lambda$ is an \emph{attracting set} and that $U$ is a
\emph{trapping region} for $\Lambda$. The \emph{topological
  basin} of the attracting set $\Lambda$ is
$W^s(\Lambda)= \{ x\in M : \lim_{t\to+\infty}\dist\big(
X^t(x) , \Lambda\big) =0 \}.$

The invariant set $\Lambda$ is \emph{transitive} if it
coincides with the $\omega$-limit set of a regular
$X$-orbit: $\Lambda=\omega_X(p)$ where $p\in\Lambda$ and
$X(p)\neq\vec0$.  If $\sigma\in M$ and $X(\sigma)=0$, then
$\sigma$ is called an {\em equilibrium} or
\emph{singularity}.

A point $p\in M$ is \emph{periodic} if $p$ is regular and
there exists $\tau>0$ so that $X^\tau(p)=p$; its orbit
$\cO_X(p)=X^{\RR}(p)=X^{[0,\tau]}(p)$ is a \emph{periodic orbit}.  An
invariant set of $X$ is \emph{non-trivial} if it is neither
a periodic orbit nor a singularity.

An \emph{attractor} is a transitive attracting set. An
attractor is \emph{proper} if it is not the whole manifold.

\begin{definition}
\label{d.dominado}
Let $\Lambda$ be a compact invariant  set of $X \in 
\fX^2(M)$ , $c>0$, and $0 < \lambda < 1$.
We say that $\Lambda$ has a $(c,\lambda)$-dominated splitting if
the tangent bundle over $\Lambda$ can be written as a continuous 
$DX^tt$-invariant sum of sub-bundles
$
T_\Lambda M=E^1\oplus E^2,
$ (that is, $DX^tE^i_x=E^i_{X^tx}, \forall t\in\RR, i=1,2$)
such that for every $t > 0$ and every $x \in \Lambda$, we have
\begin{equation}\label{eq.domination}
\|DX^t \mid E^1_x\| \cdot 
\|DX^{-t} \mid E^2_{X^t(x)}\| < c \, \lambda^t.
\end{equation}
\end{definition}

We say that a $X$-invariant subset $\Lambda$ of $M$ is
\emph{partially hyperbolic} if it has a
$(c,\lambda)$-dominated splitting, for some $c>0$ and
$\lambda\in(0,1)$, such that the sub-bundle $E^1=E^s$ is
uniformly contracting: for every $t > 0$ and every $x \in
\Lambda$ we have
$
\|DX^t \mid E^s_x\| < c \, \lambda^t.
$

We assume that $E^s$ has codimension $2$ so that $E^{cu}$ is
two-dimensional: $\dim E^{cu}_\Lambda=2$ and $\dim
E^s_\Lambda=d_s=d-2$.

Let now $J_t^{cu}(x)$ be the center Jacobian of $DX^t$ for
$x\in\Lambda$, that is, the absolute value of the
determinant of the linear map
$ DX^t \mid E^{cu}_x:E^{cu}_x\to E^{cu}_{X^t(x)}.  $ We say
that the sub-bundle $E^{cu}_\Lambda$ of the partially
hyperbolic invariant set $\Lambda$ is
\emph{$(c,\lambda)$-volume expanding} if
$J_t^{cu}(x)\geq c\, e^{\lambda t}$ for every $x\in \Lambda$
and $t\geq 0$, for some given $c,\lambda>0.$

\begin{definition}
\label{d.singularset}
Let $\Lambda$ be a compact invariant set of $X \in \fX^2(M)$
with singularities.  We say that $\Lambda$ is a
\emph{singular-hyperbolic set} for $X$ if all the
singularities of $\Lambda$ are hyperbolic and $\Lambda$ is
partially hyperbolic with volume expanding central
direction.
\end{definition}

Singular-hyperbolicity is an extension of the notion of
hyperbolic set, which we now recall.

\begin{definition}
\label{def:hyperbolic}
Let $\Lambda$ be a compact invariant set of
$X \in \fX^2(M)$.  We say that $\Lambda$ is a
\emph{hyperbolic set} for $X$ if it admits a continuous
$DX^t$-invariant splitting
$T_\Lambda M=E^s\oplus [X]\oplus E^u$ where $[X]=\RR\cdot X$
is the flow direction; $E^s$ is $(c,\lambda)$ contracting
and $E^u$ is $(c,\lambda)$-contracting for the inverse flow,
for some $(c,\lambda)\in\RR^+\times(0,1)$.
\end{definition}

In particular, every equilibrium point in a hyperbolic set
must be isolated in the set. The following result shows that
singular-hyperbolicity is a natural extension of notion of
hyperbolicity for singular flows.

\begin{theorem}[Hyperbolic Lemma]
  \label{thm:hyplemma}
  A compact invariant singular-hyperbolic set without
  singularities is a hyperbolic set.
\end{theorem}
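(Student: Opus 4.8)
The plan is to show that on a compact invariant singular-hyperbolic set $\Lambda$ with no singularities, the central direction $E^{cu}$ splits as $[X]\oplus E^u$ where $E^u$ is uniformly expanding for the forward flow. The key point is that the flow direction $[X]=\RR\cdot X$ is a well-defined one-dimensional continuous sub-bundle of $E^{cu}$ precisely because $X$ does not vanish on the compact set $\Lambda$, so $\|X(x)\|$ is bounded away from $0$ and $\infty$; moreover $[X]$ is $DX^t$-invariant. Since $E^{cu}$ is two-dimensional and $DX^t$-invariant, the quotient bundle $E^{cu}/[X]$ is a continuous one-dimensional $DX^t$-invariant line bundle. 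Because $E^{cu}$ is $(c,\lambda)$-volume expanding while the derivative along the flow direction is controlled (again using that $\|X\|$ is bounded above and below along $\Lambda$, the norm of $DX^t$ restricted to $[X]$ is sub-exponential), the induced action on the quotient $E^{cu}/[X]$ must be exponentially expanding: roughly, $\|DX^t|_{E^{cu}/[X]_x}\|=J^{cu}_t(x)/\|DX^t|_{[X]_x}\|\ge c\,e^{\lambda t}/\big(\|X(X^t x)\|/\|X(x)\|\big)\ge c'\,e^{\lambda t}$.

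First I would fix the constants: let $K=\sup_{x\in\Lambda}\|X(x)\|<\infty$ and $\kappa=\inf_{x\in\Lambda}\|X(x)\|>0$ (both attained and the latter positive since $\Lambda$ is compact and $X$ vanishes nowhere on $\Lambda$). Then $\|DX^t|_{[X]_x}\|=\|X(X^t(x))\|/\|X(x)\|\in[\kappa/K,K/\kappa]$, a bounded range independent of $t$. Second, I would produce an invariant complement of $[X]$ inside $E^{cu}$: rather than working with the abstract quotient, it is cleaner to define $E^u_x$ as the $DX^t$-orbit of the unique $DX^{-1}$... — more precisely, one builds $E^u$ as a dominated/limit object. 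Since $E^{cu}$ is a two-dimensional bundle containing the invariant line $[X]$, a standard cone-field argument (or simply passing to the quotient and then choosing a continuous, not necessarily invariant, complementary line and correcting it) yields a continuous $DX^t$-invariant splitting $E^{cu}=[X]\oplus E^u$; alternatively invoke that on a manifold this two-dimensional bundle orbit structure is a suspension-type flow and the complement exists. Third, with this splitting in hand, the volume-expansion estimate $J^{cu}_t(x)\ge c\,e^{\lambda t}$ together with the bound on the $[X]$-direction gives $\|DX^t|_{E^u_x}\|\ge c\,e^{\lambda t}\cdot(\kappa/K)$, and by the domination hypothesis combined with the known uniform contraction of $E^s$, one upgrades this to a genuine $(c'',\lambda'')$-contraction of $E^u$ for the inverse flow. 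Finally, assembling $T_\Lambda M=E^s\oplus[X]\oplus E^u$ with $E^s$ contracting, $[X]$ the flow line, and $E^u$ expanding gives exactly the definition of a hyperbolic set.

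The main obstacle is the second step: extracting a continuous $DX^t$-invariant line bundle $E^u$ complementary to $[X]$ inside $E^{cu}$, and verifying that the forward expansion of $J^{cu}_t$ really distributes so as to make $E^u$ expanding rather than merely non-contracting. The subtlety is that volume expansion of a plane does not by itself force one direction to expand — one could a priori have strong expansion in the flow direction. Here the resolution is exactly the uniform two-sided bound on $\|DX^t|_{[X]_x}\|$ coming from compactness and absence of singularities: since the flow direction neither expands nor contracts exponentially, all the exponential volume growth of $E^{cu}$ must be carried by $E^u$. I expect the cleanest route to the invariant complement is to note that $[X]$ being an invariant sub-bundle of the invariant bundle $E^{cu}$ yields an invariant quotient bundle, and then to transport a continuous invariant line from the quotient back into $E^{cu}$ using the adapted inner product (Gram–Schmidt with respect to $[X]$), checking invariance is automatic because both $[X]$ and $E^{cu}$ are invariant. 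Once that technical point is settled, the remaining estimates are routine manipulations with the constants $c,\lambda,K,\kappa$ and the domination inequality~\eqref{eq.domination}.
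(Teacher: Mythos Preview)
The paper does not give a self-contained proof here; it simply refers to \cite[Lemma 3]{MPP99} and \cite[Proposition 6.2]{AraPac2010}. Your sketch follows the standard argument found in those references: use compactness and the absence of equilibria to bound $\|X\|$ above and below on $\Lambda$, deduce that $DX^t|_{[X]}$ has sub-exponential growth (indeed uniformly bounded norm), and conclude from volume expansion of $E^{cu}$ that any invariant complement of $[X]$ in $E^{cu}$ must be uniformly expanding. So the overall strategy is correct and matches the literature.

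There is, however, a genuine slip in your final paragraph. You claim that the orthogonal complement of $[X]$ inside $E^{cu}$ obtained by Gram--Schmidt is invariant ``because both $[X]$ and $E^{cu}$ are invariant.'' This is false: the orthogonal complement of an invariant sub-bundle is almost never invariant, since $DX^t$ has no reason to preserve the inner product. What \emph{is} true is that the quotient bundle $E^{cu}/[X]$ is invariant and expands exponentially while $[X]$ has uniformly bounded cocycle; this is precisely a domination of $[X]$ by the quotient, and it is that domination which yields a continuous invariant complement $E^u$ via the cone-field or graph-transform argument you also mentioned (equivalently, $E^u_x=\lim_{t\to+\infty}DX^t\,L_{X^{-t}x}$ for any continuous complement $L$ of $[X]$ in $E^{cu}$). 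Commit to that route and drop the Gram--Schmidt remark; once $E^u$ is obtained this way, your estimates with $K,\kappa,c,\lambda$ give the required uniform expansion and the proof is complete.
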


\begin{proof}
  See \cite[Lemma 3]{MPP99} or \cite[Proposition
  6.2]{AraPac2010}.
\end{proof}

The most representative example of a singular-hyperbolic
attractor is the Lorenz attractor; see
e.g. \cite{Tu99,viana2000i}. Singular-hyperbolic attracting
sets form a class of attracting set sharing similar
topological/geometrical features with the Lorenz attractor.
For more on singular-hyperbolic attracting sets see
e.g. \cite{AraPac2010}.


\subsubsection{Lorenz-like singularities}
\label{sec:lorenz-like-singul}

A \emph{Lorenz-like singularity} is an equilibrium $\sigma$
of $X$ contained in a singular-hyperbolic set having index
(dimension of the stable direction) equal to $d-1$. Since we
are assuming that $\dim E^c=2$, this ensures the existence
of the $DX(\sigma)$-invariant splitting
$T_\sigma M=E^{s}_\sigma\oplus F^s_\sigma\oplus F^u_\sigma$
so that
\begin{itemize}
\item $E^c_\sigma=F^s\oplus F^u$ and $\dim F^s_\sigma=\dim
  F^u_\sigma=1$;
\item $F^u_\sigma$ uniformly expands and $F^s_\sigma$
  uniformly contracts: there exists $0<\wt{\lambda}<\lambda$
  such that
  $ \|DX^t\mid F^u_\sigma\|\ge C^{-1}\lambda^{-t}$ and
  $ \|DX^tt\mid F^s_\sigma\|\le C\wt{\lambda}^t$ for all
  $t\ge0$.
\end{itemize}

\begin{remark}
  \label{rmk:no-recur-non-Lorenz}
  \begin{enumerate}
  \item Partial hyperbolicity of $\Lambda$ implies that the
    direction $X(p)$ of the flow is contained in the
    center-unstable subbundle $E^2_p=E^{cu}_p$ at every
    point $p$ of $\Lambda$; see \cite[Lemma
    5.1]{ArArbSal}.
  \item The index of a singularity $\sigma$ in a
    singular-hyperbolic set $\Lambda$ equals either
    $\dim E^s_\Lambda$ or $1+\dim E^s_\Lambda$.  That is,
    $\sigma$ is either a hyperbolic saddle with codimension
    $2$, or a Lorenz-like singularity.
  \item \emph{If a singularity $\sigma$ in a
      singular-hyperbolic set $\Lambda$ is not Lorenz-like,
      then there is no orbit of $\Lambda$ that accumulates
      $\sigma$ in the positive time direction.}

    Indeed, in this case ($\dim W^s_\sigma=\dim M-2$ and
    $\dim W^u_\sigma=2$), if $\sigma\in\omega(z)$ for
    $z\in\Lambda\setminus\{\sigma\}$, then there exists
    $p\in
    (W^s_\sigma\setminus\{\sigma\})\cap\omega(z)\subset\Lambda$
    by the local behavior of trajectories near hyperbolic
    saddles.  By the properties of the stable manifold
    $W^s_\sigma$, we have
    $X^p(p)\in W^s_\sigma\cap\Lambda, \forall t\ge0$ and
    $T_{X^t(p)}W^s_\sigma\xrightarrow[t\to+\infty]{}
    E^s_\sigma$. Moreover, since
    $X(X^t(p))=\partial_uX^u(p)\mid_{u=t}\in
    T_{X^t(p)}W^s_\sigma$ and
    $T_\Lambda M=E^s_\Lambda\oplus E^{cu}_\Lambda$ is a
    continuous splitting, then
    $T_{X^t(p)}W^s_\sigma = E^s_{X^t(p)}, t\ge0$.

    But $X(p)\in E^{cu}_p$ by the previous item (1). This
    contradiction shows that $\sigma$ is not accumulated by
    any positive regular trajectory within $\Lambda$.
  \end{enumerate}
\end{remark}


\subsection{Statement of the results}
\label{sec:statement-results}

In \cite{araujo2006a} Araujo obtained exponential upper large
deviations decay for continuous observables on suspension
semiflows over a non-uniformly expanding base transformation
with non-flat criticalities/singularities, where the roof
function defining the suspension grows as the $\log$ of the
distance to the singular/critical set.

Here we extend this result to a more general class of base
transformations which, after constructing a global
Poincar\'e map describing the dynamics of
singular-hyperbolic attracting sets and reducing this
dynamics to that of a certain semiflow, enables us to obtain
the following.

\subsubsection{Upper bound for large deviations}
\label{sec:upper-bound-large}

\begin{maintheorem}
  \label{mthm:LD-sing-hyp-attracting}
  Let $X$ be a $C^2$ vector field on a compact manifold
  exhibiting a connected singular-hyperbolic attracting set
  on the trapping region $U$ having at least one Lorenz-like
  singularity. Let $\psi:U\to\RR$ be a bounded continuous
  function. Then
  \begin{enumerate}
  \item the set of equilibrium states with respect to the
    central Jacobian
    $\EE=\{\mu\in \M_X(U): h_\mu(X^1)=\int\log\big|\det (DX^1\mid
    E^{cu})\big|\,d\mu\}$ is nonempty and contains finitely
    many ergodic probability measures, where $\M_X$ is the
    family of all $X$-invariant probability measures
    supported in $U$; and
  \item for every $\epsilon>0$
    \[
      \limsup_{T\rightarrow\infty}\frac{1}{T}\log Leb\left\{
        z\in U:
        \inf_{\mu\in\EE}\left|\frac{1}{T}\int_{0}^{T}\psi(X^{t}(z))dt
          -\mu(\psi)\right|>\epsilon\right\} <0.
    \]
  \end{enumerate}
\end{maintheorem}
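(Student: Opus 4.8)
The plan is to reduce the large deviations estimate for the flow $X^t$ on the trapping region $U$ to a large deviations estimate for a suspension semiflow over a piecewise expanding one-dimensional base map, following the strategy of \cite{araujo2006a} but adapted to the non-transitive singular-hyperbolic setting with possibly several physical measures. First I would construct the global Poincar\'e map for the singular-hyperbolic attracting set, as in \cite{APPV}, using adapted cross-sections chosen (as announced in the introduction) so that the induced return time behaves like the logarithm of the distance to the singular/discontinuous set on the base. Quotienting by the stable foliation of the Poincar\'e map yields a piecewise $C^{1+}$ uniformly expanding one-dimensional map $f$ with finitely many monotonicity intervals whose endpoints are either singularities (unbounded derivative) or discontinuities (bounded derivative), together with a roof function $r$ with logarithmic behavior near those endpoints. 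The flow $X^t$ on $U$ is then semiconjugate (up to a stable contraction that does not affect time averages of continuous observables, by uniform contraction along $E^s$) to the suspension semiflow $(S^t)$ over $f$ with roof $r$.

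The second step is to establish item (1): the nonemptiness and finiteness of the set $\EE$ of equilibrium states for $\log|\det(DX^1\mid E^{cu})|$. Nonemptiness follows from upper semicontinuity of the entropy map for $C^2$ (indeed $C^\infty$) flows together with compactness of $\M_X(U)$ and the variational principle, using that the central Jacobian potential is continuous; this gives at least one equilibrium state, and by the ergodic decomposition the ergodic equilibrium states are dense in the (Choquet) simplex $\EE$. Finiteness of the ergodic equilibrium states is the place where singular-hyperbolicity is used essentially: each ergodic equilibrium state is a physical (SRB) measure supported on the attracting set, and singular-hyperbolic attracting sets of $C^2$ flows support only finitely many ergodic physical measures --- this is obtained by passing to the quotient one-dimensional map $f$, which (by the methods giving the corollaries advertised in Section~\ref{sec:coroll-extens}) is a piecewise $C^{1+}$ expanding map admitting finitely many ergodic absolutely continuous invariant probabilities, and lifting these through the suspension and the stable foliation. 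Thus $\EE$ is a finite-dimensional simplex with finitely many extreme points, and in particular $\inf_{\mu\in\EE}$ is attained.

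The third and main step is the large deviations upper bound itself. Here the key new ingredient, and the technically most delicate point, is the \emph{exponentially slow recurrence to the singular set} for the one-dimensional base map $f$: for every $\delta>0$ there is $\xi(\delta)>0$ with $\xi(\delta)\to0$ as $\delta\to0$ such that
\[
\limsup_{n\to\infty}\frac1n\log\leb\Big\{x:\frac1n\sum_{j=0}^{n-1}-\log\dist(f^j(x),\cS)>\delta\Big\}\le-\xi(\delta),
\]
where $\cS$ is the singular/discontinuity set. This is the extension and correction of \cite[Section 6]{araujo2006a} allowing both singularities (unbounded derivative) and discontinuities (bounded derivative) as endpoints of the monotonicity intervals, possibly with strong interaction. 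Granting this, together with a standard large deviations upper bound for the piecewise expanding map $f$ (obtained from the spectral gap / quasi-compactness of the transfer operator on a suitable space of functions of bounded variation or generalized bounded variation, which controls deviations of Birkhoff averages of the continuous observable induced on the quotient), one transfers the estimate to the suspension semiflow: the roof function's logarithmic blow-up near $\cS$ is exactly controlled by the exponentially slow recurrence, so the roof has finite integral against all equilibrium states and the "bad" set where $\frac1T\int_0^T$ is far from $\mu(\psi)$ for all $\mu\in\EE$ projects to a base set of exponentially small Lebesgue measure. Finally, pulling back through the Poincar\'e map and using uniform contraction along $E^s$ (so that a positive-measure bad set for the flow forces a positive-measure, hence exponentially controlled, bad set in the base), together with the fact that $\leb$ of $U$ is equivalent to the product of the quotient Lebesgue measure with Lebesgue along the flow and stable directions up to bounded densities, yields $\limsup_{T\to\infty}\frac1T\log\leb(B_T)<0$. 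The hard part, as the introduction itself flags, is the exponentially slow recurrence estimate for the mixed singular/discontinuous one-dimensional map: one must partition orbit segments according to how deeply and how often they approach each endpoint, separately handling the unbounded-derivative endpoints (where nearby dynamics is strongly expanding, aiding recurrence control via distortion and the non-flatness condition) and the bounded-derivative discontinuities (where expansion is only uniform, not large, so the combinatorial counting of admissible itineraries must be done more carefully), and then combining these with a large-deviations counting argument for the number of itineraries with prescribed approach statistics.
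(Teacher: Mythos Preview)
Your overall architecture is the paper's: global Poincar\'e map, quotient by stable leaves to a one-dimensional piecewise $C^{1+\alpha}$ expanding map $f$, representation of the flow as a suspension with logarithmic roof, and reduction of the flow large deviations to (i) exponentially slow recurrence of $f$ to $\cD$ plus (ii) a large deviations bound for the induced observable on the base. That skeleton is correct.

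There is, however, a genuine gap in your argument for item~(1). You invoke upper semicontinuity of the entropy map ``for $C^2$ (indeed $C^\infty$) flows'' to produce an equilibrium state. The vector field is only $C^2$, and more importantly the attracting set contains equilibria, so neither Newhouse's $C^\infty$ result nor the usual expansivity/entropy-expansivity arguments are available without substantial extra work. The paper does \emph{not} proceed this way: it first constructs the finitely many ergodic physical/SRB measures $\mu_1,\dots,\mu_k$ by lifting the finitely many acim's of $f$ through the skew-product and the suspension (following \cite{APPV}), and then proves the equivalence $h_\mu(X^1)=\mu(\log|\det DX^1\mid E^{cu}|)\iff\mu$ is SRB $\iff\mu$ is physical via Ledrappier's characterization \cite{Ledrappier1984} of measures satisfying the entropy formula, together with absolute continuity of the stable holonomy. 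This simultaneously gives nonemptiness, finiteness of the ergodic equilibrium states, and the identification $\EE=\mathrm{conv}\{\mu_1,\dots,\mu_k\}$. Your second paragraph already contains the pieces of this constructive route; you should drop the u.s.c.\ shortcut and rely on it.

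For item~(2) your proposal diverges from the paper at one technical point: you suggest obtaining the base large deviations from a spectral gap/quasi-compactness of the transfer operator on a BV-type space. The paper instead invokes the large deviations bound for non-uniformly expanding maps from \cite[Theorem~B]{araujo-pacifico2006}, proved via hyperbolic times rather than spectral methods. This matters because the induced observable $\varphi(x)=\int_0^{\tau(x)}\psi(X^t x)\,dt$ has logarithmic growth near $\cD$ and is merely continuous away from it, so it does not live in the Banach space where the spectral gap holds; the paper handles this by truncating $\varphi$ to a bounded continuous $\varphi_0$ and controlling $\varphi-\varphi_0$ by the slow-recurrence estimate (inclusions \eqref{eq:approxlog1}--\eqref{eq:approxlog2}), then applying \cite{araujo-pacifico2006} to $\varphi_0$, which works for arbitrary \emph{continuous} observables and for the full simplex $\EE$ (not just a single mixing component). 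A spectral route is not impossible, but you would have to approximate continuous observables in the relevant Banach space and deal explicitly with the possibly non-transitive situation with several acim's; the hyperbolic-times approach in \cite{araujo-pacifico2006} gives the $\inf_{\mu\in\EE}$ statement directly.
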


\begin{remark}
  \label{rmk:basicsets}
  If the singular-hyperbolic attracting set $\Lambda$ in $U$
  has several connected components (recall that an
  attracting set might not have a dense trajectory), then
  each is an attracting set and
  Theorem~\ref{mthm:LD-sing-hyp-attracting} applies to each
  singular component. Those components which have no
  singularities, or only non-Lorenz-like equilibria, are
  necessarily (by Remark~\ref{rmk:no-recur-non-Lorenz})
  hyperbolic basic sets to which we can apply known large
  deviations results~\cite{Yo90,wadd96}.
\end{remark}

From this result it is easy to deduce escape rates from
subsets of the attracting set.  Fix $K\subset U$ a compact
subset. Given $\epsilon>0$ we can find an open subset
$W\supset K$ contained in $U$ and a smooth bump function
$\varphi:U\to\RR$ so that $\leb(W\setminus K)<\epsilon$;
$0\leq\varphi\leq1$; $\varphi|_{K}\equiv1$ and
$\varphi|_{M\setminus W}\equiv0$.  Then
\[
\left\{ z\in K:X^{t}(z)\in K,0<t<T\right\} \subset\left\{ z\in\mathcal{U}:\frac{1}{T}\int_{0}^{T}\varphi(X^{t}(z))dt\geq1\right\} ,
\]
for all $T>0$, and using
Theorem~\ref{mthm:LD-sing-hyp-attracting} we deduce the
following.

\begin{maincorollary}
  \label{mcor:escaperate}
  In the same setting of
  Theorem~\ref{mthm:LD-sing-hyp-attracting}, let $K$ be a
  compact subset of $M$ such that
  $\sup_{\mu\in\mathbb{E}}\mu(K)<1$. Then
\[
\limsup_{T\to+\infty}
\frac1T \log \leb\Big( \left\{ x\in K :
  X^t(x)\in K, 0<t<T \right\} \Big) < 0.
\]
\end{maincorollary}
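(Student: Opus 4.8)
The plan is to deduce Corollary~\ref{mcor:escaperate} directly from Theorem~\ref{mthm:LD-sing-hyp-attracting} by the bump-function argument already sketched just above the statement, taking care only of the elementary point that the inclusion of sets combined with the large deviations bound forces an exponential rate. First I would fix $\epsilon_0>0$ small enough that $\sup_{\mu\in\EE}\mu(K)<1-2\epsilon_0$; this is possible precisely by the hypothesis $\sup_{\mu\in\EE}\mu(K)<1$, and I would also like to know that the supremum is attained (or at least approached by a weak-$*$ convergent sequence), which follows because $\EE$ is a weak-$*$ compact, convex set---being the set of equilibrium states for the continuous potential $\log|\det(DX^1\mid E^{cu})|$, hence nonempty and compact by part (1) of Theorem~\ref{mthm:LD-sing-hyp-attracting} together with upper semicontinuity of the entropy map on this setting. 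Then I would choose the open set $W\supset K$ with $\leb(W\setminus K)<\epsilon_0$ and the bump function $\varphi:U\to[0,1]$ with $\varphi|_K\equiv1$, $\varphi|_{M\setminus W}\equiv0$, exactly as described in the excerpt.

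Next I would estimate $\mu(\varphi)$ for $\mu\in\EE$: since $\mathbf 1_K\le\varphi\le\mathbf 1_W$ pointwise, we get $\mu(\varphi)\le\mu(W)$. Now $\mu(W)$ need not be close to $\mu(K)$ for an individual $\mu$, so instead I would argue as follows. For each $\mu\in\EE$ we have $\mu(\varphi)=\mu(K)+\mu(\varphi\cdot\mathbf 1_{W\setminus K})\le \mu(K)+\mu(W\setminus K)$. The quantity $\mu(W\setminus K)$ is not controlled by $\leb$, but we only need an \emph{upper} bound on $\sup_{\mu\in\EE}\mu(\varphi)$ that is strictly below $1$. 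Using regularity of the finitely many ergodic measures (and hence of all of $\EE$, which is their convex hull by the discussion in the introduction), I can choose $W$ close enough to $K$ in the appropriate sense --- shrinking $W$ to a neighbourhood with $\mu(\partial K)$-type control --- so that $\sup_{\mu\in\EE}\mu(W)<1-\epsilon_0$; concretely, pick $W$ among a nested sequence $W_n\downarrow K$ so that $\mu(W_n)\downarrow\mu(K)$ for each of the finitely many ergodic $\mu$, then take $n$ large and use convexity to control all of $\EE$. Consequently $\sup_{\mu\in\EE}\mu(\varphi)\le1-\epsilon_0<1$.

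Then I would apply part (2) of Theorem~\ref{mthm:LD-sing-hyp-attracting} with this $\psi=\varphi$ and error size $\epsilon=\epsilon_0/2$. By the inclusion displayed in the excerpt,
\[
\big\{z\in K: X^t(z)\in K,\ 0<t<T\big\}\subset\Big\{z\in U:\tfrac1T\int_0^T\varphi(X^t(z))\,dt\ge1\Big\},
\]
and on the right-hand set we have, for every $\mu\in\EE$,
\[
\frac1T\int_0^T\varphi(X^t(z))\,dt-\mu(\varphi)\ge 1-(1-\epsilon_0)=\epsilon_0>\epsilon_0/2,
\]
so the right-hand set is contained in
\[
\Big\{z\in U:\inf_{\mu\in\EE}\big|\tfrac1T\int_0^T\varphi(X^t(z))\,dt-\mu(\varphi)\big|>\epsilon_0/2\Big\}.
\]
Theorem~\ref{mthm:LD-sing-hyp-attracting}(2) gives $\limsup_{T\to\infty}\frac1T\log\leb(\text{this last set})<0$, and monotonicity of $\leb$ under the inclusions transfers the negative exponential rate to $\leb\big(\{x\in K: X^t(x)\in K,\ 0<t<T\}\big)$, which is the claim.

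The main obstacle I anticipate is the bookkeeping in the second paragraph: arranging that $\sup_{\mu\in\EE}\mu(\varphi)$ is \emph{uniformly} bounded away from $1$ over the whole (infinite) set $\EE$, rather than for a single measure. The hypothesis only gives $\sup_{\mu\in\EE}\mu(K)<1$, so one must promote this to $\sup_{\mu\in\EE}\mu(W)<1$ for a suitable open $W\supset K$; the clean way is to exploit that $\EE$ is the convex hull of finitely many ergodic measures (from Theorem~\ref{mthm:LD-sing-hyp-attracting}(1)), reduce to those finitely many, use inner/outer regularity for each, and note $\mu\mapsto\mu(W)$ attains its maximum over the compact convex $\EE$ at an extreme point. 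Everything else is the routine inclusion-of-sets plus monotonicity argument already indicated in the text.
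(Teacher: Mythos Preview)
Your proposal is correct and follows the same bump-function route the paper sketches immediately before the statement. You have in fact been more careful than the paper on one point: the paper's sketch chooses $W\supset K$ with $\leb(W\setminus K)<\epsilon$, which is not what is actually needed---one needs $\sup_{\mu\in\EE}\mu(W)<1$, and you correctly obtain this by exploiting that $\EE$ is the convex hull of finitely many ergodic measures (Theorem~\ref{mthm:LD-sing-hyp-attracting}(1)), applying outer regularity to each, and using that the supremum over a compact convex set is attained at an extreme point.
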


\subsubsection{Exponentially slow recurrence to singular
  set}
\label{sec:exponent-slow-recurr}

In Section \ref{sec:general-setting} we show that each
singular-hyperbolic attracting set $\Lambda$ for
$X\in\fX^2(M)$ admits a finite family $\Xi$ of Poincar\'e
sections to $X$ and a global Poincar\'e map
$R:\Xi_0\to\Xi, R(x)=X^{\tau(x)}(x)$ for a Poincar\'e return
time function $\tau:\Xi_0\to\RR^+$, where
$\Xi_0=\Xi\setminus\Gamma$ and $\Gamma$ is a finite family
of smooth hypersurfaces within $\Xi$.

Moreover, by a proper choice of coordinates in $\Xi$ the map
$R$ can be written
$F:(I\setminus\cD)\times B^{d_s} \to I\times B^{d_s},
F(x,y)=(f(x),g(x,y))$ where
\begin{itemize}
\item $f:I\setminus\cD\to I$ is uniformly expanding and
  piecewise $C^{1+\alpha}$, for some $\alpha>0$, on the
  connected components of $I\setminus\cD$, where $\cD$ is a
  finite subset of $I=[0,1]$ and $B^{d_s}$ denotes the
  $d_s$-dimensional open unit disk endowed with the
  Euclidean distance induced by the Euclidean norm
  $\|\cdot\|_2$; and 
\item $g:(I\setminus\cD)\times B^{d_s}\to B^{d_s}$ is a
  contraction in the second coordinate; see
  Theorem~\ref{thm:propert-singhyp-attractor}.
\end{itemize}

The main technical result in this work is that a map with
the properties of the above $f$
has exponentially slow recurrence to
the set $\cD$, as follows.

Let $f$ be a piecewise $C^{1+\alpha}$ map of the interval
$I$ for a given fixed $\alpha\in(0,1)$, that is, there
exists a finite subset $\caC$ such that \( f \) is (locally)
\( C^{1+\alpha} \) and monotone on each connected component
of \( I\setminus\caC\) and admits a continuous extension to
the boundary so that
\( f(c^\pm) = \lim_{t\to c^{\pm}} f(t)\) exists for each
$c\in\caC$. We denote by \(\cD\) the set of all ``one-sided
critical points'' $c^+$ and $c^-$ and define corresponding
one-sided neighborhoods
\begin{align}\label{eq:1sided}
\Delta(c^{+},\delta) = (c^+,c^+ +\delta) \quad\text{and}\quad
\Delta(c^{-},\delta) = (c^- -\delta,c^-),
\end{align}
for each small enough $\delta>0$. For simplicity, from now
on we use $c$ to represent the generic element of \( \cD\).
We assume that each \( c\in\cD\) has a well-defined
(one-sided) critical order $0<\alpha(c)\le1$ in the sense
that
\begin{align}\label{eq:der0}
  |f(t)-f(c)|
  \approx |t-c|^{\alpha(c)} \qand 
  |f'(t)|\approx|t-c|^{\alpha(c)-1}, \quad
  t\in\Delta(c,\delta)
\end{align}
for some $\delta>0$, where we write \( f\approx g \) if the
ratio \( f/g \) is bounded above and below uniformly in the
stated domain.

We set $\cS=\{c\in\cD: 0<\alpha(c)<1\}$ in what follows.
Moreover, note that there exists a global constant
$H=H(\alpha,f)>0$ such that for $c\in\cD\setminus\cS$,
i.e. $\alpha(c)=1$, we also have
\begin{align}
  \label{eq:derD-S}
|f'(t)-f'(s)|\le H|t-s|^\alpha, \quad t,s\in\Delta(c,\delta).
\end{align}
Let us write
$\Delta_{\delta}(x)=|\log d_{\delta}(x,\cD)|$ for the
\emph{smooth} $\delta$\emph{-truncated distance} of $x$ to
$\cD$ on $I$, that is, for any given $\delta > 0$ we set
\begin{align*}
  d_{\delta}(x,\cD)=\left\{
  \begin{array}{lll}
d(x,\cD) & \textrm{if $0<d(x,\cD)\leq\delta$;}
\\
\left(\frac{1-\delta}{\delta}\right)d(x,\cD)+2\delta-1 
& \textrm{if $\delta<d(x,\cD)<2\delta$;}
\\
1 & \textrm{if $d(x,\cD)\geq2\delta$.}
\end{array} \right.,
\end{align*}
where $d(x,y)=|x-y|, x,y\in I$ is the usual Euclidean
distance on $I$.

\begin{maintheorem}
  \label{mthm:expslowapprox}
  Let $\alpha>0$ and $f$ be a piecewise $C^{1+\alpha}$
  one-dimensional map, with monotonous branches on the
  connected components of $I\setminus\cD$ so that
  $\inf\{ |f'x|: x\in I\setminus\cD\}>1$. In addition,
  assume that $\cS\neq\emptyset$ and that


  \begin{description}
  \item[\emph{discontinuities visit singularities}]
    $\exists T_0\in\ZZ^+\, \forall c\in\cD\setminus\cS\,
    \exists T=T(c)\leq T_0$ so that $f^T(c)\in\cS$ and
    $\forall 0<j<T \exists c_j\in\cD\setminus\cS:
    d(f^j(c),\cS)=d(f^j(c),c_j)$, and we can find
    $\epsilon, \delta>0$ such that
    $f\mid_{\Delta(c,\delta)}$ is a diffeomorphism into
    $\Delta(f^T(c),\epsilon)$.
  \end{description}
  Then $f$ has \emph{exponentially slow recurrence to the
    singular/discontinuous subset $\cD$}, that is, for each
  $\epsilon>0$ we can find $\delta>0$ so that there exists
  $\xi>0$ satisfying
  \begin{align}\label{eq.expslowrecurrence}
    \limsup_{n\to\infty}\frac{1}{n}\log\lambda
    \left\{ t\in I: \frac1n \sum_{j=0}^{n-1}
      \Delta_{\delta}(f^j(t))>\epsilon\right\} <-\xi
  \end{align}
  where $\lambda$ is Lebesgue measure on $I$.
\end{maintheorem}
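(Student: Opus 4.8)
The plan is to reduce the statement to an exponential estimate for Birkhoff sums of the truncated distance function, which is essentially a large deviations statement for the non-uniformly expanding map $f$ with respect to Lebesgue measure. First I would establish a \emph{bounded distortion} property: because $f$ is piecewise $C^{1+\alpha}$ and uniformly expanding ($\inf|f'|>1$), and because near each $c\in\cS$ we have the power-law control \eqref{eq:der0} on $f'$ while near each $c\in\cD\setminus\cS$ we have the H\"older bound \eqref{eq:derD-S}, the composition $f^n$ has distortion bounded on any interval on which $f^n$ is a diffeomorphism onto its image, \emph{provided} the orbit stays a controlled distance from $\cD$. The delicate point is that orbits may come very close to $\cS$: there one uses that $\log|f'|$ is integrable near $c$ (since $\alpha(c)>0$) and that the truncated-distance function $\Delta_\delta$ dominates $|\log|f'||$ up to an additive constant near $\cS$, so that controlling $\frac1n\sum\Delta_\delta(f^j t)$ automatically controls distortion of $f^n$.

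Second I would set up the combinatorial/counting argument. For fixed $\epsilon>0$, choose $\delta>0$ small. For an $n$-string of symbols describing which monotonicity branch of $f\setminus\cD$ the orbit of $t$ lies in at each step $j=0,\dots,n-1$, together with a coarser "approach scale" record saying how close to $\cD$ each iterate $f^j(t)$ comes (dyadic scales $2^{-k}$), one partitions $\{t: \frac1n\sum\Delta_\delta(f^j t)>\epsilon\}$ into pieces with prescribed itinerary and approach-scale sequence. On each such piece, bounded distortion gives $\lambda(\text{piece})\lesssim e^{-n\gamma}$ times $\lambda$ of the image, for $\gamma>0$ coming from uniform expansion; but the number of pieces with a given approach-scale sequence $(k_0,\dots,k_{n-1})$ grows only subexponentially per symbol \emph{unless} many $k_j$ are large, and the constraint $\frac1n\sum k_j \gtrsim \epsilon/\delta$ (forced by the event, since $\Delta_\delta(x)\approx k$ when $d(x,\cD)\approx 2^{-k}$) costs a large-deviations factor. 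Summing a geometric-type series over admissible approach-scale sequences, using that the "bad" event forces the empirical average of the $k_j$'s to exceed a threshold proportional to $\epsilon$, yields that the total measure is $\le C e^{-\xi n}$ for some $\xi=\xi(\epsilon,\delta)>0$ once $\delta$ is chosen small relative to the expansion rate. This is the standard "$f$ has exponentially slow recurrence to $\cD$" mechanism; the work is in making the constants uniform across the finitely many branches and the finitely many $c\in\cD$.

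The genuinely new ingredient — and the main obstacle — is handling the discontinuities $c\in\cD\setminus\cS$ (bounded derivative, critical order $1$) simultaneously with the singularities $\cS$ (unbounded derivative). Near a discontinuity the map is bi-Lipschitz, so an orbit that lingers near $c\in\cD\setminus\cS$ contributes a large value of $\Delta_\delta$ to the Birkhoff sum \emph{without} producing compensating expansion; in the pure-singularity case of \cite{araujo2006a} this problem does not arise. This is exactly what the hypothesis \textbf{discontinuities visit singularities} is designed to defeat: within a uniformly bounded number $T\le T_0$ of steps, $f^T(c)\in\cS$, and along the intermediate orbit $f^j(c)$ the distance to $\cD$ is realized at \emph{another} discontinuity $c_j\in\cD\setminus\cS$, with $f$ mapping the one-sided neighborhood $\Delta(c,\delta)$ diffeomorphically into $\Delta(f^T(c),\epsilon)$. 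Concretely, I would group each visit to a $\delta$-neighborhood of a discontinuity $c$ with the subsequent $T(c)\le T_0$ iterates as a single "block"; the block contributes at most $T_0$ terms of size $O(|\log\delta|)$ to the Birkhoff sum but pushes the orbit into a neighborhood of a genuine singularity in $\cS$, where on the next step expansion like $d(\cdot,\cS)^{\alpha-1}$ kicks in. One then re-runs the counting argument with "blocks" in place of single iterates: the bounded block-length $T_0$ means the number of blocks in $n$ iterates is $\ge n/(T_0+1)$, so the exponential rate only degrades by a factor $1/(T_0+1)$, while bounded distortion now follows because each block ends by a diffeomorphism onto a definite neighborhood of a point of $\cS$ where the earlier singular analysis applies. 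Verifying that distortion really is controlled across such blocks — combining the H\"older estimate \eqref{eq:derD-S} on the discontinuity side with \eqref{eq:der0} on the exit into $\cS$, while the intermediate return distances are realized at discontinuities and hence bounded below by a positive constant away from the current block's own neighborhood — is the technical heart of the argument and the step I expect to require the most care.
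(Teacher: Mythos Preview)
Your overall architecture (initial partition refined dynamically, bounded distortion on atoms, measure estimates via return depths, Chebyshev at the end) matches the paper's, and your ``block'' idea for discontinuities is close in spirit to the paper's device of \emph{pulling back} the partition near $\cS$ to define the partition near $\cD\setminus\cS$. But there is a genuine gap at the point you yourself flag as ``the step I expect to require the most care'': bounded distortion near $\cS$ fails for the dyadic scales $2^{-k}$ you propose.

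Concretely, near $c\in\cS$ one has from \eqref{eq:der0} that $\big|\log|f'x|-\log|f'y|\big|\approx |x-y|/|x-c|$. If $x,y$ lie in the same dyadic annulus $\{2^{-k-1}<|t-c|<2^{-k}\}$, then $|x-y|/|x-c|$ is bounded only by a constant, \emph{not} by something summable along an orbit; even an exponential partition with polynomial sub-refinement gives only $|x-y|/|x-c|\approx 1/|\log|x-c||$, which still does not sum. The paper stresses that this was precisely the error in \cite{araujo2006a} that the present work corrects. Your sentence ``controlling $\frac1n\sum\Delta_\delta(f^jt)$ automatically controls distortion of $f^n$'' does not rescue this: you need distortion on partition atoms \emph{before} you can estimate their Lebesgue measure in terms of return depths, so you cannot assume the Birkhoff-sum control you are trying to prove.

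The paper's fix is to abandon exponential scales entirely and take the initial partition near each $c\in\cS$ to be $M(c,p)=(c+a_{p+1},c+a_p)$ with $a_p=p^{-1/(\epsilon_1\beta_1)}$, so that $|M(c,p)|\approx d(M(c,p),\cD)^{1+\epsilon_1\beta_1}$. This power-law relation is exactly what makes $|x_i-y_i|/|x_i-c|\le C|x_{i+1}-y_{i+1}|^\theta$ for some $\theta\in(0,1)$ (see Lemma~\ref{le:bddist}), which \emph{is} geometrically summable along the expanding orbit. Near $c\in\cD\setminus\cS$ the partition is then defined by pulling back $M(\tilde c,p)$ under $f^{T(c)}$ from the associated singularity $\tilde c\in\cS$; this is the precise role of the ``discontinuities visit singularities'' hypothesis, and it replaces your block argument by a uniform comparison $|M(c,p)|\approx|M(\tilde c,p)|$ of atom sizes. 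With this partition in place the rest of the proof proceeds along the counting/large-deviations lines you sketch, but the combinatorics are organized around the polynomial scale $p$ rather than a dyadic scale $k$, and the final exponential bound comes from summing $\sum_{p>\rho_0}|M(c,p)|^{\beta_3-\beta_2}$ rather than a geometric series in $k$.
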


The proof of Theorem C provides another result on escape
rates in the setting of piecewise expanding maps with
small holes, in our case restricted to neighborhoods of the
singular/discontinuity subset.

\begin{maincorollary}
  \label{mcor:expmapsholes}
  Let $f$ be a piecewise $C^{1+\alpha}$
  one-dimensional map in the setting of
  Theorem~\ref{mthm:expslowapprox}. Then there exist
  $\delta_0>0$ and $\epsilon_0>0$ such that, for every
  $0<\delta<\delta_0$ we have
  \begin{align*}
    \limsup_{n\to\infty}\frac1n\log\lambda\left\{
    x\in I: d(f^i(x),\cD)>\delta, \forall 0<j<n
    \right\}<-\epsilon_0.
  \end{align*}
\end{maincorollary}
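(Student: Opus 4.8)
\textbf{Proof proposal for Corollary~\ref{mcor:expmapsholes}.}
The plan is to deduce the escape-rate estimate directly from Theorem~\ref{mthm:expslowapprox} by comparing the ``survival set'' $A_n=\{x\in I: d(f^j(x),\cD)>\delta,\ \forall\, 0<j<n\}$ with the level set appearing in \eqref{eq.expslowrecurrence}. The key observation is that on $A_n$ the truncated distance $\Delta_\delta(f^j(x))=|\log d_\delta(f^j(x),\cD)|$ is uniformly bounded: indeed, if $d(f^j(x),\cD)>\delta$ then $d_\delta(f^j(x),\cD)$ equals $1$ when $d(f^j(x),\cD)\ge2\delta$, and lies in $(\delta,1)$ in the intermediate range, so in all cases $\Delta_\delta(f^j(x))\le M_\delta:=|\log\delta|$ (and typically much smaller). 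Thus the Birkhoff sum $\frac1n\sum_{j=0}^{n-1}\Delta_\delta(f^j(x))$ is controlled, but this by itself is not enough; we need it to be \emph{small}, which forces us to first shrink the hole.

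Concretely, first I would fix the $\epsilon>0$ (to be chosen) and apply Theorem~\ref{mthm:expslowapprox} to obtain $\delta_1=\delta_1(\epsilon)>0$ and $\xi=\xi(\epsilon)>0$ so that \eqref{eq.expslowrecurrence} holds. Next, the point is that by shrinking the hole radius $\delta$ relative to a larger truncation scale we can make the per-step contribution on the survival set as small as we like: choose an auxiliary scale $\delta_0$ with $\delta<\delta_0$ and use the $\delta_0$-truncated distance $\Delta_{\delta_0}$ in \eqref{eq.expslowrecurrence}. On $A_n$ we have $d(f^j(x),\cD)>\delta$ for $0<j<n$; picking $\delta$ small enough that $|\log\delta|$ — wait, this still diverges. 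The correct route is the reverse: note that on $A_n$, for indices $0<j<n$, the only points where $\Delta_{\delta_0}(f^j(x))$ can be large are those with $\delta<d(f^j(x),\cD)<\delta_0$, and on such points $\Delta_{\delta_0}(f^j(x))\le|\log d_{\delta_0}(f^j(x),\cD)|$. Since $f$ is uniformly expanding with $\inf|f'|>1$, the set of $j\in\{1,\dots,n-1\}$ for which $f^j(x)$ enters the thin annulus $\{\delta<d(\cdot,\cD)<\delta_0\}$ has a controlled frequency; more precisely, one shows that for $\delta_0$ fixed and $\delta\downarrow0$ the Birkhoff average $\frac1n\sum_{j=1}^{n-1}\Delta_{\delta_0}(f^j(x))$ on $A_n$ stays below a bound depending only on $\delta_0$, which we can make $<\epsilon$ by taking $\delta_0$ small (this uses that the contribution of a single visit to a $\delta_0$-neighborhood of $\cD$ is $O(\delta_0\log(1/\delta_0))$ in measure, and the expansion bounds the number of returns). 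Hence, once $\delta_0$ is small enough, $A_n\subset\{x\in I:\frac1n\sum_{j=0}^{n-1}\Delta_{\delta_0}(f^j(x))>\epsilon\}^c$ fails — instead, $A_n$ is contained in the \emph{complement}, so we must bound $A_n$ differently.

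Therefore the cleanest argument is: fix $\epsilon>0$ small, apply Theorem~\ref{mthm:expslowapprox} with the smooth $\delta$-truncated distance for that same $\delta$ (so $\delta_0$ and $\epsilon_0$ in the corollary will be read off from the theorem's output), and split $A_n$ according to whether the orbit spends many steps in the intermediate annulus $\Delta=\{\delta<d(\cdot,\cD)<2\delta\}$. If $x\in A_n$ makes at least $\eta n$ visits to $\Delta$ (for a small $\eta$ to be fixed), then $\frac1n\sum_{j=0}^{n-1}\Delta_\delta(f^j(x))\ge \eta\cdot|\log(2\delta)|\cdot(\text{const})$ is bounded below by a positive constant, hence such $x$ lie in the level set of \eqref{eq.expslowrecurrence} (after adjusting $\epsilon$), whose measure decays like $e^{-\xi n}$. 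If instead $x\in A_n$ makes fewer than $\eta n$ visits to $\Delta$, then $x$ survives outside $\{d(\cdot,\cD)>\delta\}$ while spending at least $(1-\eta)n$ steps in the region $\{d(\cdot,\cD)\ge2\delta\}$; on this region $|f'|$ is bounded, say by $\Lambda_0<\infty$, and combined with $\inf|f'|>1$ elsewhere and a standard distortion/counting argument on the finitely many branches, the number of admissible itineraries of length $n$ staying $\delta$-away from $\cD$ is at most $C(\#\caC)^{\eta n}\cdot\theta^{n}$ for some $\theta<1$ when $\eta$ is small — each such itinerary corresponds to an interval of length $\le C\rho^n$ with $\rho:=(\inf|f'|)^{-1}<1$. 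Summing gives $\lambda(A_n)\le C(\#\caC\cdot\rho)^{?}$... more carefully, $\lambda(A_n)\le Ce^{-\xi n}+C (\#\caC)^{\eta n}\rho^{n}$, and choosing $\eta$ small enough that $(\#\caC)^{\eta}\rho<1$ yields $\limsup_n\frac1n\log\lambda(A_n)<0$, with $\delta_0,\epsilon_0$ determined by these choices.

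\textbf{Main obstacle.} The delicate point is the combinatorial/distortion estimate bounding the number of length-$n$ itineraries that avoid the $\delta$-neighborhood of $\cD$ while possibly visiting the intermediate annulus few times: near a singular point $c\in\cS$ the derivative blows up like $|t-c|^{\alpha(c)-1}$, so although a single branch is uniformly expanding away from $\cD$, the \emph{distortion} across a branch is unbounded as one approaches $\cD$. Controlling this requires using the smooth $\delta$-truncation together with the bounded-distortion properties of piecewise $C^{1+\alpha}$ maps with critical order (as encoded in \eqref{eq:der0} and \eqref{eq:derD-S}), and is essentially a byproduct of the machinery developed for Theorem~\ref{mthm:expslowapprox}; I expect to quote the relevant distortion lemma from that proof rather than redo it. A secondary technical nuisance is making the three parameters $\epsilon,\eta,\delta$ (and the resulting $\delta_0,\epsilon_0$) consistent, which is routine once the two decay mechanisms above are in place.
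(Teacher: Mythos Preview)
Your proposal has a genuine gap, and it stems from the very first observation you make (and then half-retract): on the survival set $A_n=\{x:d(f^j x,\cD)>\delta,\ 0<j<n\}$ the Birkhoff sum $\frac1n S_n\Delta_\delta$ is \emph{small}, not large. Theorem~\ref{mthm:expslowapprox} controls the \emph{upper} tail $\{\frac1n S_n\Delta_\delta>\epsilon\}$; the set $A_n$ sits in the \emph{complementary} region. So the statement of Theorem~\ref{mthm:expslowapprox} as a black box carries no information about $|A_n|$, and no amount of juggling the truncation scale $\delta_0$ changes this. Your split into ``many/few annulus visits'' does not rescue the argument: a visit to $\{\delta<d(\cdot,\cD)<2\delta\}$ near the outer edge contributes $\Delta_\delta\approx 0$, so ``many visits'' does not force a positive lower bound on $\frac1n S_n\Delta_\delta$; and in the ``few visits'' branch your itinerary count is wrong --- the number of $n$-cylinders meeting $A_n$ is still of order $(\#\text{branches})^n$, not $(\#\caC)^{\eta n}$, so the product with $\rho^n=\sigma^{-n}$ need not decay (nothing in the hypotheses gives $\sigma>\#\text{branches}$; the mysterious $\theta<1$ has no source).

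The paper does something different: Corollary~\ref{mcor:expmapsholes} is obtained \emph{inside} the proof of Theorem~\ref{mthm:expslowapprox}, not from its statement. One builds the dynamically refined partition $\cP_n$ with its return times and depths, observes that every $x\in A_n$ lies in an atom $\omega\in\cP_n$ having \emph{no deep returns} (depth $>\Theta$, where $a_\Theta\approx\delta$), and then bounds the total measure of such atoms directly via the measure estimate of Lemma~\ref{le:sizedepth} (the factor $e^{-\zeta_0(n-R)}\sigma^{-R+r}$ in \eqref{eq:Vgeneral}) together with a moment computation tailored to the no-deep-return case (Lemma~\ref{le:nodeep}). The bounded-distortion lemma you mention is indeed used, but only as one ingredient of this partition machinery; the exponential decay of $|A_n|$ comes from the combinatorics of returns/escapes encoded in \eqref{eq:Vgeneral}, not from a naive branch count.
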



\subsection{Comments, corollaries and possible
  extensions}
\label{sec:coroll-extens}

The construction of adapted cross-sections for general
singular-hyperbolic attracting sets provides an extension of
the results of \cite{APPV} in line with the work of
\cite{Sataev2010,sataev2009}.

From the representation of the global Poincar\'e map as a
skew-product given by
Theorem~\ref{thm:propert-singhyp-attractor}, we can follow
\cite[Sections 6-8]{APPV} to obtain
\begin{theorem}
  \label{thm:appv-attracting}
  Let $\Lambda$ be a singular-hyperbolic attracting set for
  a $C^2$ vector field $X$ with the open subset $U$ as
  trapping region. Then
  \begin{enumerate}
  \item there are finitely many ergodic physical/SRB
    measures $\mu_1,\dots,\mu_k$ supported in $\Lambda$ such
    that the union of their ergodic basins covers $U$
    Lebesgue almost everywhere:
    \begin{align}\label{eq:appv-attracting}
      \leb\left(U\setminus\big(\cup_{i=1}^k
      B(\mu_i)\big)\right)=0.
    \end{align}
  \item Moreover, for each $X$-invariant ergodic probability
    measure $\mu$ supported in $\Lambda$ the following are
    equivalent
    \begin{enumerate}
    \item
      $h_\mu(X^1)=\int\log|\det DX^1\mid_{E^{cu}}|\,d\mu>0$;
    \item $\mu$ is a $SRB$ measure, that is, admits an
      absolutely continuous disintegration along unstable
      manifolds;
    \item $\mu$ is a physical measure, i.e., its basin
      $B(\mu)$ has positive Lebesgue measure.
    \end{enumerate}
  \item In addition, the family $\EE$ of all $X$-invariant
    probability measures which satisfy item (2a) above is the
    convex hull
    $ \EE=\{\sum_{i=1}^k t_i \mu_i : \sum_i t_i=1; 0\le
    t_i\le1, i=1,\dots,k\}.  $
  \end{enumerate}
\end{theorem}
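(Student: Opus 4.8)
The plan is to transport the question along the orbits of $X$ to the global Poincar\'e map $R$ of Theorem~\ref{thm:propert-singhyp-attractor}, then to its one-dimensional factor $f$, run there the non-uniformly expanding machinery of \cite{ABV00,APPV}, and finally lift the resulting measures back to the flow. First I would use that $\leb$ on the trapping region $U$ disintegrates along $X$-orbits as (a smooth density times) Lebesgue measure on the cross-sections $\Xi$ times Lebesgue along the flow direction, with integrable return time $\tau:\Xi_0\to\RR^+$; consequently a subset of $U$ has full $\leb$-measure iff its trace on $\Xi$ has full Lebesgue measure, and (by the Ambrose--Kakutani correspondence) an $X$-invariant probability on $U$ is physical iff it is the suspension under $\tau$ of a measure for $R$ whose basin has positive Lebesgue measure in $\Xi$. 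By Theorem~\ref{thm:propert-singhyp-attractor}, $R$ is conjugate to the skew-product $F(x,y)=(f(x),g(x,y))$ with $g$ a uniform fibre contraction, so any two points of a stable leaf $\{x\}\times B^{d_s}$ are forward asymptotic. Hence every $F$-invariant physical measure projects under $(x,y)\mapsto x$ to an $f$-invariant absolutely continuous physical measure on $I$, and conversely each ergodic absolutely continuous invariant probability (a.c.i.p.) $\nu$ of $f$ lifts, by pushing conditionals along the contracting fibres, to a unique $F$-invariant $\widehat\nu$ whose basin equals, up to a Lebesgue-null set, the preimage of the basin of $\nu$; its suspension is then an $X$-invariant probability supported on $\Lambda=\bigcap_{t>0}X^t(U)$.

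The core step is the one-dimensional statement: the map $f$ of Theorem~\ref{mthm:expslowapprox} has finitely many ergodic a.c.i.p.'s $\nu_1,\dots,\nu_k$ with $\leb\big(I\setminus\bigcup_i B(\nu_i)\big)=0$. Here I would feed the exponential escape estimate (Corollary~\ref{mcor:expmapsholes}) and the exponentially slow recurrence to $\cD$ (Theorem~\ref{mthm:expslowapprox}) into the standard hyperbolic-times argument: for Lebesgue a.e.\ $x$ one produces a positive-frequency sequence of times at which $f^n$ admits an inverse branch of definite size on which the distortion is uniformly bounded, the distortion control being exactly what the non-flatness relations \eqref{eq:der0}--\eqref{eq:derD-S} together with $\alpha(c)\le1$ (hence integrability of $1/|f'|$ near $\cD$) are designed to supply. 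The arguments of \cite{ABV00,APPV} then give finitely many ergodic a.c.i.p.'s covering $I$ Lebesgue a.e.; equivalently, one can set up the transfer operator on a generalised bounded-variation space, obtain a Lasota--Yorke inequality from the same distortion bounds, deduce quasi-compactness, and read off finiteness from the peripheral spectrum. Combined with the first paragraph this yields item (1): the suspensions $\mu_1,\dots,\mu_k$ of $\widehat\nu_1,\dots,\widehat\nu_k$ are ergodic physical/SRB measures on $\Lambda$ and $\leb\big(U\setminus\bigcup_i B(\mu_i)\big)=0$.

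For item (2) I would argue with the entropy formula. Any $X$-invariant $\mu$ supported in $\Lambda$ satisfies $\int\log|\det(DX^1\mid E^{cu})|\,d\mu\ge\lambda>0$ by the volume-expanding property, and $h_\mu(X^1)\le\int\log|\det(DX^1\mid E^{cu})|\,d\mu$ by Ruelle's inequality, the right-hand side being the integrated sum of the positive Lyapunov exponents since $E^s$ only contracts and $[X]\subset E^{cu}$ has zero exponent. Then (2a)$\Leftrightarrow$(2b) is the Ledrappier--Young characterisation of SRB measures as those attaining equality in Pesin's formula along the unstable foliation; (2b)$\Rightarrow$(2c) because an absolutely continuous disintegration along unstable manifolds forces a positive-$\leb$ basin; and (2c)$\Rightarrow$(2a) follows by projecting the physical measure to an a.c.i.p.\ of $f$, where Rokhlin's formula gives $h_\nu(f)=\int\log|f'|\,d\nu>0$, and using Abramov's formula to relate entropies and Jacobians of $\nu$, $\widehat\nu$ and $\mu$ across the suspension exactly as in \cite[Sec.~6--8]{APPV}. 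Item (3) is then formal: both sides of the defining equality of $\EE$ are affine in $\mu$ and the inequality is universal, so $\mu\in\EE$ iff $\mu$-almost every ergodic component lies in $\EE$; by item (2) and the fact that every invariant measure on $\Lambda$ has positive central exponent, these components are precisely $\mu_1,\dots,\mu_k$, whence $\EE$ is their convex hull.

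\textbf{Main obstacle.} The delicate point is the one-dimensional step: establishing finiteness of ergodic a.c.i.p.'s for $f$ when the monotonicity intervals are bounded by \emph{both} non-flat singularities (unbounded derivative) \emph{and} discontinuities (bounded derivative) interacting as in the ``discontinuities visit singularities'' hypothesis. All the distortion estimates near $\cD$, and hence the Lasota--Yorke / hyperbolic-times mechanism, must be made uniform across this mixed boundary behaviour; once that is secured, the suspension bookkeeping, the Ledrappier--Young characterisation, and the affineness argument for $\EE$ are routine and follow \cite{APPV} verbatim.
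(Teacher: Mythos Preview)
Your overall architecture---reduce to the Poincar\'e map $R$, then to the one-dimensional factor $f$, build a.c.i.p.'s there, and suspend back---is exactly what the paper does, and your treatment of (2a)$\Leftrightarrow$(2b) via Ledrappier--Young, of (2b)$\Rightarrow$(2c) via absolute continuity of the stable holonomy, and of item (3) via ergodic decomposition plus Ruelle's inequality, all match the paper. Two points are worth flagging.

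First, what you call the ``main obstacle''---finiteness of ergodic a.c.i.p.'s for $f$ in the presence of mixed singular/discontinuous boundary behaviour---is not where the paper spends effort for \emph{this} theorem. The existence and finiteness of a.c.i.p.'s for $f$ is already packaged into Theorem~\ref{thm:propert-singhyp-attractor}(3), which rests on Keller's generalised bounded-variation theory \cite{Ke85}: since $f$ is piecewise $C^{1+\alpha}$ and expanding, $1/|f'|$ has bounded $p$-variation, and Keller's spectral results give finitely many ergodic a.c.i.p.'s whose basins cover $I$. The exponential slow recurrence of Theorem~\ref{mthm:expslowapprox} and the hyperbolic-times machinery you invoke are needed for the large-deviations Theorem~\ref{mthm:LD-sing-hyp-attracting}, not for Theorem~\ref{thm:appv-attracting}. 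So your proof of item (1) is correct but heavier than necessary.

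Second, your route for (2c)$\Rightarrow$(2a) differs from the paper's and has a gap as written. You propose to project a physical $\mu$ down to an a.c.i.p.\ of $f$ and then invoke Rokhlin and Abramov; but ``$\leb(B(\mu))>0$'' does not by itself guarantee that the projection to $I$ is absolutely continuous---that step needs justification. The paper instead argues directly from item (1): since $\leb\big(U\setminus\bigcup_i B(\mu_i)\big)=0$ and $\leb(B(\mu))>0$, the basin $B(\mu)$ meets some $B(\mu_i)$ in positive measure; points in the intersection have time averages converging to both $\mu$ and $\mu_i$, so $\mu$ is a convex combination of the $\mu_i$ and, by ergodicity, equals one of them. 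Since each $\mu_i$ already satisfies the entropy formula, (2a) follows. This is both shorter and avoids the missing step in your argument.
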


The proof of Theorem~\ref{thm:appv-attracting} ,
characterizing physical/SRB measures and the set $\EE$ of
equilibrium states for the logarithm of the central
Jacobian, in the same way as for hyperbolic attracting sets,
is presented in
Subsection~\ref{sec:equival-between-srbp}. This result
proves item (1) of
Theorem~\ref{mthm:LD-sing-hyp-attracting}.

We note that there are many examples of singular-hyperbolic
attracting sets, non-transitive and containing
non-Lorenz-like singularities; see
Figure~\ref{fig:singhypattracting} for an example obtained
by conveniently modifying the geometric Lorenz construction,
and many others in \cite{Morales07}.

\begin{figure}[h]
\centering
\includegraphics[width=9cm]{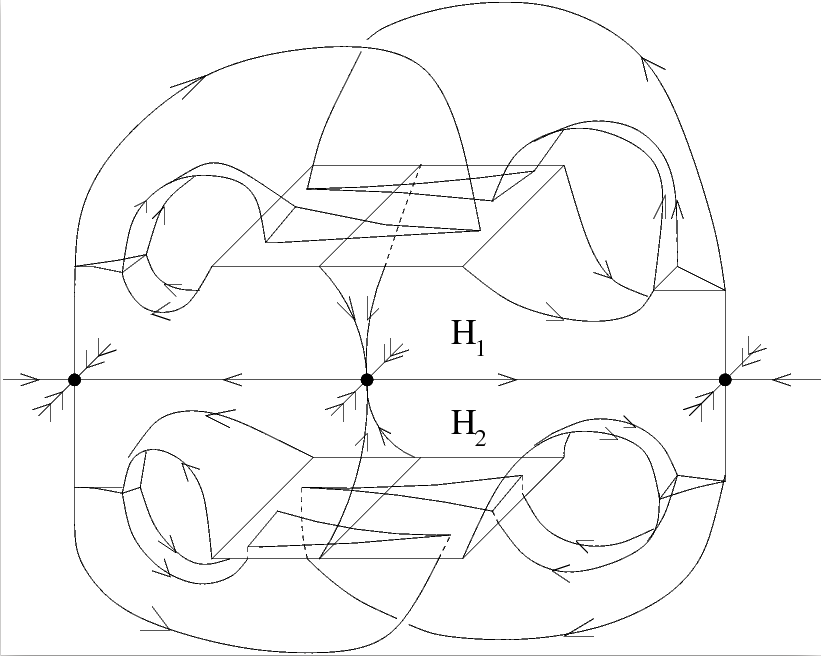}
\caption{\label{fig:singhypattracting}Example of a
  singular-hyperbolic attracting set, non-transitive and
  containing non-Lorenz like singularities.}
\end{figure}

In addition, recent results obtained in
\cite{ArGalPac,ArGalPac16} depend on the skew-product
representation of a global Poincar\'e map given by
Theorem~\ref{thm:propert-singhyp-attractor} (corresponding
to \cite[Theorem 5]{ArGalPac} ensuring the application of
\cite[Theorem A and Proposition 1]{ArGalPac} to
singular-hyperbolic attractors), which now holds without
assuming transitivity or that all equilibria are
non-resonant Lorenz-like singularities for $3$-dimensional
vector fields only.

Hence, exponential decay of
correlations for the physical measures of the Global
Poincar\'e map together with exact dimensionality and the
logarithm law for hitting times for the physical/SRB
measures of the flow on $\Lambda$ \cite[Corollaries 1 and
2]{ArGalPac} are true in the same setting of
Theorem~\ref{mthm:LD-sing-hyp-attracting}.


\subsubsection{Consequences for one-dimensional maps}
\label{sec:existence-acip-one}

In the statement of Theorem~\ref{mthm:expslowapprox} we
assume that
\begin{itemize}
\item each discontinuity point with finite lateral
  derivative (in $\cD\setminus\cS$) admits a one-sided
  neighborhood which is sent to a one-sided neighborhood of a
  singular point (in $\cS$, a discontinuity point with
  unbounded lateral derivative) in a finite and uniformly
  bounded number of iterates;
\item the set $\cS$ is non-degenerate in the usual sense
  from one-dimensional dynamics \cite{MS93} as in assumption
  \eqref{eq:der0}, that is, $|f'|$ grows as a power of the
  distance to $\cS$ (see also e.g. the conditions on the
  critical/singular set in \cite{OHL2006} for a similar
  statement in the $C^2$ setting).
\end{itemize}
Near singular points the rate of expansion is proportional
to a power of the distance to the singularity, which
allows distortion control.  The coexistence of
singularities and discontinuity points in the same map makes
it more difficult to control distortion near the boundaries
of the monotonicity intervals. The assumption that each
point in $\cD\setminus\cS$ is eventually sent in $\cS$
enables us to adapt the combinatorial method of proof from
\cite[Section 6]{araujo2006a} to this setting, which uses
partition refinement techniques first developed in the works
of Benedicks and Carleson \cite{BC85,BC91} later expanded in
\cite{MV93,PRV98,LV00,ArPa04}.

Similar techniques were used by Freitas \cite{freitas}
applied to the quadratic family to obtain exponentially slow
approximation to the critical point on Benedicks-Carleson
parameters; and by Diaz-Ordaz, Holland and Luzzatto
\cite{OHL2006} to study one-dimensional $C^2$ maps with
critical points or singularities.

In contrast to these works, where only one or finitely many
criticalities and/or singularities were allowed and with no
interaction between them, here we deal with a
\emph{H\"older-$C^1$ map having 
  non-degenerate singularities and criticalities and assume
  a strong interaction between them}.

In most works seeking the construction of a Gibbs-Markov
tower for non-uniformly expanding maps, asymptotic
conditions on the recurrence to an exceptional subset (of
criticalities, discontinuities or singularities) are assumed
providing existence of hyperbolic times, which are endowed
with automatic recurrence control to the
singular/discontinuity set. This control however does not
ensure specific rates of decay and other finer statistical
properties, which are often stated conditionally on the
class of decay rates (polynomial, super-polynomial,
stretched exponential, exponential etc);
e.g.~\cite{alves-luzzatto-pinheiro2004,AFLV11,LuzzMelb13,gouezel}.

Here we want to deduce this asymptotic recurrence control
from weaker assumptions on $f$, in a similar vein
as~\cite{BC85,BC91,MV93,PRV98,LV00} but in a much simpler
setting, which then provides hyperbolic times and many
strong statistical properties, as explained below.

However in~\cite{araujo2006a} a problem with the bounded
distortion argument was unfortunately overlooked: the
derivative $f'$ of the map $f$ (even in the case of the
Lorenz map from the geometric Lorenz attractor) is not
H\"older continuous on a whole one-sided neighborhood of the
singularities, since $f'$ is unbounded there. In fact, for
$x,y\in\Delta(c,\delta_c)$ near a singularity $c\in\cS$ we
have $|f'x-f'y|/|f'x|\approx |x-y|/|x-c|$
from~\eqref{eq:der0}, so to be able to bound $|f'x-f'y|$ we
need $|x-y|\approx|x-c|^\theta$ for some convenient
$0<\theta<1$ and $x,y$ in the same atom of a convenient
partition. \emph{But this is not provided by any exponential
  partition around the points of $\cD$, even with polynomial
  refinement} (since in this case
$|x-y|/|x-c|\approx1/|\log|x-c||$) as used in all the
works~\cite{BC85,BC91,MV93,PRV98,LV00} (and many others)
related to the Benedicks-Carleson refinement technique.

\emph{We overcome this issue by changing the way the initial
  partition is chosen}: the length of its elements
(intervals) is comparable to a suitable power of the
distance to $\cD$ in order to ensure bounded distortion.
However, we do not change the refinement strategy with
respect to~\cite{araujo2006a}, but the finer details have
been thoroughly presented in
Section~\ref{sec:exponent-slow-recurr-1}.  To the best of
the authors' knowledge, this is the first time
non-exponential initial partitions are used in this setting
and still provide upper exponential decay for the Lebesgue
measure of the deviations subset.

 Applications of this result are given by
 singular-hyperbolic attracting sets as in
 Theorem~\ref{mthm:LD-sing-hyp-attracting}, where we reduce
 the analysis to a one-dimensional map with a finite
 singular/discontinuity subset $\cD$; see
 Section~\ref{sec:reduct-large-deviat}. Coupling with
 well-known results on non-uniformly expanding maps we
 obtain results on existence of absolutely continuous
 invariant probability measures and its statistical
 properties.

We say that $f$ is \emph{non-uniformly expanding} if there
exists $c>0$ such that
\begin{align*}
  \liminf_{n\to+\infty}\frac1n
  \sum_{j=0}^{n-1}\log|f'(f^jx)|\ge c
  \quad{for}\quad\lambda-\text{a.e.} x.
\end{align*}
This condition implies in particular that the lower Lyapunov
exponent of the map $f$ is strictly positive Lebesgue almost
everywhere.

Condition \eqref{eq.expslowrecurrence} implies that
$S_n\Delta_\delta/n\to0$ in measure, i.e., the map $f$ has
\emph{slow recurrence to $\cD$}: for every $\epsilon>0$
there exists $\delta>0$ such that
\begin{align*}
  \limsup_{n\to\infty} \frac1n S_n\Delta_\delta(x)\le
  \epsilon
  \quad{for}\quad\lambda-\text{a.e.} x,
\end{align*}
where from now on
$S_ng(z)=S_n^fg(z)=\sum_{i=0}^{n-1}g(f^iz)$ denotes the
ergodic sum of the function $g:Q\to\RR$ with respect to $f$
in $n\ge1$ iterates.

We note that from properties~\eqref{eq:der0}
and~\eqref{eq:derD-S} together show that the singular set
$\cD$ is non-flat/non-degenerate similarly to the
assumptions on \cite{ABV00}

These notions suitably generalized to an arbitrary
dimensional setting were presented in~\cite{ABV00} and in
\cite{ABV00,Ze03} the following result on existence of
finitely many absolutely continuous invariant probability
measures was obtained.
\begin{theorem}[\cite{ABV00,Ze03}]
  \label{thm:abv}
  Let $f:M\circlearrowleft$ be a $C^2$ local diffeomorphism outside a
  non-degenerate singular set $\cD$. Assume that $f$ is
  non-uniformly expanding with slow recurrence to $\cD$.
  Then there are finitely many ergodic absolutely continuous
  (in particular \emph{physical} or
  \emph{Sinai-Ruelle-Bowen}) $f$-invariant probability
  measures $\mu_1,\dots,\mu_k$ whose basins cover the
  manifold Lebesgue almost everywhere, that is
  $ B(\mu_1)\cup\dots\cup B(\mu_k) = M,\quad \leb-\bmod0$.
  Moreover the support of each measure contains an open disk
  in $M$.
\end{theorem}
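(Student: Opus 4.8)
The plan is to reproduce the argument of Alves--Bonatti--Viana \cite{ABV00}, with the refinement of \cite{Ze03} that dispenses with the auxiliary integrability hypothesis. The two standing hypotheses --- non-uniform expansion and slow recurrence to $\cD$ --- will enter only through the mechanism of \emph{hyperbolic times}.

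\textbf{Step 1 (Hyperbolic times and hyperbolic pre-balls).} Fix $c>0$ as in the definition of non-uniform expansion, put $\sigma=e^{-c/2}\in(0,1)$, and fix a small $b>0$. Given $\delta>0$, call $n$ a $(\sigma,\delta)$-\emph{hyperbolic time} for $x$ if
\[
\prod_{j=n-k}^{n-1}\|Df(f^j x)^{-1}\|\le\sigma^{k}
\qquad\text{and}\qquad
d_\delta(f^{n-k}x,\cD)\ge\sigma^{bk}
\]
for every $1\le k\le n$. A Pliss-type lemma applied to the ergodic sums $\tfrac1n S_n\log\|Df^{-1}\|$, together with the Borel--Cantelli-type consequence of slow recurrence (compare \eqref{eq.expslowrecurrence}) that $\tfrac1n S_n|\log d_\delta(\cdot,\cD)|$ is small for most $n$, yields that for a suitable $\delta$ Lebesgue-a.e.\ $x$ has infinitely many hyperbolic times, occurring with positive lower density $\theta=\theta(c)>0$. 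Standard hyperbolic-time geometry then produces, for each hyperbolic time $n$ of $x$, a neighbourhood $V_n(x)\ni x$ (the \emph{hyperbolic pre-ball}) on which $f^n$ is a diffeomorphism onto the ball $B(f^n x,\delta_1)$ of a \emph{uniform} radius $\delta_1>0$, with the $k$-th backward branch contracting distances by at most $\sigma^{k/2}$.

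\textbf{Step 2 (Bounded distortion on pre-balls) --- the crux.} The heart of the matter is to show there is $C_1>0$ such that
\[
C_1^{-1}\le\frac{|\det Df^n(y)|}{|\det Df^n(z)|}\le C_1
\qquad\text{for all }y,z\in V_n(x)
\]
whenever $n$ is a hyperbolic time for $x$. One writes $\log|\det Df^n(y)|-\log|\det Df^n(z)|$ as a telescoping sum over $0\le j<n$ and bounds the $j$-th term: when $f^j y,f^j z$ lie away from $\cD$ the H\"older control on $Df$ suffices, while near $\cD$ the non-flatness estimates \eqref{eq:der0} and \eqref{eq:derD-S} bound it by a constant times $|f^j y-f^j z|^\alpha/d(f^j x,\cD)^{\alpha}$ (roughly); the exponential backward contraction from Step 1 and the lower bound $d_\delta(f^j x,\cD)\ge\sigma^{b(n-j)}$ built into the definition of hyperbolic time then make the series converge geometrically. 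This is precisely the distortion difficulty flagged in the Introduction, now in the abstract multidimensional $C^{1+\alpha}$ setting, and I expect \textbf{this to be the main obstacle}: the coupling between the H\"older modulus of $Df$ off $\cD$, the power-law blow-up near $\cD$, and the precise recurrence rate must all be balanced.

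\textbf{Step 3 (Existence, finiteness, basins, support).} Set $\mu_n=\frac1n\sum_{j=0}^{n-1}f^j_*\leb$ and let $\mu$ be any weak-$*$ accumulation point; it is $f$-invariant. For a hyperbolic time $n$ of $x$, Steps 1--2 give $f^n_*\big(\leb|_{V_n(x)}\big)\ge c_1\,\leb|_{B(f^n x,\delta_1)}$ for a uniform $c_1>0$; integrating this over a positive-density subset of hyperbolic times and over a positive-Lebesgue set of base points shows that every such $\mu$ dominates a fixed multiple of Lebesgue on a set of positive Lebesgue measure, and a standard argument (as in \cite{ABV00}) upgrades this to $\mu\ll\leb$ with density bounded below on its support. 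By the same estimate the basin of each ergodic component of $\mu$ contains, up to a Lebesgue-null set, a ball of the uniform radius $\delta_1$; since distinct ergodic measures have disjoint basins and $M$ is compact, only finitely many ergodic a.c.i.p.\ $\mu_1,\dots,\mu_k$ occur, and each has support containing an open disk. Finally, the full-Lebesgue-measure set of points with positive lower density of hyperbolic times is contained, up to a Lebesgue-null set, in $\bigcup_{i=1}^k B(\mu_i)$ --- again by transferring Lebesgue-typicality forward along hyperbolic-time orbits via the bounded distortion of Step 2 --- which gives $B(\mu_1)\cup\dots\cup B(\mu_k)=M$ up to a Lebesgue-null set, completing the proof.
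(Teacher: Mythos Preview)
The paper does not supply its own proof of this statement: Theorem~\ref{thm:abv} is quoted verbatim as a known result from \cite{ABV00,Ze03} and is used as a black box. Your proposal is a faithful sketch of the Alves--Bonatti--Viana argument (hyperbolic times via Pliss, hyperbolic pre-balls mapping onto uniform balls, bounded distortion on pre-balls using non-degeneracy of $\cD$ and the recurrence bound built into the definition of hyperbolic time, then Ces\`aro-averaging to produce a.c.i.p.'s whose basins contain uniform disks), together with the observation from \cite{Ze03} that no extra integrability hypothesis is needed; so there is nothing to compare against in this paper, and your outline matches the cited sources.
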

Clearly, $f$ in the setting of
Theorem~\ref{mthm:expslowapprox} satisfies both the
non-uniformly expanding and slow recurrence
conditions. Moreover, considering the tail sets
$\cE(x)=\min\{ N\ge1: |(f^n)'(x)|>\sigma^{n/3}, \forall n\ge
N\}$ and
$\cR(x)=\min\{ N\ge1 : S_n\Delta_\delta<\zeta n, \forall
n\ge N\}$, the exponentially slow recurrence
\eqref{eq.expslowrecurrence} can be translated as: there are
constants $\delta,\zeta,C_1,\xi>0$ so that
\begin{align*}
\lambda\Big(
  \{x\in I: \cR(x)>n \} \Big)
\le C_1\cdot e^{-\xi_1\cdot n}\quad\text{for all}\quad n\ge1;
\end{align*}
and the uniform expanding assumption on $f$ means that there
exist $\sigma>1$ and $N\in\ZZ^+$ so that $\{x\in
I:\cE(x)>n\}$ equals $I$ except finitely many points, for
all $n>N$. Hence we have
\begin{align}
  \label{eq:exptail}
  \lambda\Big(
  \{x\in I: \cR(x)>n \qand \cE(x)>n \} \Big)
\le C_1\cdot e^{-\xi_1\cdot n}\quad\text{for all}\quad n\ge1.
\end{align}
This allows us to deduce the following ergodic/statistical
properties of $f$.

\begin{maincorollary}
  \label{thm:alp}
  Let $f$ be as in the statement of
  Theorem~\ref{mthm:expslowapprox}. Then
  \begin{enumerate}
  \item (\cite{ABV00} and \cite[Theorem
    3]{alves-luzzatto-pinheiro2004}) there are finitely many
    absolutely continuous invariant probability measures
    $\mu_1,\dots,\mu_k$ such that
    $ B(\mu_1)\cup\dots\cup B(\mu_k) =
    M,\quad\lambda-\bmod0$
    and some finite power of $f$ is mixing with respect to
    $\mu_i, i=1,\dots,k$;
  \item \cite{gouezel} there exists an interval $Y_i$ with a
    return time function $R_i:Y_i\to\ZZ^+$ defining a Markov
    Tower over $f$ so that
    $\limsup\frac1n\log\mu_i\{R_i>n\}<0$ for each
    $i=1,\dots,k$;
  \item (\cite{alves-luzzatto-pinheiro2004} and
    \cite[Theorem 1.1]{gouezel}) there exist constants
    $C,c>0$ such that the correlation function
    $ \mathrm{Corr}_{n}(\vfi, \psi) = \left|\int (\varphi
      \circ g^{n})\cdot \psi \, d\mu_i - \int \varphi \,
      d\mu_i \int \psi \, d\mu_i\right|, $ for H\"older
    continuous observables $\varphi,\psi:I\to\RR$, satisfy
    $ \mathrm{Corr}_{n}(\varphi, \psi)\le C \cdot e^{-c\cdot
      n} $ for all $n\ge1$ and each $i=1,\dots,k$;
\item \cite[Theorem 4]{alves-luzzatto-pinheiro2004} $\mu_i$
  satisfies the Central Limit Theorem: given a H\"older
  continuous function \( \phi:I\to\RR \) which is not a
  coboundary (\( \phi\neq \psi\circ f - \psi \) for any continuous
  \( \psi:I\to\RR \)) there exists \( \theta>0 \) such that
  for every interval \( J\subset \RR \) and each
  $i=1,\dots,k$
\[
\lim_{n\to\infty}
\mu_i \Big(\Big\{x\in I: 
\frac{1}{\sqrt n}\sum_{j=0}^{n-1}\Big(\phi(f^{j}(x))-\int\phi d\mu_i
\Big)\in J \Big\} \Big)=
\frac{1}{\theta \sqrt{2\pi} }\int_{J} e^{-t^{2}/ 2\theta^{2}}dt.
\]
\item \cite{MelNicol05,gouezel2010} For each $i=1,\dots,k$
  let $\phi:I\to\RR$ be a H\"older observable so that
  $\mu_i(\phi)=0$. Then $\phi$ satisfies the Almost Sure
  Invariance Principle: there exist $\epsilon>0$, a sequence
  $S_N$ of random variables and a Brownian motion $W$ with
  variance $\sigma^2>0$ such that
  $\{\sum_{j=0}^{N-1}\phi\circ f^j\}=_d\{S_N\}$ and
  \begin{align*}
    S_N=W(N)+O(N^{\frac12-\epsilon})\quad\text{as}\quad
    N\to\infty\quad\mu_i-\text{almost everywhere}.
  \end{align*}
  \end{enumerate}
\end{maincorollary}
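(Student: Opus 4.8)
The plan is to check that the one-dimensional map $f$ of Theorem~\ref{mthm:expslowapprox} fulfils the standing hypotheses of each cited result and then to invoke those results directly; no genuinely new argument is needed, because the one delicate quantitative ingredient --- exponential decay in $n$ of the Lebesgue measure of the recurrence-deviation sets --- is exactly what Theorem~\ref{mthm:expslowapprox} provides.

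First I would record the two basic dynamical features of $f$. \emph{Non-uniform expansion}: since $\sigma_0:=\inf\{|f'x|:x\in I\setminus\cD\}>1$, we get $\frac1n\sum_{j=0}^{n-1}\log|f'(f^jx)|\ge\log\sigma_0>0$ for every $x$ whose forward orbit avoids $\cD$, so the lower Lyapunov exponent is uniformly bounded away from $0$. \emph{Slow recurrence to $\cD$}: for each small $\epsilon>0$ Theorem~\ref{mthm:expslowapprox} gives $\delta,\xi>0$ with $\limsup_{n\to\infty}\frac1n\log\lambda\{t\in I:\frac1n S_n\Delta_\delta(t)>\epsilon\}<-\xi$; exponential summability and Borel--Cantelli then upgrade this, along $\epsilon=1/k$, to $\limsup_{n\to\infty}\frac1n S_n\Delta_\delta(x)\le\epsilon$ for $\lambda$-a.e.\ $x$. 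Finally, the non-degeneracy/non-flatness of the singular--discontinuity set $\cD$ demanded by \cite{ABV00,alves-luzzatto-pinheiro2004,gouezel} follows from the power-law estimates \eqref{eq:der0} near points of $\cS$ together with the H\"older bound \eqref{eq:derD-S} near points of $\cD\setminus\cS$: these yield a uniform exponent $\beta>0$ with $|f'(x)|\le B\,d(x,\cD)^{-\beta}$ and the corresponding logarithmic H\"older distortion control on one-sided neighbourhoods of $\cD$, while off $\cD$ the branches are $C^{1+\alpha}$ with $|f'|\ge\sigma_0>1$.

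With these verified I would apply the quoted theorems in turn. Item (1): Theorem~\ref{thm:abv} (\cite{ABV00,Ze03}) produces finitely many ergodic absolutely continuous invariant probabilities $\mu_1,\dots,\mu_k$ whose basins cover $I$ mod $\lambda$, while \cite[Theorem~3]{alves-luzzatto-pinheiro2004} gives that a finite power of $f$ is mixing with respect to each $\mu_i$ (decompose $\mu_i$ into its finitely many exact components and pass to the least common multiple of the periods). For items (2)--(5) the crucial point is that the tail sets of the recurrence time $\cR$ and of the hyperbolic time $\cE$ obey the \emph{exponential} bound \eqref{eq:exptail}: $\lambda\{\cR>n\}$ decays exponentially straight from \eqref{eq.expslowrecurrence}, and $\lambda\{\cE>n\}$ is trivially exponentially small (indeed eventually zero) because $f$ is \emph{uniformly} expanding. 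Feeding \eqref{eq:exptail} into the construction of \cite{gouezel} yields, over each ergodic component, a base interval $Y_i$ with a return time $R_i:Y_i\to\ZZ^+$ defining a Gibbs--Markov--Young tower with $\limsup_{n\to\infty}\frac1n\log\mu_i\{R_i>n\}<0$ (item (2)); exponential decay of correlations for H\"older observables (item (3)) is then Young's tower estimate \cite{Yo98} as used in \cite{alves-luzzatto-pinheiro2004} and \cite[Theorem~1.1]{gouezel}; the Central Limit Theorem (item (4)) is \cite[Theorem~4]{alves-luzzatto-pinheiro2004}, the strict positivity $\theta^2>0$ of the variance being precisely the non-coboundary hypothesis; and the Almost Sure Invariance Principle (item (5)) follows from \cite{MelNicol05,gouezel2010} applied to the same exponential-tail tower.

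The main obstacle --- and essentially the only place care is genuinely required --- is checking that the simultaneous presence of singularities (points of $\cS$, where $|f'|$ is unbounded) and of discontinuities with bounded derivative (points of $\cD\setminus\cS$) still fits the non-degenerate-singular-set frameworks of \cite{ABV00,alves-luzzatto-pinheiro2004,gouezel}, including their regularity assumptions ($C^{1+\alpha}$ in place of $C^2$), and that the hyperbolic-time/recurrence-time tail function appearing there may be taken to be $\max\{\cR,\cE\}$. Because expansion here is uniform the expansion side is trivial, so the whole weight of the argument rests on Theorem~\ref{mthm:expslowapprox}, and everything else is a direct citation. As a bookkeeping remark, throughout the five items $M$ is to be read as $I$, and in (3)--(5) "observable" means a function on $I$ that is $\alpha'$-H\"older continuous for some $\alpha'>0$.
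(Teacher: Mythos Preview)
Your proposal is correct and follows essentially the same route as the paper: the paper's ``proof'' of this corollary is precisely the discussion immediately preceding its statement, which verifies that $f$ is (uniformly, hence non-uniformly) expanding, that exponential slow recurrence from Theorem~\ref{mthm:expslowapprox} yields slow recurrence and the exponential tail bound~\eqref{eq:exptail}, and that the singular set $\cD$ is non-degenerate via~\eqref{eq:der0} and~\eqref{eq:derD-S}; each item is then a direct citation. Your write-up is in fact slightly more explicit than the paper's (e.g.\ spelling out the Borel--Cantelli step and flagging the $C^{1+\alpha}$ versus $C^2$ regularity issue), but the substance is identical.
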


\begin{remark}\label{rmk:LIL, FCLT}
  The Almost Sure Invariance Principle implies the Central
  Limit Theorem and also the functional CLT (weak invariance
  principle), and the law of the iterated logarithm together
  with its functional version, and many other results; see e.g.
  \cite{PhilippStout75} for a comprehensive list.
\end{remark}

\subsubsection{Possible extensions and conjectures}
\label{sec:possible-general}

A natural issue is whether is it possible to remove the
assumption that $\cS$ is nonempty or to relax the assumption
that there are only finitely many discontinuity points all
of which are sent to singular points in a uniformly bounded
number of iterates.

\subsubsection*{Example with countable infinite $\cD$}
\label{sec:example-with-countab}

An example of a transformation with infinitely many
monotonicity domains and $\cS$ a single point, satisfying
the conditions of Theorem~\ref{mthm:expslowapprox} except
that $\cD$ is countably infinite, is given by a
topologically exact Lorenz transformation in the interval
$[f(0^+),f(0^-)]$ strictly contained in $J=[-1/2,1/2]$ whose
graph we complete as a function $J\to J$ with affine pieces
between points having the same values of $f$ at some element
of the preorbits of the unique singularity at $0$; see
Figure~\ref{fig:Lorinfty}.

\begin{figure}[h]
\centering
\includegraphics[scale=0.5]{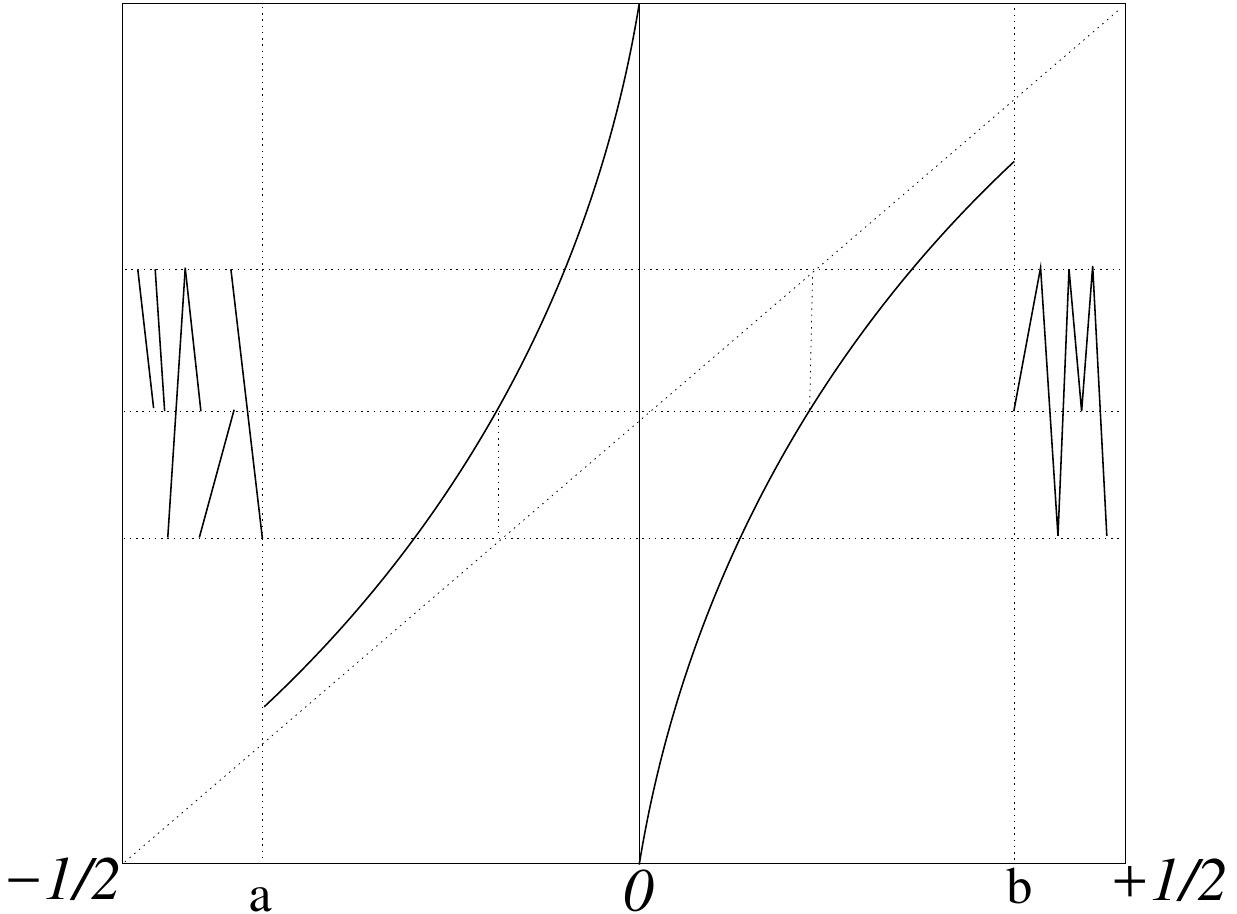}
\caption{\label{fig:Lorinfty}Example of a piecewise expading
  maps with infinitely many branches in the setting of
  Theorem~\ref{mthm:expslowapprox} but with an infinite
  $\cD$.}
\end{figure}

We can perform this extension in a way that
\begin{itemize}
\item the slope of the affine branches be larger than $2$;
\item the monotonicity domains form a denumerable partition
  of $J$;
\item  the singularity at $0$ is a Lorenz-like singularity
  which together with the discontinuity points for a
  non-degenerate singular set;
\item every discontinuity point of the map is sent to $0$ in
  finitely many iterates and the orbit of the
  discontinuities up to arriving at $0$ forms a finite
  subset.
\end{itemize}

\begin{conjecture}
  \label{conj:infinitesing}
  Exponential slow recurrence to the singular/discontinuous
  set still holds in the setting of
  Theorem~\ref{mthm:expslowapprox} with $\cS=\emptyset$ and
  countably many discontinuities.
\end{conjecture}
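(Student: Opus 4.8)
\textbf{Proof proposal for Conjecture~\ref{conj:infinitesing}.}

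The plan is to reduce the $\cS=\emptyset$ situation to the mechanism that drives the proof of Theorem~\ref{mthm:expslowapprox}, by exploiting the extra geometric structure that forces the discontinuity set $\cD$ to be organized around the (now absent) singularity. Concretely, in the example of Figure~\ref{fig:Lorinfty} every discontinuity point is a preimage (under $f$) of the point $0$, and the values $f(c^\pm)$ accumulate only at $f(0^+)$ and $f(0^-)$. The replacement for the "discontinuities visit singularities" hypothesis should read: there is $T_0\in\ZZ^+$ and, for each $c\in\cD$, some $T=T(c)\le T_0$ with $f^T(c)$ landing on a single distinguished discontinuity point $c_\ast$ (playing the role of the removed singularity), and the images $f^j(c)$, $0\le j<T$, stay in $\cD$, with $f|_{\Delta(c,\delta)}$ a diffeomorphism onto $\Delta(f^T(c),\epsilon)$. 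Under this hypothesis the whole "bad set'' near $\cD$ is, up to a uniformly bounded number of iterates and bounded distortion, the pull-back of a single one-sided neighborhood of $c_\ast$. First I would make this reduction precise: a point $t$ with $d(f^j t,\cD)<\delta$ must have $f^{j+m}t$ within $C\delta$ of $c_\ast$ for some $0\le m\le T_0$, so that $\Delta_\delta(f^j t)\le \Delta_{\delta'}(f^{j+m}t)+\const$ along the orbit; summing, $S_n\Delta_\delta(t)\le (T_0+1)\,S_{n+T_0}\Delta'_{\delta'}(t)+\const\cdot n$ where $\Delta'_{\delta'}$ is the truncated distance to the \emph{single} point $c_\ast$. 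Hence it suffices to prove exponentially slow recurrence to the one point $c_\ast$.

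Next I would run the Benedicks--Carleson-style partition argument of Section~\ref{sec:exponent-slow-recurr-1} for recurrence to the single point $c_\ast$, but \emph{without} the distortion difficulty that motivated the non-exponential initial partition: since $\cS=\emptyset$, the derivative $f'$ is bounded (indeed, the branches are affine in the model, and in general piecewise $C^{1+\alpha}$ with bounded derivative), so a genuinely exponential initial partition around $c_\ast$ suffices and bounded distortion is immediate from the $C^{1+\alpha}$ hypothesis plus uniform expansion $\inf|f'|>1$. The combinatorial core is unchanged: one partitions $I$ according to the depth of the closest approach to $c_\ast$ in the first $n$ iterates, refines, and controls the measure of the atoms that spend a large fraction of time near $c_\ast$ by the usual estimate — each return to depth $\sim k$ costs a factor $e^{-\gamma k}$ in relative measure while the branch expansion recovers measure at a uniform rate; a large-deviations (Chernoff) bound on the number-of-deep-returns statistic then yields the exponential bound \eqref{eq.expslowrecurrence}. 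The denumerability of $\cD$ enters only through the need to control, at each step, the contribution of atoms that land near \emph{some} discontinuity rather than near a fixed finite list; this is handled by the reduction above, which collapses all of $\cD$ onto $c_\ast$ after $\le T_0$ iterates, so the number of "symbols'' one must track stays finite.

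The main obstacle — and the reason this is stated as a conjecture rather than a theorem — is precisely the control of the transfer near the infinitely many discontinuities \emph{before} the reduction applies. Unlike in Theorem~\ref{mthm:expslowapprox}, where $\cS$ provides genuine expansion and distortion control concentrated at finitely many points, here one must ensure that the accumulation of the $f(c^\pm)$ at $f(c_\ast^\pm)$ does not create arbitrarily short monotonicity intervals whose images fail to cover a definite portion of $I$; without a lower bound on the lengths of the branches (or equivalently a Markov-like structure for the pull-backs of neighborhoods of $c_\ast$), the distortion constants and the "recovery of measure by expansion'' step could degenerate as one approaches $c_\ast$. In the explicit model of Figure~\ref{fig:Lorinfty} this is circumvented because the affine completion is chosen so that branches accumulating at $c_\ast$ still have slope $>2$ and their images are full or nearly full intervals; the honest theorem would require axiomatizing exactly this — e.g., a "bounded geometry near $c_\ast$'' condition stating that the nested partition elements around $c_\ast$ have comparable consecutive lengths and map onto intervals of definite size. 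Establishing such a condition for a general map with countably many branches, purely from the $C^{1+\alpha}$ and uniform-expansion hypotheses of Theorem~\ref{mthm:expslowapprox}, is the crux of the difficulty, and is why I would expect the proof to go through for the model (and for any map satisfying an explicit bounded-geometry-near-$c_\ast$ assumption) but to remain open in full generality.
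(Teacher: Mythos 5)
There is nothing in the paper to compare your argument against: the statement you are addressing is Conjecture~\ref{conj:infinitesing}, which the authors explicitly leave open (Subsection~\ref{sec:possible-general} only exhibits the motivating example of Figure~\ref{fig:Lorinfty}, and in that example $\cS$ is a single point, not empty). So the only question is whether your proposal closes the gap, and it does not --- as you yourself concede in your final paragraph. Two concrete problems. First, your reduction rests on an added hypothesis that is not part of the conjecture: that every $c\in\cD$ reaches one distinguished point $c_\ast$ in a uniformly bounded number of iterates, with the intermediate images staying in $\cD$ and with diffeomorphic transfer of one-sided neighborhoods. The conjecture as stated assumes nothing about the forward orbits of the discontinuities once $\cS=\emptyset$, so at best you would be proving a different (conditional) statement, essentially axiomatizing the Figure~\ref{fig:Lorinfty} construction.

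Second, and more seriously, the claim that with bounded derivative ``the combinatorial core is unchanged'' and an exponential initial partition plus bounded distortion suffices is not justified by the paper's mechanism. In the proof of Theorem~\ref{mthm:expslowapprox}, the measure loss at a deep return is compensated by the derivative blow-up near $\cS$: the estimates \eqref{eq:nondegexp}, \eqref{eq:expoente} and \eqref{eq:bndconection} give an expansion factor of order $|M(c,p_{m-1})|^{\beta_2-1}$ after a return to depth $p_{m-1}$, and this is exactly what produces the quotient $|M(c_m,p_m)|^{\beta_3}/|M(c_{m-1},p_{m-1})|^{\beta_2}$ in \eqref{eq:compareV} of Lemma~\ref{le:sizedepth}, the summability of $S(\rho,\xi)$, and hence the Chernoff bound of Lemma~\ref{le:expected}. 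If $\cS=\emptyset$ and $f'$ is bounded, a return to depth $p$ near $c_\ast$ still costs a factor comparable to the length of the target atom, but the only expansion recovered afterwards is the uniform rate $\sigma$ per iterate; you give no replacement estimate showing that the resulting series over deep-return depths is summable with an exponentially small total weight, and this is precisely the step your ``bounded geometry near $c_\ast$'' caveat sweeps under the rug. In addition, with $\cD$ countably infinite the function $\Delta_\delta$ need not even be Lebesgue-integrable (gaps between consecutive discontinuities can shrink arbitrarily fast, and the analogue of (P7)/Remark~\ref{rmk:lebngbhdD} fails without further hypotheses), so already the formulation of exponentially slow recurrence requires geometric conditions your sketch does not supply. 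In short: the conjecture remains open, and your proposal identifies a plausible strategy and the right obstruction, but does not prove the statement.
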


Extensions of Theorem~\ref{mthm:LD-sing-hyp-attracting} to
the class of sectional-hyperbolic attracting sets for flows
in higher dimensions, with dimension of the central direction
higher than two, introduced by Metzger-Morales in
\cite{MeMor06}, seem to involve subtle questions on the
smoothness of the stable foliation of these sets which, on
the one hand, might prevent the existence of a smooth
quotient map of the Poincar\'e return map over the stable
foliation in a natural way and, on the other hand, the
dynamics of higher dimensional piecewise expanding maps is
not so well understood as its one-dimensional couterpart,
where the boundaries of the domains of smoothness have low
complexity.

\begin{conjecture}
  \label{conj:sectionalhyp}
  Large deviations with respect to Lebesgue measure versus
  physical measures, for continuous observables on a
  neighborhood of general sectional-hyperbolic attracting
  sets for $C^2$ flows have exponential upper bound.
\end{conjecture}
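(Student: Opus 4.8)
The plan is to carry the strategy behind Theorem~\ref{mthm:LD-sing-hyp-attracting} over to the sectional-hyperbolic setting of \cite{MeMor06}. First I would build, exactly as in Section~\ref{sec:general-setting}, a finite family of adapted cross-sections transverse to $X$ together with a global Poincar\'e map $R(x)=X^{\tau(x)}(x)$ on $\Xi_0=\Xi\setminus\Gamma$ whose return time $\tau$ blows up like $|\log\dist(\cdot,\Gamma)|$ near the stable manifolds of the singularities inside the attracting set; since the cross-section construction in the present paper does not use transitivity, this part should go through verbatim. The attracting set is then modelled, up to a finite suspension, by the semiflow over $R$ with roof $\tau$, and the large deviations problem for $X^t$ on a neighbourhood of $\Lambda$ reduces to a large deviations problem for $R$ as in Section~\ref{sec:reduct-large-deviat}: one passes from deviations of the Birkhoff averages $\frac1T\int_0^T\psi\circ X^t$ to deviations of ergodic sums over $R$ of an induced observable, the contribution of long excursions near $\Gamma$ being controlled by the integrability of $\tau$ against the physical measures.

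The next step is to put $R$ in skew-product form $F(x,y)=(f(x),g(x,y))$, where now $f:I\setminus\cD\to I$ is a \emph{higher-dimensional} piecewise $C^{1+\alpha}$ expanding map, $I\subset\RR^{k-1}$ with $k=\dim E^{cu}\ge2$, the singular/discontinuity set $\cD$ is a finite union of $C^{1+}$ hypersurfaces, and $g$ contracts uniformly in $y$. This requires quotienting the Poincar\'e return map along the strong-stable foliation of $\Lambda$, and here lies the first genuine obstruction noted above: for $k\ge3$ the strong-stable foliation of a sectional-hyperbolic set need not be $C^1$, nor have regular enough holonomies, so the quotient map $f$ may fail to be smooth on its pieces. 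I would first treat the sub-class of attracting sets whose strong-stable foliation is $C^{1+}$ (already strictly larger than the geometric Lorenz family), and otherwise work, as in \cite{APPV}, directly with the Poincar\'e map restricted to a family of admissible central curves/disks and prove bounded distortion of $f^n$ along them without an honest quotient.

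Granting a skew-product model, the heart of the argument is a higher-dimensional analogue of Theorem~\ref{mthm:expslowapprox}: exponentially slow recurrence of $f$ to $\cD$. I would keep the two innovations used here, namely (i) an initial partition whose atoms have diameter comparable to a \emph{power} of the distance to $\cD$, restoring bounded distortion in the one-sided tubular neighbourhoods of the singular hypersurfaces where $|Df|$ is unbounded and not H\"older, and (ii) the Benedicks--Carleson-type refinement, estimating the Lebesgue measure of $\{\frac1n S_n\Delta_\delta>\epsilon\}$ by counting itineraries with controlled total closeness to $\cD$. The step I expect to be the main obstacle is the interaction between this distortion/partition scheme and the \emph{combinatorial complexity} of $f$: in dimension $\ge2$ the hypersurfaces in $\cD$ are cut and recombined by the branches of $f$, so the number of smoothness domains of $f^n$ may grow faster than geometrically and the partition bookkeeping of \cite{BC85,BC91,MV93,PRV98,LV00} does not transfer directly; one needs a quantitative bound on the topological complexity of the orbit $\bigcup_{j\ge0}f^j(\cD)$ (in the spirit of the complexity conditions used in the theory of multidimensional piecewise expanding maps), and reconciling such a bound with the strong interaction ``discontinuities visit singularities'' is precisely where the one-dimensional proof of Theorem~\ref{mthm:expslowapprox} breaks down.

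Finally, once exponentially slow recurrence holds, the uniform expansion of $f$ yields hyperbolic times with exponential tails as in \eqref{eq:exptail}, hence by the multidimensional non-uniformly expanding machinery of \cite{ABV00,Ze03} finitely many physical/SRB measures and an associated Gibbs--Markov tower; feeding the exponential tail estimate into the large-deviations argument of Section~\ref{sec:reduct-large-deviat}, which is insensitive to the dimension of the base once the tower and bounded distortion are available, produces the upper exponential bound $\limsup_{T\to\infty}\frac1T\log\leb\{z:\inf_{\mu\in\EE}|\frac1T\int_0^T\psi(X^t(z))\,dt-\mu(\psi)|>\epsilon\}<0$ asserted by the conjecture. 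In summary, the reduction (cross-sections, suspension, passage to the base map) and the concluding large-deviations estimate are routine extensions of what is done in this paper; essentially all the difficulty is concentrated in producing a sufficiently regular quotient map and, above all, in the complexity-versus-distortion analysis of a higher-dimensional piecewise expanding map carrying both singularities and discontinuities.
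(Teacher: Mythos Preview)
The statement you are addressing is not a theorem in the paper but an open \emph{conjecture}: the paper provides no proof of it. What the paper does offer is a short discussion (in the subsection on possible extensions) of why the singular-hyperbolic argument does not transfer directly, naming exactly the two obstructions you isolate: the smoothness of the strong-stable foliation in the sectional-hyperbolic setting, which may prevent the existence of a regular quotient map of the Poincar\'e return map; and the lack of an adequate theory for higher-dimensional piecewise expanding maps, where the boundaries of smoothness domains have high combinatorial complexity.

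Your proposal is therefore not a proof but a strategy sketch, and as such it is well aligned with the paper's own diagnosis. You correctly identify that the cross-section construction, the suspension reduction, and the final large-deviations estimate (Section~\ref{sec:reduct-large-deviat} and Theorem~\ref{thm:LDNUElog}) are dimension-insensitive once a skew-product model with bounded distortion and exponentially slow recurrence is in hand, and you correctly locate the essential difficulty in (i) obtaining a $C^{1+}$ quotient map when the stable holonomy need not be $C^1$, and (ii) proving a higher-dimensional analogue of Theorem~\ref{mthm:expslowapprox}, where the Benedicks--Carleson bookkeeping must cope with the growth of the number of smoothness domains of $f^n$. These are precisely the points the paper flags as open; neither you nor the paper resolves them. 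In short: there is no proof in the paper to compare against, and your outline is a faithful statement of what would need to be done and where it is expected to break down.
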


Another issue is regularity: what can we say about large
deviations for singular-hyperbolic attracting sets of $C^1$
flows?

\begin{conjecture}\label{conj:c1smooth}
  The statements of
  Theorem~\ref{mthm:LD-sing-hyp-attracting} and
  Conjecture~\ref{conj:sectionalhyp} are still valid for
  $C^1$ flows.
\end{conjecture}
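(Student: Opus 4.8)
The plan is to follow the same two–step reduction used for Theorem~\ref{mthm:LD-sing-hyp-attracting}, isolating precisely the steps that are sensitive to the regularity of $X$. First I would check that the geometric part of the construction of Section~\ref{sec:general-setting} — the adapted cross‑sections, the global Poincar\'e map $R:\Xi_0\to\Xi$, and its skew‑product form in Theorem~\ref{thm:propert-singhyp-attractor} — survives in the $C^1$ category: partial hyperbolicity, the dominated splitting \eqref{eq.domination} and the volume‑expanding condition on $E^{cu}$ are $C^1$‑open properties; the invariant cone‑field and graph‑transform arguments producing the cross‑sections and the stable foliation of $R$ require only $C^1$ smoothness of the flow; and the Hyperbolic Lemma (Theorem~\ref{thm:hyplemma}), Remark~\ref{rmk:no-recur-non-Lorenz} and the convex‑hull description in Theorem~\ref{thm:appv-attracting} are either purely topological or use only the $C^0$ volume‑expansion. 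Thus the genuine question is confined to the regularity of the one‑dimensional quotient map $f$ obtained from $R$ along the stable foliation, and to whether the analogue of Theorem~\ref{mthm:expslowapprox} holds for such an $f$.

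Second I would analyse the regularity of $f$. The stable foliation of $R$ is still a continuous foliation with uniformly contracting leaves, and its holonomies are H\"older with an exponent determined by the domination/bunching constants of $\Lambda$; this much is regularity‑robust. The expansion $\inf|f'|>1$ again comes from the volume‑expanding condition, a $C^0$ statement about $DX^t\mid E^{cu}$. The difference from the $C^2$ case is the modulus of continuity of $f'$ on each monotonicity branch: it is controlled by the H\"older exponent of the stable holonomy \emph{together with} the modulus of continuity of $x\mapsto DX^{\tau(x)}\mid E^{cu}_x$, which for a merely $C^1$ flow is only uniform continuity. Hence $f$ is piecewise $C^1$ and uniformly expanding but $f'$ need not be H\"older on the branches, and near a Lorenz‑like singularity one cannot expect the sharp power‑law behaviour \eqref{eq:der0}, since $C^1$ vector fields are in general only $C^0$‑linearizable near hyperbolic equilibria (Hartman--Grobman).

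Third I would revisit the proof of Theorem~\ref{mthm:expslowapprox} to see how much regularity it actually consumes. The key innovation there was to replace exponential initial partitions by atoms whose length is a suitable power of the distance to $\cD$, precisely to absorb the non‑H\"older blow‑up of $f'$ near $\cS$ in the bounded‑distortion estimate. Carrying this over to a piecewise $C^1$ map $f$ with derivative modulus $\omega$ forces the distortion sum $\sum_j \omega\big(\diam(\text{pullback atom at step }j)\big)$ to converge along typical orbits; this is automatic when $\omega$ is a Dini modulus but fails for a generic $C^1$ map. I therefore expect the honest target to be: prove Conjecture~\ref{conj:c1smooth} for $C^1$ flows whose derivative is Dini‑continuous — equivalently, whose Poincar\'e quotient has $\log f'$ of bounded variation on the branches and which are $C^1$‑linearizable near the Lorenz‑like equilibria, recovering \eqref{eq:der0} there — a class strictly larger than the $C^{1+\alpha}$ one covered by Theorems~\ref{mthm:LD-sing-hyp-attracting}--\ref{mthm:expslowapprox}. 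Under such a hypothesis the combinatorial refinement of Theorem~\ref{mthm:expslowapprox}, the reduction of Section~\ref{sec:reduct-large-deviat}, and the deduction of \eqref{eq.expslowrecurrence} and of item~(2) of Theorem~\ref{mthm:LD-sing-hyp-attracting} go through with only cosmetic changes, replacing ``H\"older'' everywhere by ``Dini''.

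The main obstacle is the fully $C^1$ case with no modulus‑of‑continuity assumption. Then one has no a priori control on the quotient map's derivative, and without it even item~(1) of Theorem~\ref{mthm:LD-sing-hyp-attracting} — existence and finiteness of the physical/SRB measures and the identification of $\EE$ as their convex hull — is open, since $C^1$ (non‑uniformly) hyperbolic systems may fail to possess SRB measures (Bowen‑type examples), so there may be nothing to deviate from. Consequently I expect the general $C^1$ statement to require a route entirely different from the distortion‑based Thermodynamic Formalism used here — for instance large deviations for the finite‑time central‑Jacobian cocycle in the spirit of Kifer, or a weak‑Gibbs/pressure‑function argument — and I regard it as genuinely beyond the present techniques; the Dini‑regular case above is the realistic next step.
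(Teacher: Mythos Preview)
This statement is a \emph{conjecture} in the paper, not a theorem: it appears in Subsection~\ref{sec:possible-general} (``Possible extensions and conjectures'') alongside Conjectures~\ref{conj:infinitesing} and~\ref{conj:sectionalhyp}, and the paper offers no proof of it whatsoever. There is therefore nothing in the paper to compare your proposal against.

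What you have written is not a proof but a research outline --- and a reasonable one --- identifying which steps of the $C^2$ argument are regularity-sensitive. Your analysis is sound as far as it goes: the geometric construction of adapted cross-sections and the global Poincar\'e map are indeed largely $C^1$-robust, and you are right that the genuine obstruction lies in the regularity of the one-dimensional quotient $f$ and the bounded-distortion Lemma~\ref{le:bddist}, which in the paper uses \eqref{eq:der0} and the $C^{1+\alpha}$ hypothesis in an essential way. Your observation that $C^1$ flows need not be $C^1$-linearizable near hyperbolic equilibria, so that the power-law \eqref{eq:der0} may fail, is exactly the kind of obstacle the authors presumably have in mind. However, you should be clear that even your proposed ``Dini-regular'' intermediate case is not a complete proof: you have not verified that the absolute continuity of the stable holonomy (Theorem~\ref{thm:H}), which the paper obtains via Pesin theory requiring $C^{1+\alpha}$, survives under a mere Dini hypothesis, nor that the existence of SRB measures (Theorem~\ref{thm:appv-attracting}, relying on \cite{APPV,LY85}) goes through. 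These are not cosmetic issues.

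In short: the paper does not prove this statement, and neither does your proposal. What you have is a plausible roadmap for a partial result, with the full $C^1$ case correctly flagged as open.
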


Using the existence of Markov towers with exponential tails
for the one-dimensional map as in Corollary~\ref{thm:alp} it
is natural to search for statistical properties for the
flows in the setting of
Theorem~\ref{mthm:LD-sing-hyp-attracting} along the lines of
\cite{AMV15,ArMel16,ArMel17}. This will be done in a
systematic way in \cite{ArMel18}.


\subsection{Organization of the text}
\label{sec:comments-organiz-tex}

The proof of Theorem~\ref{mthm:LD-sing-hyp-attracting}
demanded the extension of the construction of adapted
cross-sections used in \cite{APPV} for singular-hyperbolic
\emph{attractors (i.e. transitive attracting sets)} since the
existence of a dense forward orbit inside the attracting set
was crucial to find Poincar\'e sections whose boundaries
are contained in stable
manifolds of some singularity of the attracting
set.
Moreover, since we are not assuming the existence of a dense
regular orbit, we need to consider the possible existence of
singularities in the attracting set which are not
Lorenz-like.

This construction, without the transitivity assumption, is
presented in Section \ref{sec:general-setting} were a global
Poincar\'e map is built and
Theorem~\ref{thm:propert-singhyp-attractor} on the
representation of this map as a skew-product over a
one-dimensional transformation is proved.

The proof of item (1) of
Theorem~\ref{mthm:LD-sing-hyp-attracting} follows from
Theorem~\ref{thm:propert-singhyp-attractor}, whose proof is
presented in Subsectionc~\ref{sec:global-poincare-retu},
together with Theorem~\ref{thm:appv-attracting} presented in
Subsection~\ref{sec:coroll-extens}.  The deduction of item
(2) of Theorem~\ref{mthm:LD-sing-hyp-attracting} from the
reduction to a one-dimensional transformation in the setting
of Theorem~\ref{mthm:expslowapprox}, following the route in
\cite{araujo-pacifico2006,araujo2006a}, is presented in
Section \ref{sec:reduct-large-deviat} assuming the statement
of Theorem~\ref{mthm:expslowapprox}. Then
Theorem~\ref{mthm:expslowapprox} together with
Corollary~\ref{mcor:expmapsholes} are proved in Section
\ref{sec:exponent-slow-recurr-1}.

\subsection*{Acknowledgments}
\label{sec:acknowledgments}

We thank the referee for the careful reading of the
manuscript and the many detailed questions which greatly
helped to improve the statements of the results and the
readability of the text.

This is the PhD thesis of A. Souza and part of the PhD
thesis work of E. Trindade at the Instituto de Matem\'atica
e Estat\'{\i}stica-Universidade Federal da Bahia (UFBA,
Salvador) under a CAPES (Brazil) scholarship. Both thank the
Mathematics and Statistics Institute at UFBA for the use of
its facilities and the financial support from CAPES during
their M.Sc. and Ph.D. studies.


\section{Existence of adapted cross-sections and
  construction of global Poincar\'e map}
\label{sec:general-setting}

We let $X\in{\fX}^2(M)$ admit an singular-hyperbolic
attracting set $\Lambda=\bigcap_{t>0}\overline{X^t(U_0)}$
for some open neighborhood $U_0$, with $DX^t$-invariant
splitting $T_\Lambda M=E^{s}\oplus E^{cu}$, $d_s=\dim E^s=d-2$
and $d_{cu}=\dim E^{cu}=2$.

\subsection{Properties of singular-hyperbolic attracting
  sets}
\label{sec:propert-partially-hy}

We extend the stable direction on $\Lambda$ to a
$DX^t$-invariant stable bundle over $U_0$ and then integrate
these directions into a topological foliation of $U_0$ which
admits H{\"o}lder-$C^1$ holonomies, combining the following
results.

\begin{proposition} \label{pr:Es} Let $\Lambda$ be a
  partially hyperbolic attracting set.  The stable bundle
  $E^s$ over $\Lambda$ extends to a continuous uniformly
  contracting $DX^t$-invariant bundle $E^s$ over an open
  neighborhood of $\Lambda$.
\end{proposition}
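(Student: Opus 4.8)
The plan is to use the standard cone-field / graph transform argument to extend the stable bundle, exploiting that $\Lambda$ is an attracting set (so we have a trapping region) and that uniform contraction of $E^s$ is an open condition. First I would fix a continuous extension of the splitting $T_\Lambda M = E^s \oplus E^{cu}$ to a small neighborhood: since $E^s$ and $E^{cu}$ are continuous sub-bundles over the compact set $\Lambda$, there is a continuous (not necessarily $DX^t$-invariant) splitting $\widetilde E^s \oplus \widetilde E^{cu}$ defined on some neighborhood $V_0 \supset \Lambda$ agreeing with $E^s \oplus E^{cu}$ on $\Lambda$, obtained by choosing a smooth approximation of the projections. Shrinking $V_0$ if necessary and using uniform contraction $\|DX^t \mid E^s_x\| < c\lambda^t$ on $\Lambda$ together with the domination \eqref{eq.domination}, one gets a narrow continuous cone field $C^s_a(x)$ of width $a>0$ around $\widetilde E^s_x$, defined on a trapping neighborhood $V \subset V_0$ with $X^t(V) \subset V$ for all $t>0$, which is \emph{negatively invariant}: $DX^{-t}(C^s_a(X^t(x))) \subset C^s_{a}(x)$ for $t>0$, with uniform contraction of vectors inside these cones under $DX^{-t}$ — or, dually, $DX^t$ uniformly contracts the complementary center-unstable cone field under the forward flow. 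Here I use that for points in $V$ the forward orbit stays in $V$, so the cone estimates, which hold by continuity in a neighborhood of $\Lambda$, propagate along all of the forward orbit.

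Next I would define, for each $x$ in the trapping region, the candidate stable space as the intersection of the nested backward images of the stable cone along the forward orbit:
\[
E^s_x = \bigcap_{t\ge 0} DX^{-t}\big(C^s_a(X^t(x))\big).
\]
The key steps are: (i) this intersection is a single $d_s$-dimensional subspace, which follows from the uniform contraction/expansion rates making the diameters of the nested cones $DX^{-t}(C^s_a(X^t(x)))$ (measured as subsets of the Grassmannian) shrink geometrically — this is the usual contraction-mapping argument on the space of $d_s$-planes graphed over $\widetilde E^s_x$ inside the cone; (ii) $DX^t$-invariance, i.e. $DX^t E^s_x = E^s_{X^t(x)}$, which is immediate from the definition since the trapping property $X^t(V)\subset V$ makes the forward orbit of $X^t(x)$ a tail of the forward orbit of $x$; (iii) uniform contraction $\|DX^t \mid E^s_x\| < c'\lambda'^t$ on $V$, which follows from the uniform contraction of vectors inside the negatively invariant cone field (possibly after adjusting the constants $c',\lambda'$ and shrinking $V$); and (iv) continuity of $x \mapsto E^s_x$, which follows from the locally uniform convergence of the nested intersections — the $t$-th approximant is continuous and the convergence is geometric and uniform on compact subsets of $V$, so the limit is continuous. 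Finally, restricting to the open neighborhood $U_0$ (or its intersection with $V$) gives the desired bundle; on $\Lambda$ it agrees with the original $E^s$ by uniqueness of the invariant contracting direction inside the cone.

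The main obstacle, and the point needing the most care, is step (i)–(iii): verifying that the cone estimates valid on $\Lambda$ truly survive on a full forward-invariant neighborhood with uniform (not just pointwise or eventually uniform) constants. This is where the attracting-set hypothesis is essential — without a trapping region the forward orbit of a point near $\Lambda$ could leave the neighborhood where the cone inequalities hold. One must check that the constant $c$ in \eqref{eq.domination} (which may be $\ge 1$) does not obstruct the argument: as usual one passes to $DX^{t_0}$ for $t_0$ large enough that $c\lambda^{t_0} < 1$, obtains a genuinely invariant cone field and genuine contraction for the time-$t_0$ map on a neighborhood, and then recovers the statement for all $t$ by the (compactly bounded) cocycle over one time step. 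Alternatively, and perhaps more cleanly, one may simply invoke the general persistence theorem for dominated splittings / normally contracting invariant bundles over forward-invariant sets (e.g.\ from \cite{HPS77} or \cite{BDV}), whose hypotheses are exactly the uniform contraction of $E^s$ and the domination \eqref{eq.domination}; I would state the argument in the self-contained cone form above but remark that it is an instance of that general principle.
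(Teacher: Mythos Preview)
Your cone-field/graph-transform sketch is correct and is exactly the standard mechanism behind this extension result. The paper, however, does not give an argument at all: its proof consists entirely of a citation to \cite[Proposition~3.2 and Remark~3.3]{ArMel17}, noting only that transitivity is not used there. So rather than taking a different route, you have supplied the underlying proof that the paper outsources; the argument in \cite{ArMel17} is precisely of the type you describe (invariant cone fields on a trapping neighborhood, contraction on the Grassmannian, passage to a large time-$t_0$ map to absorb the constant $c$), so your proposal should be regarded as a faithful expansion of the cited reference rather than an alternative approach.
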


\begin{proof} See~\cite[Proposition~3.2]{ArMel17} and
  note~\cite[Remark~3.3]{ArMel17} ensuring that the
  existence of this extension does not depend on
  transitivity.
\end{proof}

We assume without loss of generality that $E^s$ extends as
in Proposition~\ref{pr:Es} to $U_0$.

Recall that $B^k$ is the $k$-dimensional open unit disk
endowed with the Euclidean distance induced by the Euclidean
norm $\|\cdot\|_2$, and let $\mathrm{Emb}^2(B^k,M)$ denote
the set of $C^2$ embeddings $\phi:B^k\to M$ endowed with
the $C^2$ distance.

\begin{proposition}\label{pr:Ws}
  Let $\Lambda$ be a partially hyperbolic attracting set.
  There exists a positively invariant neighborhood $U_0$ of
  $\Lambda$, and constants $\lambda\in(0,1), c>0$, such that
  the following are true:
  \begin{enumerate}
  \item For every point $x \in U_0$ there is a $C^2$
    embedded $d_s$-dimensional disk $W^s_x\subset M$, with
    $x\in W^s_x$, such that
\begin{enumerate}
\item $T_xW^s_x=E^s_x$.
\item $X^t(W^s_x)\subset W^s_{X^t x}$ for all $t\ge0$.
\item $d(X^t x,X^t y)\le c\lambda^t d(x,y)$ for all
  $y\in W^s_x$, $t\ge0$.
\end{enumerate}

\item The disks $W^s_x$ depend continuously on $x$ in the
  $C^0$ topology: there is a continuous map
  $\gamma:U_0\to {\rm Emb}^0(B^{d_s},M)$ such that
  $\gamma(x)(0)=x$ and $\gamma(x)(B^{d_s})=W^s_x$.
  Moreover, there exists $L>0$ such that
  $\Lip\gamma(x)\le L$ for all $x\in U_0$.
\item The family of disks $\{W^s_x:x\in U_0\}$ defines a
  topological foliation of $U_0$.
  \end{enumerate}
\end{proposition}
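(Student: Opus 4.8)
The plan is to obtain the strong stable disks by integrating the extended bundle $E^s$ from Proposition~\ref{pr:Es}, using the standard invariant-section/graph-transform machinery of invariant manifold theory, and then to upgrade the resulting abstract family to a continuous family of embedded disks forming a topological foliation. First I would exploit the fact that $E^s$ is a continuous, uniformly contracting, $DX^t$-invariant subbundle over the positively invariant neighborhood $U_0$ (shrinking $U_0$ from Proposition~\ref{pr:Es} if necessary so that contraction rates are uniform, say $\|DX^t\mid E^s_x\|\le c\lambda^t$ for all $x\in U_0$, $t\ge0$). Since $E^s$ has codimension two and the flow direction $X$ together with a complementary direction span a continuous transverse complement, I would work in a tubular neighborhood of each orbit, straightening $E^s$ to a horizontal subspace by a continuous (fiberwise linear) change of coordinates, and set up the graph transform for the time-$t$ maps $X^t$ acting on the space of candidate disks tangent to $E^s$. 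The uniform contraction along $E^s$ together with the domination $\|DX^t\mid E^s_x\|\cdot\|DX^{-t}\mid E^{cu}_{X^t x}\|<c\lambda^t$ gives the required gap between the contraction on the fiber and the (possibly weaker) contraction/expansion transverse to it, so the graph transform is a uniform fiber contraction on a suitable Banach bundle of sections; its unique fixed point is the desired family $\{W^s_x\}_{x\in U_0}$. This yields (1a) $T_xW^s_x=E^s_x$, (1b) the positive invariance $X^t(W^s_x)\subset W^s_{X^tx}$, and (1c) the exponential contraction $d(X^tx,X^ty)\le c\lambda^t d(x,y)$ for $y\in W^s_x$, $t\ge0$, by the usual estimates in the graph transform.

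Next I would address the regularity and continuity statement (2). The disks are $C^2$ because $X\in\fX^2(M)$ and the graph transform acts on $C^2$ sections with uniform bounds, so the fixed point is $C^2$ and has uniformly bounded $C^2$ norm; in particular there is a uniform Lipschitz bound $L$ on the charts. Continuity of $x\mapsto W^s_x$ in the $C^0$ topology follows from the fact that the fixed point of a uniformly contracting fiber map depends continuously on the base point whenever the data ($E^s_x$, the local flow, the transverse coordinates) do, which holds here since $E^s$ is continuous on $U_0$ and $X$ is $C^2$. Concretely I would produce the map $\gamma:U_0\to\mathrm{Emb}^0(B^{d_s},M)$ by composing the straightening charts with the graph of the invariant section, rescaling so that $\gamma(x)(0)=x$ and $\gamma(x)(B^{d_s})=W^s_x$, and read off $\Lip\gamma(x)\le L$ from the uniform bounds. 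For (3), that $\{W^s_x:x\in U_0\}$ is a topological foliation, I would check: the disks through distinct points either coincide or are disjoint (this is where one uses the dynamical characterization $y\in W^s_x\iff d(X^tx,X^ty)\to0$ exponentially, which is an equivalence relation on $U_0$ once $U_0$ is positively invariant and the contraction is uniform), they vary continuously, and near any point they can be simultaneously straightened — the last being exactly the content of the continuous chart $\gamma$ together with transversality of $E^s$ to a continuous complementary plane field, giving local product structure.

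The main obstacle I expect is controlling regularity and the foliation property \emph{on all of the neighborhood $U_0$ rather than just on $\Lambda$}: away from $\Lambda$ the bundle $E^s$ is only the dynamically-defined extension from Proposition~\ref{pr:Es} and need not be $C^1$, so the classical $C^r$-section theorem giving smooth leaves does not apply directly. The resolution is that one only needs each individual leaf $W^s_x$ to be $C^2$ (which comes from smoothness of $X$ along single orbits, not from smoothness of the bundle in the transverse directions) while the transverse dependence is merely continuous — hence a \emph{topological} foliation with $C^2$ (indeed uniformly $C^2$) leaves, which is precisely what is asserted. I would therefore be careful to separate the two roles of regularity: $C^2$ in the leaf direction from the $C^2$ flow, and $C^0$ transversally from continuity of $E^s$; and I would invoke the uniform-contraction/domination estimates, rather than any smoothness of $E^s$ off $\Lambda$, to get the Lipschitz charts and the local product structure. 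A secondary technical point is the interaction with singularities in $U_0$: near a hyperbolic equilibrium the flow boxes degenerate, but since $E^s$ is still a well-defined continuous contracting bundle there and the strong stable manifold of the equilibrium is classical, one stitches the local construction near equilibria to the orbitwise construction elsewhere using the continuity of $E^s$ and uniqueness of the invariant section.
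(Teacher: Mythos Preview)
Your approach via graph transform/invariant-section machinery is the standard route and is correct in outline. Note, however, that the paper does not actually prove this proposition: its proof consists entirely of a citation to \cite[Theorem~4.2 and Lemma~4.8]{ArMel17}, together with the remark that the arguments there use neither transitivity nor central volume expansion and hence apply to partially hyperbolic attracting sets. What you have sketched is presumably close to the content of the cited reference.

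One point worth tightening is your justification of item (3). Having a continuous family of $C^2$ disks tangent to $E^s$, with the dynamical characterization $y\in W^s_x\iff d(X^tx,X^ty)\to0$ making them pairwise disjoint-or-equal, does not by itself yield a topological foliation: one must produce genuine foliation charts (local homeomorphisms to a product) and check that the plaques stack coherently. Your remark that ``the continuous chart $\gamma$ together with transversality of $E^s$ to a continuous complementary plane field'' gives local product structure is the right idea, but it relies on the additional observation that the graph-transform fixed point varies continuously as the base point moves along a local transversal, so that nearby disks are simultaneously realized as graphs over a common domain. This is standard but is not a consequence of disjointness and continuity of $x\mapsto W^s_x$ alone, and is exactly the content of \cite[Lemma~4.8]{ArMel17} that the paper invokes separately from the existence statement \cite[Theorem~4.2]{ArMel17}.
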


\begin{proof} See~\cite[Theorem~4.2 and Lemma~4.8]{ArMel17},
  where $\Lip\gamma(x)$ is the Lipschitz constant of
  $\gamma(x)$, given by
  $\sup\{\frac{\dist(\gamma(x)(u),\gamma(x)(v))}{\|u-v\|_2}:u\neq
  v, w,v\in B^{d_s}\}$. Note also that the results
  in~\cite[Section~4]{ArMel17} do not use either
  transitivity or central volume expansion in its proofs,
  that is, these results hold for partial hyperbolic
  attracting sets.
\end{proof}

The splitting $T_\Lambda M=E^s\oplus E^{cu}$ extends
continuously to a splitting $T_{U_0} M=E^s\oplus E^{cu}$
where $E^s$ is the invariant uniformly contracting bundle in
Proposition~\ref{pr:Es} and, in general, $E^{cu}$ is not
invariant.  Given $a>0$, we consider the
\emph{center-unstable cone field}
\[
  \cC^{cu}_x(a)
  =
  \{v= v^s+v^{cu}\in E^s_x\oplus E^{cu}_x:
  \|v^s\|\le a\|v^{cu}\|\}, \quad x\in U_0.
\]

\begin{proposition} \label{pr:cu} Let $\Lambda$ be a
  partially hyperbolic attracting set.  There exists $T_0>0$
  such that for any $a>0$, after possibly shrinking $U_0$,
\[
DX^t\cdot \cC^{cu}_x(a)\subset \cC^{cu}_{X^t x}(a) \quad\text{for all $t\ge T_0$, $x\in U_0$}.
\]
\end{proposition}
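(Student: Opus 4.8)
The plan is the usual two-step cone-invariance argument: first prove a \emph{strict} contraction of the center-unstable cone field on the invariant set $\Lambda$ directly from the domination inequality \eqref{eq.domination}, and then propagate this to a neighborhood by a continuity/compactness argument, using positive invariance of the trapping region to pass from a bounded band of times to all $t\ge T_0$. For the first step, fix $x\in\Lambda$ and a nonzero $v=v^s+v^{cu}\in\cC^{cu}_x(a)$, so $\|v^s\|\le a\|v^{cu}\|$ (and necessarily $v^{cu}\ne0$). Since the splitting $E^s\oplus E^{cu}$ is $DX^t$-invariant over $\Lambda$, the vectors $DX^tv^s\in E^s_{X^t(x)}$ and $DX^tv^{cu}\in E^{cu}_{X^t(x)}$ are precisely the components of $DX^tv$ at $X^t(x)$. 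Bounding $\|v^{cu}\|=\|DX^{-t}(DX^tv^{cu})\|\le\|DX^{-t}\mid E^{cu}_{X^t(x)}\|\cdot\|DX^tv^{cu}\|$ and $\|DX^tv^s\|\le\|DX^t\mid E^s_x\|\cdot\|v^s\|\le a\|DX^t\mid E^s_x\|\cdot\|v^{cu}\|$, the strict domination bound gives
\[
\|DX^tv^s\|\le a\,\|DX^t\mid E^s_x\|\cdot\|DX^{-t}\mid E^{cu}_{X^t(x)}\|\cdot\|DX^tv^{cu}\|<a\,c\lambda^t\,\|DX^tv^{cu}\| .
\]
Hence $DX^t\cC^{cu}_x(a)\subset\cC^{cu}_{X^t(x)}(c\lambda^t a)$ for every $t>0$, $x\in\Lambda$. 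Choosing $T_0$ with $c\lambda^{T_0}<1/2$ — a choice depending only on the domination constants $(c,\lambda)$, not on $a$ — we obtain $DX^t\cC^{cu}_x(a)\subset\cC^{cu}_{X^t(x)}(a/2)$ for all $x\in\Lambda$ and $t\ge T_0$.

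For the second step I would argue by contradiction and compactness. If for some $a>0$ there were no neighborhood of $\Lambda$ on which the cone inclusion held for all times in the band $[T_0,2T_0]$, one would get points $x_n$ with $\dist(x_n,\Lambda)\to0$, times $t_n\in[T_0,2T_0]$, and unit vectors $v_n\in\cC^{cu}_{x_n}(a)$ with $DX^{t_n}v_n\notin\cC^{cu}_{X^{t_n}(x_n)}(a)$. Passing to subsequences $x_n\to x\in\Lambda$, $t_n\to t\in[T_0,2T_0]$, $v_n\to v$, and using continuity of the extended splitting $E^s\oplus E^{cu}$ on $U_0$, of the cone field, and of the derivative cocycle, one gets $v\in\cC^{cu}_x(a)$ with $\|(DX^tv)^s\|\ge a\|(DX^tv)^{cu}\|$ at $X^t(x)\in\Lambda$; this contradicts the strict bound $\|(DX^tv)^s\|<c\lambda^t a\|(DX^tv)^{cu}\|\le (a/2)\|(DX^tv)^{cu}\|$ from Step~1 (the degenerate case $(DX^tv)^{cu}=0$ is excluded, since then $DX^tv=0$, hence $v=0$, contradicting $\|v\|=1$). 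So for each $a>0$ there is a neighborhood of $\Lambda$ with the cone inclusion for $t\in[T_0,2T_0]$; shrinking it inside the trapping region we may take it positively invariant and call it $U_0$. Finally, for arbitrary $t\ge T_0$ write $t=s_1+\dots+s_m$ with each $s_i\in[T_0,2T_0]$ — always possible by peeling off copies of $T_0$ until the remainder lies in $[T_0,2T_0)$ — and note that all intermediate points $X^{s_1+\dots+s_j}(x)$ remain in $U_0$ by positive invariance; composing the one-step inclusions $DX^{s_{j+1}}\cC^{cu}_{y_j}(a)\subset\cC^{cu}_{y_{j+1}}(a)$ yields $DX^t\cC^{cu}_x(a)\subset\cC^{cu}_{X^t(x)}(a)$.

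The step I expect to be the main obstacle is exactly the passage off $\Lambda$: the extended bundle $E^{cu}$ over $U_0$ is \emph{not} $DX^t$-invariant, so the clean pointwise computation of Step~1 has no literal analogue on $U_0$. What makes the argument work is that Step~1 delivers a \emph{strict} inclusion into the thinner cone $\cC^{cu}(c\lambda^t a)$, which provides the slack a continuity/compactness argument needs, combined with the observation that it is enough to control a compact band of times $[T_0,2T_0]$ and then iterate — which is where positive invariance of the trapping region is used. This also explains the order of quantifiers in the statement: $T_0$ is determined by $(c,\lambda)$ alone, while the admissible shrinking of $U_0$ depends on $a$ (and degenerates as $a\to0$).
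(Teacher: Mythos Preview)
Your proof is correct. The paper does not give an argument here but simply refers to \cite[Proposition~3.1]{ArMel17}; your two-step approach---strict cone contraction on $\Lambda$ directly from the domination inequality, then extension to a neighborhood by compactness on the bounded time band $[T_0,2T_0]$ followed by iteration using positive invariance of the trapping region---is the standard argument one expects behind such a citation.
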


\begin{proof} See~\cite[Proposition~3.1]{ArMel17} and again
  note that the results in~\cite[Section~3]{ArMel17} do not
  use either transitivity or central volume expansion in its
  proofs.
\end{proof}

\begin{proposition} \label{pr:VE}
Let $\Lambda$ be a singular hyperbolic attracting set.
After possibly increasing $T_0$ and shrinking $U_0$, there exist
constants $K,\theta>0$ such that
$|\det(DX^t| E^{cu}_x)|\geq K \, e^{\theta t}$ for all
$x\in U_0$, $t\geq 0$.
\end{proposition}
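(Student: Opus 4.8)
The plan is to derive the uniform volume‑expansion estimate on the whole neighborhood $U_0$ from the corresponding estimate on the invariant set $\Lambda$ (which holds by the very definition of singular‑hyperbolicity: $J^{cu}_t(x)\ge c\,e^{\lambda t}$ for $x\in\Lambda$) together with a compactness and continuity argument, and then to upgrade this to hold for \emph{all} $t\ge0$ by a standard subadditivity/cocycle trick. The point is that $E^{cu}$ is only continuous (not invariant) over $U_0$, so one cannot speak of $|\det(DX^t\mid E^{cu})|$ being submultiplicative in $t$ directly; instead one measures the norm of the induced map between the cone fields and uses the cone invariance from Proposition~\ref{pr:cu}.

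First I would fix $a>0$ small and apply Proposition~\ref{pr:cu} to get $T_0>0$ (after shrinking $U_0$) with $DX^{t}\cdot\cC^{cu}_x(a)\subset\cC^{cu}_{X^tx}(a)$ for all $t\ge T_0$, $x\in U_0$. For a point $x\in U_0$ and $t\ge T_0$, pick any $2$‑plane $P_x\subset\cC^{cu}_x(a)$; by cone invariance $DX^t(P_x)$ is again a $2$‑plane inside $\cC^{cu}_{X^tx}(a)$, and I would define $\widehat J_t(x)$ to be the infimum over such planes $P_x$ of the Jacobian of $DX^t\colon P_x\to DX^t(P_x)$ (equivalently, the smallest expansion of area by $DX^t$ restricted to the cone). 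Because the cones have a definite aperture, $\widehat J_t(x)$ is comparable, up to a constant depending only on $a$, to $|\det(DX^t\mid E^{cu}_x)|$ itself, so it suffices to estimate $\widehat J_t(x)$. The key structural fact is the cocycle inequality $\widehat J_{s+t}(x)\ge \widehat J_s(X^tx)\cdot \widehat J_t(x)$ for $s,t\ge T_0$: this follows because any plane in the cone at $x$ is mapped by $DX^t$ into the cone at $X^tx$, where it is then expanded by at least $\widehat J_s(X^tx)$.

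Next I would establish a lower bound bounded away from $1$ for $\widehat J_N(x)$ for one sufficiently large fixed time $N\ge T_0$ and all $x\in U_0$. On $\Lambda$ we have $\widehat J_N(x)\ge |\det(DX^N\mid E^{cu}_x)|\ge c\,e^{\lambda N}$, which exceeds, say, $2$ once $N$ is large; since $x\mapsto\widehat J_N(x)$ is continuous (the cone field, $DX^N$, and the Grassmannian of $2$‑planes in each cone all vary continuously) and $\Lambda$ is compact, there is a neighborhood of $\Lambda$ — which we may take to be $U_0$ after shrinking — on which $\widehat J_N(x)\ge 2$. Writing an arbitrary $t\ge T_0$ as $t=kN+r$ with $0\le r<N$ and iterating the cocycle inequality $k$ times gives $\widehat J_t(x)\ge \widehat J_r(X^{kN}x)\cdot 2^{k}$; absorbing the bounded factor $\widehat J_r(\cdot)$ (bounded below over the compact $U_0$ and $r\in[0,N]$) and the comparison constant between $\widehat J_t$ and $|\det(DX^t\mid E^{cu})|$ into constants $K,\theta>0$, and finally handling $0\le t\le T_0$ trivially by compactness and continuity (the Jacobian is bounded below there), yields $|\det(DX^t\mid E^{cu}_x)|\ge K e^{\theta t}$ for all $x\in U_0$, $t\ge0$, with $\theta=(\log 2)/N$.

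The main obstacle I anticipate is the non‑invariance of $E^{cu}$ over $U_0$: one must be careful that "$|\det(DX^t\mid E^{cu})|$" in the statement is interpreted via the projection of $DX^t E^{cu}_x$ back onto $E^{cu}_{X^tx}$ along $E^s_{X^tx}$ (or equivalently, up to the cone‑aperture constant, via $\widehat J_t$), and that the comparison constants between this quantity and $\widehat J_t$ are genuinely uniform — this uses the uniform transversality of $E^s$ and $E^{cu}$ and the uniform aperture of the cones on the compact set $\overline{U_0}$. Once that bookkeeping is in place, the rest is the routine compactness‑plus‑subadditivity argument sketched above.
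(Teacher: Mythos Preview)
Your argument is correct and is essentially the standard cone-field/subadditivity argument one expects here. The paper itself does not give a proof but simply refers to \cite[Proposition 2.10]{ArMel18}; your sketch is precisely the kind of argument that reference carries out, so there is nothing to compare beyond noting that you have supplied the details the paper outsources.

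One minor bookkeeping point worth tightening: the cocycle inequality $\widehat J_{s+t}(x)\ge \widehat J_t(X^sx)\,\widehat J_s(x)$ only needs $s\ge T_0$ (so that $DX^s$ maps cone-planes to cone-planes), not $t\ge T_0$; with that observation the decomposition $t=kN+r$ with $0\le r<N$ goes through cleanly for all $k\ge1$, and $\widehat J_r$ is bounded below on $\overline{U_0}\times[0,N]$ by continuity and compactness as you say.
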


\begin{proof}
  See \cite[Proposition 2.10]{ArMel18}
\end{proof}

\subsubsection{The stable lamination is a topological
  foliation}
\label{sec:stabltopfolio}

The Stable Manifold Theorem \cite{Sh87} ensures the
existence of an $X^t$-invariant stable lamination
$\cW^s_\Lambda$ consisting of smoothly embedded disks
$W^s_x$ through each point $x\in\Lambda$.  Although not true
for general partially hyperbolic attractors, for
singular-hyperbolic attractors in our setting
$\cW^s_\Lambda$ indeed defines a topological foliation in an
open neighborhood of $\Lambda$.

\begin{theorem} \label{thm:stabltopfol} Let $\Lambda$ be a
  singular hyperbolic attracting set.  Then the stable
  lamination $\cW^s_\Lambda$ is a topological foliation of
  an open neighborhood of~$\Lambda$.
\end{theorem}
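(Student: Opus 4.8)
The plan is to combine Proposition~\ref{pr:Ws} with the structure provided by singular-hyperbolicity near the finitely many equilibria. Proposition~\ref{pr:Ws} already gives a topological foliation by the local stable disks $W^s_x$ over the neighborhood $U_0$; the only thing that remains is to reconcile this with the Stable Manifold Theorem lamination $\cW^s_\Lambda$ and to verify that the leaves fit together continuously as embedded disks even at points of $\Lambda$ that lie in the strong-stable manifolds of equilibria. The key point is that $\cW^s_\Lambda$ from \cite{Sh87} and the family $\{W^s_x : x\in U_0\}$ from Proposition~\ref{pr:Ws} agree on $\Lambda$: both have tangent space $E^s_x$ at $x$, both are locally invariant and uniformly contracted, so by uniqueness of the (strong) stable manifold they coincide as germs, and hence the lamination $\cW^s_\Lambda$ extends to the topological foliation of $U_0$ furnished by Proposition~\ref{pr:Ws}(3).

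First I would recall that every singularity $\sigma$ of $\Lambda$ is hyperbolic (Definition~\ref{d.singularset}) and, by Remark~\ref{rmk:no-recur-non-Lorenz}(2), is either a codimension-two saddle or a Lorenz-like singularity; in the Lorenz-like case the splitting $T_\sigma M=E^s_\sigma\oplus F^s_\sigma\oplus F^u_\sigma$ from Section~\ref{sec:lorenz-like-singul} shows that the full stable manifold $W^s_\sigma$ has dimension $d-1=d_s+1$, containing the strong-stable foliation by $d_s$-disks tangent to $E^s$. The disks $W^s_x$ of Proposition~\ref{pr:Ws} for $x\in W^s_\sigma\cap U_0$ are exactly the strong-stable leaves inside $W^s_\sigma$, and these vary continuously in the $C^0$ topology by the Lipschitz bound $\Lip\gamma(x)\le L$; away from a neighborhood of the equilibria the set $\Lambda$ is uniformly hyperbolic by Theorem~\ref{thm:hyplemma}, so there the stable lamination is a genuine continuous foliation by classical hyperbolic theory. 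One then glues these two descriptions: on the overlap they coincide because both are obtained by integrating the same continuous bundle $E^s$ with the same contraction, so the local charts $\gamma(x)$ provide a global continuous foliation chart structure.

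The main obstacle I expect is the behavior near a Lorenz-like singularity: one must check that the local stable disks through points accumulating on $\sigma$ do not degenerate or develop discontinuities, i.e.\ that $\gamma$ extends continuously across $W^s_\sigma$. This is where the hyperbolicity of $\sigma$ and the domination $E^s\oplus E^{cu}$ are essential — the contraction rate $c\lambda^t$ along $E^s$ is uniform over all of $U_0$ by Proposition~\ref{pr:Es}, which forces the disks $W^s_x$ to have uniformly bounded geometry and to depend continuously on $x$ even at $\sigma$ itself (where $W^s_\sigma\supset W^s_x$ for nearby $x$). The non-Lorenz-like equilibria cause no trouble because, by Remark~\ref{rmk:no-recur-non-Lorenz}(3), they are not accumulated by any regular trajectory in $\Lambda$, so a neighborhood of such a $\sigma$ meets $\Lambda$ only in $W^u_\sigma$ and the local stable manifold $W^s_\sigma$, and the foliation structure there is the standard one near a hyperbolic saddle. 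Assembling these cases, Proposition~\ref{pr:Ws}(3) upgraded with this near-equilibrium continuity yields that $\cW^s_\Lambda$ is a topological foliation of an open neighborhood of $\Lambda$, as claimed.
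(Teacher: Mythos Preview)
Your core argument is correct and matches the paper's approach: the paper simply cites \cite[Theorem 5.1]{ArMel18}, where it is shown that the Stable Manifold Theorem lamination $\cW^s_\Lambda$ coincides with the topological foliation $\{W^s_x:x\in U_0\}$ already furnished by Proposition~\ref{pr:Ws}(3), exactly via the uniqueness-of-stable-manifolds reasoning you sketch in your first paragraph. Your second and third paragraphs about special behavior near equilibria are superfluous --- Proposition~\ref{pr:Ws} is stated and proved uniformly over all of $U_0$ (equilibria included), so no separate ``near-equilibrium continuity upgrade'' is needed.
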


\begin{proof}
  See \cite[Theorem 5.1]{ArMel18} where it is shown that
  $\cW^s_\Lambda$ coincides with the topological foliation
  $\{W_x^s:x\in U_0\}$ of item (3) of
  Proposition~\ref{pr:Ws}.
\end{proof}

From now on, we refer to $\cW^s=\{W^s_x:x\in\Lambda\}$ as
the \emph{stable foliation}.

\subsubsection{Absolute continuity of the stable foliation}
\label{sec:abscontWs}

A key fact for us is regularity of stable holonomies.  Let
$Y_0,\,Y_1\subset U_0$ be two smooth disjoint
$d_{cu}$-dimensional disks that are transverse to the stable
foliation $\cW^s$.  Suppose that for all $x\in Y_0$, the
stable leaf $W^s_x$ intersects each of $Y_0$ and $Y_1$ in
precisely one point.  The {\em stable holonomy}
$H:Y_0\to Y_1$ is given by defining $H(x)$ to be the
intersection point of $W^s_x$ with $Y_1$.

\begin{theorem}\label{thm:H}
  The stable holonomy $H:Y_0\to Y_1$ is absolutely
  continuous. That is, $m_1\ll H_*m_0$ where $m_i$ is
  Lebesgue measure on $Y_i$, $i=0,1$.  Moreover, the
  Jacobian $JH:Y_0\to\R$ given by
  \begin{align*}
    JH(x)=\frac{dm_1}{dH_*m_0}(Hx)=
    \lim_{r\to0}\frac{m_1(H(B(x,r)))}{m_0(B(x,r))},\quad x\in Y_0,
  \end{align*}
  is bounded above and below and is $C^\epsilon$ for some
  $\epsilon>0$.
\end{theorem}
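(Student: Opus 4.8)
The plan is to prove the quantitative statement that there is $\kappa\ge1$ (depending only on $X$ and, locally uniformly, on the transversals) such that $\kappa^{-1}m_0(B)\le m_1(H(B))\le\kappa\,m_0(B)$ for every small ball $B\subset Y_0$, together with an explicit limit formula for the density; absolute continuity, the bounds for $JH$, and its $C^\epsilon$ regularity all follow from that formula. A first reduction uses a finite cover of a neighbourhood of $\Lambda$ by foliation boxes of $\cW^s$ (Theorem~\ref{thm:stabltopfol}) and the multiplicativity of holonomy Jacobians under composition: it suffices to treat $Y_0,Y_1$ inside a single small box, where, by Proposition~\ref{pr:Ws}(2), both are graphs over a common $d_{cu}$-dimensional disk transverse to $\cW^s$. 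Replacing $Y_i$ by $X^{T_0}(Y_i)$ — which changes $H$ only by composition with the smooth maps $X^{\pm T_0}$ — we may further assume, by Proposition~\ref{pr:cu}, that $T_z Y_i\subset\cC^{cu}_z(a)$ for a fixed small $a>0$, a property then preserved under every forward iterate $X^t$, $t\ge0$. (If one uses the H\"older-$C^1$ regularity of the holonomies of $\cW^s$ quoted at the opening of this section at full strength, one can stop here: $H$ is then a composition of $C^{1+\alpha}$ maps, hence a $C^{1+\alpha}$ diffeomorphism between the transversals, so $JH=|\det DH|$ is positive and continuous — hence bounded above and below on the relatively compact $Y_i$ — and $C^\alpha$. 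I prefer, however, to derive $JH$ and its bounds intrinsically, as this also yields the limit formula needed later.)

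The heart is the Anosov–Sinai push-forward along the flow. For $t\ge0$ put $Y_i^t=X^t(Y_i)$ and let $H_t\colon Y_0^t\to Y_1^t$ be the holonomy of $\cW^s$; since $X^t(W^s_x)\subset W^s_{X^t x}$ (Proposition~\ref{pr:Ws}(1b)) each stable leaf still meets $Y_0^t$ and $Y_1^t$ in one point, after possibly shrinking domains, and $H_t\circ X^t=X^t\circ H$. Writing $J$ for the Jacobian of a map between the $d_{cu}$-dimensional transversals with respect to their induced Riemannian areas, the chain rule gives for $x\in Y_0$
\[
  JH(x)=\frac{J\big(X^t|_{Y_0}\big)(x)}{J\big(X^t|_{Y_1}\big)(Hx)}\;JH_t\big(X^t x\big).
\]
Uniform contraction along $\cW^s$ (Proposition~\ref{pr:Ws}(1c)) gives $d\big(X^t x,H_t(X^t x)\big)\le c\lambda^t d(x,Hx)\to0$; combined with the cone invariance of Proposition~\ref{pr:cu}, which confines the tangent planes of $Y_0^t$ and $Y_1^t$ to $\cC^{cu}(a)$, and with the quantitative $C^1$-smallness of holonomies between exponentially close transversals, this yields $JH_t(X^t x)\to1$ uniformly in $x$. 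Simultaneously $J\big(X^t|_{Y_i}\big)(z)=\big|\det\big(DX^t|_{E^{cu}_z}\big)\big|\,\rho_i(z,t)$, where $\rho_i$ is a ratio of angle and volume corrections that is bounded above and below uniformly in $z$ and $t$ and converges as $t\to\infty$ (this uses the domination of $E^s$ by $E^{cu}$ and Proposition~\ref{pr:VE}). Letting $t\to\infty$,
\[
  JH(x)=\lim_{t\to\infty}\frac{\big|\det\big(DX^t|_{E^{cu}_x}\big)\big|\,\rho_0(x,t)}{\big|\det\big(DX^t|_{E^{cu}_{Hx}}\big)\big|\,\rho_1(Hx,t)},
\]
a limit that exists and is pinched between two positive constants. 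Running the same estimates with a ball $B(x,r)\subset Y_0$ in place of the point $x$ gives $\big|m_1(H(B(x,r)))/m_0(B(x,r))-JH(x)\big|\to0$ uniformly as $r\to0$, whence $m_1\ll H_*m_0$ with density $JH$ bounded above and below.

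For the $C^\epsilon$ regularity of $JH$, I would rewrite the limit above as a convergent infinite product along the integer orbits of $x$ and $Hx$ under $X^1$ (telescoping the ratios of centre Jacobians $|\det(DX^1|_{E^{cu}})|$ at $X^n x$ and $X^n Hx$), each factor being H\"older in $x$ with H\"older seminorm decaying geometrically; this uses that $x\mapsto E^{cu}_x$ and $x\mapsto|\det(DX^1|_{E^{cu}_x})|$ are H\"older (standard for dominated splittings of $C^2$ flows) and that $x\mapsto Hx$ is H\"older (holonomies of $\cW^s$ are at least H\"older). Summing the geometric series of H\"older seminorms then gives $JH\in C^\epsilon$ for some $\epsilon>0$. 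I expect the main obstacle to be precisely the uniform control of the geometry of the pushed transversals $Y_i^t$: showing that their tangent planes stay uniformly transverse to $\cW^s$ and that the nearly-identity holonomies $H_t$ have Jacobians tending to $1$ uniformly in the base point. This is where the invariance of the centre–unstable cone field (Proposition~\ref{pr:cu}), the uniform exponential contraction of $\cW^s$ (Proposition~\ref{pr:Ws}) and the volume expansion (Proposition~\ref{pr:VE}) must be combined carefully; a genuine subtlety is that, because $E^{cu}$ is only \emph{volume} expanding, individual vectors need not be expanded near singularities, so the $C^1$-closeness of $Y_0^t$ and $Y_1^t$ has to be extracted from the contraction transverse to $E^{cu}$ (domination) rather than from expansion along it, and all these estimates are uniform exactly because everything stays in the trapping region $U_0$.
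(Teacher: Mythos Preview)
The paper does not give a self-contained argument here: its proof consists of a pointer to standard results in Barreira--Pesin and Pesin's books (and to a companion paper), applied to the time-$T$ map $f=X^T$ for large $T$. Your proposal sketches the classical Anosov--Sinai push-forward argument that underlies those cited theorems, so it is not a genuinely different route but an unpacking of the same proof; at the level of an outline it is correct, and you have correctly isolated the one real subtlety (it is domination, not expansion along $E^{cu}$, that controls the geometry of the pushed transversals near singularities).

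One small caveat: your parenthetical shortcut, invoking the ``H\"older-$C^1$ regularity of the holonomies'' mentioned at the opening of Section~\ref{sec:propert-partially-hy}, would be circular --- that sentence is announcing what the propositions and theorems of the section (including this one) will establish, not an independent hypothesis --- so you were right not to rely on it.
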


\begin{proof}
  This essentially follows from \cite[Theorems 8.6.1 and
  8.6.13]{BarPes2007} or from~\cite[Theorem 7.1]{Pesin2004}
  applied to $f=X^T$ for a large enough fixed $T>0$ which
  satisfies conditions (4.1), (4.2) and (4.3) of
  \cite[Section 4]{Pesin2004} in the open positively
  invariant set $U$; see \cite[Theorem 6.3]{ArMel18}.
\end{proof}

Hence, we can assume without loss of generality, that there
exists a foliation $\cW^s$ of $U_0$, which continuously
extends the stable lamination of $\Lambda$ together with a
positively invariant field of cones
$(\cC^{cu}_x)_{x\in U_0}$ on $T_{U_0}M$.  Moreover, the
Jacobian of holonomies along contracting leaves on
cross-sections of singular-hyperbolic attracting sets in our
setting is a H\"older function.

\subsection{Global Poincar\'e return map}
\label{sec:global-poincare-retu}

In \cite{APPV} the construction of a global Poincar\'e map
for any singular-hyperbolic attractor is carried out based
on the existence of ``adapted cross-sections'' and
H{\"o}lder-$C^1$ stable holonomies on these
cross-sections. With the results just presented this
construction can be performed for any singular-hyperbolic
attracting set.

This construction was presented in~\cite[Sections 3 and
4]{ArMel18}: we obtain
\begin{itemize}
\item a finite collection
  $\Xi=\Sigma_1+\dots+\Sigma_m$\footnote{We write $A+B$ the
    union of the disjoint subsets $A$ and $B$.} of (pairwise
  disjoint) cross-sections to $X$ so that
  \begin{itemize}
  \item each $\Sigma_i$ is diffeomorphically identified with
    $(-1,1)\times\cD^{d_s}$;
  \item the stable boundary
    $\partial^s\Sigma_i\cong \{\pm1\}\times \cD^{d_s}$
    consists of two curves contained in stable leaves; and
  \item each $\Sigma_i$ is foliated by
    $W^s_x(\Sigma_i)= \bigcup_{|t|<\epsilon_0}X^t(W^s_x)\cap
    \Sigma_i$ for a small fixed $\epsilon>0$. We denote this
    foliation by $W^s(\Sigma_i), i=1,\dots,m$;
  \end{itemize}
\item a Poincar{\'e} map $R:\Xi\setminus\Gamma\to\Xi$ which
  is $C^2$ smooth in $\Sigma_i\setminus\Gamma, i=1,\dots,m$;
  preserves the foliation $W^s(\Xi)$ and a big enough time
  $T>0$, where $\Gamma=\Gamma_0\cup\Gamma_1$ is a finite
  family of stable disks $W^s_{x_i}(\Xi)$ so that
  \begin{itemize}
  \item
    $\Gamma_0=\{x\in\Xi:X^{T+1}(x)\in\bigcup_{\sigma\in
      S}\gamma^s_\sigma\}$ for
    $S=S(X,\Lambda)=\{\sigma\in\Lambda: X(\sigma)=\vec0\}$
    and $\gamma_\sigma^s$ is the local stable manifold of
    $\sigma$ in a small fixed neighborhood of $\sigma\in S$;
    and
  \item
    $\Gamma_1=\{x\in\Xi:R(x)\in\partial^s\Xi=\cup_i\partial^s\Sigma_i\}$
  \end{itemize}
\item and an open neighborhood $V_0$ of $S$ of so that every
  orbit of a regular point $z\in U_0\setminus V_0$
  eventually hits $\Xi$ or else $z\in\Gamma$.
\end{itemize}
Having this, the same arguments from \cite{APPV} (see
\cite[Proposition 4.1 and Theorem 4.3]{ArMel18}) show that
$DR$ contracts $T_\Xi W^s(\Xi)$ and
expands vectors on the unstable cones
$\{C^u_x(\Xi)=C^{cu}_x(a)\cap T_x\Xi\}_{x\in\Xi\setminus\Gamma}$. The
stable holonomies for $R$ enable us to reduce its dynamics
to a one-dimensional map, as follows.

Let $\gamma_i\subset\Sigma_i$ be curves that \emph{cross}
$\Sigma_i$, that is, $\dot\gamma_i\in C^u_{\gamma}$ and for
all $x\in \Sigma_i$ the stable leaf $W^s_x(\Sigma_i)$
intersects $\gamma_i$ in precisely one point, $i=1,\dots,m$.
The \emph{(sectional) stable holonomy}
$h:\Xi\to\gamma=\sum_i\gamma_i$ is defined by setting
$h(x)$ to be the intersection point of $W^s_x(\Sigma_i)$
with $\gamma_i$, $x\in\Sigma_i, i=1,\dots,m$.

  \begin{lemma}\label{le:hC1+}
    The stable holonomy $h$ is $C^{1+\epsilon}$ for some
    $\epsilon>0$.
  \end{lemma}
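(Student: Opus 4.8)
The plan is to reduce regularity of the stable holonomy $h$ between cross-sections to the regularity of the flow-holonomy (Theorem~\ref{thm:H}) composed with the flow, exploiting that each leaf $W^s_x(\Sigma_i)$ of the sectional foliation is obtained by saturating the genuine stable leaf $W^s_x$ with a short flow segment, and that the curves $\gamma_i$ are transverse to $W^s(\Sigma_i)$ along the unstable cone direction. First I would fix $x_0\in\Sigma_i$ and work in a small neighborhood of $x_0$ inside $\Sigma_i$; by the definition $W^s_x(\Sigma_i)=\bigcup_{|t|<\epsilon_0}X^t(W^s_x)\cap\Sigma_i$, the point $h(x)$ is determined by following the stable manifold of $x$ and then an $\cO(\epsilon_0)$-long flow segment until one meets $\gamma_i$. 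Since $\gamma_i$ is a $C^2$ curve transverse to the (two-dimensional, since $\dim E^{cu}=2$) local stable saturation, the implicit function theorem applies once we know the stable holonomy itself is regular enough.

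The key steps, in order, are: (i) reduce to the time-$T$ map: choose $T>0$ large as in Theorem~\ref{thm:H} so that $f=X^T$ satisfies the Pesin-type absolute continuity hypotheses, giving that the stable holonomy $H$ between two $d_{cu}$-disks transverse to $\cW^s$ has Jacobian bounded above and below and $C^\epsilon$; (ii) observe that the \emph{sectional} holonomy $h:\Sigma_i\to\gamma_i$ differs from such an $H$ only by composition with (a) the inclusion of $\Sigma_i$ as a transversal disk to $\cW^s$ near each point, and (b) a flow time-change along $X$ of length $<\epsilon_0$, both of which are $C^2$ in the base point; (iii) upgrade $C^\epsilon$ of the Jacobian to $C^{1+\epsilon}$ of the holonomy map itself by using that $h$ is a homeomorphism onto $\gamma_i$ with Hölder Jacobian, and that in the $1$-dimensional transverse direction (the unstable cone direction inside $\Sigma_i$, which is a curve because $\dim C^u_x(\Xi)=1$) a homeomorphism of an interval with Hölder-continuous, bounded-away-from-zero Jacobian is a $C^{1+\epsilon}$ diffeomorphism by the fundamental theorem of calculus; (iv) check the estimate is uniform in $x_0$ and across the finitely many sections $\Sigma_1,\dots,\Sigma_m$, using compactness and the uniform Lipschitz bound on $\gamma(x)$ from Proposition~\ref{pr:Ws}(2) together with the uniform hyperbolicity of $DR$ on the cones.

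The main obstacle I expect is step (iii): passing from the \emph{Hölder regularity of the holonomy Jacobian}, which is what the Pesin/Barreira--Pesin absolute continuity theory delivers, to the \emph{$C^{1+\epsilon}$ regularity of the holonomy map} in the transverse direction. This is only clean because the unstable cones inside each $\Sigma_i$ are one-dimensional (a consequence of $\dim E^{cu}=2$ and the flow direction lying in $E^{cu}$, as recorded in Remark~\ref{rmk:no-recur-non-Lorenz}(1)), so the transverse coordinate along $\gamma_i$ is genuinely a scalar and the Jacobian is literally the derivative of a monotone function of one variable; in higher central dimension this step would fail and one would only get the Jacobian, not the map, to be Hölder. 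A secondary technical point is that $h$ is only defined away from the singular part of $\Gamma$, so one must restrict attention to $\Xi\setminus\Gamma$ where $R$ (and hence the relevant flow segments) have bounded length and the constants are uniform; I would state the lemma on each connected component of $\Sigma_i\setminus\Gamma$ and note the constant can be taken uniform because $\Gamma$ is a finite union of stable disks and $T$ is a fixed finite return time.
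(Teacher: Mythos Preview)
The paper's own proof is simply a citation to \cite[Lemma 7.1]{ArMel18}, so there is no in-paper argument to compare against directly. Your proposal outlines a genuine strategy, and the flow-box reduction in step (ii)---writing the strong stable holonomy $H$ between two $cu$-flow-boxes as $(u,s)\mapsto(v(u),s+t(u))$ so that the 2D Radon--Nikodym Jacobian $JH$ of Theorem~\ref{thm:H} coincides with the 1D derivative $v'(u)$---is correct and is the right way to extract a 1D statement from Theorem~\ref{thm:H}. This does give that $h|_{\gamma'}:\gamma'\to\gamma_i$ is $C^{1+\epsilon}$ for any single $cu$-curve $\gamma'$.

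The gap is in step (iii). You write that ``$h$ is a homeomorphism onto $\gamma_i$'', but $h:\Sigma_i\to\gamma_i$ is a projection with $d_s$-dimensional fibers, not a homeomorphism; only its restriction to a $cu$-curve is. Knowing that $h|_{\gamma'}$ is $C^{1+\epsilon}$ for every transversal $\gamma'$, even with uniform constants as in your step (iv), does \emph{not} imply $h$ is $C^{1}$ on $\Sigma_i$: one must also control how the tangent spaces $T_xW^s_x(\Sigma_i)$ vary in the stable directions. Concretely, if in local coordinates $(u,y)\in\RR\times\RR^{d_s}$ the leaves are the affine disks $\{u=c+\psi(c)\cdot y\}$ with $\psi$ merely H\"older, each leaf is $C^\infty$ and $h$ restricted to every horizontal line $\{y=y_0\}$ is $C^{1+\epsilon}$, yet $h(u,y)$ solves $u=h+\psi(h)y$ and is no smoother than $\psi$. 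Theorem~\ref{thm:H} gives no information about this transverse variation of the leaf tangents; it only controls Jacobians between $cu$-disks.

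What is actually needed---and what \cite[Lemma 7.1]{ArMel18} presumably invokes---is the $C^{1+\epsilon}$ regularity of the strong stable foliation itself for a $C^2$ partially hyperbolic flow with uniformly contracting $E^s$ (a Hirsch--Pugh--Shub / bunching-type result), which then passes to the sectional foliation $W^s(\Sigma_i)$ by intersecting the $C^2$ center-stable leaves with the $C^2$ cross-section. That gives $C^{1+\epsilon}$ foliation charts on $\Sigma_i$ and hence $h\in C^{1+\epsilon}$ in one stroke, rather than bootstrapping from absolute continuity.
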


  \begin{proof}
    See \cite[Lemma 7.1]{ArMel18}
  \end{proof}

  Following the same arguments in \cite{APPV} (see also
  \cite[Section 7]{ArMel18}) we obtain a one-dimensional
  piecewise $C^{1+\alpha}$ quotient map over the stable
  leaves $f:\gamma\setminus\Gamma\to \gamma$ for some
  $0<\alpha<1$ so that $h(Rx) = f(x)$ and
  $|f'x|>2$. Choosing smooth parametrizations of $\gamma_i$
  in a concatenated fashion we can write
  $I=I_1\cup\dots\cup I_m$ and $f:I\setminus\cD\to I$ where
  $\cD$ is a finite set of points identified with
  $h(\Gamma)$ and each $I_i$ is identified with
  $\gamma_i, i=1,\dots,m$. In addition, as shown in
  \cite[Proof of Lemma 8.4]{ArMel18},
  $|f'\mid_{I\setminus\cD}|$ behaves near singular points
  $\cS$, identified with $h(\Gamma_0)$ as a subset of $\cD$,
  as a power of the distance to $\cS$, as in assumption of
  the statement of \eqref{eq:der0}.

  We can use the choice of coordinates and concatenation
  defining $I$ from $\gamma=\sum_i\gamma_i$ to choose
  coordinates in $\Xi=\sum_i\Sigma_i$ so that: $\Xi$ can be
  identified with $I\times B^{d_s}$; and each $\Sigma_i$ with
  $I_i\times B^{d_s}, i=1,\dots,m$.
  
  This construction can be summarized as in \cite[Theorem
  5]{ArGalPac} as follows, with adaptations to our more
  general setting: items (1-5) can be found in \cite{APPV}
  but item (6), which is crucial for us, will be obtained in
  Remark~\ref{rmk:discontsing} following
  Corollary~\ref{cor:singular-adapted-section} in
  Subsection~\ref{sec:stable-manifolds-sin}. This is the
  only argument where the assumption of connectedness is
  used; see \cite{ArMel17,ArMel18}.

  In what follows, we say that a function
  $\vfi:\Xi_0=\Xi\setminus\Gamma\to\RR$ has
  \emph{logarithmic growth near $\Gamma$} if there is
  $K=K(\vfi)>0$ so that
  $|\vfi|\chi_{B(\Gamma,\delta)}\le K \Delta_\delta\circ h$
  for every small enough $\delta>0$.

\begin{theorem}{\cite[Theorem 5, Section 4, p 1021]{ArGalPac}}
  \label{thm:propert-singhyp-attractor}
  For a $C^2$ vector field $X$ on a compact manifold having
  a connected singular hyperbolic attracting set $\Lambda$,
  there exists $\alpha>0$ and a finite family $\Xi$ of
  adapted cross-sections and a global Poincar\'e map
  $R:\Xi_0\to\Xi$, $R(x)=X^{\tau(x)}(x)$ such that
  \begin{enumerate}
  \item the domain $\Xi_0=\Xi\setminus\Gamma$ is the entire
    cross-sections with a family $\Gamma$ of finitely many
    smooth arcs removed and
    \begin{enumerate}
    \item $\tau:\Xi_0\to[\tau_0,+\infty)$ is a smooth
      function with logarithmic growth near $\Gamma$ and
      bounded away from zero by some uniform constant
      $\tau_0>0$;
    \item there exists a constant $\kappa>0$ so that
      $|\tau(y)-\tau(w)|<\kappa\dist(y,w)$ for all points
      $w\in W^s(y,\Xi)$ in the stable leaf through a point
      $y$ inside a cross-section of $\Xi$;
    \end{enumerate}
  \item We can choose coordinates on $\Xi$ so that the map
    $R$ can be written as $F:\tilde Q\to Q$,
    $F(x,y)=(f(x),g(x,y))$, where $Q= I\times B^{d_s}$,
    $ I=[0,1]$ and $\tilde Q=Q\setminus\tilde\Gamma$ with
    $\tilde\Gamma=\cD\times B^{d_s}$ and
    $\cD=\{c_1,\dots,c_n\}\subset I$ a finite set of points.
  \item The map $f: I\setminus\cD\to I$ is a piecewise
    $C^{1+\alpha}$ map of the interval with finitely many
    branches defined on the connected components of
    $I\setminus\cD$ and has a finite set of ergodic
    a.c.i.m. $\mu^i_f, i=1,\dots,k$ whose ergodic basins
    cover $I$ Lebesgue modulo zero. Also
    \begin{enumerate}
    \item $\inf\{|f'x|:x\in I\setminus\cD\}>2$;
    \item the elements $c$ of $\cD$ have a one-sided critical
      order $0<\alpha(c^\pm)\le1$ as in \eqref{eq:der0};
    \item $1/|Df|$ has universal bounded
      $p$-variation\footnote{See \cite{Ke85} for the
        definition of $p$-variation.}; and
    \item $d\mu^i_f/dm$ has bounded $p$-variation for some
      $p>0$.
    \end{enumerate}
  \item
    The map $g:\tilde Q\to B^{d_s}$ preserves and uniformly contracts
    the vertical foliation
    $\cF=\{\{x\}\times B^{d_s}\}_{x\in I}$ of $Q$: there
    exists $0<\lambda<1$ such that $
    \dist(g(x,y_1),g(x,y_2)) \leq \lambda \cdot |y_1-y_2|$
    for each $y_1, y_2 \in  B^{d_s}$. 
  \item The map $F$ admits a finite family of physical
    ergodic probability measures $\mu^{i}_{F}$ which are
    induced by $\mu^i_f$ in a standard way\footnote{See
      \cite[Section 6.1]{APPV} where it is shown how to get
      $h_*\mu^i_F=\mu^i_f$.}. The Poincar\'e time $\tau$ is
    integrable both with respect to each $\mu^{i}_{f}$ and
    with respect to the two-dimensional Lebesgue area
    measure of $Q$.
  \item The subset\footnote{The subset $\cS$ can be
      identified with $h(\Gamma_0)$ while $\cD\setminus\cS$
      can be identified with $h(\Gamma_1)$}
    $\cS=\{c\in\cD: 0<\alpha(c)<1\}$ is nonempty and
    satisfies
    $\exists T_0\in\ZZ^+\, \forall c\in\cD\setminus\cS\,
    \exists T=T(c)\leq T_0$ so that $f^T(c)\in\cS$,
    $\forall 0<j<T\exists c_j\in\cD\setminus\cS:
    f^j(c)\in\Delta(c_j,\delta_{c_j})$, and we can find
    $\epsilon,\delta>0$ such that
    $f\mid_{\Delta(c,\delta)}$ is a diffeomorphism into
    $\Delta(f^T(c),\epsilon)$. Moreover
    \begin{enumerate}
    \item
      $c\in\cS\ \iff \lim_{t\to
        c}|t-b|^{1-\alpha(c)}\cdot|f'(t)|$ exists and is finite;
    \item $c\in\cD\setminus\cS \iff$ the limit
      $\lim_{t\to c}|f'(t)|$ exists and is finite.
    \end{enumerate}
  \end{enumerate}
\end{theorem}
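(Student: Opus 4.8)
The plan is to obtain items (1)--(5) by adapting the construction of the global Poincar\'e map from \cite{APPV} (as reorganized in \cite[Sections~3--4]{ArMel18} and summarized in \cite[Theorem~5]{ArGalPac}), replacing every use of transitivity by the results of Subsection~\ref{sec:propert-partially-hy}, and then to concentrate the real work on item~(6). In \cite{APPV} transitivity enters only to guarantee that the stable boundaries of the adapted cross-sections lie inside stable manifolds of singularities; here Propositions~\ref{pr:Es}, \ref{pr:Ws}, \ref{pr:cu}, \ref{pr:VE} together with Theorems~\ref{thm:stabltopfol} and~\ref{thm:H} already supply, on a suitably shrunk trapping region $U_0$, a $DX^t$-invariant contracting bundle $E^s$, a topological stable foliation $\cW^s$ with H\"older-$C^1$ absolutely continuous holonomies, a positively invariant center-unstable cone field, and uniform volume expansion along $E^{cu}$. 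With these I would build the finite family $\Xi=\Sigma_1+\dots+\Sigma_m$ of cross-sections foliated by stable leaves with $\partial^s\Sigma_i$ inside stable leaves, and the first-return map $R$ on $\Xi_0=\Xi\setminus\Gamma$, $\Gamma=\Gamma_0\cup\Gamma_1$, exactly as in \cite[Proposition~4.1, Theorem~4.3]{ArMel18}: this yields items (1)(a)--(b) (logarithmic growth of $\tau$ near $\Gamma_0$ because the flow spends time $\approx-\log(\dist(\cdot,\gamma^s_\sigma))$ near a saddle, boundedness away from $0$ from disjointness of the cross-sections), item~(2) with $\tilde\Gamma=\cD\times B^{d_s}$, item~(3)(a)--(b), and item~(4). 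The one-dimensional quotient $f:I\setminus\cD\to I$ with $h\circ R=f\circ h$ and $|f'|>2$ then comes from the H\"older-$C^1$ sectional holonomy of Lemma~\ref{le:hC1+} --- which is precisely why $f$ is only piecewise $C^{1+\alpha}$ with $0<\alpha<1$ --- and the behaviour of $|f'|$ as a power of the distance to $\cS$ is \cite[Proof of Lemma~8.4]{ArMel18}.

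For items (3)(c)--(d) and (5): since $f$ is piecewise $C^{1+\alpha}$ with $|f'(t)|\approx|t-c|^{\alpha(c)-1}$ near each $c\in\cD$, the function $1/|f'|$ extends H\"older-continuously to each closed monotonicity interval with a one-sided modulus of continuity that is $p$-summable for suitable $p>0$, so $1/|Df|$ has universally bounded $p$-variation; the Lasota--Yorke/Keller theory for piecewise expanding maps of bounded $p$-variation \cite{Ke85} then gives finitely many ergodic a.c.i.m.\ $\mu^i_f$, with densities of bounded $p$-variation, whose basins cover $I$ Lebesgue modulo zero. Each $\mu^i_f$ lifts to a physical measure $\mu^i_F$ for $F$ by the saturation procedure of \cite[Section~6.1]{APPV}, and integrability of $\tau$ with respect to $\mu^i_f$ and to the area of $Q$ follows from the logarithmic growth of $\tau$ near $\Gamma$ in (1)(a) together with the (at worst polynomial) bound for the Lebesgue measure of $\delta$-neighborhoods of the finitely many arcs of $\Gamma$.

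The genuinely new content is item~(6), which I would split in two. Nonemptiness of $\cS=h(\Gamma_0)$ is guaranteed by the presence of a Lorenz-like singularity $\sigma$ in $\Lambda$ (the relevant case, by Theorem~\ref{thm:hyplemma}, Remark~\ref{rmk:no-recur-non-Lorenz} and Remark~\ref{rmk:basicsets}): then $\gamma^s_\sigma$ is genuinely crossed by returning orbits, so $\Gamma_0\neq\emptyset$. The \emph{discontinuities visit singularities} property is the main obstacle. It forces us to choose the adapted cross-sections so that $\partial^s\Sigma_j$ lies in the local stable manifold of a Lorenz-like singularity for every $j$ --- automatic from a dense orbit in the transitive case of \cite{APPV}, but here obtained by propagating the stable lamination of $\sigma$ to adapted cross-sections throughout $\Lambda$ using connectedness, which is exactly Corollary~\ref{cor:singular-adapted-section} in Subsection~\ref{sec:stable-manifolds-sin} (and is the only place connectedness is used). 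Granting this, if $c\in\cD\setminus\cS=h(\Gamma_1)$ then $c=h(x_0)$ with $R(x_0)\in\partial^s\Sigma_j\subset W^s_{\sigma'}$ for a Lorenz-like $\sigma'$, so the forward Poincar\'e orbit of $R(x_0)$ is attracted to $\sigma'$; tracking it through at most $T_0$ cross-sections ($T_0$ finite since $\#\Xi<\infty$ and $\#S<\infty$) shows that the one-sided branch of $f$ at $c$ carries $\Delta(c,\delta)$ diffeomorphically, one iterate at a time, into $\Delta(f^T(c),\epsilon)$ with $f^T(c)\in\cS$ and each intermediate iterate $f^j(c)$, $0<j<T$, in a one-sided neighborhood of another combinatorial discontinuity $c_j\in\cD\setminus\cS$; shrinking $\delta$ makes the final image as small as required. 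Finally, items (6)(a)--(b) are immediate from \eqref{eq:der0}: $c\in\cS$ means $\alpha(c)<1$, so $|t-c|^{1-\alpha(c)}|f'(t)|\approx1$ has a finite limit, while $c\in\cD\setminus\cS$ means $\alpha(c)=1$, so $|f'(t)|$ is bounded with a finite limit, and the two cases are mutually exclusive. The full construction and the verification of item~(6) are carried out in Section~\ref{sec:general-setting}, culminating in Remark~\ref{rmk:discontsing}.
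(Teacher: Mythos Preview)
Your proposal is correct and follows essentially the same route as the paper: items (1)--(5) are obtained by citing the construction in \cite{APPV,ArMel18,ArGalPac} together with the transitivity-free ingredients of Subsection~\ref{sec:propert-partially-hy} and Lemma~\ref{le:hC1+}, while item~(6) is the new content, obtained via Corollary~\ref{cor:singular-adapted-section} (density of stable manifolds of singularities, the only place connectedness is used) and finalized in Remark~\ref{rmk:discontsing}. Your identification of the key step --- choosing the stable boundaries $\partial^s\Sigma_j$ inside $W^s(\sigma)$ for Lorenz-like $\sigma$ so that each $c\in\cD\setminus\cS$ is sent in bounded time to $\cS$ --- matches the paper's argument exactly.
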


\begin{remark}
  \label{rmk:lebngbhdD}
  Due to the dimension and codimension of $\cD$ as a
  submanifold of the quotient $I=\Xi/W^s(\Xi)$ together with
  logarithmic growth of $\tau$ near $\Gamma$, there exists
  $C_d,d>0$ such that for all small $\rho>0$
$
\leb\{x\in M: \dist(x,\cD)<\rho\}\le C_d \rho^d,
$
  that is, the Lebesgue measure of neighborhoods of $\cD$ is
  comparable to a power of the distance to $\cD$. In our
  sectional-hyperbolic case $d=1$.
\end{remark}


\subsection{Equivalence between SRB/physical measure and
  equilibrium state}
\label{sec:equival-between-srbp}

We now prove Theorem~\ref{thm:appv-attracting} showing that
in singular-hyperbolic attracting sets for a $C^2$ smooth
flow we can characterize physical/SRB measures in the same
way as in hyperbolic attracting sets.

\begin{proof}[Proof of
  Theorem~\ref{thm:appv-attracting}]

  Let $\mu$ be a $X$-invariant probability measure supported
  in the singular-hyperbolic attracting set $\Lambda$.

  We start by recalling that from
  Theorem~\ref{thm:propert-singhyp-attractor} we can follow
  \cite[Sections 6-8]{APPV} to
  obtain~\eqref{eq:appv-attracting}. More precisely: the
  arguments in \cite[Sections 6-8]{APPV} show that from the
  existence of a finite family $\Xi$ of adapted
  cross-sections and a global Poincar\'e map satisfying
  items (1-5) of
  Theorem~\ref{thm:propert-singhyp-attractor}, we induce
  finitely many physical/SRB ergodic probability measures
  $\mu_1,\dots,\mu_k$ for the flow (one for each measure
  $\mu_i^f$ given in
  Theorem~\ref{thm:propert-singhyp-attractor}) whose ergodic
  basins cover the trapping region of the attracting set
  $\Lambda$ and if, in addition, $\Lambda$ is transitive,
  then these ergodic physical measures cannot be distinct;
  for this final reasoning see~\cite[Subsection
  7.1]{APPV}. This proves item (1) of the statement of
  Theorem~\ref{thm:appv-attracting}.

  To prove item (2), we note that, since (i)
  $\int\log|\det DX^1\mid_{E^{cu}}|\,d\mu_i>0$ by
  singular-hyperbolicity, (ii) each $\mu_i$ is an ergodic
  physical/SRB measure, (iii) the Lyapunov exponent along
  the direction of the flow is zero and (iv) this direction
  is contained in the central direction then, by the
  characterization of measures satisfying the Entropy
  Formula \cite{LY85}, we get
  \begin{align}\label{eq:entroform}
  h_{\mu_i}(X^1)=\int\lambda^+(x)\,d\mu_i(x)= \int\log|\det
  DX^1\mid_{E^{cu}}|\,d\mu_i
  \end{align}
  where $\lambda^+(x)$ is the positive Lyapunov exponent
  along the orbit of $x\in\Lambda$ in the direction of
  $E^{cu}_x$. That is, each $\mu_i$ satisfies property (a):
  it is an equilibrium state with respect to the central
  Jacobian, $i=1,\dots,k$.

  We now prove the implication $(c)\implies(a)$.

  If the basin $B(\mu)$ of $\mu$ has positive
  Lebesgue measure, then by invariance $B(\mu)\cap U$ must
  have positive Lebesgue measure. So we get a Lebesgue
  modulo zero decomposition
  $B(\mu)\cap U=U\cap\big(\sum_{i=1}^k B(\mu)\cap
  B(\mu_i)\big)$. By definition of
  physical measure, this means that for each continuous
  observable $\vfi:U\to\RR$
  \begin{align*}
    \int\vfi\,d\mu
    &=
    \frac1{\leb(U)}\int_U \int\vfi\,
    d\left(\lim_{n\to+\infty}\frac1n\sum_{j=0}^{n-1}\delta_{f^jx}\right)
      \,d\leb(x)
      \\
      &=
      \sum_{i=1}^k \frac{\leb(B(\mu)\cap B(\mu_i)\cap
        U)}{\leb(U)}\int \vfi\,d\mu_i,
  \end{align*}
  where the limit above is in the weak$^*$ topology of the
  probability measures of the manifold.  Hence, we obtain
  $\mu=\sum_{i=1}^k \frac{\leb(B(\mu)\cap B(\mu_i)\cap
    U)}{\leb(U)}\mu_i$ and $\mu$ is a convex linear
  combination of the ergodic physical/SRB measures provided
  by item (1). In particular, $\mu=\mu_i$ for some
  $i\in\{1,\dots,k\}$ by ergodicity and $\mu$ is a equilibrium
  state with respect to the central Jacobian.

  Next we prove that $(a)\implies(b)\implies(c)$. The
  assumption (a) implies, since $\mu$ is ergodic, the
  Lyapunov exponent along the flow direction is zero and
  this direction is contained in the two-dimensional central
  direction, that is, \eqref{eq:entroform} is true for $\mu$ in
  the place of $\mu_i$.

  Hence, by the work of Ledrappier \cite[Theorem
  2,7]{Ledrappier1984}, we conclude that $\mu$ is a $SRB$
  measure: $\mu$ has absolutely continuous disintegration
  along unstable manifolds $W^u(x)$ for
  $\mu$-a.e. $x\in\Lambda$ if the Entropy Formula
  \eqref{eq:entroform} holds.\footnote{The proof
    of~\cite[Theorem 2.7]{Ledrappier1984} based on a
    combination of \cite{Ru78} with
    \cite{ledrappier_strelcyn_1982} does not assume that the
    measure has only non-zero Lyapunov exponents: it also
    applies to non-uniformly partially hyperbolic measures.}
  Hence, by invariance of the unstable manifolds and
  smoothness of the flow, we see that $\mu$ has absolutely
  continuous disintegration along the central-unstable
  manifolds $W^{cu}(x)$ for $\mu$-a.e. $x$, where
  $W^{cu}(x)=\cup_{t\in\RR}X_tW^u(x)$. This shows that
  $(a)\implies(b)$.

  Let $\Lambda_1$ be a full $\mu$-measure subset of
  $\Lambda$ where the previous absolutely continuous
  disintegration property holds.  Since $\Lambda$ is a
  attracting set, then $\Lambda$ contains all unstable
  manifolds\footnote{For if $y\in W^u(x)\cap U$ and
    $x\in\Lambda$, then
    $d(X^{-t}(y),X^{-t}(y))\xrightarrow[t\to+\infty]{}0$
    thus $X^{-t}(y)\subset U$ for all $t\ge0$, that is,
    $y\in \cap_{t\ge0}X^t(U)=\Lambda$.} and so all the
  central-unstable manifolds $W^{cu}(x)$ for
  $x\in\Lambda_1$. We recall that $W^{cu}(x)$ is tangent to
  $E^c(x)$ at $x\in\Lambda_1$.

In addition, because $\Lambda$ has a partially hyperbolic
splitting, \emph{every point} $y$ of $\Lambda$ admits a
stable manifold $W^s(y)$ which is tangent to $E^s(y)$ at
$y$. Thus $W^s(y)$ are transverse to $W^{cu}(x)$ for all
$y\in W^{cu}(x)$ and $x\in\Lambda_1$. 

Moreover, the future time averages of $z\in W^s(y)$ and $y$
are the same for all continuous observables. Also, the
future time averages of $y\in A_x\subset W^u(x)$ and $x$ are
the same for all continuous observables and some subset
$A_x$ of $W^{cu}(x)$ with positive area (by the absolutely
continuous disintegration) for each $x\in\Lambda_1$. Hence
the subset
\begin{align*}
  B=\{W^s(y): y\in A_x, x\in \Lambda_1\}
\end{align*}
is contained in the ergodic basin of the measure $\mu$.

For $C^2$ smooth partially hyperbolic flows it is well-known
that the strong-stable foliation $\{W^s(x)\}_{x\in\Lambda}$
is an absolutely continuous foliation of $U$; recall
Theorem~\ref{thm:H} and see~\cite{Pesin2004}. In particular,
the set $B$ has positive Lebesgue measure. Hence
$\leb(B(\mu))\ge\leb(B)>0$ and $\mu$ is a physical measure.

This shows that $(b)\implies(c)$ and completes the proof of
item (2).

Finally, the characterization of $\EE$ is a consequence of
the equivalence obtained in item (2): using Ergodic
Decomposition \cite{Man87} applied to both sides of the
Entropy Formula
\begin{align*}
  h_\mu(X^1)&=
              \int\log|\det DX^1\mid_{E^{cu}}|\,d\mu\iff\\
  \int h_{\mu_x}(X^1)\,d\mu(x)&=\int
  \int\log|DX^1\mid_{E^{cu}}|\,d\mu_x \,d\mu(x)
\end{align*}
together with Ruelle's Inequality
$h_{\mu_x}(X^1)\le\int\log|DX^1\mid_{E^{cu}}|\,d\mu_x$
ensures that $\mu_x\in\EE$ for $\mu$-a.e $x$ and so
$\mu_x=\mu_{i(x)}$ for some $i(x)\in\{1,\dots,k\}$, since
$\mu_x$ is ergodic for $\mu$-a.e. $x$. Thus
$\mu$ is a linear convex combination of $\mu_1,\dots,\mu_k$,
completing the proof of item (3).
\end{proof}


\subsection{Density of stable manifolds of singularities}
\label{sec:denstable-manifolds-sin}

The following is essential to obtain the property in item
(6) of Theorem~\ref{thm:propert-singhyp-attractor}.

From Subsection~\ref{sec:global-poincare-retu} the
one-dimensional map $f:I\setminus\cD\to I$ can be written
$f:\sum_j I_j \to I:=\overline{\sum_j I_j}$, where the $I_j$
are the finitely many connected components of
$I\setminus\cD$ (that is, open subintervals).

\subsubsection{Topological properties of the dynamics of $f$}
\label{sec:topological-dynamics}

The following provides the existence of a special class of
periodic orbits for $f$.

\begin{proposition}
  \label{pr:intervalocresce}
  Let $f:\sum_j I_j\to I$ be a piecewise $C^1$ expanding map
  with finitely many branches $I_1,\dots,I_l$ such that each
  $I_j$ is a non-empty open interval, $|Df\mid
  I_j|\ge\sigma>2$ and $I\setminus(\sum_j I_j)$ is finite.

  Then, for each small $\delta>0$ there exists $n=n(\delta)$
  such that, for every non-empty open interval
  $J\subset\sum_j I_j$ with $|J|\ge\delta$, we can find
  $0\le k\le n$, a sub-interval $\hat J$ of $J$ and
  $1\le j \le l$ satisfying
  \begin{align*}
    f^k\mid \hat J : \hat J\to I_j \quad\text{is a diffeomorphism}.
  \end{align*}
  In addition, $f$ admits finitely many periodic orbits
  $\cO(p_1),\dots,\cO(p_k)$ contained in $\sum_j I_j$ with
  the property that every non-empty open interval
  $J\subset\sum_j I_j$ admits an open sub-interval $\hat J$,
  a periodic point $p_j$ and an iterate $n$ such that
  $f^{n}\mid\hat J$ is a diffeomorphism onto a neighborhood
  of $p_j$.
\end{proposition}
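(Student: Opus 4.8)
The plan is to prove the two assertions of Proposition~\ref{pr:intervalocresce} separately: first the ``a bounded number of iterates makes a short interval cover a whole branch'' statement, and then deduce the existence of the finite family of periodic orbits with the universal-recurrence property.

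For the first part, I would use expansion plus finiteness of the branch set. Since $|Df\mid I_j|\ge\sigma>2$ on each of the finitely many branches $I_1,\dots,I_l$, as long as an open interval $J\subset\sum_j I_j$ is contained in a single branch $I_j$, its image $f(J)$ is again an open interval with $|f(J)|\ge\sigma|J|$. The only obstruction to iterating this is that $J$ (or one of its successive images) may fail to lie inside a single branch, i.e. it may straddle one of the finitely many endpoints in $I\setminus\sum_j I_j$. The key observation is: if $f^i(J)$ is not contained in one branch, then it contains an endpoint of some $I_j$ in its interior, hence it contains a one-sided subinterval of length $\ge\frac12|f^i(J)|$ that lies in a single branch; replacing $J$ by the corresponding subinterval $\hat J$ of $J$ we lose at most a factor $2$ in length but are now inside a branch. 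So, starting from $|J|\ge\delta$, at each step either we are inside a branch and length multiplies by $\ge\sigma$, or we cut down by $\le\frac12$ (and then are inside a branch); since $\sigma>2$, the net growth per two steps is at least $\sigma/2>1$, so after at most $n=n(\delta):=\lceil \log(|I|/\delta)/\log(\sigma/2)\rceil$ or so iterates the current subinterval has length $\ge\min_j|I_j|$ while lying inside a single branch $I_j$; one more remark (a subinterval of length $\ge|I_j|$ that sits inside $I_j$ must equal $I_j$, hence was already all of $I_j$ at the previous step — or more carefully, track ``the image is all of some $I_j$'') gives $f^k\mid\hat J:\hat J\to I_j$ a diffeomorphism for some $0\le k\le n$. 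I would phrase the induction so that the stopping criterion is ``$f^k(\hat J)\supseteq I_j$ for some $j$'', which is cleaner.

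For the second part, fix $\delta>0$ and the corresponding $n=n(\delta)$ from the first part. For every open $J\subset\sum_j I_j$ with $|J|\ge\delta$ we now know there is $k\le n$, a subinterval, and a branch $I_j$ with $f^k$ mapping a subinterval diffeomorphically onto $I_j$. Now take $\delta$ small enough that every $I_j$ itself has length $\ge\delta$ (possible: there are finitely many branches and each is a nonempty open interval, so $\delta_0:=\min_j|I_j|>0$, take $\delta\le\delta_0$). Then from any branch $I_j$ we can, in $\le n$ iterates, find a subinterval of $I_j$ mapped diffeomorphically onto some branch $I_{j'}$; iterating this among the finitely many branches, by the pigeonhole principle we return to some branch, producing a nested sequence of subintervals whose $f^N$-image (for the return time $N$) covers the original branch. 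This monotone branch of $f^N$ that overflows its domain has a fixed point $p$ by the intermediate value theorem (a continuous monotone map of an interval into itself whose image contains the interval has a fixed point), and $|(f^N)'|>\sigma^N>1$ there makes it a repelling periodic point; its orbit stays in $\sum_j I_j$. Doing this for each starting branch and collecting the (finitely many, since the combinatorics of branch-to-branch transitions in $\le n$ steps is finite) periodic orbits obtained gives the list $\cO(p_1),\dots,\cO(p_k)$. Finally, given \emph{any} open $J\subset\sum_j I_j$ (no length restriction), expansion gives an iterate $m$ with $|f^m(J')|\ge\delta$ for a subinterval $J'\subset J$ (again using the branch-cutting trick to stay inside branches while growing), so $f^m(J')$ contains a branch up to the same kind of argument, and then we append the orbit-closing itinerary above to get $f^{n}\mid\hat J$ a diffeomorphism onto a neighborhood of some $p_j$.

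The main obstacle, and the place requiring genuine care rather than routine estimation, is the bookkeeping in the branch-cutting argument: one must make sure that passing to subintervals to stay inside single branches does not destroy the lower length bound needed to keep the expansion working, and that the ``number of iterates'' bound $n(\delta)$ is genuinely uniform over all $J$ with $|J|\ge\delta$ and not secretly dependent on the itinerary. The clean way to handle this is to set up the induction on the quantity $|f^i(\hat J_i)|$ for a carefully chosen nested sequence $\hat J_0\supset\hat J_1\supset\cdots$ and show it grows at a definite geometric rate (because of $\sigma>2$) until it exceeds $\min_j|I_j|$, at which point we must have overflowed a branch; the finiteness of the endpoint set $I\setminus\sum_j I_j$ is exactly what guarantees the ``lose at most a factor $2$'' step, since a subinterval can meet at most one endpoint in its interior if it is short relative to the gaps — or, if it meets several, it is already longer than needed and we are done. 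I would also remark that this is essentially the classical argument that piecewise expanding maps with $\inf|Df|>2$ are ``locally eventually onto'' up to passing to subintervals, adapted to the non-Markov, merely piecewise-smooth setting here.
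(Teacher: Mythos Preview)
Your argument is correct. The paper does not give its own proof here but simply cites \cite[Lemma~6.30]{AraPac2010}; your cut-and-grow induction (using $\sigma>2$ so that the net growth factor $\sigma/2>1$ survives each branch-endpoint cut) followed by pigeonhole among the $l$ branches to locate periodic cycles is precisely the standard argument for this type of statement, and is almost certainly what appears in the cited reference.
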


\begin{proof}
  See \cite[Lemma 6.30]{AraPac2010}.
\end{proof}

\begin{remark}\label{rmk:denstableF}
  \begin{enumerate}
  \item For the bidimensional map $F$ this shows that there
    are finitely many periodic orbits
    $\cO(P_1), \dots,\cO(P_k)$ for $F$ so that
    $\pi(\cO(P_i))=\cO(p_i), i=1,\dots,k$, where
    $\pi:Q\to I$ is the projection on the first
    coordinate. Moreover, the union of the stable manifolds
    of these periodic orbits is dense in $Q$. See
    \cite[Section 6.2]{AraPac2010} for details.
  \item This also implies that the stable manifolds of the
    periodic orbits $P_i$ obtained above are dense in a
    neighborhood $U_0$ of $\Lambda$.

    Indeed, we can write the flow $X_t$ on a neighborhood of
    $\Lambda$ as a suspension flow over $F$; see
    \cite{APPV}. Then the orbit of each $P_i$ is periodic
    and hyperbolic and $W^s_G(P_i)$ is the suspension of
    $W^s_F(P_i)$. Therefore, the density of
    $\cup_iW^s_F(P_i)$ in $Q$ implies the density of
    $\cup_iW^s_G(P_i)$ in a neighborhood $U_0$ of $\Lambda$.
  \end{enumerate}
\end{remark}

\subsubsection{Ergodic properties of $f$}
\label{sec:ergodic-properties}

The map $f$ is piecewise expanding with H\"older
derivative which enables us to use strong results on
one-dimensional dynamics.

\subsubsection*{Existence and finiteness of acim's}
\label{sec:existence-finiten-ac}

It is well known \cite{HK82} that $C^1$ piecewise expanding
maps $f$ of the interval such that $1/|Df|$ has bounded
variation have absolutely continuous invariant probability
measures whose basins cover Lebesgue almost all points of
$I$.

Using an extension of the notion of bounded variation this
result was extended in \cite{Ke85} to $C^1$ piecewise expanding maps
$f$ such that $g=1/|f'|$ is $\alpha$-H\"older for some
$\alpha\in(0,1)$.  In addition from \cite[Theorem 3.3]{Ke85}
there are finitely many ergodic absolutely continuous
invariant probability measures $\upsilon_1,\dots,\upsilon_l$
of $f$ and every absolutely continuous invariant probability
measure $\upsilon$ decomposes into a convex linear
combination $\upsilon=\sum_{i=1}^l a_i\upsilon_i$. From
\cite[Theorem 3.2]{Ke85} considering any subinterval
$J\subset I$ and the normalized Lebesgue measure
$\lambda_J=(\lambda\mid J)/\lambda(J)$ on $J$, every
weak$^*$ accumulation point of
$n^{-1}\sum_{j=0}^{n-1} f_*^j(\lambda_J)$ is an absolutely
continuous invariant probability measure $\upsilon$ for $f$
(since the indicator function of $J$ is of generalized
$1/\alpha$-bounded variation). Hence the basin of the
$\upsilon_1,\dots,\upsilon_l$ cover $I$ Lebesgue modulo
zero:
$ \lambda\big(I\setminus(B(\upsilon_1)+\dots+
B(\upsilon_l)\big) =0. $

Note that from \cite[Lemma 1.4]{Ke85} we also know that
\emph{the density $\varphi$ of any absolutely continuous
  $f$-invariant probability measure} \emph{is bounded from
  above}.

\subsubsection*{Absolutely continuous measures and
   periodic orbits}
\label{sec:absolut-contin-measu}

Now we relate some topological and ergodic properties.

\begin{lemma}\label{le:persuppacim}
  For each periodic orbit $\cO(p_i)$ of $f$ given by
  Proposition~\ref{pr:intervalocresce}, there exists some
  ergodic absolutely continuous $f$-invariant probability
  measure $\upsilon_j$ such that $p_i\in\supp\upsilon_j$,
  and vice-versa.
\end{lemma}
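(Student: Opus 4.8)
The plan is to prove both directions of the equivalence using the mixing/exactness properties of $f$ on pieces of the interval provided by Proposition~\ref{pr:intervalocresce}, together with the structure of absolutely continuous invariant measures coming from \cite{Ke85}.

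\emph{First direction: every $\cO(p_i)$ lies in the support of some $\upsilon_j$.} Let $\cO(p_i)$ be one of the periodic orbits from Proposition~\ref{pr:intervalocresce}, of period $\tau$, and fix a small interval $J\ni p_i$ contained in $\sum_j I_j$. Since the basins of $\upsilon_1,\dots,\upsilon_l$ cover $I$ Lebesgue modulo zero, there is some $j$ with $\lambda\big(J\cap B(\upsilon_j)\big)>0$. I would argue that $p_i\in\supp\upsilon_j$ by localizing: take any open interval $V\ni p_i$; by Proposition~\ref{pr:intervalocresce} applied to $J$, some iterate $f^{n}$ maps a subinterval $\hat J\subset J$ diffeomorphically onto a neighborhood of $p_i$, hence onto an interval containing some open $W$ with $p_i\in W\subset V$ (shrinking $\hat J$ if needed and using $|Df|>2$ to control the image). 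The backward branch of $f^n$ carrying $W$ to $\hat J$ has image of positive measure inside $J\cap B(\upsilon_j)$ for a positive-measure subset of points (again because $B(\upsilon_j)$ has positive measure in $J$ and $f$ is piecewise smooth with bounded distortion on branches of bounded length; alternatively, invariance of $B(\upsilon_j)$ under $f$ shows $f^n(J\cap B(\upsilon_j))$ meets $W$ in positive measure). Therefore $W\cap B(\upsilon_j)$ has positive Lebesgue measure; since $\upsilon_j$ is absolutely continuous with density bounded above and (by ergodicity and the covering property) positive on its support region, and $B(\upsilon_j)\subset\supp\upsilon_j$ up to measure zero, we get $\upsilon_j(W)>0$, hence $\upsilon_j(V)>0$. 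As $V$ was an arbitrary neighborhood of $p_i$, this gives $p_i\in\supp\upsilon_j$.

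\emph{Second direction: every $\upsilon_j$ contains some $\cO(p_i)$ in its support.} Fix $\upsilon_j$. Its support contains an open interval $J$ (since $\upsilon_j$ is absolutely continuous with bounded density and $\supp\upsilon_j$ is forward invariant; more precisely, $\upsilon_j$ being acim, there is an interval on which its density is bounded below, so $J\subset\supp\upsilon_j$ with $|J|>0$). By the second part of Proposition~\ref{pr:intervalocresce}, there is an open subinterval $\hat J\subset J$, a periodic point $p_i$ from the distinguished finite list, and an iterate $n$ with $f^n|_{\hat J}$ a diffeomorphism onto a neighborhood of $p_i$. Since $\upsilon_j(\hat J)>0$ and $\upsilon_j$ is $f$-invariant, $f^n_*\upsilon_j=\upsilon_j$ assigns positive mass to every neighborhood of $p_i$: indeed for any open $V\ni p_i$, the set $(f^n|_{\hat J})^{-1}(V\cap f^n(\hat J))$ is a nonempty open subinterval of $\hat J$, hence has positive $\upsilon_j$-measure (density bounded below on $\hat J\subset\supp\upsilon_j$), and its image under $f^n$ lies in $V$; invariance gives $\upsilon_j(V)\ge\upsilon_j\big((f^n|_{\hat J})^{-1}(V)\big)>0$. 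Thus $p_i\in\supp\upsilon_j$.

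\emph{Main obstacle.} The delicate point in both directions is passing rigorously from ``positive Lebesgue measure of $J\cap B(\upsilon_j)$'' (or ``positive $\upsilon_j$-mass of $\hat J$'') to ``$\upsilon_j$ charges every neighborhood of $p_i$'', which requires knowing that the density of $\upsilon_j$ is bounded below on the relevant intervals, or equivalently that $\supp\upsilon_j$ behaves well under the finitely many branch maps. Here I would invoke the structure theory from \cite[Lemma 1.4 and Theorems 3.2--3.3]{Ke85}: the support of each ergodic acim $\upsilon_j$ is, up to measure zero, a finite union of intervals cyclically permuted by $f$, on which the density is bounded away from zero; combined with the bounded-distortion on branches of bounded length and Proposition~\ref{pr:intervalocresce}, the measure-theoretic bookkeeping closes. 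One must also check that the periodic orbits produced in the two directions are among the same finite distinguished list $\cO(p_1),\dots,\cO(p_k)$, which is guaranteed by Proposition~\ref{pr:intervalocresce} since both arguments use exactly that list.
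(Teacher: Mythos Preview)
Your second direction (every $\upsilon_j$ contains some $\cO(p_i)$ in its support) is correct and is essentially the paper's argument that the set $E=\{i:\exists j,\ p_i\in\supp\upsilon_j\}$ is nonempty: pick $J\subset\inter(\supp\upsilon_j)$, use Proposition~\ref{pr:intervalocresce} to push a subinterval onto a neighborhood of some $p_i$, and invoke forward invariance of $\supp\upsilon_j$.

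The first direction, however, has a genuine gap. Your argument hinges on the assertion ``$B(\upsilon_j)\subset\supp\upsilon_j$ up to measure zero'', which is false in general: the ergodic basin of an a.c.i.m.\ is typically much larger than its support (any point that is eventually mapped into $\supp\upsilon_j$ lies in $B(\upsilon_j)$). So knowing that $\lambda(J\cap B(\upsilon_j))>0$ for a neighborhood $J$ of $p_i$ does \emph{not} yield $\upsilon_j(V)>0$ for small neighborhoods $V$ of $p_i$; points of $B(\upsilon_j)$ near $p_i$ may simply leave that region after finitely many iterates and only then enter $\supp\upsilon_j$. A secondary issue: Proposition~\ref{pr:intervalocresce} applied to $J$ produces a neighborhood of \emph{some} $p_{i'}$ from the list, not necessarily the given $p_i$; the statement you want does hold (use periodicity of $p_i$ and expansion directly), but not by citing the proposition.

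The paper closes this direction by a different, topological argument. Having shown $E\neq\emptyset$, it sets $B=\{1,\dots,k\}\setminus E$ and introduces a relation $i\to j$ meaning the preorbit of $\cO(p_j)$ accumulates on $\cO(p_i)$. A short claim shows that $i\to j$ and $j\in E$ force $i\in E$ (if some $f^{-m}(p_j)$ lies in $\inter(\supp\upsilon_h)$ and preimages of $p_j$ accumulate $p_i$, then $p_i\in\supp\upsilon_h$ by closedness and invariance). Hence no index in $B$ links to $E$. But Proposition~\ref{pr:intervalocresce} makes the union of preorbits of all $\cO(p_i)$ dense in $I$, so the closure $\overline{W}$ of the preorbits of elements of $B$ is a compact $f$-invariant set with nonempty interior; it must therefore contain the support of some $\upsilon_j$, giving a preorbit of $B$ inside $\inter(\supp\upsilon_j)$ and contradicting $B\cap E=\emptyset$. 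This avoids entirely any comparison between $B(\upsilon_j)$ and $\supp\upsilon_j$.
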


\begin{proof}
  Define $E=\{1\le i\le k: \exists 1\le j\le l$ s.t.
  $\cO(p_i)\in\supp\upsilon_j\}$. Note that since
  $\inter(\supp\upsilon_1)$ is nonempty, then for an
  interval $J\subset\inter(\supp\upsilon_1)$ we can by
  Proposition~\ref{pr:intervalocresce} find another interval
  $\hat J\subset J$ and $n>1$ so that
  $f^n\mid_{\hat J}:\hat J\to V(p_i)$ is a diffeomorphism to
  a neighborhood $V(p_i)$ of $p_i$, for some $1\le i\le
  k$. The invariance of $\supp\upsilon_1$ shows that
  $p_i\in\inter(\supp\upsilon_1)$ and $E\neq\emptyset$.

  We set $B=\{1,\dots,k\}\setminus E$ and show that
  $B=\emptyset$. For that, we write $i\to j$ if the preorbit
  of $\cO(p_j)$ accumulates $\cO(p_i)$.

  \begin{claim}\label{cl:toE}
    If $i\to j$ and $j\in E$, then $i\in E$.
  \end{claim}

  Hence orbits in $B$ cannot link to orbits in $E$. Since
  the union of the preorbits of $\cO(p_i)$
  are dense in $I$, then $B$ can only be
  accumulated by preorbits of elements of $B$. Thus, the
  union $W$ of the preorbits of the elements of $B$ is
  $f$-invariant and dense in a neighborhood of the orbits of
  the elements of $B$. Therefore, $\ov{W}$ is a compact
  $f$-invariant set with nonempty interior of $I$ and so
  $\ov{W}$ contains the support of some
  $\upsilon_j$. Consequently, the preorbit of some element
  of $B$ intersects $\inter(\supp\upsilon_j)$ and so $B\cap
  E\neq\emptyset$. This contradiction proves that $B$ must
  be empty, except for the proof of the claim.

  \begin{proof}[Proof of Claim~\ref{cl:toE}]
    There exists $x_n\xrightarrow[n\to\infty]{}p_i$ so that
    $x_n\in\cup_{m\ge0}f^{-m}p_j$ and then we can find $V_n$
    neighborhood of $x_n$ and $m_n>1$ such that
    $f^{m_n}\mid_{V_n}: V_n\to V(p_i)$ is a diffeomorphism
    onto a neighborhood $V(p_j)$ of $p_j$.

    But $V(p_j)\cap\supp\upsilon_h\neq\emptyset$ for some
    $1\le h\le l$ and so, by invariance of
    $\supp\upsilon_h$, there are points of $\supp\upsilon_h$
    in $V_n$, for all $n\ge1$. This shows that $p_i$ is a
    limit point of $\supp\upsilon_h$, and so $i\in E$. This
    proves the claim and finishes the proof of the lemma.
  \end{proof}

\end{proof}


\subsubsection{Stable manifolds of singularities}
\label{sec:stable-manifolds-sin}

We are now ready to obtain the property in item (6) of
Theorem~\ref{thm:propert-singhyp-attractor}.

\begin{theorem}
  \label{thm:densestablesing}
  The union of the stable manifolds of the singularities in
  a connected singular-hyperbolic attracting set is dense in
  the topological basin of attraction, that is
  \begin{align*}
    U_0\subset\ov{\bigcup_{\sigma\in\Lambda\cap S(G)}W^s(\sigma)}
  \end{align*}
\end{theorem}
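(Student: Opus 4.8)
The plan is to exploit the one-dimensional reduction $f:\sum_j I_j\to I$ together with the structural results already established. First I would observe that a singularity $\sigma$ in the connected singular-hyperbolic attracting set $\Lambda$ is, by Remark~\ref{rmk:no-recur-non-Lorenz}, either accumulated by no forward orbit of $\Lambda$ (the non-Lorenz-like case) or it is Lorenz-like; the hypothesis guarantees at least one Lorenz-like singularity exists. In the reduction, the Lorenz-like singularities correspond to the points of $\cS\subset\cD$ (the local stable manifold $\gamma^s_\sigma$ of such $\sigma$ is swept by the flow into one of the stable disks in $\Gamma_0$, and $h(\Gamma_0)=\cS$), so $W^s(\sigma)$ for $\sigma$ Lorenz-like corresponds, after suspension, to the preimages under the Poincar\'e dynamics of the stable disks forming $\Gamma_0$. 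Thus it suffices to show that $\bigcup_{n\ge0}R^{-n}(\Gamma_0)$ is dense in $\Xi$ (equivalently, that the union of the preimages of $\cS$ under $f$, lifted through the skew-product and then through the suspension, is dense in $U_0$).

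The key step is therefore a one-dimensional density statement: the set $\bigcup_{n\ge0}f^{-n}(\cS)$ is dense in $I$. To prove this I would combine Proposition~\ref{pr:intervalocresce} with Lemma~\ref{le:persuppacim}. Given any nonempty open interval $J\subset\sum_j I_j$, Proposition~\ref{pr:intervalocresce} produces a subinterval $\hat J$ and an iterate $n$ with $f^n\mid\hat J$ a diffeomorphism onto a full branch $I_j$, and in fact onto a neighborhood of one of the finitely many periodic points $p_i$. By Lemma~\ref{le:persuppacim} each $p_i$ lies in the support of some ergodic absolutely continuous invariant measure $\upsilon$, and $\inter(\supp\upsilon)$ is a nonempty open set which, being forward-invariant (up to Lebesgue-null sets) under a topologically exact-like expanding map, must eventually cover a neighborhood of every point of $\cD$, in particular of $\cS$ (here I would use that the branches are onto, $|f'|>2$, and the standard covering argument for piecewise expanding maps, or alternatively invoke the topological exactness type statement underlying Lemma~\ref{le:persuppacim}). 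Pulling back, $J$ contains points of $\bigcup_{n\ge0}f^{-n}(\cS)$, giving density in $I$.

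Next I would transfer this to the skew-product $F$ and then to the flow. Since $h\circ R=f\circ h$ with $h$ the stable holonomy (Lemma~\ref{le:hC1+}), and $h$ maps open sets to open sets, density of $\bigcup f^{-n}(\cS)$ in $I$ yields density of $\bigcup_{n\ge0}R^{-n}(\Gamma_0)$ in $\Xi$; each such preimage is a union of stable leaves contained in stable manifolds of the Lorenz-like singularities (an orbit that hits $\Gamma_0$ lands on the local stable manifold of some $\sigma\in S$ after time $T+1$). Then, writing $X^t$ near $\Lambda$ as a suspension semiflow over $F$ (as in \cite{APPV} and as used in Remark~\ref{rmk:denstableF}), the saturation of a dense subset of $\Xi$ under the flow, together with the fact that every regular orbit in $U_0\setminus V_0$ eventually meets $\Xi$ (and orbits entering $V_0$ either accumulate a singularity or are governed by the local stable/unstable structure), gives density of $\bigcup_{\sigma}W^s(\sigma)$ in $U_0$. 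I would also note that non-Lorenz-like singularities contribute nothing new and cause no obstruction, since their stable manifolds are codimension-one and by Remark~\ref{rmk:no-recur-non-Lorenz} they are not in the closure of recurrent dynamics; the density is already achieved by the Lorenz-like ones.

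The main obstacle I anticipate is the one-dimensional covering step: showing that $\inter(\supp\upsilon)$ — or more directly the forward images of any interval — eventually contains a one-sided neighborhood of each point of $\cS$. The points of $\cD$ are precisely the boundaries of the monotonicity intervals, so a forward image of an interval can fail to contain a two-sided neighborhood of such a point, but a one-sided neighborhood is enough since $\cS$ consists of one-sided critical points $c^\pm$ with their one-sided neighborhoods $\Delta(c^\pm,\delta)$. I would handle this by the standard argument that under $f$ the image of any interval eventually has length bounded below (because $|f'|>2$ forces growth until a branch point is hit, and branch points are finite in number), hence eventually contains a full branch $I_j$, and since the combinatorics are finite some forward iterate of $I_j$ abuts any prescribed $c\in\cS$; alternatively I would simply cite the topological transitivity/exactness properties packaged in \cite[Lemma 6.30 and Section 6.2]{AraPac2010} together with Lemma~\ref{le:persuppacim}. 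Care must also be taken that the stable holonomy $h$ and the suspension identification do not destroy openness or density, but this is routine given Lemma~\ref{le:hC1+} and the logarithmic (hence integrable, non-degenerate) behaviour of $\tau$.
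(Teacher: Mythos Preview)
Your approach differs from the paper's in an interesting way, but the key one-dimensional step has a genuine gap. You claim that $\bigcup_{n\ge0}f^{-n}(\cS)$ is dense in $I$, and your justification is that forward images of any interval eventually cover a full branch $I_j$, and then ``some forward iterate of $I_j$ abuts any prescribed $c\in\cS$'' via a ``standard covering argument''. But the branches of $f$ are \emph{not} assumed to be onto $I$, and the map need not be topologically transitive, let alone exact, on $I$; nothing in Proposition~\ref{pr:intervalocresce} or Lemma~\ref{le:persuppacim} gives this. The support of an ergodic acim is a forward-invariant finite union of intervals, and there is no a priori reason it must approach every point of $\cS$. Concretely, one could imagine a collection of branches mapping among themselves and staying bounded away from $\cS$ --- your argument does not rule this out, and the only tool that does is the \emph{connectedness of $\Lambda$}, which you never invoke.

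The paper's proof works at the flow level precisely to exploit connectedness. It first notes (Remark~\ref{rmk:denstableF}) that the stable manifolds of finitely many periodic orbits $\cO(p_i)$ are dense in $U_0$, and that each $p_i$ lies in the support of some physical/SRB measure $\mu$ (via Lemma~\ref{le:persuppacim}). Then Theorem~\ref{thm:stablepertransversalsing} shows $W^u(p_i)\pitchfork W^s(\sigma)\neq\emptyset$ for some $\sigma\in S$: the key is Lemma~\ref{le:unstablesing}, which takes the closure $H$ of forward iterates of an arc in $W^{uu}(p_0)$ and argues that if $H$ contained no singularity it would be a compact connected \emph{hyperbolic} set (Hyperbolic Lemma), hence would contain its own unstable manifolds, hence be an attracting set, hence equal $\Lambda$ by connectedness --- a contradiction since $\Lambda$ has singularities. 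The Inclination Lemma then gives that each $W^s(p_i)$ is accumulated by $W^s(\sigma)$. Your one-dimensional route could in principle be completed, but only by lifting back to the flow and running exactly this connectedness/Hyperbolic Lemma argument to show that forward orbits of any branch must accumulate a singularity; at that point you are essentially reproducing the paper's Lemma~\ref{le:unstablesing} rather than giving a genuinely one-dimensional proof.
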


We present a proof of Theorem~\ref{thm:densestablesing} in
Subsection~\ref{sec:density-stable-manif}.

One important consequence is the possibility of choosing
adapted cross-sections with a special feature crucial to
obtain item (6) of the statement of
Theorem~\ref{thm:propert-singhyp-attractor}.

\begin{corollary}
  \label{cor:singular-adapted-section}
  Every regular point of a connected singular-hyperbolic
  attracting set admits cross-sections with arbitrarily
  small diameter whose stable boundary is formed by stable
  manifolds of singularities of the set, for every small
  enough $\delta>0$.
\end{corollary}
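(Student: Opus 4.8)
The plan is to deduce the corollary directly from Theorem~\ref{thm:densestablesing}: given a regular point of $\Lambda$, take a small smooth local cross-section through it and then \emph{cut it} along two of its stable plaques that happen to sit inside stable manifolds of singularities; this is possible because such plaques are dense among all plaques of the cross-section.

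Fix a regular point $p$ of the connected singular-hyperbolic attracting set $\Lambda$ and a target diameter $\delta>0$. As in the standard construction of cross-sections for singular-hyperbolic attracting sets (Subsection~\ref{sec:global-poincare-retu} and \cite{APPV}), through $p$ there is a $C^2$ cross-section $\Sigma_0$ to $X$, with $p$ in its interior, $\diam\Sigma_0<\delta$, disjoint from the singularity set $S$, carrying the foliation by stable plaques $W^s_x(\Sigma_0)=\bigcup_{|s|<\epsilon_0}X^s(W^s_x)\cap\Sigma_0$ ($x\in\Sigma_0$) --- which are $C^2$ $d_s$-dimensional disks with one-dimensional leaf space --- together with a continuous crossing curve $\gamma\colon(-1,1)\to\Sigma_0$, tangent to the unstable cone, with $\gamma(0)=p$, meeting each plaque exactly once. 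Write $\xi_t:=W^s_{\gamma(t)}(\Sigma_0)$. The cross-section we seek will be $\Sigma:=\bigcup_{t\in[t_-,t_+]}\xi_t\subset\Sigma_0$ for a convenient choice of $t_-<0<t_+$.

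Two facts are needed. First, for every $\sigma\in S$ the global stable manifold $W^s(\sigma)$ is flow-invariant and saturated by leaves of the stable foliation: if $z\in W^s(\sigma)$ and $y\in W^s_z$, then the uniform contraction along stable leaves (Proposition~\ref{pr:Ws}) and $\dist(X^t y,\sigma)\le\dist(X^t y,X^t z)+\dist(X^t z,\sigma)\to0$ give $y\in W^s(\sigma)$; flow-invariance is immediate from the definition. Combined with flow-invariance, this yields the implication: if $\gamma(t)\in\bigcup_{\sigma\in S}W^s(\sigma)$ then $\xi_t\subset\bigcup_{\sigma\in S}W^s(\sigma)$. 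Second, the set $D:=\{t\in(-1,1):\xi_t\subset\bigcup_{\sigma\in S}W^s(\sigma)\}$ is dense in $(-1,1)$: given a subinterval $J\subset(-1,1)$, the set $\Omega_J:=\bigcup_{|s|<\epsilon_0}X^s\big(\bigcup_{t\in J}W^s_{\gamma(t)}\big)$ is open in $U_0$ (the stable foliation is a genuine topological foliation of $U_0$ by Theorem~\ref{thm:stabltopfol}, and $\Omega_J$ is a flow-box thickening of its restriction to $\gamma(J)$, whose transverse complement is two-dimensional since $d-d_s=2$); by Theorem~\ref{thm:densestablesing} there is $z\in\Omega_J\cap W^s(\sigma)$ for some $\sigma\in S$; writing $z=X^{s_0}(w)$ with $w\in W^s_{\gamma(t)}$, $t\in J$, $|s_0|<\epsilon_0$, flow-invariance of $W^s(\sigma)$ gives $w\in W^s(\sigma)$, and applying the same contraction-plus-triangle-inequality argument to $w$ and $\gamma(t)$ (which lie on one stable leaf) gives $\gamma(t)\in W^s(\sigma)$, hence $t\in D\cap J$.

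Finally, pick $t_-\in D\cap(-1,0)$ and $t_+\in D\cap(0,1)$, which exist by density, and set $\Sigma:=\bigcup_{t\in[t_-,t_+]}\xi_t$. Being a codimension-zero subregion of the $C^2$ disk $\Sigma_0$, $\Sigma$ is a $C^2$ cross-section to $X$, disjoint from $S$, with $\diam\Sigma\le\diam\Sigma_0<\delta$, containing $p=\gamma(0)$ in its interior (since $0\in(t_-,t_+)$), and whose stable boundary is $\partial^s\Sigma=\xi_{t_-}\cup\xi_{t_+}\subset\bigcup_{\sigma\in S}W^s(\sigma)$ by the first fact; a harmless reparametrization of $[t_-,t_+]$ (and passage to a slightly smaller region, if the open model is wanted) puts $\Sigma$ in the form $(-1,1)\times B^{d_s}$ with $\partial^s\Sigma\cong\{\pm1\}\times B^{d_s}$ without altering the stable boundary. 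As $p$ and $\delta$ were arbitrary, the corollary follows. The main obstacle is the density statement for $D$: it amounts to upgrading the ambient density of Theorem~\ref{thm:densestablesing} to density along the one-dimensional transversal $\gamma$, which forces us to work in flow-box together with stable-foliation-box coordinates around $\Sigma_0$ and to use that $\cW^s$ is a bona fide topological foliation so that the saturations $\Omega_J$ are open; a secondary point is to check that $\Sigma$ --- though bounded by ``merely'' $C^2$ stable plaques --- is still an admissible (adapted) cross-section, which holds precisely because $\Sigma\subset\Sigma_0$ is smooth and adaptedness only constrains the stable boundary to lie in stable leaves.
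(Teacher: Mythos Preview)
Your proof is correct and follows essentially the same approach as the paper: use the density of $\bigcup_{\sigma\in S}W^s(\sigma)$ in $U_0$ from Theorem~\ref{thm:densestablesing} to select, inside any small cross-section through the regular point, a pair of stable plaques lying in stable manifolds of singularities, and take these as the stable boundary. The paper's own proof is a single sentence stating exactly this idea; you have simply supplied the details (saturation of $W^s(\sigma)$ by stable leaves, passing from ambient density to density of the parameter set $D\subset(-1,1)$ via an open flow-box/foliation-box argument, and the final construction of $\Sigma$), all of which are sound.
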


\begin{proof}
  Since there is a dense subset of stable leaves in $U_0$
  that are part of $W^s(S)=\bigcup_{\sigma\in S}W^s(\sigma)$, we
  can choose a cross-section $\Sigma$ to $X$ at any point
  $x\in U_0$ with diameter as small as we like having a
  stable boundary contained in $W^s(S)$.
\end{proof}

\begin{remark}
  \label{rmk:discontsing}
  As a consequence of
  Corollary~\ref{cor:singular-adapted-section}, the
  one-dimensional map $f$ satisfies item (6) of
  Theorem~\ref{thm:propert-singhyp-attractor}. 

  Indeed, points in the stable manifold of a singularity
  $\sigma\in\Lambda$ are sent in finite positive time by the
  flow to the local stable manifold of the singularity in a
  cross-section close to the singularity. We can also ensure
  that orbits of such stable boundaries do not contain stable
  boundaries of other cross-sections. For the
  one-dimensional map the corresponding behavior is
  precisely given by item (6) of
  Theorem~\ref{thm:propert-singhyp-attractor}.

  Items (6a-6b) are consequences of the properties of $f$;
  see~\cite[Proof of Lemma 8.4]{ArMel18} for more
  details. Moreover, we can assume that all singular points
  of $f$ are related to Lorenz-like singularities, after
  Remark~\ref{rmk:no-recur-non-Lorenz}.
\end{remark}

\subsubsection{Density of stable manifolds of singularities}
\label{sec:density-stable-manif}

To prove Theorem~\ref{thm:densestablesing} we use
non-uniform hyperbolic theory through the following result.



\begin{theorem}
  \label{thm:stablepertransversalsing}
  Let $\mu$ be an ergodic $f$-invariant hyperbolic
  probability measure supported in a connected
  singular-hyperbolic attracting set $\Lambda$.

  Let us assume that $\mu$ is a $u$-Gibbs state, that
  is, for $\mu$-a.e. $x$ the unstable manifold $W^u_x$ is
  well-defined and Lebesgue-a.e. $y\in W^u_x$ is
  $\mu$-generic:
  $\frac1T\int_0^T\delta_{X_sy}\,ds\xrightarrow[T\to\infty]{w^*}\mu$.

  Then there exists $\sigma\in S$ such that
  $W^u(p)\pitchfork W^s(\sigma)\neq\emptyset$ for every
  $p\in\supp\mu$.
\end{theorem}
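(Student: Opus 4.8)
\textbf{Plan for the proof of Theorem~\ref{thm:stablepertransversalsing}.}

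The plan is to exploit the skew-product structure of the global Poincar\'e map from Theorem~\ref{thm:propert-singhyp-attractor} together with the density statement for stable manifolds of periodic orbits from Remark~\ref{rmk:denstableF} and Proposition~\ref{pr:intervalocresce}. First I would push the $f$-invariant hyperbolic measure $\mu$ down to the one-dimensional quotient map; more precisely, since the flow near $\Lambda$ is a suspension over the two-dimensional map $F$, and $F$ quotients over the stable foliation to the one-dimensional $f:I\setminus\cD\to I$, the measure $\mu$ induces a Poincar\'e section measure $\mu_F$ and then a push-forward $\mu_f=\pi_*\mu_F$, which is an $f$-invariant probability measure. The hyperbolicity assumption and the $u$-Gibbs property translate into $\mu_f$ being an absolutely continuous invariant probability measure for $f$ (using the fact that $\mu$-generic points fill up a positive-Lebesgue subset of an unstable manifold, and unstable manifolds project to intervals in $I$ with the projection carrying Lebesgue to Lebesgue up to the H\"older holonomy Jacobian of Lemma~\ref{le:hC1+} / Theorem~\ref{thm:H}).

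Next I would invoke the structure theory of acim's for $f$ developed in Subsection~\ref{sec:ergodic-properties}: by \cite{Ke85} there are finitely many ergodic acim's $\upsilon_1,\dots,\upsilon_l$, and by Lemma~\ref{le:persuppacim} each of them has in its support one of the distinguished periodic orbits $\cO(p_i)$ furnished by Proposition~\ref{pr:intervalocresce}. Decomposing $\mu_f$ (or rather its ergodic component equal to some $\upsilon_j$) we obtain a periodic point $p_i\in\supp\mu_f$. Lifting back, the hyperbolic periodic orbit $\cO(P_i)$ of $F$, and its suspension, is a hyperbolic periodic orbit of the flow sitting in $\supp\mu$; and by Remark~\ref{rmk:denstableF}(2) the union of the stable manifolds $W^s_G(P_i)$ is dense in a neighborhood $U_0$ of $\Lambda$.

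The heart of the argument, and the step I expect to be the main obstacle, is then to connect the unstable manifold of any point $p\in\supp\mu$ to the stable manifold of a singularity by a transverse intersection. The idea is: the forward $f$-orbit (equivalently, the forward images under $F$, equivalently the forward flow images under $X^t$) of the unstable manifold $W^u(p)$ grows in length under the expansion $|f'|>2$ and hence, by Proposition~\ref{pr:intervalocresce}, an iterate of a subinterval of $\pi(W^u(p))$ covers an entire monotonicity interval $I_j$; this interval has an endpoint in $\cS$, which (by item (6) of Theorem~\ref{thm:propert-singhyp-attractor} and Remark~\ref{rmk:discontsing}) corresponds to a stable leaf $\gamma^s_\sigma$ of a Lorenz-like singularity $\sigma\in S$. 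Thus the forward orbit of $W^u(p)$ accumulates $W^s(\sigma)$, and the accumulation is transverse because $W^u$ lies in the center-unstable cone while $W^s(\sigma)$ is tangent to the uniformly contracting bundle $E^s$, which are uniformly transverse by the dominated splitting; the inclination lemma / $\lambda$-lemma for the hyperbolic periodic orbit $\cO(P_i)$ (whose unstable manifold is accumulated by $W^u(p)$ since $p$ and $P_i$ are both in $\supp\mu$ and $\mu$ is ergodic $u$-Gibbs) then upgrades the accumulation to an actual transverse intersection $W^u(p)\pitchfork W^s(\sigma)$. The delicate points are ensuring the transversality is genuine rather than tangential at the boundary of $I_j$ (handled by the cone-field separation) and making rigorous the passage between the one-dimensional combinatorial statement and the two-dimensional (and flow) geometric statement, using the $C^{1+\epsilon}$ regularity of the holonomy $h$ and the vertical contraction of $g$.
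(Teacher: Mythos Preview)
Your plan has a genuine circularity. You invoke item (6) of Theorem~\ref{thm:propert-singhyp-attractor} (and Remark~\ref{rmk:discontsing}) to conclude that the boundary points of the monotonicity interval $I_j$ are, or are eventually sent to, points of $\cS$ corresponding to local stable manifolds of singularities. But in the logical structure of the paper, item (6) is obtained \emph{from} Corollary~\ref{cor:singular-adapted-section}, which is a corollary of Theorem~\ref{thm:densestablesing}, and the latter is proved precisely by applying Theorem~\ref{thm:stablepertransversalsing} (see Subsection~\ref{sec:density-stable-manif}). At this stage only items (1)--(5) of Theorem~\ref{thm:propert-singhyp-attractor} are available: the endpoints of $I_j$ lying in $\cD\setminus\cS=h(\Gamma_1)$ correspond to stable leaves that are mapped to $\partial^s\Xi$, and without the special choice of cross-sections guaranteed by Corollary~\ref{cor:singular-adapted-section} there is no a priori reason these stable boundaries lie in $W^s(\sigma)$ for any $\sigma\in S$. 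Your growth argument via Proposition~\ref{pr:intervalocresce} therefore only shows that forward iterates of $W^u(p)$ reach some point of $\cD$, which is not enough.

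The paper avoids the quotient map for this step and argues intrinsically with the flow. One first takes a periodic point $p_0\in\supp\mu$ (via Pesin's description of $\supp\mu$ as a homoclinic class; your route through Lemma~\ref{le:persuppacim} would also produce such a $p_0$), fixes a fundamental arc $J\subset W^{uu}(p_0)\setminus\{p_0\}$, and considers the compact connected invariant set $H=\overline{\cup_{t>0}X^t(J)}\subset\supp\mu$. If $H$ contained no singularity, then by the Hyperbolic Lemma (Theorem~\ref{thm:hyplemma}) $H$ would be uniformly hyperbolic and hence would contain the strong-unstable manifold through each of its points; being compact, connected and unstable-saturated inside the trapping region, $H$ would itself be an attracting set, and connectedness of $\Lambda$ forces $H=\Lambda$, contradicting $S\cap\Lambda\neq\emptyset$ (this is Lemma~\ref{le:unstablesing}). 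Thus some $\sigma\in S$ lies in $\overline{W^u(p_0)}$, and a ``falls off the cross-section'' argument (Lemma~\ref{le:fallsoffS}, following \cite[Section~6.3.2]{AraPac2010}) upgrades this accumulation to a genuine transverse intersection $W^u(p_0)\pitchfork W^s(\sigma)\neq\emptyset$. The extension to all $p\in\supp\mu$ uses that the periodic orbits in $\supp\mu$ are pairwise homoclinically related.
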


We prove Theorem~\ref{thm:stablepertransversalsing} in
Subsection~\ref{sec:transv-inters-betwee} and, based on this
result, we can now present the following.

\begin{proof}[Proof of Theorem~\ref{thm:densestablesing}]
  This theorem really is a corollary of
  Theorem~\ref{thm:stablepertransversalsing} since we
  already know that the stable manifolds of the periodic
  orbits $\cO(p_i)$ are dense in a neighborhood $U_0$ of
  $\Lambda$; see Remark~\ref{rmk:denstableF}(2). The
  transverse intersection provided by
  Theorem~\ref{thm:stablepertransversalsing} ensures,
  through the Inclination Lemma, that each of these stable
  manifolds is accumulated the stable manifold of some
  singularity, and the statement of
  Theorem~\ref{thm:densestablesing} follows.
\end{proof}

\subsubsection{Transversal intersection between unstable
  manifolds of periodic orbits and stable manifolds of
  singularities}
\label{sec:transv-inters-betwee}

The proof of Theorem~\ref{thm:stablepertransversalsing} is
based on a few results.

In what follows, we say that a disk
$\gamma\subset M$ is a (local) {\em strong-unstable
  manifold}, or a {\em strong-unstable manifold}, if
$\dist(X_{-t}(x),X_{-t}(y))$ tends to zero exponentially
fast as $t\to+\infty$, for every $x,y\in\gamma$. It is
well-know \cite{KH95,Palmer} that every point $x$ of a
hyperbolic periodic orbit $\cO_X(p)$ for a vector field $X$
admits a local strong-unstable manifold which is an embedded
disk tangent at $x\in\cO_X(p)$ to the unstable direction $E^u_x$.

Considering the action of the flow we get the (global)
\emph{strong-unstable manifold}
$$W^{uu}(x)=\bigcup_{t>0}X_{t}\Big(W^{uu}_{loc}\big(X_{-t}(x)\big)\Big)$$
for every point $x$ of a uniformly hyperbolic set: in
particular, for a hyperbolic periodic orbit $\cO(p)$ of the
flow of $X$.

In the present setting, since the singular-hyperbolic
attracting set $\Lambda$ has codimension $2$ and the central
direction $E^c_\Lambda$ contains the flow direction, then
every periodic orbit $\cO_X(p)$ in $\Lambda$ is hyperbolic and its
unstable direction is one-dimensional. Hence the
strong-unstable manifold through any point $x\in\cO_X(p)$ is
an immersed curve.

\begin{lemma}
  \label{le:unstablesing}
  In the setting of the statement of
  Theorem~\ref{thm:stablepertransversalsing}, fix
  $p_0\in\per(X)\cap\supp\mu$ and let $J=[a,b]$ be an arc on
  a connected component of $W^{uu}(p_0)\setminus\{p_0\}$
  with $a\neq b$.  Then $H=\ov{\cup_{t>0} X^t(J)}$
  contains some singularity of $\Lambda$.
\end{lemma}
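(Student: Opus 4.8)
The plan is to use the recurrence of the $\mu$-generic points on $W^{uu}(p_0)$ together with the fact that an unstable arc cannot grow indefinitely inside the compact attracting set $\Lambda$ without either its forward iterates coming back near $p_0$ (producing a non-trivial homoclinic-type configuration) or else being forced to pass arbitrarily close to a singularity. More precisely, I would argue by contradiction: assume $H=\ov{\bigcup_{t>0}X^t(J)}$ contains no singularity of $\Lambda$. Then $H$ is a compact invariant (in positive time) subset of $\Lambda$ with no equilibria, hence by the Hyperbolic Lemma (Theorem~\ref{thm:hyplemma}) $H$ is a uniformly hyperbolic set. On a uniformly hyperbolic set the flow has local product structure and, crucially, uniform estimates: there is $\lambda_0\in(0,1)$ and $C_0>0$ with $\|DX^{-t}\mid E^u\|\le C_0\lambda_0^{t}$ on $H$, so the length of $X^t(J)$ grows at a uniform exponential rate in $t$.

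Next I would exploit that $p_0\in\supp\mu$ and that $\mu$ is a $u$-Gibbs state: Lebesgue-a.e.\ point $y$ of the unstable manifold $W^u_x$ of a $\mu$-generic $x$ is $\mu$-generic, and since $p_0\in\supp\mu$ the orbit of such a $y$ returns to every neighborhood of $p_0$ with positive frequency. Choosing $x$ near $p_0$ and using that $W^{uu}(p_0)$ and $W^u_x$ are uniformly close (both tangent to the one-dimensional $E^u$), one gets that $\bigcup_{t>0}X^t(J)$, hence $H$, accumulates on $p_0$ itself — in fact $p_0\in H$ and $H$ is a non-trivial transitive-like hyperbolic set containing the full arc's forward orbit. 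Because the unstable direction is one-dimensional and $J$ has nonempty interior in $W^{uu}(p_0)$, the forward images $X^t(J)$ eventually $\delta$-shadow large portions of $W^{uu}(p_0)$; combining with the recurrence to $p_0$, a standard $\lambda$-lemma / inclination argument shows that $\ov{\bigcup_{t>0}X^t(J)}$ contains an arbitrarily long piece of $W^{uu}(p_0)$, and in particular contains a complete local unstable leaf through $p_0$. But then $H$ is a compact invariant hyperbolic set which is also an attracting set for the flow restricted near it (it contains entire unstable leaves and is $X^t$-invariant for $t>0$), so $H$ is a hyperbolic attracting set inside $\Lambda$ with no singularities — this forces $H$ to be a hyperbolic attractor disjoint from $S$, contradicting the fact (from the global Poincar\'e map construction, Remark~\ref{rmk:denstableF} and the density of $W^s(S)$ that we are in the process of proving, so one must be careful here) that every neighborhood in the trapping region meets $W^s(\sigma)$ for some $\sigma$ — alternatively, and more self-containedly, it contradicts that $\Lambda$ is \emph{singular}-hyperbolic with the central bundle genuinely volume-expanding and \emph{containing} the flow direction, because a hyperbolic attracting set would have all its recurrence captured by $H$, preventing the volume-expansion estimate of Proposition~\ref{pr:VE} from being compatible with $H$ being all of the relevant dynamics.

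I expect the main obstacle to be the second route above: getting from ``$H$ is hyperbolic and $p_0\in\supp\mu$'' to an actual contradiction without circular use of Theorem~\ref{thm:densestablesing}. The clean way is probably to not invoke hyperbolicity of $H$ directly but to use the singular-hyperbolic cone field: the arc $J$ lies in the center-unstable cone, the forward flow expands it and keeps it in the cone, so its length grows; since $\Lambda$ is compact, $\bigcup_t X^t(J)$ has bounded diameter, so the curve must ``fold'' and in doing so it must accumulate points where the flow direction degenerates relative to $E^u$ — but the flow direction only degenerates at equilibria. Formalizing ``a forward-invariant expanding curve of bounded diameter in the center-unstable cone must accumulate a singularity'' is the technical heart: one monitors the arc length $\ell(t)$ of $X^t(J)$, which satisfies $\ell(t)\ge \ell(0)\,K e^{\theta t/2}\to\infty$ by the volume-expansion/domination estimates (Proposition~\ref{pr:VE} applied along the cu-direction), while $X^t(J)\subset\Lambda$ keeps it of bounded diameter; an arc of huge length and bounded diameter has points of near-zero speed, i.e.\ $\|X\|$ small along the curve, and since $X\ne\vec0$ on $\Lambda\setminus S$ with $\|X\|$ bounded below on compact subsets away from $S$, these points must accumulate $S$; passing to a limit curve in $H$ and using compactness of $S$ gives a singularity in $H$, the desired contradiction. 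This last compactness-and-folding argument, together with pinning down the geometry of how a cu-curve in the cone field can fold, is where the real care is needed.
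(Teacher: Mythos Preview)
Your overall strategy matches the paper's up to the point where you establish that $H$ is a hyperbolic attracting set: assume $H$ contains no singularity, apply the Hyperbolic Lemma (Theorem~\ref{thm:hyplemma}) to get $H$ uniformly hyperbolic, then argue that $H$ contains the full strong-unstable manifold through each of its points (since every point of $H$ is accumulated by forward images of the $cu$-curve $J$), hence $H$ is attracting. You also correctly flag the circularity of invoking the density of $W^s(S)$.

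The gap is in closing the contradiction. The paper's closure is the one line you are missing: $\Lambda$ is \emph{connected} (this is part of the standing hypotheses of Theorem~\ref{thm:stablepertransversalsing}), and $H$ is connected as well, being the closure of the forward flow of the connected arc $J$. A nonempty attracting set $H$ contained in the connected attracting set $\Lambda$ must equal $\Lambda$; but $\Lambda$ contains singularities by assumption. That is the whole contradiction---no $u$-Gibbs recurrence and no arc-length argument are needed for this step.

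Your route (b) does not work: volume expansion along $E^{cu}$ (Proposition~\ref{pr:VE}) is entirely compatible with a non-singular hyperbolic attracting set; any suspended hyperbolic attractor is an example. Your route (c) contains a genuine error. You claim that a curve of large arclength and bounded diameter must contain points where $\|X\|$ is small. This conflates the tangent vector of the curve $s\mapsto X^t(\gamma(s))$, namely $DX^t\cdot\dot\gamma(s)\in E^u_{X^t\gamma(s)}$, with the flow vector $X(X^t(\gamma(s)))$. These lie in transverse one-dimensional subspaces of the two-dimensional bundle $E^{cu}$; the strong-unstable curve can wind densely inside a compact hyperbolic set (again, think of a solenoid or an Anosov flow) with $\|X\|$ uniformly bounded away from zero everywhere along it. The ``folding'' of the curve is real, but it carries no information about the norm of the vector field.
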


\begin{proof}
  It is well-known from the non-uniform hyperbolic theory
  (Pesin's Theory) that the support of a non-atomic
  hyperbolic ergodic probability measure $\mu$ is contained in a
  homoclinic class of a hyperbolic periodic orbit $\cO(p)$; see
  e.g. \cite[Appendix]{KH95} or \cite{BaPe13}.

  Hence, for $\mu$-a.e. $x$ we have $W^u_x\subset\supp\mu$
  (since $\mu$ is a $u$-Gibbs measure) and
  $W^u_x\pitchfork W^s(\cO(p))\neq\emptyset$. Thus by the
  Inclination Lemma (see \cite{PM82}) we have
  $W^u(p)\subset\ov{W^u(x)}\subset\supp\mu$. 

  Since every periodic point $p_0\in\supp\mu$ is
  homoclinically related to $p$ (that is, $W^s(p)\pitchfork
  W^u(p_0)\neq\emptyset\neq W^s(p_0)\pitchfork W^u(p)$), then we
  also have $W^{uu}(p_0)\subset W^u(p_0)\subset\supp\mu$.

  Note that $H\subset\ov{W^u_0(p_0)}\subset\supp\mu$ and $H$
  is a compact invariant set by construction, where
  $W^u_0(p_0)$ is the connected component of
  $W^u(p_0)\setminus\cO(p_0)$ containing $J$. In addition,
  $H$ is clearly connected, since $H$ is also the closure of
  the orbit of the connected set $J$ under a continuous
  flow.

  If $H$ has no singularities, then $H$ is a compact
  connected hyperbolic set and so contains the
  strong-unstable manifolds through any of its points, since
  every point in $H$ is accumulated by forward iterates of
  the arc $J$. This means that $H$ is an attracting set and
  so $H=\Lambda$ by connectedness, and $H$ contains all
  singularities of $\Lambda$. This contradiction proves that
  $H$ must contain a singularity.
\end{proof}

Fix $p_0$ and $\sigma\in S\cap H$ as in the statement of
Lemma~\ref{le:unstablesing}.  We have shown that there
exists $\sigma\in\supp\mu\cap S$ so that
$\sigma\in\ov{W^u(p_0)}$. We assume that $J$ is a
fundamental domain for $W^u(p_0)$, that is, $b=X^{T}(a)$
with $T>0$ the first return time of the orbit of $a$ to
$W^{uu}(p_0)$, i.e., $X^t(a)\notin W^{uu}(p_0)$ for all
$0<t<T$. We now argue just as in \cite[Section 6.3.2, pp
199-202]{AraPac2010} and show that there exists some
singularity whose stable manifold transversely intersects
$J$.

This is enough to conclude the proof of
Theorem~\ref{thm:stablepertransversalsing}. Indeed, since
all periodic orbits in $\supp\mu$ are homoclinically
related, it is enough to obtain $W^u(p_0)\pitchfork
W^s(\sigma)\neq\emptyset$ for a  periodic point
$p_0\in\supp\mu$.

To complete the argument, since in \cite[Section
6.3.2]{AraPac2010} it was assumed that $\Lambda$ was either
a singular-hyperbolic attractor or attracting set with dense
periodic orbits for a $3$-vector field, we state \cite[Lemma
6.49]{AraPac2010} in our setting.

\begin{lemma}
  \label{le:fallsoffS}
  Let $\tilde\Sigma$ be a cross-section of $X$ containing a
  compact $cu$-curve $\zeta$, which is the image of a
  regular parametrization $\zeta:[0,1]\to\tilde\Sigma$, and
  assume that $\zeta$ is contained in $\supp\mu$. Let
  $\Sigma$ be another cross-section of $X$. Suppose that
  \emph{$\zeta$ falls off $\Sigma$}, that is
  \begin{enumerate}
  \item the positive orbit of $\zeta(t)$ visits
    $\inter(\Sigma)$ for all $t\in[0,1)$;
  \item and the $\omega$-limit of $\zeta(1)$ is disjoint
    from $\Sigma$.
  \end{enumerate}
  Then $\zeta(1)$ belongs either to the stable manifold of some
  periodic orbit $p$ in $\supp\mu$, or to the stable manifold
  of some singularity.
\end{lemma}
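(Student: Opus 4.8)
The plan is to follow closely the argument of \cite[Lemma 6.49]{AraPac2010}, adapting it from the transitive ($3$-dimensional, dense periodic orbits) situation to our connected singular-hyperbolic attracting set with arbitrary stable codimension $d_s=d-2$. The starting observation is that by hypothesis the full positive orbit of each $\zeta(t)$, $t\in[0,1)$, meets $\inter(\Sigma)$, so there is a well-defined first-hit (Poincar\'e) map $P$ sending an open sub-arc of $\zeta$ into $\Sigma$; since $\zeta$ is a $cu$-curve its image is again a $cu$-curve in $\Sigma$, and because $\zeta\subset\supp\mu$ and $\mu$ is a $u$-Gibbs ergodic measure, $\zeta$ carries expansion along the $cu$-direction under the global Poincar\'e return map $R$ of Subsection~\ref{sec:global-poincare-retu}. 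The cone invariance of Proposition~\ref{pr:cu} together with the expansion on unstable cones of $DR$ (as recorded after Lemma~\ref{le:hC1+}) guarantee that iterating the Poincar\'e map, whenever it is defined, stretches sub-arcs of $\zeta(t)$, $t<1$, while $\zeta(1)$ itself must have an $\omega$-limit that never returns to $\Sigma$.

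The key dichotomy to exploit is the following: consider the forward orbit of $\zeta(1)$. Either it accumulates on a singularity of $\Lambda$, in which case (using the local behaviour of trajectories near hyperbolic saddles, exactly as in Remark~\ref{rmk:no-recur-non-Lorenz}(3) and using that non-Lorenz-like singularities are not accumulated by regular orbits of $\Lambda$) one deduces that $\zeta(1)$ lies in the stable manifold of some Lorenz-like singularity; or its $\omega$-limit set is contained in the regular part of $\Lambda$. In the latter case, by the Hyperbolic Lemma (Theorem~\ref{thm:hyplemma}) a small invariant compact piece of that $\omega$-limit is a (uniformly) hyperbolic set, and since it is disjoint from $\Sigma$ by hypothesis (2) but $\zeta(1)$ is a limit of points $\zeta(t)$ whose orbits do return to $\Sigma$, the point $\zeta(1)$ is ``trapped'' on the boundary of the domain of the Poincar\'e map. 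One then pushes the argument of \cite{AraPac2010}: the orbit of $\zeta(1)$ must shadow the stable set of this hyperbolic piece, and by the Shadowing/Stable Manifold Theorem for hyperbolic sets, together with the density of periodic orbits in a hyperbolic set (or just the structure of $\omega$-limits in hyperbolic sets), $\zeta(1)$ belongs to the stable manifold of some periodic orbit; since all periodic orbits involved lie in $\supp\mu$ by the homoclinic-class argument used in Lemma~\ref{le:unstablesing}, this is a periodic orbit in $\supp\mu$.

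Concretely, the steps I would carry out are: (i) set up the first-return Poincar\'e map along $\zeta$ and verify it is defined on $\zeta([0,1))$ with $cu$-curve images, using hypothesis (1) and the adapted cross-section structure; (ii) use $u$-Gibbs-ness plus cone expansion to control the geometry of the iterated images, so that the only obstruction to continuing iteration is $\zeta(t)$ hitting the boundary of the domain $\Xi_0=\Xi\setminus\Gamma$ — and this boundary consists of stable disks $W^s_{x_i}(\Xi)$, i.e. pieces of $\Gamma_0$ (stable manifolds of singularities) or $\Gamma_1$ (preimages of stable boundaries of cross-sections); (iii) analyze the two cases for $\omega(\zeta(1))$: singular, giving membership in $W^s(\sigma)$ directly, or regular-hyperbolic, giving membership in $W^s$ of a periodic orbit via Theorem~\ref{thm:hyplemma} and standard hyperbolic-set theory; (iv) in every case invoke the homoclinic-class structure from Lemma~\ref{le:unstablesing} to place the periodic orbit inside $\supp\mu$. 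The main obstacle I anticipate is step (ii)–(iii): controlling precisely \emph{how} $\zeta(1)$ fails to return to $\Sigma$, i.e. showing the ``falls off'' boundary point genuinely lands on a stable manifold rather than on some wilder set. In \cite{AraPac2010} this rests on delicate estimates near the cross-section boundaries (which are pieces of stable manifolds of singularities, by our choice in Corollary~\ref{cor:singular-adapted-section}) and on the fact that orbits which neither return to $\Sigma$ nor hit a singularity must asymptotically be captured by a uniformly hyperbolic invariant subset; transcribing those estimates to higher stable codimension, while routine in spirit, is where the technical care is needed.
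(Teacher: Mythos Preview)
Your plan is correct and takes the same route as the paper: both reduce the lemma to \cite[Lemma~6.49]{AraPac2010}. The paper's own proof is in fact nothing more than that citation together with the remark that the two ingredients needed there carry over to the present setting, namely (i) stable manifolds of the flow intersected with a cross-section are hypersurfaces that disconnect it (this holds because $\dim E^{cu}=2$, independently of $d_s$), and (ii) either a dense regular orbit or dense periodic orbits exist in $\supp\mu$ (the latter holds since $\supp\mu$ is a homoclinic class). Your sketch reaches the same endpoint but layers on machinery that is not actually used here: the $u$-Gibbs property plays no role in this lemma (expansion of $cu$-curves comes directly from the adapted cross-section structure and $DR$), and the periodic orbit in the conclusion is located via the density in (ii) rather than via the Hyperbolic Lemma and shadowing on $\omega(\zeta(1))$; you can safely drop those detours.
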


\begin{proof}
  Just follow the same arguments in the proof of \cite[Lemma
  6.49]{AraPac2010} since the proof assumes that stable
  manifolds of the flow intersected with cross-sections
  disconnect the cross-sections (that is, the transverse
  intersection is a hypersurface inside the cross-section);
  and either the existence of a dense regular orbit, or the
  denseness of periodic orbits, each of which is true in the
  invariant subset $\supp\mu$ in our setting.
\end{proof}


\section{Dimensional reduction of large deviations subset}
\label{sec:reduct-large-deviat}

Here we explain how to use the representation of the global
Poincar\'e map obtained in Subsection
\ref{sec:global-poincare-retu} to reduce the problem of
estimating an upper bound for the large deviations subset of
the flow to a similar problem for a expanding quotient map
on the base dynamics of a suspension semiflow, in the
setting of Theorem~\ref{thm:propert-singhyp-attractor}
\emph{assuming exponentially slow recurrence to the subset
  $\cD$} as in Theorem~\ref{mthm:expslowapprox}.

We start by representing the flow as a suspension semiflow
over the global Poincar\'e map constructed in
Section~\ref{sec:general-setting} to reduce the large
deviations subset of a continuous bounded observable to a
similar large deviations subset of an induced observable for
the dynamics of $F$ and its quotient $f$ over stable
leaves. Then we use the uniform expansion of $f$ and assume
exponentially slow recurrence to a singular subset to deduce
exponential decay of large deviations for continuous
observables on a neighborhood of the attracting set.

\subsection{Reduction to the global Poincar\'e map and
  quotient along stable leaves}
\label{sec:reduct-global-poinca}

Let $\phi^t:\hat Q^\tau\to \hat Q^\tau$ denote the
suspension semiflow with roof function $\tau$ and base
dynamics $F$, where $F$ and $\tau$ satisfying the properties
stated in Theorem~\ref{thm:propert-singhyp-attractor}.

More precisely, we assume that
\begin{description}
\item[(P1) $\tau$ grows as $|\log \dist(\cdot,\cD)|$] the
  roof function $\tau$ has logarithmic growth near $\cD$; is
  uniformly bounded away from zero $\tau\ge\tau_0>0$;
\end{description}
and set
$\hat Q^\tau=\{ (x,y)\in Q\times[0,+\infty): 0\le y <
\tau(x), x_n=F^n(x)\notin\tilde\Gamma, \forall n\ge1\}$.
Then for each pair $(x_0,s_0)\in\hat Q^\tau$ and $t>0$ there
exists a unique $n\ge1$ such that
\begin{align*}
S_n \tau(x_0) \le s_0+ t < S_{n+1} \tau(x_0).
\end{align*}
 We are now ready to define
\begin{align*}
   \phi^t(x_0,s_0) = \big(x_n,s_0+t-S_n \tau(x_0)\big),\quad
  (x_0,s_0)\in\hat Q^r, t\ge0.
\end{align*}
For each $F$-invariant physical measure
$\mu^i_F, i=1,\dots,k$ from
Theorem~\ref{thm:propert-singhyp-attractor}, we denote by
$\mu^i=\mu^i_F\ltimes\lambda$ the natural $\phi^t$-invariant
extension of $\mu_F^i$ to $\hat Q^\tau$ and by
$\lambda^\tau$ the natural
extension of $\leb$ induced on $Q$ to $\hat Q^\tau$,
i.e. $\lambda^\tau=\leb\ltimes\lambda$, where $\lambda$ is
one-dimensional Lebesgue measure on $\RR$: for any subset
$A\subset\hat Q^\tau$ and $\chi_A$ its characteristic function
\begin{align*}
  \mu^i(A)&=\frac1{\mu_F^i(\tau)}\int d\mu_F^i(x)
\int_0^{\tau(x)}\!\!\!\! ds \,\chi_A(x,s),
            \quad\text{and}
  \\
  \lambda^\tau(A)&=\frac1{\leb(\tau)}\int d\leb(x)
\int_0^{\tau(x)}\!\!\!\! ds \,\chi_A(x,s).
\end{align*}
From the construction of $F$ from the proof of
Theorem~\ref{thm:propert-singhyp-attractor} we see that the
map $\Psi:\hat Q^\tau\to M, (x,s)\mapsto X^s(x)$ is a
finite-to-$1$ locally $C^2$ smooth semiconjugation
$\Psi\circ\phi^t= X^t\circ\Psi$ for all $t>0$ so that we can
naturally identify $\Psi_*(\mu^i)=\mu_i$, where $\mu_i$ are
the physical measures supported on the singular-hyperbolic
attracting set given by
Theorem~\ref{thm:appv-attracting}. In particular we get
$\Psi_*(\lambda^\tau)\le \ell\cdot\leb$ where $\ell$ is the
maximum number of preimages of $\Psi$.

\subsubsection{The quotient map}
\label{sec:quotient-map}

Let $Q$ be a compact metric space, $\Gamma\subset Q$ and
$F:(Q\setminus\Gamma)\to\Xi$ be a measurable map.  We assume
that there exists a partition $\F$ of $Q$ into measurable
subsets, having $\Gamma$ as the union of a collection of
atoms of $\F$, which is
\begin{description}
\item[(P2) {\em invariant\/}] the image of any $\xi\in\F$
  not in $\Gamma$ is contained in some element $\eta$
  of $\F$;
\item[(P3) {\em contracting\/}] the diameter of $F^n(\xi)$
  goes to zero when $n\to\infty$, uniformly over all the
  $\xi\in\F$ for which $F^n(\xi)$ is defined.
\end{description}
We denote $p:Q\to \F$ the canonical projection, i.e. $p$
assigns to each point $x\in Q$ the atom $\xi\in\F$ that
contains it.  By definition, $A\subset \F$ is measurable if
and only if $p^{-1}(A)$ is a measurable subset of $Q$ and
likewise $A$ is open if, and only if, $p_\Sigma^{-1}(A)$ is
open in $Q$.  The invariance condition means that there is a
uniquely defined map
$$
f:(\F\setminus\{\Gamma\}) \to \F
\quad\text{such that}\quad
f\circ p = p \circ F.
$$
Clearly, $f$ is measurable with respect to the measurable
structure we introduced in $\F$.  We assume from now on that
the leaves are sufficiently regular so that
\begin{description}
\item[(P4) regular quotient] the quotient $M=Q/\F$ is a
  compact finite dimensional manifold with the topology
  induced by the natural projection $p$ and
  $\lambda=p_*\leb$ is a finite Borel measure.
\end{description}

It is well-known (see e.g. \cite[Section 6]{APPV}) that each
$F$-invariant probability measure $\mu_F$ is in one-to-one
correspondence with the $f$-invariant probability measure
$\mu_f$ by $p_*\mu_F=\mu_f$ and this map preserves
ergodicity. We also need
\begin{description}
\item[(P5) uniform expansion and non-degenerate singular
  set] the quotient map $f$ is uniformly expanding: there
  are $\sigma>2$ and $q\in\ZZ^+, q\ge2$ so that $f$ is
  expanding with rate $\|Df^{-1}\|<\sigma^{-1}$ and
  number of pre-images of a point (degree) bounded by $q$;
  also $p(\Gamma)$ is a non-degenerate singular set for $f$.

\item[(P6) integrability]
  \begin{enumerate}
  \item $\tau$ satisfies condition (1b) of
    Theorem~\ref{thm:propert-singhyp-attractor} and so there
    exists $\kappa_0>0$ so that
    $\hat\tau(p(x))=\sup\{\tau(y): y\in Q, p(y)=p(x)\}$
    satisfies $|\tau(x)-\hat\tau(p(x))|\le\kappa_0, x\in Q$ and 
    $\hat\tau$ is both $\lambda$-integrable and
    $\mu_f$-integrable;
  \item $\tau$ is $\leb$-integrable and $\mu_F$-integrable
    for any $F$ invariant probability measure $\mu_F$ such
    that $p_*\mu_F$ is absolutely continuous with respect to
    $\lambda$.
  \end{enumerate}
\item[(P7) measure of singular neighborhoods] there exists
  $d,C_d>0$ so that
  $\leb\{x\in M: \dist(x,\cD)<\rho\} \le C_d \rho^d$, for
  all small $\rho>0$.
\end{description}

In our singular-hyperbolic setting, we have $d=1$ in (P7).

Moreover, we identify the equilibrium states $\EE$ for
$\log J_1^{cu}$ with $\Psi_*\EE$. In addition, the ergodic
physical/SRB measures that are the extremes points of $\EE$
are naturally induced uniquely by ergodic physical measures
for $F$ which, in turn, are also related to a unique
absolutely continuous ergodic invariant probability measure
for $f$. We denote in what follows $\EE_F$ and $\EE_f$ to be
the convex hull of these ergodic measures with respect to
$F$ and $f$, respectively; and note that $p_*\EE_F=\EE_f$.

\subsubsection{Exponentially slow recurrence for the
  suspension flow}
\label{sec:exponent-slow-recurr-2}

In the rest of this section we prove the following.

\begin{theorem}
  \label{thm:LD-suspension_F}
  Let $\phi^t:\Xi_0^\tau\to\Xi_0^\tau$ be the suspension
  semiflow with roof function $\tau$ and base dynamics $F$,
  where $F$ and $\tau$ satisfy conditions (P1)-(P7) stated
  above.  Let the quotient map $f$ have exponentially slow
  recurrence to the finite subset $\cD$; set $\EE$ to be the
  family of all measures that are sent into equilibrium
  states of $X$ for $\log J_1^{cu}$ on $\Lambda$; and let
  $\psi:\Xi^\tau\to\RR$ be a bounded uniformly continuous
  observable. Then, for any given $\epsilon>0$
  \begin{align*}
    \limsup_{T\to+\infty}\frac1T\log\lambda^\tau
    \left\{z\in\Xi^\tau:
    \inf_{\mu\in\EE}\left|\frac1T\int_0^T\psi(\phi^t(z))
    \,dt - \mu(\psi)\right|>\epsilon\right\} < 0;
  \end{align*}
\end{theorem}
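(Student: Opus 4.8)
The plan is to reduce the large-deviations problem for the suspension semiflow $\phi^t$ in successive stages: first from the flow to the base map $F$, then from $F$ to its one-dimensional quotient $f$, and finally to invoke the exponentially slow recurrence hypothesis on $f$ (Theorem~\ref{mthm:expslowapprox}) to close the estimate. First I would fix the bounded uniformly continuous $\psi:\Xi^\tau\to\RR$ and, given $\epsilon>0$, choose $\delta>0$ small using uniform continuity of $\psi$ and the modulus of continuity of $\tau$. The standard device (as in \cite{araujo2006a,araujo-pacifico2006}) is to relate the continuous-time Birkhoff average $\frac1T\int_0^T\psi(\phi^t(z))\,dt$ along the orbit of $z=(x,s)$ to the discrete-time Birkhoff sum $\frac1{S_n\tau(x)}\sum_{j=0}^{n-1}\widetilde\psi(F^j x)$ of the induced observable $\widetilde\psi(x)=\int_0^{\tau(x)}\psi(x,s)\,ds$, where $n=n(z,T)$ is the lap number. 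The discrepancy between these two averages is $O(\|\psi\|_\infty\cdot\tau_0^{-1}\cdot( \text{boundary terms}))$; the key point is that the number of laps $n$ is comparable to $T$ because $\tau$ is integrable and bounded below, so a deviation of size $\epsilon$ for the flow average forces a deviation of size $\gtrsim\epsilon$ (for a slightly smaller constant) for the $F$-average along the base orbit \emph{provided} $n$ is not anomalously small, which is itself an exponentially rare event because $\tau$ has only logarithmic growth near $\cD$ and $\leb$-measure of neighborhoods of $\cD$ is polynomially small (condition (P7)). This is where condition (P1) together with Theorem~\ref{mthm:expslowapprox} first enter: $\frac1n S_n\tau(x)\approx\int\tau\,d\mu_f$ up to an error controlled by $\frac1n S_n\Delta_\delta(p(x))$, whose deviation set has exponentially small $\lambda$-measure.

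Second I would pass from $F$ to $f$. Because $F(x,y)=(f(x),g(x,y))$ with $g$ uniformly contracting in the fibers (condition (P3), Theorem~\ref{thm:propert-singhyp-attractor}(4)), the induced observable $\widetilde\psi$ is ``almost constant on stable leaves'': $|\widetilde\psi(x,y_1)-\widetilde\psi(x,y_2)|$ is controlled by $\|\psi\|_\infty$ times a quantity that decays geometrically along forward iterates plus the logarithmic-growth term from $\tau$. Hence the Birkhoff sum $S_n\widetilde\psi$ along an $F$-orbit differs from $S_n\overline\psi$ along the projected $f$-orbit $p(x)$ — where $\overline\psi$ is a suitable fiber-average of $\widetilde\psi$ — by a uniformly bounded amount, up to a term involving $S_n\Delta_\delta$ near $\cD$ that is again exponentially rare by Theorem~\ref{mthm:expslowapprox}. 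Using that $\leb$ disintegrates along the vertical foliation $\cF$ with projection $\lambda=p_*\leb$ (condition (P4)), the $\leb$-measure of the $F$-deviation set is bounded (up to the exceptional exponentially small piece) by the $\lambda$-measure of the corresponding $f$-deviation set
\[
\Big\{t\in I:\Big|\tfrac1n\sum_{j=0}^{n-1}\overline\psi(f^j t)-\overline\mu(\psi)\Big|>\epsilon/4,\ \text{some }\mu\in\EE_f\Big\}.
\]

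Third, and this is the decisive analytic input, I would prove exponential decay of the $\lambda$-measure of this one-dimensional deviation set. Here one uses uniform expansion of $f$ (condition (P5), $|f'|>2$) together with the bounded-$p$-variation structure of $1/|Df|$ and of the densities of the a.c.i.m.'s (Theorem~\ref{thm:propert-singhyp-attractor}(3)), so that the transfer operator of $f$ acts on the space of functions of bounded $p$-variation with a spectral gap away from the finitely many ergodic a.c.i.m.'s $\mu_f^i$. A large-deviation estimate for the deviation of Birkhoff averages of the bounded (and $p$-variation, or at least continuous and hence approximable) observable $\overline\psi$ from the set $\EE_f=\{\sum t_i\mu_f^i\}$ then follows from the standard thermodynamic-formalism argument (pressure function strictly convex, Gärtner–Ellis, as in \cite{Yo90,araujo2006a}), \emph{conditioned} on the orbit segment making only controlled visits to the singular/discontinuity set $\cD$ — and the Lebesgue measure of orbit segments that fail this control is exactly the quantity bounded exponentially by Theorem~\ref{mthm:expslowapprox} (equivalently by the exponential tail \eqref{eq:exptail} of $\cR(x)$ and $\cE(x)$). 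Summing the two exponentially small contributions — the ``bad recurrence'' part and the ``genuine large deviation'' part — and translating back through the identifications $\Psi_*\lambda^\tau\le\ell\cdot\leb$ and $p_*\EE_F=\EE_f$, one obtains
\[
\limsup_{T\to\infty}\frac1T\log\lambda^\tau\Big\{z\in\Xi^\tau:\inf_{\mu\in\EE}\Big|\tfrac1T\int_0^T\psi(\phi^t(z))\,dt-\mu(\psi)\Big|>\epsilon\Big\}<0.
\]

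The main obstacle I expect is the careful bookkeeping at the two reduction stages near the singular/discontinuity set $\cD$: one must show that the error terms coming from (a) the logarithmic blow-up of the return time $\tau$, (b) the non-uniformity of the lap number $n$ versus $T$, and (c) the fiber-contraction estimate for $g$ near $\cD$, are \emph{all} subsumed by a single quantity of the form $\frac1n S_n\Delta_\delta$ whose deviation set has exponentially small measure — i.e. that nothing worse than the logarithmic distance function appears. Equivalently, one must verify that condition (P1) (logarithmic growth of $\tau$) plus (P7) (polynomially small neighborhoods of $\cD$) are exactly strong enough to make the ``short-orbit'' and ``deep-recurrence'' exceptional sets exponentially negligible; this is precisely the role played by Theorem~\ref{mthm:expslowapprox}, and the reduction must be arranged so that its hypothesis is the only recurrence input needed. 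The thermodynamic large-deviation step for $f$ itself is comparatively standard once the $p$-variation functional-analytic framework of \cite{Ke85} is in place.
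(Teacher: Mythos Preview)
Your reduction strategy---flow $\to F \to f$, with all error terms absorbed into quantities of the form $\frac1n S_n\Delta_\delta$ whose deviation sets are exponentially small by Theorem~\ref{mthm:expslowapprox}---matches the paper's architecture (Sections~3.1--3.3 and Proposition~\ref{pr:reduction-1d}) quite closely. The bookkeeping you anticipate as the main obstacle is indeed where the work lies, and your identification of (P1) and (P7) as the precise inputs that make the boundary/short-orbit sets exponentially negligible is correct.

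There is, however, one genuine gap. You describe the fiber-averaged observable $\overline\psi$ as ``bounded (and $p$-variation, or at least continuous)''. It is not: since $\widetilde\psi(x)=\int_0^{\tau(x)}\psi\,ds$ and $\tau$ has logarithmic growth near $\cD$ by (P1), the induced observable inherits that logarithmic blow-up, and so does $\overline\psi$. A spectral-gap/G\"artner--Ellis argument on the Keller $p$-variation space does not apply directly to such an unbounded observable. The paper addresses exactly this point in Theorem~\ref{thm:LDNUElog}: one truncates $\varphi$ to a genuinely continuous bounded $\varphi_0$ via Tietze extension on $M\setminus B(\cD,\delta_1)$, shows $|S_n\varphi-S_n\varphi_0|\le K\,S_n\Delta_{\delta_1}$, and then applies a large-deviations result for \emph{continuous} observables. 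The same truncation device is needed for the roof function $\tau$ itself when it appears as an observable in the lap-number estimates (Section~\ref{sec:roof-function-as}).

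A secondary methodological difference: for the one-dimensional large-deviations input the paper does not use a spectral/thermodynamic argument but rather invokes \cite[Theorem~B]{araujo-pacifico2006} (stated here as Theorem~\ref{thm:LDNUE}), a result for non-uniformly expanding maps proved via hyperbolic times and covering estimates. Your spectral route is plausible for bounded observables and would give sharper rate information, but you would still need the truncation step above, and you would also have to handle the infimum over the full convex set $\EE_f$ (multiple ergodic a.c.i.m.'s), which does not fall out of a single G\"artner--Ellis computation; the paper's approach sidesteps this because Theorem~\ref{thm:LDNUE} is already formulated with the infimum over $\EE$.
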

This result proves Theorem~\ref{mthm:LD-sing-hyp-attracting}
as soon as we prove exponentially slow recurrence to $\cD$
for the quotient base map $f$: this is
Theorem~\ref{mthm:expslowapprox} to be proved in
Section~\ref{sec:exponent-slow-recurr-1}

\begin{remark}
  \label{rmk:finiteD}
  We assumed that $\cD$ is finite in several places along
  the following argument.
  This is a natural assumptiom for the quotient map
  induced from singular-hyperbolic attracting sets.
\end{remark}

The proof of this result is based on the observation that, 
for a continuous function $\psi:\Xi^\tau\to\RR, T>0,
z=(x,s)\in\Xi^\tau$ we have
\begin{align*}
  \int_{0}^{T}\psi(\phi^{t}(z))\,dt
  &=
    \int_{s}^{\tau(x)}\psi(\phi^{t}(x,0))\,dt+
    \sum_{j=1}^{n-1}\int_{0}^{\tau(F^{j}(x))}\psi(\phi^{t}(F^{j}(x),0))\,dt
  \\
  & +\int_{0}^{T+s-S_{n}\tau(x)}\psi(\phi^{t}(F^{n}(x),0))\,dt
\end{align*}
where $n=n(x,s,T)$ is the lap number so that
$0\le T+s-S^F_n\tau(x)<\tau(F^n(x))$.  So setting
$\varphi(x)=\int_{0}^{\tau(x)}\psi(\phi^{t}(x,0))\,dt$ we
obtain
\begin{align*}
  \frac{1}{T}\int_{0}^{T}\psi(\phi^{t}(z))\,dt
  &=
    \frac{1}{T}S^F_n\vfi(x)+I(x,s,T)
\end{align*}
where
\begin{align*}
  I=I(x,s,T)
  =\frac{1}{T}\left(\int_{0}^{T+s-S_{n}\tau(x)}\psi(\phi^{t}(F^{n}(x),0))\,dt
  -\int_{0}^{s}\psi(\phi^{t}(x,0))\,dt\right)
\end{align*}
can be bounded as follows, with $\|\psi\|=\sup|\psi|$
\begin{align*}
  I\leq\left(2\frac{s}{T}+
  \frac{S_{n+1}^{F}\tau(x)-S_{n}^{F}\tau(x)}{T}\right)\cdot\|\psi\|.
\end{align*}
Hence, given $\omega>0$ for $0<s<\tau(x)$ and $n=n(x,s,T)$
the subset
\begin{align}\label{eq:LDsemiflow}
  \left\{ (x,s)\in\Xi^\tau :
  \inf_{\mu\in\EE}
  \left|
  \frac{1}{T}S_{n}^{F}\varphi(x)+I
  -\frac{\mu(\varphi)}{\mu(\tau)}
  \right|>\omega\right\} 
\end{align}
is contained in the union
\begin{align}\label{eq:red-din-base-1}
  \left\{ (x,s)\in\Xi^\tau:
  \inf_{\mu\in\EE}
  \left|\frac{1}{T}S_{n}^{F}\varphi(x)
  -\frac{\mu(\varphi)}{\mu(\tau)}\right|
  >
  \frac{\omega}{2}\right\}
  \bigcup
  \left\{ (x,s)\in\Xi^\tau:I>\frac{\omega}{2}\right\}.
\end{align}
Assuming that $\psi\neq0$ (otherwise we consider only
the right hand side of \eqref{eq:red-din-base-1}) we estimate
the $\lambda^\tau$-measure of each subset
in~\eqref{eq:red-din-base-1} showing that they are deviations
sets for the dynamics of $F$.

We note that assumption (P6) (consequence of
Theorem~\ref{thm:propert-singhyp-attractor}(1b)), ensures
that
\begin{align}
  I(x,s,T)
  &\leq
  \frac1T(2s+\tau(F^{n}x))\|\psi\|
  \le\nonumber
  \frac1T(2s+\tau(f^{n}(p(x))))\|\psi\| +\frac{\kappa_0}{T}\|\psi\|
  \\
  &\le\label{eq:Ibound}
    \|\psi\|\frac{S_{n+1}^f\tau -S_n^f\tau}T\circ p (x)
    + \frac{2s+\kappa_0\|\psi\|}T;
\end{align}
which shows that $I(x,s,T)$ is bounded by an expression
depending essentially on the dynamics of $f$.

Now the left hand side subset of \ref{eq:red-din-base-1} is
contained in
\begin{align}
  \label{eq:omega4-1}
  \left\{ (x,s)\in\Xi^{\tau}:
  \inf_{\mu\in\EE}\left|
  \frac{n}{T}\left(\frac{S_{n}^{F}\varphi}{n}
  -
  \mu(\varphi)\right)\right|>\frac{\omega}{4}\right\}
  \cup
  \left\{ (x,s)\in\Xi^{\tau}:\inf_{\mu\in\EE}
  \left|\frac{n}{T}-\frac{1}{\mu(\tau)}\right|
  >\frac{\omega}{4\left|\mu(\varphi)\right|}\right\} 
\end{align}
since for each $\mu\in\EE$ we have
\begin{align*}
  \left|\frac{1}{T}S_{n}^{F}\varphi-\frac{\mu(\varphi)}{\mu(\tau)}\right|
  &\leq
    \left|
    \frac{n}{T}\frac{S_{n}^{F}\varphi}{n}-\frac{n}{T}\mu(\varphi)
    \right|
    +
    \left|\frac{n}{T}\mu(\varphi)-\frac{\mu(\varphi)}{\mu(\tau)}\right|
  \\
  &\leq
    \frac{n}{T}\left|\frac1n S_{n}^{F}\varphi-\mu(\varphi)\right|+
    \mu(\varphi)\left|\frac{n}{T}-\frac{1}{\mu(\tau)}\right|,
\end{align*}
and the lap number $n=n(x,s,T)$ satisfies
\begin{align*}
  \frac{S_{n}^{f}\tau(p(x))}{n}-c
  \leq
  \frac{S_{n}^{F}\tau(x)}{n}\leq\frac{T+s}{n}
  <
  \frac{S_{n+1}^{F}\tau(x)}{n}
  \leq
  \frac{S_{n+1}^{f}\tau(p(x))}{n}+\frac{n+1}{n}c.
\end{align*}
Therefore, bounds involving $n(x,s,T)/T$ can be replaced by
others involving ergodic sums
$\frac{S_{n}^{f}\tau(p(x))}{n}$ and hence we reduce its
study to the dynamics of the one-dimensional map $f$. We
deal with the sums $S_{n}^{F}\vfi$ in the next
Subsection~\ref{sec:reduct-one-dimens} and with the sums
$S_n^f\tau$ in Subsection~\ref{sec:roof-function-as}.

\subsection{Reduction to the quotiented base dynamics}
\label{sec:reduct-one-dimens}

Here we use the contracting foliation that covers the
cross-sections $\Xi$ to show that large deviations of an
induced observable for the dynamics of $F$ can be reduced to
a similar property for the dynamics of the quotient map
$f$. Then we show how this large deviation bound for $f$
follows assuming exponentially slow recurrence to $\cD$.

\begin{proposition}\label{pr:reduction-1d}
  Let $\epsilon>0$ and a continuous and bounded
  $\psi:U\to\RR$ be given on the trapping neighborhood $U$
  of $\Lambda$ and set $\vfi:\Xi_{0}\to\RR$ as
  $\vfi(z)=\int_{0}^{\tau(z)}\psi(X^{t}(z))\,dt$, where
  $\tau(z)$ is the Poincar\'e time of $z\in\Xi_{0}$. Let
  $\mu$ be a measure on $\Xi$ such that
  $\int|\vfi|\,d\mu<\infty$. If we assume that there are
  $\sigma>2$ and $q\in\ZZ^+$ so that
  \begin{itemize}
  \item the quotient map $f:M\setminus\cD\to M$ is a $C^1$
    local diffeomorphism away from the finite subset $\cD$
    of the finite-dimensional compact manifold $M$,
  \item $f$ is expanding with rate $\|Df^{-1}\|<\sigma^{-1}$
    and the number of pre-images of a point is at most $q$,
  \end{itemize}
then there exist
  $N,k\in\ZZ^+$, $\delta>0$, a constant $\gamma>0$ depending
  only on $\psi$ and the flow, and a continuous function
  $\xi: M\setminus\cup_{j=0}^{k-1}f^{-j}\cD\to\RR$ with
  logarithmic growth near $\cD_k=\cup_{j=0}^{k-1}f^{-j}\cD$
  such that, for all $n>N$
\[
  \left\{ \left|\frac1n S_{n}^{F^k}\vfi-\mu(\vfi)\right|
    >3\epsilon\right\}
  \subset
  p^{-1}\left(\left\{ \frac1n
      S_{n}^{f^k}\Delta_{\delta}>\frac{\epsilon}{\gamma}\right\}
    \cup
    \left\{ \left|\frac1n
        S_{n}^{f^k}\xi-\mu(\xi)\right|>\epsilon\right\}
  \right).
\]
\end{proposition}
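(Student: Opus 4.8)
The plan is to reduce $S_n^{F^k}\varphi$ to an ergodic sum over the quotient $M$ by pulling $\varphi$ back along a section of the contracting foliation, paying only the leafwise oscillation of $\varphi$; the point is that this oscillation will be bounded by a quantity small in the cross-section diameter plus a constant multiple of $\Delta_\delta\circ p$, and the latter, once summed, is exactly the first right-hand set. First I would shrink the cross-sections: by Corollary~\ref{cor:singular-adapted-section} the adapted family $\Xi$ may be taken so that $\rho:=\diam\Xi$ is as small as we wish while keeping all properties of Theorem~\ref{thm:propert-singhyp-attractor}. In the coordinates of item~(2) of that theorem a continuous section $\iota\colon M\to Q$, $\iota(x)=(x,0)$, of $p$ is available; I set $\xi:=\varphi\circ\iota$, which is continuous on $M\setminus\cD$ (hence on $M\setminus\cD_k$) and inherits logarithmic growth near $\cD$, hence near $\cD_k$, from the logarithmic growth of $\tau$ near $\Gamma$ (item~(1a)). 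Since $\iota(p(w))$ and $w$ lie on a common leaf of $W^s(\Xi)$ and $f^{jk}\circ p=p\circ F^{jk}$, the difference $S_n^{F^k}\varphi(z)-S_n^{f^k}(\xi\circ p)(z)=\sum_{j=0}^{n-1}\big(\varphi(F^{jk}z)-\varphi(\iota(p(F^{jk}z)))\big)$ is termwise a leafwise oscillation of $\varphi$.

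The heart of the proof is therefore the \emph{leafwise variation estimate}: there are $C',K,M_0,\kappa>0$, depending only on $\psi$, $X$ and the fixed Poincar\'e construction, such that for $z,z'$ on the same leaf of $W^s(\Xi)$ at distance $r$,
\[
|\varphi(z)-\varphi(z')|\le\omega_\psi(C'r)\,\big(K\,\Delta_\delta(p(z))+M_0\big)+\|\psi\|\,\kappa\,r ,
\]
$\omega_\psi$ being a modulus of continuity of $\psi$. To prove it, assume $\tau(z')\le\tau(z)$ and split $\varphi(z)-\varphi(z')=\int_0^{\tau(z')}\!\big(\psi(X^tz)-\psi(X^tz')\big)dt+\int_{\tau(z')}^{\tau(z)}\!\psi(X^tz)\,dt$; the second integral is $\le\|\psi\|\,|\tau(z)-\tau(z')|\le\|\psi\|\kappa r$ by item~(1b). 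For the first, the leaves of $W^s(\Xi)$ are strong-stable disks of $\Lambda$ reparametrised by the flow in a fixed window $[-\epsilon_0,\epsilon_0]$ and $DR$ contracts the leaf direction, so $\dist(X^tz,X^tz')\le C'r$ for all $t$ up to the return time; hence the integrand is $\le\omega_\psi(C'r)$ and the domain has length $\le\tau(z)\le K\,\Delta_\delta(p(z))+M_0$ by item~(1a). I expect the delicate part here to be justifying the bound $\dist(X^tz,X^tz')\le C'r$ \emph{throughout a possibly very long passage near a Lorenz-like singularity}, together with the uniformity of $M_0$ (the time spent outside the fixed neighbourhood of the singular set being uniformly bounded) --- this is exactly where the structure supplied by Theorem~\ref{thm:propert-singhyp-attractor} and its supporting lemmas is used.

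Granting the estimate, I would fix $\rho=\diam\Xi$ so small that $\omega_\psi(C'\rho)M_0+\|\psi\|\kappa\rho<\epsilon$ and set $\gamma:=2K\|\psi\|$ (which depends only on $\psi$ and $X$ and satisfies $\gamma\ge K\,\omega_\psi(C'\rho)$ since $\omega_\psi\le2\|\psi\|$). Summing the estimate over the $n$ terms above and dividing by $n$,
\[
\Big|\tfrac1n S_n^{F^k}\varphi(z)-\tfrac1n S_n^{f^k}(\xi\circ p)(z)\Big|\le \omega_\psi(C'\rho)M_0+\|\psi\|\kappa\rho+K\,\omega_\psi(C'\rho)\cdot\tfrac1n S_n^{f^k}(\Delta_\delta\circ p)(z),
\]
so outside $p^{-1}\{\tfrac1n S_n^{f^k}(\Delta_\delta\circ p)>\epsilon/\gamma\}$ the right-hand side is $<2\epsilon$. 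Next, taking $\xi$ to be the $\mu$-conditional leafwise average of $\varphi$ one gets $\int\xi\,d(p_*\mu)=\mu(\varphi)$ exactly (or, if continuity of $\xi$ must be retained, keep $\xi=\varphi\circ\iota$ and estimate as above, getting $|\int\xi\,d(p_*\mu)-\mu(\varphi)|<\epsilon$ from $F$-invariance of $\mu$, the bound $\int(\Delta_\delta\circ p)\,d\mu\le\int|\log\dist(\cdot,\cD)|\,d(p_*\mu)<\infty$ --- finite because $p_*\mu$ has density bounded above, by Keller's bound quoted after Theorem~\ref{thm:abv}, and (P7) holds --- and $\rho$ small). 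Writing $\mu(\xi)=\int\xi\,d(p_*\mu)$ and combining, if $z$ lies outside both right-hand sets then
\[
\Big|\tfrac1n S_n^{F^k}\varphi(z)-\mu(\varphi)\Big|\le\Big|\tfrac1n S_n^{F^k}\varphi(z)-\tfrac1n S_n^{f^k}\xi(p(z))\Big|+\Big|\tfrac1n S_n^{f^k}\xi(p(z))-\mu(\xi)\Big|+\big|\mu(\xi)-\mu(\varphi)\big|<2\epsilon+\epsilon+\epsilon,
\]
which after a harmless rescaling of the three error thresholds is $\le3\epsilon$; this is the contrapositive of the claimed inclusion (valid for all $n$, so any $N$ works). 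The main obstacle, as indicated, is the leafwise variation estimate near the singularities; the secondary bookkeeping point is the reconciliation of $\mu(\xi)$ with $\mu(\varphi)$, handled either exactly via the leafwise average or up to $\epsilon$ via the continuous section and the integrability of $\Delta_\delta\circ p$.
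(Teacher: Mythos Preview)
Your leafwise variation estimate is essentially the right engine, and your final decomposition into the $\Delta_\delta$-set and the $\xi$-set is the same as the paper's. The gap is in \emph{how} you make the stable leaves small.

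You propose to shrink $\Xi$ (via Corollary~\ref{cor:singular-adapted-section}) so that $\rho=\diam\Xi$ is small. But the proposition is stated for a \emph{fixed} Poincar\'e apparatus: $\Xi$, $R$, $\tau$, $F$, $f$ and the measure $\mu$ on $\Xi$ are given in the hypothesis. Changing $\Xi$ would change $f$, and then $f$ would depend on $\psi$ and $\epsilon$; this breaks the overall strategy, since the exponentially slow recurrence of Theorem~\ref{mthm:expslowapprox} is proved for one fixed $f$. Moreover, with your choice $\xi=\varphi\circ\iota$, the integer $k$ does no work at all: for any $j\ge1$ the points $F^{jk}z$ and $\iota(p(F^{jk}z))=(f^{jk}(p(z)),0)$ lie on the same leaf but are separated by the \emph{full} leaf diameter, not by the contracted diameter of $F^{jk}(\{p(z)\}\times B^{d_s})$, because $(\,\cdot\,,0)$ need not lie in that contracted image. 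So without shrinking $\Xi$ your bound never improves.

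The paper uses $k$ precisely for this purpose. It picks $k=j_0$ so large that the uniform fibre contraction gives $\dist(F^{j}(x,y),F^{j}(x,y'))<\zeta/\kappa_1$ for all $j\ge j_0$, and then defines $\xi(x)$ to lie within $a<\epsilon/3$ of the range of $\varphi\circ F^{j_0}$ on the leaf $W^s(x,\Xi)$. The comparison is therefore between $\varphi$ evaluated at two points of a \emph{contracted} leaf, and the oscillation bound (your displayed estimate, the paper's \eqref{eq:barvfi}) applies with the small parameter coming from $\lambda^{j_0}$ rather than from $\diam\Xi$. This is also why $\xi$ is naturally defined on $M\setminus\cD_{j_0}$ and has logarithmic growth there: it is essentially $\varphi\circ F^{j_0}$ read through the quotient.

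Your two options for reconciling $\mu(\xi)$ with $\mu(\varphi)$ are exactly what the paper combines: it first sets $\xi_0(x)=\int\varphi\,d\mu_x$ (the leafwise conditional expectation, your option (a), giving $\int\xi_0\circ p\,d\mu=\mu(\varphi)$ exactly), then approximates $\xi_0$ in $L^1(p_*\mu)$ by a continuous $\xi_1$, and finally shifts by a constant $b$ with $|b|<\epsilon/3$ to restore $\mu(\xi)=\mu(\varphi)$ exactly while keeping $\xi$ continuous and within the required envelope. This yields the clean $3\epsilon$ without the rescaling you mention.
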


This shows that it is enough to obtain an exponential decay
for large deviations for observables with logarithmic growth
near $\cD$ if we are able to obtain such exponential decay
for a power of $f$ together with exponentially slow
recurrence to $\cD$.

Indeed, for $n=k\ell+m$ with
$0\le m<k$ and all big enough $\ell\in\ZZ^+$
\begin{align*}
  \left\{ \left|\frac{S_{n}^{F}\vfi}n -\mu(\vfi)\right|
  >(4k+1)\epsilon\right\}
  \subset
  \bigcup_{i=0}^k
  \left\{
  \left|\frac{S_{\ell}^{F^k}\vfi}{\ell} \circ
  F^{m+i}-\mu(\vfi)
  \right| >3\epsilon\right\}
  \cup
  \left\{\left|\frac{S_m^F\vfi}{k\ell+m}\right|>\epsilon\right\}.
\end{align*}
Then the Lebesgue measure of the right hand side subset can
be bounded using that: $0\le m<k$, $\vfi$ has logarithmic
growth near the \emph{finite subset} $\cD$ and
\begin{align*}
  \left\{\left|\frac{S_m^F\vfi}{k\ell+m}\right|>\epsilon\right\}
  \subset
  p^{-1}\left(\bigcup_{i=0}^{k-1}
  f^{-i}\left\{x\in M: d(x,\cD)<\frac{e^{-k\ell\epsilon}}K\right\}\right)
\end{align*}
so that, since $p_*\leb=\lambda$ and by Remark
\ref{rmk:lebngbhdD}
\begin{align*}
  \leb\left\{\left|\frac{S_m^F\vfi}{k\ell+m}\right|>\epsilon\right\}
  \le
  \sum_{i=0}^{k-1}\left(\frac{q}{\sigma^d}\right)^i
  \frac{e^{-kd\ell\epsilon}}K
  \le
  \frac{\sigma^d}{q-1}
  \left(\frac{q}{\sigma^d}\right)^k\frac{e^{-kd\ell\epsilon}}K,
\end{align*}
where $\sigma>1$ is the least expansion rate of $f$, $q$ is
the maximum number of pre-images of the map $f$ and $d$ is
the dimension of the quotient manifold $M$. For the
remaining union of subsets we obtain
\begin{align*}
  \leb\left(
  \bigcup_{i=0}^k
  \left\{
  \left|\frac{S_{\ell}^{F^k}\vfi}{\ell} \circ
  F^{m+i}\!\!-\mu(\vfi)
  \right| >3\epsilon\right\}
  \right)
  \le
  k 
  \left(\frac{q}{\sigma^d}\right)^{2k}
  \!\!\lambda\left(
    \left\{
  \left|\frac{S_{\ell}^{F^k}\vfi}{\ell} \circ
  F^{m+i}\!\!-\mu(\vfi)
  \right| >3\epsilon\right\}
  \right).
\end{align*}
Thus, from the statement of
Proposition~\ref{pr:reduction-1d}, we are left to study
upper large deviations for continuous observables with
logarithmic growth near $\cD$ and exponentially slow
recurrence to $\cD$ for a power of $f$.

\begin{proof}[Proof of Proposition~\ref{pr:reduction-1d}]
  First note that since $\vfi$ is continuous on $\Xi_0$ and
  $\psi$ is bounded on $U$ we get
  $\vfi(x)\le \tau(x)\cdot\sup|\psi|\le
  K\Delta_\delta(p(x))\cdot\sup|\psi|$ for
  $x\in B(\cD,\delta)$, some small enough $\delta>0$ and
  $K=K(\vfi)>0$, since the return time function has
  logarithmic growth near the singular set $\Gamma$.

  From the assumptions (P1)-(P7) we can write $F$ as a
  skew-product as in
  Theorem~\ref{thm:propert-singhyp-attractor} and so
  $\dist(F^k(x,y),F^k(x,y'))< \lambda^k$ for all
  $1\le k\le n$ and points in the same stable leaf of $F$,
  where $n$ is the first time the points visit the singular
  lines $\Gamma$. These times $n$ are given by $f^nx\in\cD$
  and since $X_0=\cup_{n\ge1}f^{-n}\cD$ is enumerable the
  set of points which can be iterated indefinitely by $F$
  has full Lebesgue measure in $Q$.

  Moreover, there exists a constant $\kappa_1>0$ so that
  $\dist(X^t(x,y),X^t(x,y'))\le \kappa_1|y-y'|$ for all
  $t>0$, since stable leaves of $F$ correspond to curves
  contained in central stable leaves of the flow $X^t$, by
  construction of $W^s_x(\Sigma_i)$ in
  $\Sigma_i\in\Xi$. Indeed, central stable leaves are given
  by $W^{cs}(x)=\cup_{t\in\RR}W^s(X^tx)$ and so there exists
  $\delta$ close to $0$ such that $(x,y')$ can be identified
  with a point $z\in W^s(X^\delta x)$ and
  $\delta\approx|y-y'|$. Hence the distance between $X^tz$
  and $X^tx$ is comparable with the distance between $x$ and
  $X^\delta x$.

  Altogether this ensures the bound
  \begin{align*}
    \left|
    \frac1n\sum_{j=0}^{n-1} \left(\vfi(F^{j}(x,y))
    -
    \vfi(F^{j}(x,y'))\right)
    \right|
    \leq
    \frac1n\sum_{j=0}^{n-1}\bar\vfi_j(x)
  \end{align*}
  where
  $\bar\vfi_j(x) = \sup_{y,y'\in
    W^{s}(x,\Xi)}|\vfi(F^j(x,y))-\vfi(F^j(x,y'))|$.

  For each $\epsilon>0$ there exist $\delta,\eta>0$ such
  that $-K\eta\log\delta<\epsilon/3$ and
  $\eta\|\psi\|<\epsilon/3$ and, using uniform continuity,
  we can also find $\zeta>0$ satisfying
  $\dist(y,y')<\zeta\implies
  |\psi(x,y)-\psi(x,y')|<\eta$. Hence, by the uniform
  contraction of stable leaves by $F$ and because we can
  assume without loss of generality that
  $\zeta/\kappa_1<\eta/\kappa$, there exists
  $j_0=j_0(\eta)\in\ZZ^+$ so that
  $|F^{j}(x,y)-F^{j}(x,y')|
  \leq\frac{\zeta}{\kappa_1}<\frac{\eta}\kappa, \forall j\ge
  j_0$. Thus, by the previous choices together with item
  (1b) from Theorem~\ref{thm:propert-singhyp-attractor}, we
  get
  \begin{align}
    \bar\vfi_j(x)
    &\le
      K\max\{\Delta_{\delta}(x_j),\log\delta^{-1}\}
      \sup_{0<t<K\Delta_{\delta}(x_j)}
      |\psi(X^t(F^j(x,y)))-\psi(X^t(F^j(x,y'))))|\nonumber
    \\
    &\quad +
      |\tau(F^j(x,y))-\tau(F^j(x,y'))|\cdot\|\psi\|\nonumber
    \\
    &\le
      K\max\{\Delta_{\delta}(x_j),\log\delta^{-1}\}\eta
      +\kappa\frac{\zeta}{\kappa_1}\|\psi\|
      \le
      K\Delta_{\delta}(x_j)\eta+\frac23\epsilon, \label{eq:barvfi}
  \end{align}
  where we write $x_j=f^j(x), j\ge0$. Now take a continuous function
  $\xi:M\setminus\cD\to\RR$ such that for some
  $0<a<\epsilon/3$
  \begin{itemize}
  \item $\int \xi\circ p \, d\mu = \int\vfi\,d\mu$;
  \item $\min_{y\in W^s(x,\Xi)}\vfi(F^{j_0}(y))-a
    \le\xi(x)\le a+\max_{y\in W^s(x,\Xi)}\vfi(F^{j_0}(y))$.
  \end{itemize}
  This is possible since $\vfi$ is $\mu$-integrable and
  disintegrating $\mu$ on the measurable partition of $Q$
  given by the stable leaves we obtain the family
  $(\mu_x)_{x\in M}$ of conditional probabilities 
  and we set $\xi_0(x)=\int \vfi\,d\mu_x$. Then we
  approximate $\xi_0$ by a continuous function $\xi_1$
  satisfying $\int|\xi_0-\xi_1|\circ p\,d\mu<\epsilon/3$ and
  so for some $b\in(-\epsilon/3,\epsilon/3)$ the function
  $\xi=\xi_1+b$ satisfies the above items.

  Now note that $\xi$ also has logarithmic growth near
  $\cup_{i=0}^{j_0-1}f^{-i}\cD$. In addition, for
  $n\in\ZZ^+$ using \eqref{eq:barvfi} and
  $f\circ p =p\circ F$ and summing over orbits of $f^{j_0}$
  and $F^{j_0}$ we get
  \begin{align}
    |S_{n}^{F^{j_0}}(\xi\circ p)-S_{n}^{F^{j_0}}\vfi|(x,y)
    &\leq
      |\xi\circ p -\vfi|(x,y)+|S_{n-1}^{F^{j_0}}(\xi\circ
      p-\vfi)|(x,y) \nonumber
    \\
    &\leq \nonumber
      K\eta\Delta_{\delta}(x)+\frac23\epsilon+ a+
      \sum_{i=1}^{n-1}
      \left(K\eta\Delta_{\delta}(f^{i j_0}(x))+a+\frac23\epsilon\right)
    \\
    &\leq \label{eq:zeta0}
      n\epsilon+K\eta\cdot S_{n}^{f^{j_0}}\Delta_{\delta}(x)
  \end{align}
  We finally observe that
  \begin{align}\label{eq:zeta2}
    \big\{ \big|\frac{S_n^{F^{j_0}}\vfi}n -\mu(\vfi)\big|>3\epsilon\big\}
    \subseteq
    \big\{ \big|\frac{S_n^{F^{j_0}}(\xi\circ p)-S_n^{F^{j_0}}\vfi}n
    \big| > 2\epsilon \}
    \cup
    \big\{ \frac1n\big| S_n^{R^{j_0}}(\xi\circ p)-\mu(\vfi)\big|>\epsilon\big\}.
  \end{align}
and by \eqref{eq:zeta0} we obtain
\begin{align*}
  \big\{
  \big| \frac1n\big( S_n^{F^{j_0}}(\xi\circ P)-S_n^{F^{j_0}}\vfi\big)
    \big| > 2\epsilon \}
    \subseteq p^{-1}
    \big\{ \frac1n S_{n}^{f^{j_0}}\Delta_\delta > 
    \frac{\epsilon}{K\eta} \big\}
\end{align*}
which together with \eqref{eq:zeta2} completes the proof of
the proposition with $k=j_0$.
\end{proof}

\subsubsection{Large deviations for observables with
  logarithmic growth near singularities}
\label{sec:large-deviat-observ}

This is based in \cite[Section 3]{araujo2006a} adapted to
the setting where there might be several equilibria for the
the potential $\log J_1^{cu}=\log|\det DX^1\mid E^{cu}|$ on
$\Lambda$.

The main bound on large deviations for suspension semiflows
over a non-uniformly expanding base will be obtained from
the following large deviation statement for non-uniformly
expanding transformations \emph{assuming exponentially slow
  recurrence to the singular/discontinuous set}.

\begin{theorem}
  \label{thm:LDNUElog}
  Let $f:M\setminus\cD\to M$ be a regular\footnote{A map is
    regular if $f_*\leb\ll\leb$, that is, $\leb$-null sets
    are not images of positive $\leb$-measure subsets.}
  $C^{1+\alpha}$ local diffeomorphism, where $\cD$ is a
  non-flat critical set and $\alpha\in(0,1)$.  Assume that
  $f$ is a non-uniformly expanding map with
  \emph{exponentially slow recurrence to the
    singular/discontinuous set $\cD$} and let
  $\vfi:M\setminus\cD\to\RR$ be a continuous map which has
  logarithmic growth near $\cD$. Moreover, assume that the
  family of ergodic equilibrium states with respect to
  $\log J$ is finite, where $J=|\det Df|$, and each of them
  is an absolutely continuous $f$-invariant probability
  measure. Then for any given $\omega>0$
  \begin{align*}
    \limsup_{n\to+\infty}\frac1n\log\leb\Big\{ x\in M:
    \inf_{\mu\in\EE}\left|\frac1n
      S_n\vfi(x)-\mu(\vfi)\right|\ge\omega \Big\}<0,
\end{align*}
where $\EE$ is the family of all equilibrium states with
respect to $\log J$.
\end{theorem}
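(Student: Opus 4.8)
The plan is to follow the scheme of \cite[Section~3]{araujo2006a}, modified to accommodate a finite family $\EE$ of equilibrium states instead of a single one. The starting point is the exponential tail estimate \eqref{eq:exptail}: non-uniform expansion together with exponentially slow recurrence to $\cD$ (that is, \eqref{eq.expslowrecurrence}) provide constants $C_1,\xi_1>0$, $\sigma>1$ and $\zeta>0$ so that $\leb\{x\in M:\cR(x)>n \text{ or }\cE(x)>n\}\le C_1 e^{-\xi_1 n}$ for all $n\ge1$, where a point with $\cR(x),\cE(x)\le\theta n$ has a positive density (at least $1-O(\theta)$) of backward-contracting hyperbolic times among its first $n$ iterates, together with uniform bounded distortion along the corresponding hyperbolic pre-orbits. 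Fixing $\omega>0$ and writing $B_n=\{x\in M:\inf_{\mu\in\EE}|\tfrac1n S_n\vfi(x)-\mu(\vfi)|\ge\omega\}$, it therefore suffices to bound $\leb(B_n\cap G_n)$ with $G_n=\{x:\cR(x)\le\theta n,\ \cE(x)\le\theta n\}$, for a small $\theta=\theta(\omega)>0$ to be chosen.

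On $G_n$ we cover $B_n\cap G_n$ by a family $\cP_n$ of hyperbolic pre-balls: each $P\in\cP_n$ is mapped by some $f^m$, $(1-\theta)n\le m\le n$, diffeomorphically and with uniformly bounded distortion onto a ball of a fixed radius $\rho_0>0$, whence $\leb(P)\le C\,e^{-S_m\log J(x_P)}$ for a reference point $x_P\in P$, with $J=|\det Df|$. The non-flatness conditions \eqref{eq:der0} and \eqref{eq:derD-S} are exactly what makes the distortion of $\log J$ controllable near $\cD$ along these pre-orbits. To connect this with the potential we replace $\vfi$, which has only logarithmic growth near $\cD$, by a bounded truncation $\vfi_N=\max\{\vfi,-N\}$: on $G_n$ the error $\tfrac1n S_n(\vfi-\vfi_N)$ is dominated by $\tfrac{K}{n}S_n\Delta_\delta$, which by exponentially slow recurrence is $<\omega/4$ off a set of exponentially small measure. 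Shrinking $\delta$ and enlarging $N$ so that $\mu(\vfi_N)\approx\mu(\vfi)$ for every $\mu\in\EE$, and absorbing the contribution of the last $n-m<\theta n$ iterates (using that $\vfi_N$ is bounded), we may assume that each $P\in\cP_n$ carries a point whose empirical measure $\tfrac1m\sum_{j<m}\delta_{f^j x}$ lies in the closed set $\cK(\omega)=\{\nu:\inf_{\mu\in\EE}|\nu(\vfi_N)-\mu(\vfi_N)|\ge\omega/2\}$.

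A standard entropy-by-distribution / pressure argument applied to the separated sets underlying $\cP_n$ then yields
\begin{align*}
  \limsup_{n\to\infty}\frac1n\log\Big(\sum_{P\in\cP_n}e^{-S_m\log J(x_P)}\Big)
  \le\sup\Big\{h_\nu(f)-\int\log J\,d\nu:\nu\in\cK(\omega)\Big\}.
\end{align*}
The final point is that this supremum is strictly negative. Since each ergodic equilibrium state for $-\log J$ is an absolutely continuous $f$-invariant probability measure, Ruelle's inequality together with the Rokhlin formula give $P(f,-\log J)=0$, and the full set of invariant measures attaining $h_\nu(f)-\int\log J\,d\nu=0$ is precisely $\EE$, a weak$^*$-compact convex set. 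The entropy map $\nu\mapsto h_\nu(f)$ is upper semicontinuous in this setting and $\nu\mapsto\int\log J\,d\nu$ is lower semicontinuous ($\log J$ being bounded below and only logarithmically unbounded above near $\cD$), so $\nu\mapsto h_\nu(f)-\int\log J\,d\nu$ is upper semicontinuous on the compact space of $f$-invariant probabilities; hence it attains its maximum over the compact subset $\cK(\omega)$, which is disjoint from $\EE$, and that maximum is some $-\xi(\omega)<0$. Combining with $\leb(M\setminus G_n)\le C_1 e^{-\xi_1 n}$, with $\leb(B_n\cap G_n)\le\sum_{P\in\cP_n}\leb(P)$, and with the exponentially small measure of the truncation error set, gives $\limsup_n\tfrac1n\log\leb\{x:\inf_{\mu\in\EE}|\tfrac1n S_n\vfi(x)-\mu(\vfi)|\ge\omega\}<0$, which is the claim.

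The main obstacle is precisely this last, variational step in the presence of several equilibrium states: one must know that the full equilibrium set for $-\log J$ is exactly the finite-dimensional simplex $\EE$ (so that any empirical measure pushed away from $\EE$ genuinely has negative free energy), and one must control the logarithmic blow-up of both $\log J$ and $\vfi$ near $\cD$ — which is where exponentially slow recurrence is used, to make the near-singular contributions exponentially negligible and to keep the upper-semicontinuity and distortion arguments valid.
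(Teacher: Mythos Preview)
Your truncation step matches the paper's: it replaces $\vfi$ by a Tietze extension $\vfi_0$ of $\vfi|_{M\setminus B(\cD,\delta_1)}$ (your $\max\{\vfi,-N\}$ plays the same role), absorbs the difference $S_n\vfi-S_n\vfi_0$ into the slow-recurrence set via the inclusions~\eqref{eq:approxlog1}--\eqref{eq:approxlog2}, and then invokes \cite[Theorem~B]{araujo-pacifico2006} (restated here as Theorem~\ref{thm:LDNUE}), which already delivers the exponential deviation bound for \emph{continuous} observables under non-uniform expansion with slow recurrence. Exponentially slow recurrence then removes the restriction to $A_n$ in that statement. So the paper's argument is short: truncate, then cite.

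You instead attempt to reprove the cited black box, and there the argument has a genuine gap. The claim that ``the entropy map $\nu\mapsto h_\nu(f)$ is upper semicontinuous in this setting'' carries your entire variational step --- it is what lets you restrict the pressure-type supremum to the closed set $\cK(\omega)$ disjoint from $\EE$ and conclude it is strictly negative --- but it is neither proved nor standard for $C^{1+\alpha}$ local diffeomorphisms outside a non-flat singular set where $|Df|$ blows up: neither Newhouse's $C^\infty$ result nor entropy-expansiveness is available off the shelf here. The proof in \cite{araujo-pacifico2006} does \emph{not} proceed via global upper semicontinuity of entropy; it shows directly that any weak$^*$ accumulation point of empirical measures along orbits with high density of hyperbolic times and controlled recurrence to $\cD$ must satisfy the entropy formula, hence lie in $\EE$, using the bounded-distortion structure of hyperbolic pre-balls rather than an abstract semicontinuity statement. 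Without either supplying a proof of upper semicontinuity of entropy for this class of maps or reproducing that more specific argument, your final inequality $\sup_{\nu\in\cK(\omega)}\{h_\nu(f)-\int\log J\,d\nu\}<0$ remains unproved.
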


\begin{remark}
  \label{rmk:NUE}
  \begin{enumerate}
  \item Since we assume in Theorem~\ref{thm:LD-suspension_F}
    that $f$ has exponentially slow recurrence to the
    non-degenerate singular set $\cD$ and is also expanding,
    then $f$ is in particular non-uniformly expanding with
    slow recurrence to $\cD$.
  \item The statement of Theorem~\ref{thm:LDNUElog} and its
    proof does not assume that $f$ is a one-dimensional map:
    this reduction holds for local diffeomorphisms away from
    a singular subset of a compact manifold.
  \end{enumerate}
\end{remark}

This finishes the reduction of the estimate of the Lebesgue
measure of the large deviation subset~\eqref{eq:LDsemiflow}
to obtaining exponentially slow recurrence to $\cD$ as in
Theorem~\ref{mthm:expslowapprox}, through the inclusion
\eqref{eq:red-din-base-1}, Proposition~\ref{pr:reduction-1d}
and Theorem~\ref{thm:LDNUElog}.

\begin{proof}[Proof of Theorem~\ref{thm:LDNUElog}]
  Fix $\vfi:M\setminus\cS\to\RR$ as in the statement,
  $\epsilon_0>0$ and $c\in\RR$. By assumption we may choose
  $\epsilon_1,\delta_1>0$ small enough such that the
  exponential slow recurrence condition
  \eqref{eq.expslowrecurrence} is true for the pair
  $(\epsilon_1,\delta_1)$,
  $|\vfi\chi_{B(\cS,\delta_1)}|\le K(\vfi)\Delta_{\delta_1}$
  and $K(\vfi)\cdot\epsilon_1\le\epsilon_0$, where $K(\vfi)$
  is the constant given by the assumption of logarithmic
  growth of $\vfi$ near $\cS$.

  Let $\vfi_0:M\to\RR$ be the continuous extension of
  $\vfi\mid_{B(\mathcal{S},\delta_1)^c}$ given by the Tietze
  Extention Theorem, that is
  \begin{itemize}
  \item $\vfi_0$ is continuous;
    $\vfi_0\mid_{M\setminus B(\mathcal{S},\delta_1)} =
    \varphi\mid_{M\setminus B(\mathcal{S},\delta_1)}$, and
  \item
    $\sup_{x\in M}|\varphi_0(x)| = \sup_{x\in M\setminus
      B(\mathcal{S},\delta_1)}|\varphi(x)|$.
  \end{itemize}
We may choose
  $K\ge K(\vfi)$ big enough so that and
  $|(\vfi-\vfi_0)\chi_{B(\mathcal{S},\delta_1)}|  \leq K
  \Delta_{\delta_1}$.  Then for all $n\ge1$ we have
\begin{align*}
S_n\vfi_0- S_n\big|\vfi-\vfi_0\big| \le
S_n\vfi=S_n\vfi_0+S_n(\vfi-\vfi_0) \le S_n\vfi_0 +
S_n\big|\vfi-\vfi_0\big|.
\end{align*}
and deduce the following inclusions
\begin{align}
  \left\{\frac1n S_n\vfi > c\right\}
  &\subseteq
  \left\{\frac1n S_n\vfi_0 > c- \epsilon_0\right\}
  \cup
  \left\{\frac1n S_n\Delta_{\delta_1}\ge\epsilon_1\right\},
  \label{eq:approxlog1}
\end{align}
where in \eqref{eq:approxlog1} we use the assumption that
$\vfi$ is of logarithmic growth near $\cS$ and the choices
of $K,\epsilon_1,\delta_1$. Analogously we get with opposite
inequalities
\begin{align}
  \left\{\frac1n S_n\vfi < c \right\}
  &\subseteq
  \left\{\frac1n S_n\vfi_0 < c + \epsilon_0\right\}
  \cup
  \left\{\frac1n S_n\Delta_{\delta_1}\ge\epsilon_1\right\};
  \label{eq:approxlog2}
\end{align}
see \cite[Section 4, pp 352]{araujo2006a} for the derivation
of these inequalities.

From \eqref{eq:approxlog1} and \eqref{eq:approxlog2} we see
that \emph{to get the bound for large deviations in the statement
of Theorem~\ref{thm:LDNUElog} it suffices to obtain a large
deviation bound for the continuous function $\vfi_0$ with
respect to the same transformation $f$} and \emph{to have
exponentially slow recurrence to the singular set $\cS$}.

To obtain this large deviation bound, we use the following
result from~\cite{araujo-pacifico2006}.

\begin{theorem}{\cite[Theorem B]{araujo-pacifico2006}}
  \label{thm:LDNUE}
  Let $f:M\setminus\cD\to M$ be a local diffeomorphism outside a
  non-flat singular set $\cD$ which is non-uniformly
  expanding and has slow recurrence to $\cD$.  For
  $\omega_0>0$ and a
  continuous function $\vfi_0:M\to\RR$ 
  there exists $\epsilon,\delta>0$ arbitrarily close to $0$
  such that, writing
\begin{align*}
A_n=\{x\in M: \frac1nS_n\Delta_\delta(x)\le\epsilon\}
\quad\text{and}\quad
B_n=\left\{
x\in M : 
\inf_{\mu\in\EE}\left|
\frac1n S_n\vfi_0(x) - \mu(\vfi_0)
\right|
> \omega_0
\right\}
\end{align*}
we get $
\limsup_{n\to+\infty}\frac1n
\log \leb\big(A_n\cap B_n\big) <0.$
\end{theorem}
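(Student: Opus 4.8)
The plan is to follow the strategy of \cite{araujo-pacifico2006}. Under the standing hypotheses, Theorem~\ref{thm:abv} together with Ledrappier's characterization of equilibrium states of $-\log J$ as the a.c.i.p.\ shows that $\EE$ is the convex hull of finitely many ergodic measures $\mu_1,\dots,\mu_k$, so $\{\mu(\vfi_0):\mu\in\EE\}=[a,b]$ with $a=\min_i\mu_i(\vfi_0)$, $b=\max_i\mu_i(\vfi_0)$. Hence, writing $v(x)=\frac1nS_n\vfi_0(x)$, we have $\inf_{\mu\in\EE}|v(x)-\mu(\vfi_0)|=\operatorname{dist}(v(x),[a,b])$, so $B_n\subseteq\{\frac1nS_n\vfi_0>b+\omega_0\}\cup\{\frac1nS_n\vfi_0<a-\omega_0\}$. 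Replacing $\vfi_0$ by $-\vfi_0$ it is enough to bound $\leb(A_n\cap B_n^+)$ where $B_n^+=\{\frac1nS_n\vfi_0>c\}$ and $c=b+\omega_0$. For every $t>0$ the exponential Chebyshev inequality gives
\[
\leb(A_n\cap B_n^+)\le e^{-nct}\int_{A_n}e^{tS_n\vfi_0}\,d\leb,
\]
so, setting $L(t)=\limsup_n\frac1n\log\int_{A_n}e^{tS_n\vfi_0}\,d\leb$, it suffices to find, for $\epsilon,\delta$ as small as we wish, some $t>0$ with $L(t)<ct$.

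\emph{Control of $L$ via hyperbolic times.} H\"older's inequality makes $t\mapsto L(t)$ convex, and $L(0)=\limsup_n\frac1n\log\leb(A_n)\le0$, so $\lim_{t\downarrow0}L(t)/t\le D^+L(0)$; it remains to estimate $D^+L(0)$, and this is exactly where the restriction to $A_n$ enters. Since $f$ is non-uniformly expanding and on $A_n$ one has $S_n\Delta_\delta\le\epsilon n$, the Pliss lemma / Alves--Bonatti--Viana construction applies uniformly: for $\epsilon$ small relative to the expansion rate and $\delta$ small there are $\sigma>1$, $\delta_1>0$, $\theta>0$ so that $\leb$-a.e.\ point of $A_n$ (intersected with the $n$-step non-uniform expansion set, which in the present --- indeed uniformly expanding --- setting is all of $A_n$) has at least $\theta n$ $(\sigma,\delta_1)$-hyperbolic times among $0,\dots,n-1$; at each such time $j$ the map $f^j$ carries a hyperbolic ball $V_j\ni x$ diffeomorphically onto $B(f^jx,\delta_1)$ with uniformly bounded distortion and $\|(Df^j\mid V_j)^{-1}\|\le\sigma^{-j}$. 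Using these bounded-distortion branches together with the fact that $\leb$ is the conformal reference measure of the potential $-\log J$, one covers $A_n$ by a controlled family of cylinders subordinate to the hyperbolic times: their average gap on $A_n$ is at most $1/\theta$, the number of distinct itineraries grows at most exponentially (the degree of $f$ is bounded), and the total weight is governed by partition functions of $-\log J+t\vfi_0$. This is the Benedicks--Carleson partition-refinement technology as adapted in \cite{BC85,BC91,MV93,PRV98,LV00,ArPa04}, and it yields
\[
L(t)\le P(-\log J+t\vfi_0)+\rho(\epsilon,\delta),
\]
where $P$ is topological pressure and $\rho(\epsilon,\delta)\to0$ as $\epsilon,\delta\to0$.

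\emph{Identification of the pressure and conclusion.} Because the $\mu_i$ are equilibrium states of $-\log J$ we have $P(-\log J)=h_{\mu_i}-\int\log J\,d\mu_i=0$; by convexity of $t\mapsto P(-\log J+t\vfi_0)$ and the variational principle, $P(-\log J+t\vfi_0)\ge t\,\mu_i(\vfi_0)$ for every $i$ and $t>0$, while every weak-$*$ accumulation point, as $t\downarrow0$, of measures nearly realizing $P(-\log J+t\vfi_0)$ is an equilibrium state of $-\log J$ (using upper semicontinuity of the entropy and of $\nu\mapsto\int(-\log J)\,d\nu$, the latter since $-\log J$ is bounded above near the singularities), hence lies in $\EE$; therefore $\limsup_{t\downarrow0}P(-\log J+t\vfi_0)/t\le\sup_{\mu\in\EE}\mu(\vfi_0)=b$. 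Thus $D^+\bigl(t\mapsto P(-\log J+t\vfi_0)\bigr)(0)=b$ and $D^+L(0)\le b+\rho(\epsilon,\delta)$. Now fix $\eta\in(0,\omega_0)$, choose $\epsilon,\delta$ small enough that $\rho(\epsilon,\delta)<\eta$ and the hyperbolic-time density $\theta$ is positive, then $t>0$ small with $L(t)\le t(b+\eta)$. The Chebyshev bound gives
\[
\limsup_n\frac1n\log\leb(A_n\cap B_n^+)\le L(t)-ct\le t(b+\eta)-t(b+\omega_0)=t(\eta-\omega_0)<0,
\]
and the symmetric argument applied to $-\vfi_0$, after taking the smaller of the two admissible pairs $(\epsilon,\delta)$, controls the remaining half of $B_n$ and finishes the proof.

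\emph{Main obstacle.} The crux is the estimate $L(t)\le P(-\log J+t\vfi_0)+\rho(\epsilon,\delta)$ of the second step: turning the non-invariant integral $\int_{A_n}e^{tS_n\vfi_0}\,d\leb$ into a genuine partition function requires the full hyperbolic-times and bounded-distortion machinery, a careful combinatorial count of the possible patterns of hyperbolic times (exploiting that on $A_n$ their average gap is $\le1/\theta$) and of the itineraries between them, and a proof that the resulting exponential growth rate differs from the topological pressure only by a term vanishing with $\epsilon,\delta$. By contrast, the presence of several equilibrium states --- which turns $\{\mu(\vfi_0):\mu\in\EE\}$ into an interval and gives $t\mapsto P(-\log J+t\vfi_0)$ a corner at $0$ --- causes no further trouble once the pressure has been identified, thanks to the one-sided differentiability argument above.
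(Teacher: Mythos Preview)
The paper does not prove this statement: Theorem~\ref{thm:LDNUE} is quoted verbatim as \cite[Theorem~B]{araujo-pacifico2006} and used as a black box inside the proof of Theorem~\ref{thm:LDNUElog}. There is therefore no ``paper's own proof'' to compare against; the only in-paper content is the remark immediately following the statement, recalling that $\EE=\EE_{\epsilon,\delta}$ is the set of equilibrium states for $\log J$ with $\nu(\Delta_\delta)<\epsilon$, which is nonempty, compact and convex by \cite[Theorem~5.1]{araujo2006a}.

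Your sketch does follow the architecture of \cite{araujo-pacifico2006}: exponential Chebyshev, the Laplace-type functional $L(t)$, hyperbolic-times/bounded-distortion to dominate $L(t)$ by a pressure-like quantity up to an $o(1)$ error in $(\epsilon,\delta)$, and a one-sided derivative-of-pressure argument to identify the slope at $t=0$ with $\sup_{\mu\in\EE}\mu(\vfi_0)$. Two points deserve caution. First, in \cite{araujo-pacifico2006} the set $\EE$ is defined with the extra constraint $\nu(\Delta_\delta)<\epsilon$ (as the paper also recalls), and it is this constraint --- not merely the Entropy Formula --- that makes $\EE$ weak$^*$-compact and lets the accumulation-point argument go through; your identification of $\EE$ purely as equilibrium states via Ledrappier suppresses this and would need the additional input (available here, but not automatic) that all such equilibrium states already satisfy the slow-recurrence bound. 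Second, the step you flag as the ``main obstacle'' --- the inequality $L(t)\le P(-\log J+t\vfi_0)+\rho(\epsilon,\delta)$ --- is indeed the substantive content of \cite{araujo-pacifico2006}; what you wrote is an accurate signpost to that argument but not a proof, and in particular the upper semicontinuity of entropy you invoke is delicate for maps with unbounded derivative and is handled in that reference via the hyperbolic-times structure rather than abstract semicontinuity.
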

Recall that
$\EE=\EE_{\epsilon,\delta}=\{\nu\in\M_f : h_\nu(f)=\nu(\log
J)$ and $\nu(\Delta_\delta)<\epsilon\}$ is the set of all
equilibrium states of $f$ with respect to the potential
$\log J$ which have slow recurrence to $\cD$. From
\cite[Theorem 5.1]{araujo2006a} we have that $\EE$ is a
non-empty compact convex subset of the set of invariant
probability measures, in the weak$^*$ topology.

Note that exponentially slow recurrence implies
$\limsup_{n\to+\infty}\frac1n\leb(M\setminus A_n) <0$. Under
this assumption Theorem~\ref{thm:LDNUE} ensures that for
$(\epsilon,\delta)$ close enough to $(0,0)$ we get
$\limsup_{n\to+\infty}\frac1n \log \leb( B_n) <0.$

Now in Theorem~\ref{thm:LDNUE} we take $\omega,\epsilon_0>0$
small, choose $\vfi_0$ as before and
$\omega_0=\omega+\epsilon_0$. Hence
$\{\mu(\vfi_0):\mu\in\EE\}$ is a compact interval of the
real line.

In \eqref{eq:approxlog1} set
$c=\inf_{\mu\in\EE}\mu(\vfi_0)-\omega$ and in
\eqref{eq:approxlog2} set
$c=\sup_{\mu\in\EE}\mu(\vfi_0)+\omega$. Then we have the
inclusion
\begin{align}\label{eq:omegazero}
  \left\{\inf_{\mu\in\EE}\Big\{\Big|\frac1n
    S_n\vfi-\mu(\vfi)\Big|\Big\}>\omega\right\}
  \subseteq
  B_n
   \cup
  \left\{\frac1n S_n\Delta_{\delta_1}\ge\epsilon_1\right\}.
\end{align}
By Theorem~\ref{thm:LDNUE} we may find $\epsilon,\delta>0$
small enough so that the exponentially slow recurrence holds
also for the pair $(\epsilon,\delta)$ and hence
\begin{align}
  \label{eq:omegazero1}
\limsup_{n\to+\infty}\frac1n\log\leb
\left\{\inf_{\mu\in\EE}\Big\{\Big|\frac1n
S_n\vfi_0-\mu(\vfi_0)\Big|\Big\}>\omega_0\right\}<0.
\end{align}
Finally the choice of $\epsilon_1,\delta_1$ according to the
condition on exponential slow recurrence to $\cD$ ensures
that the Lebesgue measure of the right hand subset in
\eqref{eq:omegazero} is also exponentially small when
$n\to\infty$. This together with \eqref{eq:omegazero1}
concludes the proof of Theorem~\ref{thm:LDNUElog}.
\end{proof}


\subsection{The roof function and the induced observable as
  observables over the base dynamics}
\label{sec:roof-function-as}

We now proceed with the estimate of the Lebesgue measure of
the sets in \eqref{eq:red-din-base-1} using the results from
the previous Subsection~\ref{sec:reduct-one-dimens},
assuming exponentially slow recurrence to $\cD$ under the
dynamics of $f$ and also that $\cD$ is finite: we write
$\#\cD$ for the number of elements of $\cD$ in what follows.

To estimate the Lebesgue measure of the right hand side
subset in
\eqref{eq:red-din-base-1} we take a sufficiently large
$N\in\ZZ^+$ so that $N\|\psi\|>2$ and note that for
$\omega>0$ by using \eqref{eq:Ibound} and a large $T>0$
\begin{align}
  &\lambda\Big\{I>\frac\omega2\Big\}
  =
  \int d\leb(x)\int_0^{\tau(x)} \!\!\! ds\,
  \big(\chi_{(\omega/2,+\infty)}\circ I\big)(x,s,T)\nonumber
  \\
  &\le
  \leb\{ \tau>\omega T\}
  +
  \omega T \sum_{i=0}^{[T/\tau_0]+1}
  \lambda\left\{
    \frac{|S_{i+1}^f\tau-S_i ^f\tau|}T>\frac{\omega}4
    \right\},\label{eq:LD02}
\end{align}
where $\tau_0=\inf\tau>0$. Because $\tau$ has logarithmic
growth near the \emph{finite subset} $\cD$ together with (P7)
\begin{align*}
  \leb\{ \tau>\omega T\}
  =
  \lambda\{x\in I: d(x,\cD) \le e^{-\frac{\omega T}K}\}
  \le
  C_d e^{-\frac{d\omega T}K}\#\cD.
\end{align*}
On the other hand, since $T\ge S_i\tau(x)\ge\tau_0i$ we
obtain for each $i=0,\dots,[T/\tau_0]+1$
\begin{align}\label{eq:LDobstau}
  \lambda\left\{
    \frac{|S_{i+1}^f\tau-S_i ^f\tau|}T>\frac{\omega}4
  \right\}
  \le\sum_{j=0,1}
  \lambda\left\{
  \inf_{\mu\in\EE_f}\left|\frac1{i+j} S^f_{i+j}
  \tau-\mu(\tau)\right|>\frac{\omega\tau_0}4
  \right\}\le 2C_0e^{-\gamma i}
\end{align}
for some constants $C_0,\gamma>0$. This follows from Theorem
\ref{thm:LDNUE} assuming exponentially slow recurrence for
$f$. Hence \eqref{eq:LD02} is bounded from above by
\begin{align*}
  C_de^{-\frac{d\omega T}K}\#\cD
  +
  \omega T 2C_0\sum_{i=0}^{[T/r_0]+1} e^{-\gamma i}
  \le
  C_d C_0\omega T\big(e^{-\frac{d\omega T}K}+ e^{-\gamma T/\tau_0}\big)
\end{align*}
for all big enough $T>0$. Hence we have proved
\begin{align}\label{eq:limsup1}
  \limsup_{T\to\infty}
  \frac1T\log\lambda\Big\{I>\frac\omega2\Big\}<0.
\end{align}

\subsubsection{Using $\vfi$ as an observable for the $f$
  dynamics}
\label{sec:using-vfi-as}

Now we consider the measures of the subsets in
\eqref{eq:omega4-1}. For the right hand side subset in
\eqref{eq:omega4-1} we can bound its Lebesgue measure by
\begin{align}
  & \leb\left\{(x,s)\in\Xi^\tau:
    \inf_{\mu\in\EE}\left| \frac{n}T -\frac1{\mu(\tau)} \right|
    > \frac{\omega}{4|\mu(\vfi)|}\,\&\, \tau\le T\right\} +
  \leb\{(x,s)\in\Xi^\tau: \tau>  T \}\nonumber
  \\
  &\le T \sum_{i=0}^{[T/r_0]+1}\sum_{j=0,1} \leb\left\{\left|
      \frac{i}{S_{i+j}^F\tau} -\frac1{\mu(\tau)} \right| >
    \frac{\omega}{|\mu(\vfi)|}\right\} +
    \int_{\{\tau>T\}} \tau \,d\leb
    \label{eq:LD41}
\end{align}
Since $\tau$ has logarithmic growth near $\cD$ and $\cD$ is
finite, we get for $T$ large enough so that $i>[T]$ implies
$C_d(i+1)e^{-id/K}\cdot \#\cD<e^{-\gamma i}$ for some $\gamma>0$
\begin{align}
  \int_{\{\tau> T\}} \hspace{-0.5cm} \tau  \,\, d\leb
  &\le
  \sum_{i\ge[T]}\int_i^{i+1} \tau\,d\leb
  \le
  \sum_{i\ge[T]}(i+1)\leb\{\tau>i\}\nonumber
  \\
  &\le
  \sum_{i\ge[T]} (i+1) C_d e^{-id/K}\#\cD
  \le
  \sum_{i\ge[T]} e^{-\gamma i}
  \le
  \frac{e^{-\gamma T}}{1-e^{-\gamma}}\label{eq:LD42}.
\end{align}
For the double summation~\eqref{eq:LD41} we use again large
deviations for $f$ on the observable $\tau$ as in
\eqref{eq:LDobstau} to get the upper bound
$C_1 T e^{-\gamma T/\tau_0}$ for a constant $C_1$ depending
only on $f, C_0, C_d, \gamma$ and $\tau_0$.

This shows that the Lebesgue measure of the right hand side
subset of \eqref{eq:omega4-1} decays exponentially fast as
$T\nearrow\infty$.

Finally, for a small $\hat\omega>0$ the left hand side
subset of \eqref{eq:omega4-1} is contained in the union
\begin{align}
  \left\{
  \inf_{\mu\in\EE}\left|\frac{T}{n}-\mu(\tau)\right|>\hat{\omega}\right\}
  \bigcup
  \left\{
  \inf_{\mu\in\EE_F}\left|\frac{S_{n}\varphi}{n}-\mu(\varphi)\right|
  >\frac{\omega}{4}\left(\hat{\omega}+
  \inf_{\mu\in\EE_F}\mu(\tau)\right)\right\}.\label{eq:last}
\end{align}
The left hand side subset of~\eqref{eq:last} has Lebesgue
measure which decays exponentially fast as $T\nearrow\infty$
following the same arguments as in \eqref{eq:LD41}. For the
right hand side subset, we again use a large deviation bound
for $\vfi$ with respect to the dynamics of $f$ as in
\eqref{eq:LDobstau}. Putting all together we conclude the
proof of Theorem~\ref{thm:LD-suspension_F}.


\section{Exponentially slow recurrence}
\label{sec:exponent-slow-recurr-1}

As explained in Section~\ref{sec:reduct-large-deviat}, we
are now left to prove Theorem~\ref{mthm:expslowapprox} to
complete the proof of
Theorem~\ref{mthm:LD-sing-hyp-attracting}.


Let $f:I\setminus\cD\to I$ be a piecewise $C^{1+\alpha}$
one-dimensional map, for some $\alpha>0$, in the setting of
Theorem~\ref{mthm:expslowapprox}.
For every $c\in\cD$ and small $\delta>0$ we recall that
$\Delta(c,\delta)$ represents the one-sided neighborhood of
$c$, we set
$0<\beta_0=\inf_{c\in\cS}\alpha(c)\le\sup_{c\in\cS}\alpha(c)=\beta_1<1$
and
\begin{align*}
  \delta_c=\frac12\sup\{\delta>0:
  \Delta(c,\delta)\subset I\setminus\cD\}
\end{align*}
half of the largest possible radius of this neighborhood not
including other elements of $\cD$. We write the boundary
points $\partial\Delta(c,\delta_c)=\{c,c+\delta_c\}$ if
$c=c^+$ and $\partial\Delta(c,\delta_c)=\{c,c-\delta_c\}$ if
$c=c^-$, so that $c\pm\delta_c$ is the mid point between $c$
and the next element of $\cD$, according to the side of the
one-sided neighborhood; recall~\eqref{eq:1sided}. We also
fix a small $\epsilon_1>0$ so that
$1-\beta_1>\epsilon_1\beta_1$.

We define a partition $\cP_0$ of $I$ as follows.

\subsection{Initial partition}
\label{sec:initial-partit-refin}

The Lebesgue modulo zero partition of $I$ to be constructed
consists of \emph{intervals whose length is comparable to a
  power of the distance to $\cD$}.  For this, the following
simple result will be very useful.
\begin{lemma}
  \label{le:an}
  Let $a_n=n^\gamma$ with $\gamma=-1/(\epsilon_1\beta_1)$ and
  $n\ge1$. Then there exists $K_0>1$ so that for all $n\ge1$
  \begin{align*}
    \frac1{K_0}<\frac{a_n-a_{n+1}}{a_n^{1+\epsilon_1\beta_1}}< K_0
    \qand
    \frac{a_{n-1}-a_{n+2}}{a_n-a_{n+1}}
    <
    K_1
    :=\max\{1+2K_0,\sigma^{T_0}\}.
  \end{align*}
\end{lemma}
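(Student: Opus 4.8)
The plan is to build everything on the single observation that $\gamma=-1/(\epsilon_1\beta_1)$ forces $\gamma(1+\epsilon_1\beta_1)=\gamma-1$, so that $a_n^{1+\epsilon_1\beta_1}=n^{\gamma-1}$; after that, both estimates become elementary facts about the convex decreasing function $x\mapsto x^\gamma$ on $(0,\infty)$ (convex since $\gamma(\gamma-1)>0$, decreasing since $\gamma<0$).

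First I would prove the left display. By the mean value theorem applied to $x\mapsto x^\gamma$ on $[n,n+1]$ there is $\xi_n\in(n,n+1)$ with
\[
a_n-a_{n+1}=n^\gamma-(n+1)^\gamma=|\gamma|\,\xi_n^{\gamma-1},
\]
so, dividing by $a_n^{1+\epsilon_1\beta_1}=n^{\gamma-1}$,
\[
\frac{a_n-a_{n+1}}{a_n^{1+\epsilon_1\beta_1}}=|\gamma|\Big(\frac{\xi_n}{n}\Big)^{\gamma-1}\in\big(|\gamma|\,2^{\gamma-1},\,|\gamma|\big),
\]
because $\xi_n/n\in(1,2)$ for every $n\ge1$ and $t\mapsto t^{\gamma-1}$ is decreasing. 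Thus the left display holds, uniformly in $n\ge1$, for any $K_0>1$ with $K_0\ge|\gamma|$ and $1/K_0\le|\gamma|\,2^{\gamma-1}$.

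Next, for the right display (meaningful for $n\ge2$, since $a_0$ is not defined) I would split
\[
a_{n-1}-a_{n+2}=(a_{n-1}-a_n)+(a_n-a_{n+1})+(a_{n+1}-a_{n+2}),
\]
and use strict convexity of $x\mapsto x^\gamma$, which makes these three consecutive differences strictly decreasing; in particular the third is smaller than the middle one, so
\[
\frac{a_{n-1}-a_{n+2}}{a_n-a_{n+1}}<\frac{a_{n-1}-a_n}{a_n-a_{n+1}}+2 .
\]
A second use of the mean value theorem gives $\dfrac{a_{n-1}-a_n}{a_n-a_{n+1}}=(\xi_{n-1}/\xi_n)^{\gamma-1}$ with $\xi_{n-1}\in(n-1,n)$ and $\xi_n\in(n,n+1)$; since $\xi_{n-1}/\xi_n>(n-1)/(n+1)\ge1/3$ for $n\ge2$ and $t\mapsto t^{\gamma-1}$ is decreasing, this ratio is bounded above by $3^{1-\gamma}$, an explicit constant depending only on $\gamma=\gamma(\epsilon_1,\beta_1)$.

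Finally I would fix a single $K_0>1$ large enough to validate the left display and in addition to satisfy $K_0\ge 3^{1-\gamma}$; enlarging $K_0$ never breaks the left display, so this costs nothing. Then $\dfrac{a_{n-1}-a_{n+2}}{a_n-a_{n+1}}<K_0+2<1+2K_0\le K_1$, using $K_0>1$ and $K_1=\max\{1+2K_0,\sigma^{T_0}\}$, which is the right display. I do not expect a genuine obstacle: the statement is one-variable calculus, and the only point requiring a moment's care is the bookkeeping that lets one constant $K_0$ serve both inequalities (the lower bound $|\gamma|\,2^{\gamma-1}$ in the left display is already exponentially small in $|\gamma|$, so $K_0$ is of exponential size anyway and absorbing $3^{1-\gamma}$ as well is harmless).
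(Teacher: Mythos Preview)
Your proof is correct. The paper states this lemma without proof, calling it a ``simple result'' and moving directly to the construction of the partition, so there is nothing to compare against; your argument via the mean value theorem and convexity of $x\mapsto x^\gamma$ is exactly the kind of elementary verification the authors evidently had in mind.
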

We define the partition in each $\Delta(c,\delta_c)$
according to whether $c\in\cS$ or $c\in\cD\setminus\cS$.

\subsubsection{Near a singular point}
\label{sec:near-singular-point}

If $c\in\cS$, then we partition $\Delta(c,\delta_c)$ into (see
Figure~\ref{fig:part1})
\begin{align*}
  M(c,p)=\Delta(c,a_p)\setminus\Delta(c,a_{p+1}), \quad p\ge\rho(c)
\end{align*}
where $\rho(c)=\inf\{\rho\in\ZZ^+: a_p<\delta_c\}$ is a
threshold defined for each $c\in\cS$.

\begin{figure}[htpb]
  \centering
\includegraphics[width=12cm]{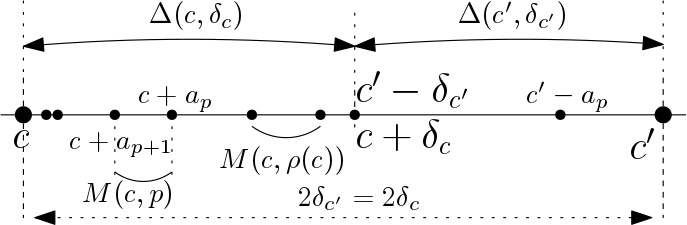}
\caption{Sketch of the partition in one-sided neighborhoods:
  right-side neighborhood of $c\in\cS$ and left-side
  neighborhood of $c'\in\cD$.}
  \label{fig:part1}
\end{figure}

In addition, from assumption~\eqref{eq:der0},
there is $B>0$ so that
\begin{align}
  \label{eq:nondegexp}
  |f'x|
  \ge
  \frac1B d(x,\cS)^{\alpha(c)-1}
  \ge
  \frac1B d(x,\cS)^{\beta_1-1},
  \quad x\in \Delta(c,\delta_c), \forall c\in\cS
\end{align}

\subsubsection{Near a discontinuity point}
\label{sec:near-discont-point}

According to the main assumption in the statement of
Theorem~\ref{mthm:expslowapprox}, we can map a (one-sided)
neighborhood $\Delta(c,\delta)$ of $c\in\cD\setminus\cS$
into a (one-sided) neighborhood $\Delta(\tilde c,\epsilon)$
of some $\tilde c\in\cS$ in finitely many $T=T(c)\le T_0$
iterates of $f$, for some pair $\epsilon,\delta>0$.

Hence, we can use this map to \emph{pull-back the partition
  elements defined in neighborhoods of $\cS$ to obtain
  partition elements in neighborhoods of
  $\cD\setminus\cS$}. More precisely, there exist
$\rho(c)\in\ZZ^+,\delta>0$ so that the following is well defined
\begin{align}\label{eq:1bddrho0}
  M(c,p)
  =
  (f^T\mid_{\Delta(c,\delta)})^{-1}(M(\tilde c,p)), \quad p\ge\rho(c).
\end{align}
Extra conditions on $\rho$ will be imposed
at~\eqref{eq:escnosing}~\eqref{eq:conexao} in
Subsection~\ref{sec:full-set-conditions}.

We denote by $\cP$ the family of all intervals
$\{M(c,p):p\ge\rho(c),c\in\cD\}$ defined up to this point.
Since each $\rho(c), c\in\cD\setminus\cS$ needs to be big
    enough, $\cP$ is not a partition of $I\setminus\cD$.

\subsubsection{Global initial partition}
\label{sec:global-initial-parti-1}

Let now $\cP_0$ be formed by the collection of all intervals
$M(c,p)$ for all $c\in\cD$ and $p\ge\rho(c)$ together with
the connected components of
\begin{align*}
  I\setminus
  \left(\bigcup_{c\in\cD}\bigcup_{p\ge\rho(c)}M(c,p)\right),
\end{align*}
which will be the \emph{escape intervals}.  We denote these
components by $M(c,\rho(c)-1)$, the \emph{escape interval}
of $c\in\cD\setminus\cS$, whenever they intersect
$\Delta(c,\delta_c)$; see Figure~\ref{fig:part2}.

\begin{figure}[htpb]
  \centering
  \includegraphics[width=15cm]{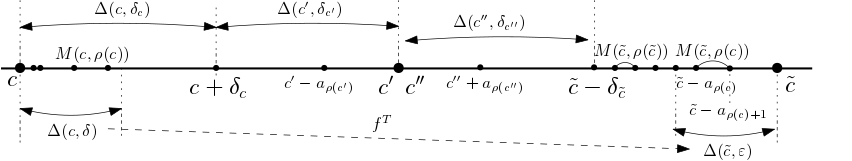}
  \caption{Sketch of the  partition on one-sided
    neighborhoods of consecutive points
    $c, c', c''\in \cD\setminus\cS$ and $\tilde c=f^Tc$ with
    $\tilde c\in\cS$, illustrating the pull-back from
    $\tilde c$ to $c$ and the one-sided neighborhoods and
    middle points between one-sided elements of $\cD$.}
  \label{fig:part2}
\end{figure}

\begin{remark}\label{rmk:escapecomponents}
  For any given escape interval there might be two points
  $c, c'\in\cD$ such that $M(c,\rho(c)-1)=M(c',\rho(c)-1)$,
  but the focus will be the length of these intervals, and
  the length of $M(c,p)$ does not depend on $c\in\cS$ for
  $p\neq\rho(c)-1$ by construction.
\end{remark}

\subsubsection{Estimates on atom lengths}
\label{sec:global-initial-parti}

For a subset $A$ of $I$ we denote by $|A|$ the Lebesgue
measure of $A$.  For each element $\eta=M(c,p)$ with
$p>\rho(c)$ of $\cP_0$ we denote by $\eta^+$ the interval
obtained by joining $\eta$ with its two neighbors in
$\cP_0$.

From the definitions in
Subsection~\ref{sec:near-singular-point} and
Lemma~\ref{le:an}, since $|M(c,p)|=a_p-a_{p+1}$
\begin{align}
  \label{eq:atomsizesing}
  c\in\cD, p\ge\rho(c)
  \implies
  \left\{
  \begin{array}{rcl}{}
    |M(c,p)^+|
    &\le&
          K_1|M(c,p)| \le K_1K_0 p^{-1-1/\epsilon_1\beta_1}
    \\
    \frac{p^{-1-1/\epsilon_1\beta_1}}{K_0}
    &\le&
          |M(c,p)|
  \end{array}
          \right..
\end{align}
For $c\in\cD\setminus\cS$ we recall that by assumption for
$1\le j<T=T(c)\le T_0$ there exists $c_j\in\cD\setminus\cS$
satisfying $d(f^j(c),\cD)=d(f^j(c),c_j)>0$ and the
collection $\cC=\{c_j: 1\le j<T(c), c\in\cD\setminus\cS\}$
is finite. Hence we get, from the Mean Value Theorem
\begin{align}\label{eq:connectsing}
  \sigma|M(c,p)|
  \le
  \sigma^T|M(c,p)|
  \le
  |M(\tilde c,p)|
  =
  |(f^T)'(\xi_{c,p})|\cdot|M(c,p)|
  \le
  K_1 |M(c,p)|
\end{align}
for some $\xi_{c,p}\in M(c,p)$, where $\tilde c\in\cS$ is
the singular point associated to $c$, $|(f^T)'(\xi_{k,p})|$
is uniformly bounded from below by $\sigma^T>\sigma$ and from above
by $K_1\ge\sigma^{T_0}>1$ for all $p\ge\rho(c)$ and
$c\in\cD\setminus\cS$, since $\cC$ is finite.
This implies in particular
\begin{align}
  \label{eq:M+}
  c\in\cD\setminus\cS, p\ge\rho(c)
  \implies
  |M(c,p)^+|
  \le
  \frac{|M(\tilde c,p)^+|}{\sigma^T}
  \le
  \frac{K_1}{\sigma}|M(\tilde c,p)|
  \le
  K_1^2  |M(c,p)|.
\end{align}
Using~\eqref{eq:atomsizesing} for $p\ge\rho(c)$ we arrive at 
\begin{align}\label{eq:atomsizediscont}
  \frac{p^{\gamma-1}}{K_0K_1}
  &\le
  \frac{|M(\tilde c,p)|}{K_1}
  \le
    |M(c,p)|
  \le   
  \frac{|M(\tilde c,p)|}{\sigma}
  \le
  \frac{K_0}{\sigma}
  p^{\gamma-1}.
\end{align}
From \eqref{eq:atomsizesing} and \eqref{eq:atomsizediscont}
we can relate distance to $\cD$ with the length of the atoms
of $\cP$. For $c\in\cS$ we can use Lemma~\ref{le:an} and the
definition of the partition to deduce\footnote{We write
  \( f\approx g \) if the ratio \( f/g \) is bounded above
  and below independently of $c\in\cS, p\ge\rho(c)$. }
\begin{align*}
  d\big(M(c,p),\cD)
  &=
    a_{p+1}
    =
    a_p\left(1+\frac1p\right)^\gamma
    \approx
    \big(a_p^{1+\epsilon_1\beta_1}\big)^{1/(1+\epsilon_1\beta_1)}
    \approx
    |M(c,p)|^{1/(1+\epsilon_1\beta_1)}\approx a_p.
\end{align*}
For $c\in\cD\setminus\cS$, using the above relation together
with \eqref{eq:connectsing} and~\eqref{eq:atomsizediscont}
we again get
\begin{align*}
  \frac{d\big(f^T(M(k,p)),\cD)}{K_1}
  \le
  d\big(M(c,p),\cD)
  &\le
    \frac{d\big(f^T(M(c,p)),\cD)}{\sigma^T}
    \approx
    |M(c,p)|^{1/(1+\epsilon_1\beta_1)}\approx a_p;
\end{align*}
hence, there exists a constant $K_2>0$ so that
\begin{align}
  \label{eq:distD_length}
  \frac1{K_2}
  \le
  \frac{d\big(M(c,p),\cD)}
  {|M(c,p)|^{1/(1+\epsilon_1\beta_1)}}
  \le K_2,
  \quad p\ge\rho(c), c\in\cD.
\end{align}

\subsection{Dynamical refinement of the partition and
  bounded distortion}
\label{sec:dynamic-refinem-part}

Following \cite[Section 6]{araujo2006a}, the partition
$\cP_0$ is dynamically refined so that any pair $x,y$ of
points in the same atom of the $n$th refinement $\cP_n$,
i.e.  $y\in\cP_n(x)$, belong to the same element $\eta^+$
during the first consecutive $n$ iterates: there are
$\eta_i\in\cP_0$ so that 
$f^i(x),f^i(y)\in\eta_i^+$ for
$i=0,\dots,n$. 
Moreover, $\cP_n$ is a collection of intervals for each
$n\ge1$ and $f^{n+1}\mid\omega: \omega\to f^{n+1}\omega$ is
a diffeomorphism for every interval $\omega\in\cP_n$.

The details of this refinement will be presented at
Subsection~\ref{sec:dynamic-refinem-algo}. Before this, we
first obtain a bounded distortion property for $\cP_n$ which
follows just from the above general properties of the
refinement and the choice of the initial partition.

\subsubsection{Bounded distortion}
\label{sec:bounded-distort}

Slightly more general than in \cite[Section
6.3]{araujo2006a} (where this was only stated for atoms of
$\cP_n$ in $n$ iterates while it is also valid for atoms of
$\cP_{n-1}$), uniform expansion and the domination of the
length of atoms of $\cP_0$ by a power of the distance to
$\cD$ imply bounded distortion on atoms of the partition
$\cP_{n-1}$.

\begin{lemma}
  \label{le:bddist}
  There exists $D>0$ depending only on $f$
  such that 
  \begin{align*}
    y\in\cP_{n-1}(x), n\ge1
    \implies
      \log\left|\frac{(f^n)'(x)}{(f^n)'(y)}\right|\le D.
  \end{align*}
\end{lemma}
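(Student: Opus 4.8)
Fix $\omega:=\cP_{n-1}(x)$ and $y\in\omega$, write $\sigma:=\inf\{|f'(z)|:z\in I\setminus\cD\}>1$ and $\omega_i:=f^i(\omega)$, $i=0,\dots,n$. By the refinement properties recalled above, each $\omega_i$ is an interval contained in $\eta_i^+$, where $\eta_i:=\cP_0(f^i(x))$ for $0\le i\le n-1$, and $f^n|_\omega$ is a diffeomorphism onto $\omega_n\subset I$. Since $\log\big|(f^n)'(x)/(f^n)'(y)\big|=\sum_{i=0}^{n-1}\big(\log|f'(f^ix)|-\log|f'(f^iy)|\big)$ and, for $s,t$ in the same monotone branch (so $f's,f't$ have the same sign), $\big|\log|f's|-\log|f't|\big|=\log\frac{\max\{|f's|,|f't|\}}{\min\{|f's|,|f't|\}}\le\frac{|f's-f't|}{\min\{|f's|,|f't|\}}$ (using $\log(1+u)\le u$), it suffices to bound $\sum_{i=0}^{n-1}\frac{|f'(f^ix)-f'(f^iy)|}{\min\{|f'(f^ix)|,|f'(f^iy)|\}}$ by a constant depending only on $f$.

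First, since $f^{n-i}|_{\omega_i}$ is a diffeomorphism onto $\omega_n$, the Mean Value Theorem together with $|f'|\ge\sigma$ gives $|\omega_i|\le\sigma^{-(n-i)}|\omega_n|\le\sigma^{-(n-i)}$ for every $i$; this is the only use of uniform expansion and it requires no distortion control. Now split according to $\eta_i$. \emph{Tame case:} if $\eta_i$ is not one of the atoms $M(c,p)$ with $c\in\cS$, then $\eta_i^+$ stays at a uniformly positive distance from $\cS$, so $f$ is $C^{1+\alpha}$ on $\eta_i^+$ with $|f'|\ge\sigma$ and with $\alpha$-H\"older derivative of a uniform constant $H_0$ (by \eqref{eq:derD-S} near $\cD\setminus\cS$, in particular on escape intervals and on the pull-back atoms around $\cD\setminus\cS$, by non-degeneracy \eqref{eq:der0} at bounded distance from $\cS$, and by compactness otherwise); hence the $i$-th term is $\le\frac{H_0}{\sigma}|f^ix-f^iy|^{\alpha}\le\frac{H_0}{\sigma}|\omega_i|^{\alpha}$. \emph{Singular case:} if $\eta_i=M(c,p)$ with $c\in\cS$, $p\ge\rho(c)$, then by \eqref{eq:der0} (as recorded in the introduction) and $|s-c|\approx|t-c|$ on $M(c,p)^+$ (Lemma~\ref{le:an}) one has $\frac{|f's-f't|}{\min\{|f's|,|f't|\}}\lesssim\frac{|s-t|}{d(s,c)}$ for $s,t\in M(c,p)^+$; moreover \eqref{eq:atomsizesing}, \eqref{eq:atomsizediscont} and \eqref{eq:distD_length} show that every point of $M(c,p)^+$ lies at distance $\gtrsim|M(c,p)^+|^{1/(1+\epsilon_1\beta_1)}$ from $c$, so with $|s-t|\le|\omega_i|\le|M(c,p)^+|$ the $i$-th term is $\lesssim|\omega_i|^{1-1/(1+\epsilon_1\beta_1)}=|\omega_i|^{\theta}$, where $\theta:=\frac{\epsilon_1\beta_1}{1+\epsilon_1\beta_1}>0$.

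Put $\beta:=\min\{\alpha,\theta\}>0$. The two cases provide a constant $C=C(f)>0$ with $\frac{|f'(f^ix)-f'(f^iy)|}{\min\{|f'(f^ix)|,|f'(f^iy)|\}}\le C|\omega_i|^{\beta}$ for every $i$, whence
\[
\log\left|\frac{(f^n)'(x)}{(f^n)'(y)}\right|\le C\sum_{i=0}^{n-1}|\omega_i|^{\beta}\le C\sum_{i=0}^{n-1}\sigma^{-\beta(n-i)}\le C\sum_{k\ge1}\sigma^{-\beta k}=:D ,
\]
a constant depending only on $f$ (the case $n=1$ being the single term, which is $\le C$).

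The only nonroutine ingredient — and the whole reason for the unusual shape of $\cP_0$ — is the singular case. An exponential initial partition $M(c,p)\approx\Delta(c,e^{-p})\setminus\Delta(c,e^{-p-1})$ yields $|s-t|/d(s,c)\approx 1$ at a deep return, and even an exponential–polynomial refinement yields only $\approx 1/|\log d(s,c)|$, so in either case $\sum_i|s_i-t_i|/d(s_i,c_i)$ fails to converge along typical orbits; the choice $|M(c,p)|\approx d(M(c,p),\cD)^{1+\epsilon_1\beta_1}$ built into \eqref{eq:distD_length} is exactly what turns this sum into the geometrically convergent $\sum_i|\omega_i|^{\theta}$ via the expansion bound $|\omega_i|\le\sigma^{-(n-i)}$. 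The remaining points — that \eqref{eq:der0} genuinely gives the first-order estimate $|f's-f't|\lesssim d(s,c)^{-1}|f's|\,|s-t|$ near $c\in\cS$ (not merely $|f's|\approx d(s,c)^{\alpha(c)-1}$), and that all tame atoms (including escape intervals and the pull-back atoms near $\cD\setminus\cS$) carry a uniform $C^{1+\alpha}$ bound — are bookkeeping already contained in the construction of $\cP_0$.
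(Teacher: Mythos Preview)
Your proof is correct and follows essentially the same route as the paper: the chain-rule decomposition, the split into ``tame'' versus ``singular'' host atoms, the key use of \eqref{eq:distD_length} to control $|x_i-y_i|/d(x_i,c)$ near $\cS$, and the final geometric summation via $|\omega_i|\le\sigma^{-(n-i)}$ are all the same ingredients. The one organizational difference is in the singular case: the paper writes $\frac{|x_i-y_i|}{|x_i-c|}=\frac{|x_{i+1}-y_{i+1}|^\theta}{|f'\xi_i|^\theta}\cdot\frac{|x_i-y_i|^{1-\theta}}{|x_i-c|}$ with $\theta/(1-\theta)=\epsilon_1$ and then shows the second factor is bounded, obtaining $\theta=\epsilon_1/(1+\epsilon_1)$; you instead bound the quotient directly by $|\omega_i|/|M(c,p)^+|^{1/(1+\epsilon_1\beta_1)}\le|\omega_i|^{\epsilon_1\beta_1/(1+\epsilon_1\beta_1)}$. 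Your exponent is slightly smaller (since $\beta_1<1$) but the argument is shorter and the conclusion is identical.
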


\begin{proof}
  For $\omega\in\cP_{n-1}$ for some $n\ge1$ and
  $x,y\in\omega$, since
  $f^i\mid\omega:\omega\to\omega_i=f^i\omega$ is a
  diffeomorphism for $i=1,\dots,n$, then writing
  $x_i=f^ix, y_i=f^iy$
  \begin{align}\label{eq:bddist0}
    \left|\log\frac{|(f^n)'(x)|}{|(f^n)'(y)|}\right|
    &\le
      \sum_{i=0}^{n-1} \left|\log\frac{|f'x_i|}{|f'y_i|}\right|
      \le
      \sum_{i=0}^{n-1} \frac{\big||f'x_i|-|f'y_i|\big|}
      {\min\{|f'x_i|, |f'y_i|\}}.
  \end{align}
  If $\omega_i\in\Delta(c,\delta_c)$ with
  $c\in\cD\setminus\cS$, then by uniform expansion
  and~\eqref{eq:derD-S} we can bound the $i$th summand from
  above by
  \begin{align}\label{eq:bddist1}
    H\frac{|x_i-y_i|^\alpha}{|f'\xi_i|}
    \le
    \frac{H}{\sigma}\sigma^{\alpha(i-n+1)}|x_n-y_n|^\alpha.
  \end{align}
  Otherwise, $\omega_i\in\Delta(c,\delta_c)$ with
  $c\in\cS$. Then we set $\theta\in(0,1)$ such that
  $\theta/(1-\theta)=\epsilon_1$ and $1-\beta_1>\epsilon_1\beta_1$,
  use~\eqref{eq:der0} together with the size of the
  partition elements near $\cS$ to bound the middle $i$th
  summand of \eqref{eq:bddist0} as follows
  \footnote{Here it is important to have $\cP_0$ defined
    in $\Delta(c,\delta_c)$ for $c\in\cS$ with $\rho(c)$ the
    minimum possible value such that $a_p<\delta_c$, to
    have a tight control of $|x_i-y_i|$.}
  \begin{align}\label{eq:bddist1a}
    \left|\log\frac{|f'x_i|}{|f'y_i|}\right|
    \le
    C\left|\log\frac{|x_i-c|}{|y_i-c|}\right|
    \le
    C\frac{|x_i-y_i|}{|x_i-c|}
    &=
      C\frac{|x_{i+1}-y_{i+1}|^\theta}{|f'\xi_i|^\theta}
      \cdot
      \frac{|x_i-y_i|^{1-\theta}}{|x_i-c|},
  \end{align}
  where $C$ depends only on $f$ and $\xi_i$ is between $x_i$
  and $y_i$. Since
  $|f'\xi_i|\approx|\xi_i-c|^{\theta(\alpha(c)-1)}$
  from~\eqref{eq:der0} again and 
  \begin{align*}
    \left|\frac{x_i-c}{\xi_i-c}\right|
    &=
      \left|\frac{x_i-c}{x_i-c+\xi_i-x_i}\right|
      \le
      \frac{|x_i-c|}{|x_i-c|-|y_i-x_i|}
    \le
      \frac1{1-(a_p-a_{p-1})/a_p}
  \end{align*}
  for some $p\ge\rho(c)$ so that $\omega_i\in M(c,p)^+$ for
  some $c\in\cS$, then Lemma~\ref{le:an} ensures that the
  last expression is bounded and we can find another
  constant to bound \eqref{eq:bddist1a} by
  \begin{align*}
    \hat C |x_{i+1}-y_{i+1}|^\theta
    \left(\frac{|x_i-y_i|^{1-\theta}}
    {|\xi_i-c|^{\theta(\alpha(c)-1)+1}}
    \right).
  \end{align*}
  The last expression in parenthesis is bounded from above
  by a constant since
  $\frac{\theta(\alpha(c)-1)+1}{1-\theta}=
  {1+\alpha(c)\theta/(1-\theta)}=1+\alpha(c)\epsilon_1\le
  1+\epsilon_1\beta_1$ and
\begin{align*}
  |x_i-y_i|
  \le
  K_2|x_i-c|^{1+\epsilon_1\beta_1}
  \le
  K_2|x_i-c|^{1+\alpha(c)\epsilon_1}
\end{align*}
is precisely provided by~\eqref{eq:distD_length} from the
construction of the initial partition.
Hence we can find a constant $\tilde D>0$ depending only on
$f$ so that
\begin{align}\label{eq:bddist3}
  \left|\log\frac{|f'x_i|}{|f'y_i|}\right|
  \le
  C\frac{|x_i-y_i|}{|x_i-c|}
  \le
  \tilde D|x_{i+1}-y_{i+1}|^\theta
  \le
  \tilde D\sigma^{\theta(i-n+1)}|x_n-y_n|^\theta.
\end{align}
Finally, putting~\eqref{eq:bddist1} and~\eqref{eq:bddist3}
together, since $|x_n-y_n|\le1$ we get
\begin{align*}
  \left|\log\frac{|(f^n)'(x)|}{|(f^n)'(y)|}\right|
  &\le
    \max\left\{\frac{H}{\sigma},\tilde D\right\}
    \sum_{i=0}^{n-1}\sigma^{(i-n+1)\min\{\alpha,\theta\}}\le D
\end{align*}
where $D>0$ depends on $\sigma, H, \tilde D, \alpha$ and
$\theta$ which, in turn, depend on $f$.
\end{proof}

\subsubsection{Conditions on $\rho_0$}
\label{sec:full-set-conditions}

We now specify a threshold $\rho_0\in\ZZ^+$ for the distance
to $\cD\setminus\cS$ given by the family of intervals to
consider in $\cP_0$ near discontinuity points and its
complementary escape intervals. We recall that for
$c\in\cD\setminus\cS$ and $1\le j<T=T(c)\le T_0$ there exists
$c_j\in\cD\setminus\cS$ satisfying
$d(f^j(c),\cD)=d(f^j(c),c_j)>0$ and so 
$\cC=\{c_j: 1\le j<T(c), c\in\cD\setminus\cS\}$ is finite.
Moreover, since $\cS$ is finite, we assume that
\begin{align}\label{eq:escnosing}
  \rho_0>\rho(c), \quad\forall c\in\cS.
\end{align}
Hence we can choose $\rho_0\in\ZZ^+$ big enough and find
$\bar\delta>0$ so that, setting
$\rho(c)=\rho_0, c\in\cD\setminus\cS$, then we have the
following besides~\eqref{eq:1bddrho0}
\begin{align}
  \label{eq:conexao}
  d(f^j(c),c_j)
  &>
    (\rho_0-1)^\gamma\ge \bar\delta,
    \quad
  0< j<T(c), c\in\cD\setminus\cS.
\end{align}
This ensures that for $p\ge\rho_0$ we get
$f^jM(c,p)\subset M(c_j,\rho_0-1), 0<j<T$, that is, the
orbit of $M(c,p)$ is contained in the escape set near
$\cD\setminus\cS$ until it reaches a one-sided neighborhood
of some point of $\cS$.  We define
\begin{align*}
    \kappa_0(\rho_0)
    &=
    \sup_{c,c'\in\cD\setminus\cS}\frac{|M(c,\rho_0-1)^+|}
  {|M(c',\rho_0-1)|}
  \qand
  \un{\beta}=\un{\beta}_{\ell}
  =
  1+
  \frac1{\ell}\sup_{c\in\cD\setminus\cS}\frac{\log\sigma}{\log(2\delta_c)};
\end{align*}
and then set
$\beta_2=\frac{(1+\epsilon_1)\beta_1}{1+\epsilon_1\beta_1}$
and choose $\ell\in\ZZ^+$ big enough so that
$\un{\beta}\le\beta_2$.

We note that\footnote{Because $2\delta_c$ is the size of one
  monotonous branch of $f$ having $c$ in its boundary, so
  $\sigma\cdot2\delta_c<1$.}  $0<\un{\beta}<1$ and
$|M(c,\rho_0-1)|\xrightarrow[\rho_0\nearrow\infty]{}2\delta_c$
for each $c\in\cD\setminus\cS$ and also
$ \kappa_0(\rho_0)\xrightarrow[\rho_0\nearrow\infty]{}
\kappa_0=\sup\{2^{1-\beta_2}\delta_c\delta_{c'}^{-\beta_2}:
c,c'\in\cD\setminus\cS\}$.

Then we define $\beta_3$ to be such that
$\beta_2+(1+\epsilon_1\beta_1)/(\epsilon_1\beta_1)<\beta_3<1$. We
also set $L=\kappa_0BD^2 (K_1K_0)^{\beta_2-1}$ and choose
$\rho_0\in\ZZ^+$ so that, in addition
to~\eqref{eq:1bddrho0}, \eqref{eq:escnosing}
and~\eqref{eq:conexao}, the following is also true
\begin{align}
  \label{eq:escapeint0}
  \frac12\le\frac{\kappa_0(\rho_0)}{\kappa_0}\le\frac32;
  \quad
  2\delta_c
  \le
  \frac{|M(c,\rho_0-1)|}{\sigma^{1/\ell(1-\un{\beta})}}
  \qand
  p>\rho_0
  \implies
  \frac{|M(c,p)^+|}{|M(c,p)|^{\beta_3}}\le L^{-3};
\end{align}
ensuring, in particular, that
$|f(M(c,\rho_0-1))|\ge\sigma|M(c,\rho_0-1))| >
|M(c,\rho_0-1))|^{\un{\beta}}$, for all $c\in\cD\setminus\cS$.
Moreover we also need that
\begin{align}
  \label{eq:escapeint1}
  p\ge\rho_0, c\in\cD, c'\in\cD\setminus\cS
  \implies
  \begin{cases}
    |M(c,p)|^{\beta_3}/|M(c',\rho_0-1)|^{\beta_2} \le
    |M(c,p)|^{\beta_3-\beta_2}
    \\
    |M(c,p)|^{\beta_3-\beta_2}\cdot
    BK_1^{1-\beta_2}K_2^{1-\beta_1}\le1
    \\
    |M(c,p)|^{1/(1+\epsilon_1\beta_1)}\ge K_2|M(c,p)|
  \end{cases}.
\end{align}
These conditions can be simultaneously achieved by a large
enough $\rho_0$ and are crucial in the proof of
Lemma~\ref{le:sizedepth} and in
subsections~\ref{sec:distance-singul-set}
and~\ref{sec:expect-value-splitt}.

\subsubsection{The dynamical refinement algorithm}
\label{sec:dynamic-refinem-algo}

The refinement algorithm is defined inductively, assuming
that $\cP_n$ is already defined and, for each
$\omega\in\cP_n$, there are sets
$R_n(\omega)=\{r_1<\dots<r_s\}$ (with $r_1\ge0$ and
$r_s\le n$) of $u_n(\omega)=\#R_n(\omega)$ \emph{return
  times} and $D_{n}(\omega)=\{(c_1,p_1), \dots,(c_s,p_s)\}$
whose pairs give the corresponding \emph{return depths}, to
be defined below.

First for $\omega=M(c,p)\in\cP_0$ for some $c\in\cD$ we set
$R_0(\omega)=\{0\}$ and $D_0(\omega)=\{(c,p)\}$.  Then, for
each $n\ge1$ we assume that $\cP_n$ is defined and for each
$\omega\in\cP_n$ that $R_n(\omega), D_n(\omega)$ are also
defined. Then we analyze the interval $f^{n+1}\omega$:
\begin{itemize}
\item if 
  $f^{n+1}(\omega)$ intersects more than three atoms of
  $\cP_0$, then we set $\omega\in\cP_{n+1}$,
  $R_{n+1}(\omega)=R_n(\omega)$ and
  $D_{n+1}(\omega)=D_n(\omega)$. For the points in $\omega$,
  the iterate $n+1$ is called a 
  \emph{free time}. 
\item otherwise the iterate $n+1$ is a \emph{return time}
  for the points in $\omega$, and we consider the
  subsets\footnote{The requirement that the splitting is
    done only if the interval $f^{n+1}\omega$ intersects
    more than three elements of $\cP_0$ is crucial to ensure
    that a certain proportion of this interval goes to each
    subinterval; see Figure~\ref{fig:returns},
    Remark~\ref{rmk:zeta0}(1) and Remark~\ref{rmk:3split}.}
  $\bar\eta_{c,p}=\big(f^{n+1}\mid \omega\big)^{-1}(M(c,p))$
  of the interval $\omega$ for all elements $M(c,p)$ of
  $\cP_0$ which intersect $f^{n+1}(\omega)$; see
  Figure~\ref{fig:repart}.

\begin{figure}[htpb]
  \centering
  \includegraphics[width=12cm]{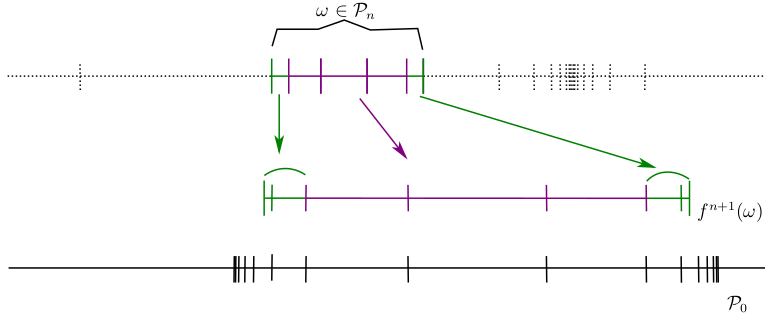}
  \caption{Illustration of the refinement of $\omega\in\cP_n$.}
  \label{fig:repart}
\end{figure}

This family $\bar\cQ=\{\bar\eta_{c,p}\}$ is a partition of
$\omega$ Lebesgue modulo zero such that
$f^{n+1}(\bar\eta_{c,p})$ is either equal to $M(c,p)$; or
strictly contained in $M(c,p)$.

In the latter case, $f^{n+1}(\bar\eta_{c,p})$ is necessarily at
an extreme of the interval $f^{n+1}(\omega)$ and we join
$\bar\eta_{c,p}$ with its neighbor in $\cQ$ obtaining an
interval $\eta_{c,p}$.

In this way we construct a new partition $\cQ=\{\eta_{c,p}\}$ of
$\omega$ which satisfies
  \begin{align}\label{eq:nesting}
  M(c,p)\subseteq f^{n+1}(\eta_{c,p}) \subseteq M(c,p)^+;
  \end{align}
  for all $(c,p)$ with either $c\in\cS$ and $p\ge\rho(c)$,
  or $c\in\cD\setminus\cS$ and $p\ge\rho_0-1$, so that
  $\eta_{c,p}\in\cQ$; and we set
  \begin{align*}
  \eta_{c,p}\in\cP_{n+1}, \quad
R_{n+1}(\eta_{c,p})&=R_n(\eta_{c,p})\cup\{n+1\} \qand
\\D_{n+1}(\eta_{c,p})&=D_n(\eta_{c,p})\cup\{(c,p)\}.
  \end{align*}
  The cases $\eta_{c,\rho_0-1}\in\cQ$ with
  $c\in\cD\setminus\cS$ are a special kind of return time,
  identified in what follows by naming $n+1$ an
  \emph{escape time} for the points of $\eta_{c,\rho_0-1}$.
\end{itemize}

\begin{remark}\label{rmk:escape}
  If $\eta_{c,\rho_0}\in\cQ$, then we might have
  $M(c,\rho_0-1) \cap f^{n+1}\eta_{c,\rho_0}\neq\emptyset$
  but even in this case $M(c,\rho_0-1)$ is not necessarily
  covered; see Figure~\ref{fig:returns}.
\end{remark}

To finish the refining algorithm, we repeat the procedure
for each $\omega\in\cP_n$ completing the construction of
$\cP_{n+1}$ from $\cP_n$ for $n\ge1$.

Clearly, since the atoms of the initial partition $\cP_0$
are intervals, then this construction shows that all the
atoms of $\cP_{n}$ are intervals, for all $n\ge1$.

\begin{figure}[htpb]
  \centering
  \includegraphics[width=15cm]{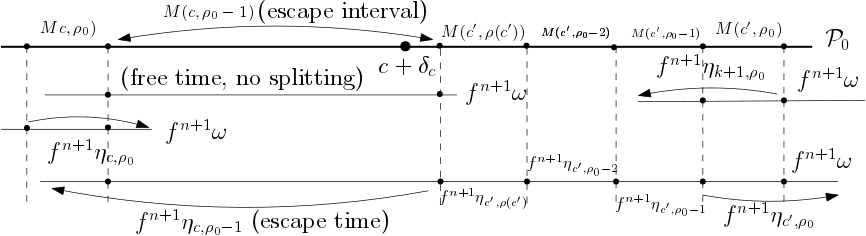}
  \caption{Illustration of different relative positions of
    $f^{n+1}\omega$ and atoms of $\cP_0$ between consecutive
    elements of $c\in\cD\setminus\cS$ and $c'\in\cS$. Note
    how enforcing splitting of $f^{n+1}\omega$ only if it
    intersects more than $3$ elements of $\cP_0$ ensures
    that a certain proportion of $\omega$ is removed to
    obtain any $\eta_{c,p}$.}
  \label{fig:returns}
\end{figure}

\subsection{Measure of atoms of $\cP_n$ as a function of
  return depths}
\label{sec:dist-depth}

Here we estimate the measure of $\omega\in\cP_n$ using
$R_n(\omega)$ and $D_n(\omega)$ in a similar way to
\cite[Section 6.4]{araujo2006a} but with the new
length/distance relations from~\eqref{eq:distD_length}.

We start by fixing $n\in\ZZ^+$, $u\in\{1,\dots,n\}$ and
taking $\omega^0\in\cP_0$.  Let $\omega\in\cP_n$ be such
that $\omega\subset\omega^0$ and
$u_n(\omega) =u=\#\R_n(\omega)$ is the number of return
times $R_n(\omega)=\{t_0,t_1,\dots,t_u\}$ of $\omega$, where
$0=t_0<t_1<\dots<t_u\le n$; and
$D_n(\omega)=\{(c_1,p_1),\dots,(c_u,p_u)\}$ be the return
depths determined by $\omega$ through the refinement
algorithm.

For $m=1,\dots,u$ we write
$\omega^m=\omega((c_1,p_1),\dots,(c_m,p_m))\in\cP_{t_m}$ the
subset of $\omega^0$ satisfying
\begin{align}\label{eq:nested}
  f^{t_i}(\omega^m)\subset M(c_i,p_i)^+,
  1\le i<m \,\mbox{ and }\,
  M(c_m,p_m)\subset f^{t_m}(\omega^m)\subset
  M(c_m,p_m)^+,
\end{align}
by the definition of the sequence of partitions $\cP_n$.
We get a nested sequence of sets
\[
  \omega^0 \supsetneq \omega^1
  \supsetneq \dots \supsetneq \omega^u=\omega.
\]
Fixing $u\le n$ and $p_1, \dots, p_u$, we define
\begin{align*}
\cT=\{ \omega\in\cP_n : \omega\subset\omega^0, \,
u_n(\omega) = u \qand
D_n(\omega)=\{(c_1,p_1),\dots,(c_u,p_u)\}, c_i\in\cD \},
\end{align*}
this is, the set of points contained in atoms of $\cP_n$
which lie inside $\omega^0$ and whose first $n$ iterates
have a given number $u$ of return times and a given sequence
$p_1,\dots, p_u$ of return depths.

Now we define by induction a sequence of refinements of $\cT$
which will enable us to determine the estimates we need. 
Start by putting $\cV_0=\omega^0$. We define
for $1\le i\le u$
\begin{align*}
  \cV_{i}
  =\bigcup
  \{ \omega^{i}: \omega\in\cT \,\&\,
  \eqref{eq:nested} \text{  holds with  }
  m=i\}
\end{align*}
and note that $\cV_i\subset\cV_{i-1}$.  Next we compare
$\big|\cV_{j}\big|$ with
$\big| \cV_{j-1}\big|, j=1,\dots,u$, using the conditions on
$\rho_0$ and definitions of $\beta_2,\beta_3, \un{\beta}$
given in Subsection~\ref{sec:full-set-conditions}.

\begin{lemma}
  \label{le:sizedepth}
  There exists $\zeta_0=\zeta_0(\rho_0)>0$ such that
  \begin{align}\label{eq:comparecVesc}
    \big|\cV_{m}\big|
    &\le
      e^{-\zeta_0}\big| \cV_{m-1} \big|, \quad m=1,\dots, u.
  \end{align}
  Moreover, if $p_m>\rho_0$ for any
  $m\in\{1,\dots,u\}$, 
  then
  \begin{align}\label{eq:compareV}
  \big|\cV_{m}\big|
  &\le
    \frac{1}{L^2\kappa_0\sigma^{t_m-t_{m-1}-1}}
    \cdot
    \frac{|M(c_m,p_m)|^{\beta_3}}
    { |M(c_{m-1},p_{m-1})|^{\beta_2}}
    |\cV_{m-1}|. 
  \end{align}
These estimates provide the general bound
\begin{align}\label{eq:Vgeneral}
  |\cV_u|
  \le
  |\omega^0|^{1-\beta_2}
  e^{-\zeta_0(n-R)}
  \sigma^{-R+r}
  \prod_{p_i>\rho_0}
  |M(c_i,p_i)|^{\beta_3-\beta_2},
\end{align}
where $R$ is the number of iterates between the $r$ return
times having depth larger than $\rho_0$.
\end{lemma}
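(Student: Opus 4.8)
The plan is to compare $|\cV_m|$ with $|\cV_{m-1}|$ atom by atom and then multiply the one-step estimates. First I would observe that, the data $u$ and $(c_1,p_1),\dots,(c_u,p_u)$ being fixed, each atom $\omega^{m-1}$ appearing in the union defining $\cV_{m-1}$ contains \emph{at most one} sub-atom $\omega^m\in\cP_{t_m}$ realising the return depth $(c_m,p_m)$ at the return time $t_m$; since $\cV_m$ and $\cV_{m-1}$ are then disjoint unions of, respectively, these $\omega^m$'s and their hosts $\omega^{m-1}$, it is enough to bound the ratio $|\omega^m|/|\omega^{m-1}|$ uniformly (for \eqref{eq:comparecVesc}) and more sharply when $p_m>\rho_0$ (for \eqref{eq:compareV}). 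Since $\cV_0=\omega^0$ this will also yield \eqref{eq:Vgeneral} by telescoping.

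Next I would pass to the images under $f^{t_m}$. The atom $\omega^{m-1}$ is carried along unchanged through the free times $t_{m-1}+1,\dots,t_m-1$, so $\omega^{m-1}\in\cP_{t_m-1}$ and $f^{t_m}\mid\omega^{m-1}$ is a diffeomorphism; the mean value theorem together with the bounded distortion Lemma~\ref{le:bddist} (applied with $n=t_m$, noting $\omega^m\subset\omega^{m-1}=\cP_{t_m-1}(x)$) gives
\[
  \frac{|\omega^m|}{|\omega^{m-1}|}\ \le\ e^{D}\,\frac{|f^{t_m}(\omega^m)|}{|f^{t_m}(\omega^{m-1})|}.
\]
By \eqref{eq:nested} the numerator satisfies $|f^{t_m}(\omega^m)|\le|M(c_m,p_m)^+|$, while for the denominator \eqref{eq:nested} gives $f^{t_{m-1}}(\omega^{m-1})\supseteq M(c_{m-1},p_{m-1})$ and $\inf|f'|\ge\sigma$ yields $|f^{t_m}(\omega^{m-1})|\ge\sigma^{\,t_m-t_{m-1}-1}|M(c_{m-1},p_{m-1})|$. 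Crucially, $t_m$ being a return time, $f^{t_m}(\omega^{m-1})$ meets more than three consecutive atoms of $\cP_0$, whereas by the construction of $\cQ$ (its two extreme pieces being absorbed into their neighbours) $f^{t_m}(\omega^m)$ lies inside a single atom and its two neighbours; hence at least one full atom of $\cP_0$ covered by $f^{t_m}(\omega^{m-1})$ is \emph{discarded} when forming $\omega^m$.

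To get the uniform estimate \eqref{eq:comparecVesc} I would bound below, independently of $(c_m,p_m)$, the fraction of $f^{t_m}(\omega^{m-1})$ that is discarded, and play it off against $e^{D}$. This is the point where the tailored initial partition pays off: adjacent atoms $M(c,p)$ with $c\in\cS$ have comparable lengths by Lemma~\ref{le:an} (ratios controlled by $K_0,K_1$), the escape intervals near $c\in\cD\setminus\cS$ are mutually comparable by \eqref{eq:escapeint0}, the pulled-back atoms near $\cD\setminus\cS$ are comparable to their $\cS$-counterparts by \eqref{eq:connectsing}--\eqref{eq:atomsizediscont}, and only finitely many ``bounded geometry'' atoms remain; moreover, when the return is deep inside an $\cS$-neighbourhood the few atoms met by $f^{t_m}(\omega^{m-1})$ are tiny, so $\diam\big(f^{t_m}(\omega^{m-1})\big)$ is small and the distortion there is essentially trivial. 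A finite case analysis then produces $\zeta_0=\zeta_0(\rho_0)>0$, after a last enlargement of $\rho_0$ if needed, with $|\cV_m|\le e^{-\zeta_0}|\cV_{m-1}|$. For the sharp bound \eqref{eq:compareV} one runs the same computation but, since now $p_m>\rho_0$, inserts $|M(c_m,p_m)^+|\le L^{-3}|M(c_m,p_m)|^{\beta_3}$ from \eqref{eq:escapeint0} in the numerator and $|M(c_{m-1},p_{m-1})|\ge \kappa_0^{-1}|M(c_{m-1},p_{m-1})|^{\beta_2}$ (up to a $\rho_0$-dependent constant) from \eqref{eq:escapeint1} in the denominator, absorbing $e^{D},B,K_0,K_1,\kappa_0$ into $L=\kappa_0BD^2(K_1K_0)^{\beta_2-1}$. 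Finally \eqref{eq:Vgeneral} follows by telescoping $|\cV_u|=|\omega^0|\prod_{m=1}^u(|\cV_m|/|\cV_{m-1}|)$: applying \eqref{eq:compareV} at the $r$ deep returns, whose $\sigma^{-(t_m-t_{m-1}-1)}$ factors multiply to $\sigma^{-R+r}$ and whose $|M(c_{m-1},p_{m-1})|^{\beta_2}$ denominators partly cancel the $|M(c_i,p_i)|^{\beta_3}$ numerators, leaving $|\omega^0|^{1-\beta_2}\prod_{p_i>\rho_0}|M(c_i,p_i)|^{\beta_3-\beta_2}$, and \eqref{eq:comparecVesc} on the remaining $n-R$ iterates' worth of (shallow) returns, giving the factor $e^{-\zeta_0(n-R)}$.

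I expect the main obstacle to be precisely the uniform positivity of $\zeta_0$ in \eqref{eq:comparecVesc}: one must show the discarded fraction beats the distortion $e^{D}$ \emph{simultaneously} for a return buried deep in an $\cS$-neighbourhood, a return landing in an escape interval near $\cD\setminus\cS$, and a return into one of the finitely many bounded-geometry regions; it is exactly for this that all the comparability relations and the several conditions on $\rho_0$ in Subsection~\ref{sec:full-set-conditions} are assembled, and once bounded distortion is in hand via Lemma~\ref{le:bddist} the rest is bookkeeping.
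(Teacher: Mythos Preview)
Your overall architecture (atom-by-atom comparison, bounded distortion to pass to images, then telescoping) is the right one and matches the paper. But two steps do not go through as written.

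\textbf{The direction of distortion in \eqref{eq:comparecVesc}.} You apply bounded distortion to the \emph{kept} ratio, obtaining $|\omega^m|/|\omega^{m-1}|\le e^{D}\cdot(\text{image ratio})$, and then try to make the discarded image fraction beat $e^{D}$. That cannot work: when the return falls in an escape interval and the discarded neighbour is a deep atom near $\cS$, the discarded image fraction is tiny (of order $\xi(\rho_0)\to0$ as $\rho_0\to\infty$, cf.\ Remark~\ref{rmk:zeta0}(1)), so $e^{D}\cdot(\text{image ratio})$ exceeds $1$. The paper applies distortion to the \emph{complementary} ratio instead: $|\omega^{m-1}\setminus\omega^m|/|\omega^{m-1}|\ge e^{-D}\cdot(\text{discarded image fraction})$, which is always strictly positive, so $|\omega^m|/|\omega^{m-1}|\le 1-e^{-D}\xi(\rho_0)<1$. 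The distortion constant then sits harmlessly inside the positive quantity you subtract from $1$, rather than multiplying a number you need to force below $1$. Your remark that ``the distortion there is essentially trivial'' does not help: $D$ is cumulative over $t_m$ iterates and is a fixed constant independent of where the $t_m$-th image lands.

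\textbf{The exponent $\beta_2$ in \eqref{eq:compareV}.} Using only $\inf|f'|\ge\sigma$ gives $|f^{t_m}\omega^{m-1}|\ge\sigma^{t_m-t_{m-1}}|M(c_{m-1},p_{m-1})|$, so your denominator carries $|M(c_{m-1},p_{m-1})|^{1}$, not $|M(c_{m-1},p_{m-1})|^{\beta_2}$. Since $|M|<1$ and $\beta_2<1$ one has $|M|^{\beta_2}>|M|$, so your claimed inequality $|M|\ge\kappa_0^{-1}|M|^{\beta_2}$ goes the wrong way and is false for deep returns. The missing ingredient is the \emph{singular} expansion: at time $t_{m-1}$ (or $t_{m-1}+T$ if $c_{m-1}\in\cD\setminus\cS$) the image sits in $\Delta(\tilde c,\delta)$ with $\tilde c\in\cS$, and one step of $f$ there has derivative $\gtrsim|M(c_{m-1},p_{m-1})|^{\beta_2-1}$ by \eqref{eq:nondegexp}, which upgrades the denominator to $|M(c_{m-1},p_{m-1})|^{\beta_2}$; this is exactly \eqref{eq:elementquotient}--\eqref{eq:bndconection} in the paper.

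A smaller point on \eqref{eq:Vgeneral}: \eqref{eq:comparecVesc} gives $e^{-\zeta_0}$ per \emph{return}, not per iterate, so to reach $e^{-\zeta_0(n-R)}$ you also need the observation (Remark~\ref{rmk:zeta0}(3)) that consecutive shallow returns are at most $L_0$ iterates apart, after which one absorbs $L_0$ into $\zeta_0$.
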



Assuming the lemma, we fix $n\ge1$, the escape/return times
$0=t_0<t_1<\dots<t_u\le n$ and $(c_1,p_1),\dots,(c_u,p_u)$
and consider the subset
\begin{align*}
  A^u
  &=
    A^{u}_{(c_1,p_1),\dots,(c_u,p_u)}(n)
  =
  \big\{
  x\in M: u_n(x)=u \text{ and }
  f^{t_i}(x)\in M(c_i,p_i),\, i=1,\dots,u
    \big\}
  \\
  &=
    \big\{
    x\in\omega\in\cP_n: D_n(\omega)=\{(c_0,p_0), (c_1,p_1),\dots,(c_u,p_u)\}
    \big\}
\end{align*}
where $c_i\in\cD, p_i\ge\rho_0-1, i=0,1,\dots,u$ and $u_n(x)$ for
$x\in\omega\in\cP_n$ denotes the number of escape/returns of
$\omega$ until the $n$th iterate. Then from
\eqref{eq:Vgeneral} we get
\begin{align}
  |  A^{u}_{(c_1,p_1),\dots,(c_u,p_u)}(n)|
  &\le
    \sum_{\omega\in\cP_n\cap A^{u}\atop \omega\subset\omega^0\in\cP_0}
      e^{-\zeta_0(n-R)}
  \sigma^{-R+r}
  \prod_{p_i>\rho_0}
  |M(c_i,p_i)|^{\beta_3-\beta_2}
    \cdot|M(c_0,p_0)|^{1-\beta_2} \nonumber
  \\
  &\le
      e^{-\zeta_0(n-R)}
  \sigma^{-R+r}
  \prod_{p_i>\rho_0}
  |M(c_i,p_i)|^{\beta_3-\beta_2}
    \sum_{\omega_0\in\cP_0}|\omega^0|^{1-\beta_2}\nonumber
  \\
  &\le  \label{eq:probreturns}
    C_0   e^{-\zeta_0(n-R)}
  \sigma^{-R+r}
  \prod_{p_i>\rho_0}
  |M(c_i,p_i)|^{\beta_3-\beta_2},
\end{align}
where $C_0= \sum_{\omega_0\in\cP_0}|\omega^0|^{1-\beta_2}>1$
depends only on $\beta_1$ and $\epsilon_1$. This is
well-defined since $1-\beta_1>\epsilon_1\beta_1$, by the
choice of $\epsilon_1$, and denoting $\#\cD$ the number of
elements of $\cD$:
\begin{align*}
  C_0
  \le
  \#\cD\cdot
  \sum_{p\ge\rho_0}
  \big(p^{-1-1/(\epsilon_1\beta_1)}\big)^{(1-\beta_1)(1+\epsilon_1\beta_1)}
  =
  \#\cD
  \sum_{p\ge\rho_0}
  p^{-\frac{1-\beta_1}{\epsilon_1\beta_1}}<\infty.
\end{align*}

The proof of Lemma~\ref{le:sizedepth} is contained in the
remaining of this subsection.

\subsubsection{Between return times not considering relative
depths}
\label{sec:between-escapes}

Now we start the proof of Lemma~\ref{le:sizedepth}.  Let us
assume that $m\in\{1,\dots,u\}$ and $t_m$ is an
\emph{escape time}: $p_m=\rho_0-1$ and
$c_m\in\cD\setminus\cS$. Then by the partition
algorithm,  bounded distortion and~\eqref{eq:atomsizesing}
(recall Figure \ref{fig:returns})
\begin{align*}
  \frac{|\omega^{m-1}\setminus \omega^m|}{|\omega^{m-1}|}
  &\ge
    \frac1{D^2}\cdot
    \frac{|f^{t_m}(\omega^{m-1}\setminus
    \omega^m)|}{|f^{t_m}\omega^{m-1}|}
    \ge
    \frac1{D^2}\cdot
    \frac{|M(c,p)|}{|M(c_m,\rho_0-1)^+|}
\end{align*}
where $M(c,p)$ is one of the neighbors of $M(c_m,\rho_0-1)$
in $\cP_0$: either $M(c_m,\rho_0)$; or
$M(c,\rho_0)$ for some $c\in\cD\setminus\cS, c\neq c_m$;
or else $c\in\cS$ and $p=\rho(c)\le\rho_0-1$.

If $t_m$ is such that $p_m=\rho_0$ and
$c_m\in\cD\setminus\cS$, then analogously we obtain
\begin{align*}
  \frac{|\omega^{m-1}\setminus \omega^m|}{|\omega^{m-1}|}
  &\ge
    \frac1{D^2}\cdot
    \frac{|M(c_m,\rho_0+1)|}{|M(c_m,\rho_0)^+|}.
\end{align*}
Otherwise, either $p_m<\rho_0$ and then $c_m\in\cS$; or
$p_m>\rho_0$ and we similarly obtain
\begin{align*}
  \frac{|\omega^{m-1}\setminus \omega^m|}{|\omega^{m-1}|}
  &\ge
    \frac1{D^2}\cdot
    \frac{|M(c_m,p)|}{|M(c_m,p_m)^+|}
    >\frac1{K_1D^2},
\end{align*}
where $M(c_m,p)$ is one of the neighbors of $M(c_m,p_m)$ in
$\cP_0$ in the one-sided neighborhood of $c_m$.
So in all cases
$  \frac{|\omega^{m-1}\setminus \omega^m|}{|\omega^{m-1}|}
    \ge
    \frac{\xi(\rho_0)}{D^2},
$
where
$ \xi(\rho_0) = \frac1{K_0K_1}\inf_{c\in\cD\setminus\cS}
\frac{(\rho_0+1)^{-1-\epsilon_1\beta_1}}
{2\delta_c-(\rho_0+2)^{-1/(\epsilon_1\beta_1)}}$.

This proves \eqref{eq:comparecVesc} since
\begin{align*}
    |\cV_m|
  &=
  \sum_{\omega^m\in\cV_m}|\omega^m|
  =
  \sum_{\omega^m\in\cV_m\atop \omega^{m}\subset\omega^{m-1}\in\cV_{m-1}}
  \frac{\big|\omega^{m-1}\setminus(\omega^{m-1}\setminus\omega^{m})\big|}
  {|\omega^{m-1}|}|\omega^{m-1}|
  \\
  &\le
  \left(1-\frac{\xi}{D^2}\right)
    \sum_{\omega^m\subset\omega^{m-1}\in\cV_{m-1}}|\omega^{m-1}|
    \le
    e^{-\zeta_0}|\cV_{m-1}|, \text{   for  }
    0<\zeta_0<\zeta=-\log(1-\xi/D^2).
\end{align*}

\begin{remark}
  \label{rmk:zeta0}
  \begin{enumerate}
  \item Note that
    $\zeta=\zeta(\rho_0)\xrightarrow[\rho_0\nearrow\infty]{}0$. Moreover,
    \emph{the estimates in this subsection depend on the
      requirement in the refining algorithm of only
      splitting the intervals when they intersect more than
      three elements of $\cP_0$}.
  \item Since
    $1\ge|f^{t_{m}}\omega^{m-1}|
    \ge\sigma^{t_m-t_{m-1}}|f^{t_{m-1}}\omega^{m-1}| \ge
    \sigma^{t_m-t_{m-1}}|M(c_{m-1},p_{m-1})|$ we deduce
    $t_m-t_{m-1}\le-\log|M(c_{m-1},p_{m-1})|/\log\sigma$. Hence
    \emph{the number of iterates between consecutive return times is
      bounded by the depth of the first return.}
    
  \item In particular, if $t_{m-1}$ is an escape time,
    i.e. $p_{m-1}=\rho_0-1$ and $c_{m-1}\in\cD\setminus\cS$,
    then $t_m-t_{m-1}\le L_0$
    for a uniform constant $L_0$ depending only on the
    minimum length of $M(c,\rho_0-1)$ for $c\in\cD\setminus\cS$. Because
    $|M(c,\rho_0-1)|$ grows to $\delta_c$ as
    $\rho_0\nearrow\infty$, then \emph{this shows that the
      number of iterates between consecutive escape times is
      uniformly bounded by a constant which does not depend
      on $\rho_0$.}
  \end{enumerate}
\end{remark}

\subsubsection{Between returns which are not escape times}
\label{sec:after-deep-returns}

We fix $m\in\{1,\dots,u\}$.  On the one hand, note that if
$p_{m-1}\ge\rho_0$, then by the partition algorithm, either
$c=c_{m-1}\in\cS$ and we get the bound
\begin{align}
  \frac{\big| \omega^{m}\big|}{\big|\omega^{m-1}\big|}
  &\le
    \frac{\big|
    \omega^{m}\big|}{\big|\wh{\omega}^{m-1}\big|}
    \le
    D^2\cdot \frac{\Big|f^{t_m}\omega^{m}\Big|}
    {\Big| f^{t_m}\wh{\omega}^{m-1}  \Big|},
    \text{  where  }
    \wh{\omega}^{m-1}=\omega^{m-1}\cap f^{-t_{m}}(\cU_0 )\nonumber
  \\
  &\le\label{eq:elementquotient}
    D^2\cdot \frac{\big| M(c_m,p_m)^+ \big|}
    {\sigma^{t_{m}-t_{m-1}-1}|f'(\xi)|\cdot \big|
    f^{t_{m-1}}\wh{\omega}^{m-1}\big|}
    \text{  for some  }\xi\in f^{t_{m-1}}\wh{\omega}^{m-1};
\end{align}
or $c=c_{m-1}\in\cD\setminus\cS$ and we use~\eqref{eq:nondegexp}
\begin{align*}
  \Big| f^{t_m}\wh{\omega}^{m-1}  \Big|
  \ge
  \sigma^{t_{m}-t_{m-1}-T(c)-1}|f'(\zeta)|\cdot \big|
  f^{t_{m-1}+T(c)}\wh{\omega}^{m-1}\big|
  \ge
  \sigma^{t_{m}-t_{m-1}-1}|f'(\zeta)|\cdot \big|
  f^{t_{m-1}}\wh{\omega}^{m-1}\big|
\end{align*}
for some $\zeta\in f^{t_{m-1}+T(c)}\wh{\omega}^{m-1}$; where
$\wh{\omega}^{m-1}$ restricts focus in the denominator to
points which lie in the \emph{return region}
$\cU_0=\cup_{c\in\cD}\cup_{p\ge\rho_0-1}I(c,p)$ whose
trajectories are controlled by the refinement algorithm.

If $c\in\cS$, by Lemma~\ref{le:an}
and the choice of $\wh{\omega}^{m-1}$
\begin{align}\label{eq:expoente}
  |f'(\xi)|
  \ge
  \frac{\left(p_{m-1}^\gamma\right)^{\beta_1-1}}B
  =
  \frac{p_{m-1}^{(1-\beta_1)/\epsilon_1\beta_1}}B
  >
  \frac{K_0^{(\beta_1-1)/(1+\epsilon_1\beta_1)}}B
  |M(c,p_{m-1})|^{(\beta_1-1)/(1+\epsilon_1\beta_1)}
\end{align}
where
$M(c,p_{m-1})\subset f^{t_{m-1}}\wh{\omega}^{m-1}\subset
f^{t_{m-1}}\omega^{m-1}$ by definition of return
time. Otherwise, $c\in\cD\setminus\cS$ and by the previous
estimates together with \eqref{eq:M+}, noting that
$(\beta_1-1)/(1+\epsilon_1\beta_1)=\beta_2-1$
\begin{align}
  |f'(\zeta)|
  &\ge
  \frac{\left(p_{m-1}^\gamma\right)^{\beta_1-1}}B
    \ge\nonumber
    \frac{K_0^{\beta_2-1}}B
    |M(\tilde c,p_{m-1})|^{\beta_2-1}
  \\
  &=\label{eq:bndconection}
    \frac{K_0^{\beta_2-1}}B
    |f^TM(c,p_{m-1})|^{\beta_2-1}
  \ge
    \frac{(K_0K_1)^{\beta_2-1}}B
    |M(c,p_{m-1})|^{\beta_2-1}
\end{align}
where $\tilde c=f^Tc$ is the singularity to which $c$ is
connected.

On the other hand, if $p_{m-1}=\rho_0-1$ and
$c\in\cD\setminus\cS$, then we use the conditions from
Subsection~\ref{sec:full-set-conditions} to deduce
\begin{align}\label{eq:expansesc}
  |f^{t_{m-1}+1}\omega^{m-1}|
  \ge
  \sigma|M(c,\rho_0-1)|
  \ge
  |M(c,\rho_0-1))|^{\un{\beta}}.
\end{align}
Since $K_1>K_0>1$, in any of the cases above we arrive at
\begin{align}\label{eq:bound-m-1}
  \frac{\big| \omega^{m}\big|}{\big|\omega^{m-1}\big|}
  &\le
    \frac{BD^2(K_1K_0)^{\beta_2-1}}
    {\sigma^{t_m-t_{m-1}-1}}\cdot
    \frac{\big| M(c_m,p_m)^+ \big|}
    { |M(c_{m-1},p_{m-1})|^{\max\{\beta_2,\un{\beta}\}}}.
\end{align}
Now if $t_{m}$ is such that $p_m>\rho_0$, then
from~\eqref{eq:escapeint0}
we can bound~\eqref{eq:bound-m-1} as
\begin{align*}
  \frac{L|M(c_m,p_m)^+|}{\kappa_0\sigma^{\Delta t_m-1}|M(c_m,p_m)|^{\beta_3}}
  \cdot
  \frac{|M(c_m,p_m)|^{\beta_3}}
  {|M(c_{m-1},p_{m-1})|^{\beta_2}}
  \le
  \frac1{L^2\kappa_0\sigma^{\Delta t_m-1}}\cdot
  \frac{|M(c_m,p_m)|^{\beta_3}}
  {|M(c_{m-1},p_{m-1})|^{\beta_2}},
\end{align*}
where $\Delta t_m=t_m-t_{m-1}$.
Now we can obtain \eqref{eq:compareV}
\begin{align*}
  |\cV_m|
  &=
  \sum_{\omega^m\in\cV_m}|\omega^m|
  =
  \sum_{\omega^m\in\cV_m\atop \omega^{m}\subset\omega^{m-1}\in\cV_{m-1}}
  \frac{|\omega^{m}|}{|\omega^{m-1}|}
  |\omega^{m-1}|
    \le
    \frac1{L^2\kappa_0\sigma^{\Delta t_m-1}}
    \cdot
    \frac{|M(c_m,p_m)|^{\beta_3}}
    { |M(c_{m-1},p_{m-1})|^{\beta_2}}
  |\cV_{m-1}|.
\end{align*}

This completes the proof of all but the last estimate in
Lemma~\ref{le:sizedepth}.

\subsubsection{Probability of a given sequence of returns}
\label{sec:probab-sequence-retu}

Now we apply the estimates~\eqref{eq:comparecVesc}
and~\eqref{eq:compareV} 
to prove~\eqref{eq:Vgeneral}.  Consider
$s_0=0<1\le r_1<s_1< r_2<s_2< \dots< r_h<s_h\le
u<r_{h+1}=u+1$ the indexes marking the beginning of
sequences of return times with return depths $p\le\rho_0$
from $t_{s_i}$ to $t_{r_{i+1}-1}$ within
$t_1<t_2<\dots<t_u$; and sequences of consecutive return
times with depth $p>\rho$ from $t_{s_i}$ to $t_{r_{i+1}-1}$.

More precisely, for each $j=0,\dots,h$ we have for
$r_j\le i <s_j$ that $p_i>\rho_0$; and for 
$s_j\le i<r_{j+1}$ we have $p_i\le\rho_0$.  Using this
grouping of the consecutive return times we obtain that
\begin{itemize}
\item every pair of return times with depth $p\le\rho_0$
  introduces a factor $e^{-\zeta}$ as in
  \eqref{eq:comparecVesc};
\item the remaining pairs of return times introduce
  quotients as in~\eqref{eq:compareV}.
\end{itemize}
Now the details: assuming first that $r_1=1$ and $s_h=u$,
then
$|\cV_u|=|\cV_0|\prod_{i=1}^u\frac{|\cV_i|}{|\cV_{i-1}|}$
and we can group the iterates from $s_0=0=r_1-1$ to $s_1-1$, the
transition to $s_1$ together with the iterates from $s_1$ to
$r_2-1$ 
\begin{align*}
  |\cV_{r_2-1}|
  &=
  |\cV_0|
  \prod_{i=r_1}^{r_2-1}\frac{|\cV^i|}{|\cV^{i-1}|}
  \le
    e^{-\zeta(r_2-s_1)}|\omega^0|
  |M(c_{s_1-1},p_{s_1-1})|^{\beta_3}
  \prod_{j=r_1}^{s_1-2}
  \frac{|M(c_j,p_j)|^{\beta_3}}
  {L^2\kappa_0\sigma^{\Delta t_j-1}|M(c_{j-1},p_{j-1})|^{\beta_2}}
  \\
  &\le
    e^{-\zeta(r_2-s_1)}|\omega_0|^{1-\beta_2}
    \frac{|M(c_{s_1-1},p_{s_1-1})|^{\beta_3}}{(L^2\kappa_0)^{s_1-r_1-1}}
    \prod_{j=1}^{s_1-2}
    \frac{|M(c_j,p_j)|^{\beta_3-\beta_2}}
  {\sigma^{\Delta t_j-1}}.
\end{align*}
Repeating for the groups of iterates from $s_j$ to
$s_{j+1}-1$ and including the transition from $s_j-1$ to
$s_j$, for $j=1,\dots, h$, we obtain, writing
$s=\sum_{j=1}^h (r_j-s_{j-1})$ for the number returns in a
chain of return times with $p_i\le\rho_0$, and
using~\eqref{eq:escapeint1}
\begin{align*}
  |\cV_{u-1}|
  &\le
    \frac{e^{-\zeta s}}{|\omega^0|^{\beta_2-1}}
    \prod_{j=1}^h
    \frac{|M(c_{s_h-1},p_{s_h-1})|^{\beta_3}}
    {(L^2\kappa_0)^{s_j-r_j-1}
    |M(c_{r_{j+1}-1},\rho_0-1)|^{\beta_2}}
    \prod_{r_j\le i<s_j-1}\hspace{-0.3cm}
    \frac{|M(c_i,p_i)|^{\beta_3-\beta_2}}
    {\sigma^{\Delta t_i-1}}
  \\
  &\le
    e^{-\zeta s}|\omega^0|^{1-\beta_2}
    \prod_{j=1}^h
    \frac1{(L^2\kappa_0)^{s_j-r_j-1}}
    \prod_{r_j\le i<s_j}
    \frac{|M(c_i,p_i)|^{\beta_3-\beta_2}}
    {\sigma^{\Delta t_i-1}}.
\end{align*}
Now since $s_h=u$, by~\eqref{eq:bound-m-1} the definition of
$\kappa_0$ and the choice of $\rho_0$, we get
\begin{align}
  |\cV_u|
  &=\nonumber
    \frac{|\cV_{u}|}{|\cV_{u-1}|}|\cV_{u-1}|
    \le
        \frac{L}
    {\kappa_0\sigma^{\Delta t_u}}\cdot
    \frac{|M(c_u,\rho_0-1)^+|}
    { |M(c_{u-1},p_{u-1})|^{\beta_2}}\cdot|\cV_{u-1}|
  \\
  &\le\label{eq:V1}
    |M(c_u,\rho_0-1)|^{\beta_2}
    \frac{e^{-\zeta s}|\omega^0|^{1-\beta_2}}{\sigma^{\Delta t_u}}
    \prod_{j=1}^h
    \prod_{r_j\le i<s_j-1}
    \frac{|M(c_i,p_i)|^{\beta_3-\beta_2}}
    {\sigma^{\Delta t_i-1}}.
\end{align}
Moreover, because $s_h=u$, then
$f^{t_u}\omega^u\supset M(c_u,\rho_0-1)$
and $f^{t_u+j}\omega^u$ has no returns for
$j=1,\dots,n-t_u$. Thus by \eqref{eq:expansesc}
\begin{align*}
      1&\ge|f^n\omega^u|=|f^{n-t_u-1}(f^{t_u+1}\omega^u)|
     \ge\sigma^{n-t_u-1}|f^{t_u}\omega^u|^{\un{\beta}}
\end{align*}
and then
$ |M(c_u,\rho_0-1)|^{\beta_2} \le
|f^{t_u}\omega^u|^{\beta_2} \le
|f^{t_u}\omega^u|^{\un{\beta}} \le \sigma^{t_u-n+1}$.
Writing
\begin{itemize}
\item $R=\sum_{j=1}^{h}(t_{s_j+1}-t_{r_j})$ for the quantity
  of iterations between return times with depth $p>\rho_0$
  from the $r_j$-th to $(s_j+1)$-th return, and
\item $r=\sum_{j=1}^h(s_j-r_j)$ for the quantity of return
  times involved in this chain of returns;
\end{itemize}
we arrive at
\begin{align}
  \label{eq:V2}
  \big|\cV_u\big|
  &\le
    |\omega^0|^{1-\beta_2} e^{-\zeta s}\sigma^{t_u-n+1-R+r}
    \prod_{p_i>\rho_0}    |M(c_i,p_i)|^{\beta_3-\beta_2}.
\end{align}
Now we use the last expression to obtain the other cases: we
might have $1<r_1$ or $s_h<u$, and so there is an initial
and/or final sequence of consecutive escape times. Since
\begin{align*}
  |\cV_u|&=|\cV_0|
           \prod_{i=1}^{r_1-1}\frac{|\cV_i|}{|\cV_{i-1}|}
           \prod_{i=r_1}^{s_h}\frac{|\cV_i|}{|\cV_{i-1}|}
           \prod_{i=s_h+1}^{u}\frac{|\cV_i|}{|\cV_{i-1}|}
\end{align*}
we can bound the middle product by the
expression~\eqref{eq:V1} with $M(c_{r_1-1},\rho_0-1)$ in the
place of $\omega^0$ and $s_h$ in the place of $u$:
\begin{align*}
  \prod_{i=r_1}^{s_h}\frac{|\cV_i|}{|\cV_{i-1}|}
  &\le
    \frac{e^{-\zeta s}L|M(c_{s_h},\rho_0-1)^+|}
    {\kappa_0\sigma^{\Delta t_{s_h}}|M(c_{r_1-1},\rho_0-1)|^{\beta_2}}
    \prod_{j=1}^h
    \prod_{r_j\le i<s_j}
    \frac{|M(c_i,p_i)|^{\beta_3-\beta_2}}
    {\sigma^{\Delta t_i-1}}
  \\
  &\le
    e^{-\zeta s}  \sigma^{-R+r}
    \prod_{p_i>\rho_0}  |M(c_i,p_i)|^{\beta_3-\beta_2},
\end{align*}
where $s=\sum_{j=2}^h (r_j-s_{j-1})$.  Finally, the first
factor is bounded according to
Subsection~\ref{sec:between-escapes}
\begin{align*}
  |\cV_0|\prod_{i=1}^{r_1-1}\frac{|\cV_i|}{|\cV_{i-1}|}
  \le
  e^{-\zeta(r_1-2)}|\cV_1|
  \le
  e^{-\zeta(r_1-2)}|\omega^0|
  =
  e^{-\zeta(r_1-s_0-2)}|\omega^0|
\end{align*}
and likewise the last factor is bounded by
$e^{-\zeta(u-s_h-1)}=e^{-\zeta(r_{h+1}-s_h-2)}$.  Hence
writing $\bar s=s+r_{h+1}-s_h-1 + r_1-s_0-1$ to incorporate
all the return times inside all chains of returns with depth
at most $\rho_0$, we arrive at
\begin{align*}
  |\cV_u|\le|\omega^0|^{1-\beta_2}
  e^{-\zeta\bar s}\sigma^{-R+r}
  \prod_{p_i>\rho_0}
  |M(c_i,p_i)|^{\beta_3-\beta_2}.
\end{align*}
In addition, by Remark~\ref{rmk:zeta0}(3) the total number
of iterates between consecutive escape times is related to
the number of escapes times by
$
  \bar s L_0 \ge \sum_{j=0}^{h}(t_{r_{j+1}}-t_{s_{j}+1}) =
  n-R,
$
so $e^{-\zeta \bar s}\le e^{-\frac{\zeta}{L_0}(n-R)}$. 
Hence
$
  |\cV_u|
  \le
  |\omega^0|^{1-\beta_2}
  e^{-\frac{\zeta}{L_0}(n-R)}
  \sigma^{-R+r}
  \prod_{p_i>\rho_0}
  |M(c_i,p_i)|^{\beta_3-\beta_2}.
$
This provides the general bound \eqref{eq:Vgeneral} after
setting $\zeta_0=\zeta/L_0$ and completes the proof of
Lemma~\ref{le:sizedepth}.


\subsection{Distance to the singular set between returns}
\label{sec:distance-singul-set}

We show that the distance to the singular set for iterates
$t_i<j<t_{i+1}$ between return times of a given
$\omega\in\cP_n$ is controlled by the depth of the last
return time, as follows.

In the same setting since the beginning of this section, by
the refinement algorithm we can write
$M(c_i,p_i)\subset f^{t_i}\omega^i\subset M(c_i,p_i)^+$,
where either $c_i\in\cS$ and $p_i\ge\rho(c)$, or
$c_i\in\cD\setminus\cS$ and $p_i\ge\rho_0-1$, since $t_i$
is a return.

We also have that there are $(c^j,p^j)$ so that
$f^{j}\omega^i\subset f^jM(c_i,p_i)^+\subset M(c^j,p^j)^+$,
where $p^j\ge\rho(c^j)$ for $c^j\in\cS$ or $p^j\ge\rho_0-1$
for $c^j\in\cD\setminus\cS$; and
$f^j\mid\omega^i:\omega^i\to f^j\omega^i$ is a
diffeomorphism, for each $j=t_i+1,\dots,t_{i+1}-1$. These
are the \emph{host intervals} at iterate $t_i<j<t_{i+1}$.
\begin{remark}\label{rmk:3split}
  These intervals are well-defined since the refinement
  algorithm only splits $f^j\omega^i$ when this interval
  intersects more that $3$ intervals of $\cP_0$.
\end{remark}
Moreover, we can compare the distance to the singular set
with the length of the host interval
following~\eqref{eq:distD_length} and noting that
$d(M(c,\rho_0-1)^+,\cD)=d(M(c,\rho(c)),\cD)$. In addition,
from~\eqref{eq:conexao}, we assume that
\begin{align}\label{eq:trunca}
2\delta<\inf\{\bar\delta,\dist(M(c,\rho_0-1)^+,\cD):
c\in\cD\setminus\cS\}
\end{align}
in what follows.  Next we divide the iterates
$j=t_i,\dots,t_{i+1}-1$ into sequences of consecutive visits
near a singularity and connection iterates between a
discontinuity and a singularity.

If $c_i\in\cD\setminus\cS$ with $p_i>\rho_0$, then
$\dist_\delta(f^j\omega^i,\cD)=1$ for
$t_i<j<t_i+ T, T=T(c_i)$ by the choice of $\delta$.  By
construction of the partition
$f^{t_i+T}(\omega^i)\supset M(\tilde c_i,p)$ for some
$\tilde c_i\in\cS$ and so
$|f^{t_i+T}(\omega^i)|\ge\sigma^T
|f^{t_i}\omega^i|>|M(c_i,p_i)|$ which implies
from~\eqref{eq:distD_length} that
$ \sum_{j=t_i}^{t_i+T-1} \log \dist_\delta(f^j\omega^i,\cD)
$ equals
\begin{align}\label{eq:double}
  \log \dist_\delta(f^{t_i}\omega^i,\cD)
  \ge
  \log\frac{|M(c_i,p_i)|^{\frac1{1+\epsilon_1\beta_1}}}{K_2}
  \ge
  \log|M(c_i,p_i)|.
\end{align}
\begin{remark}\label{rmk:trunca}
  We can assume without loss of generality that $p_i>\rho_0$
  for $c_i\in\cD\setminus\cS$. Otherwise we have
  $M(c_i,p_i)\subset f^{t_i}\omega^i$ and consequently
  $f^jM(c_i,p_i)\subset f^{t_i+j}\omega^i\subset
  M(c^j,p^j)^+$ with $p_i=\rho_0$ or $p_i=\rho_0-1$, which
  implies that $|M(c^j,p^j)^+|\ge\sigma^j|M(c_i,p_i)|$
  and so $p^j\le\rho_0$\footnote{All
    $M(c,p),p>\rho(c),c\in\cD$ are smaller than any
    $M(c',\rho-1), c'\in\cD\setminus\cS$.}  for all
  $0<j<t_{i+1}$. Hence $f^j\omega^i$ is $2\delta$-away from
  $\cD$ by the choice of $\delta$, so all the iterates
  $t_i\le j<t_{i+1}$ are truncated:
  $\sum_{j=t_i}^{t_{i+1}-1}-\log
  \dist_\delta(f^j\omega^i,\cD)=0$.
\end{remark}
By the Mean Value Theorem and \eqref{eq:nondegexp} we get
\begin{align*}
  \frac{|f^{j+1}\omega^i|}{|f^j\omega^i|}=|f'(\xi_j)|
  \ge
  \frac{d(\xi_j,c^j)^{\beta_1-1}}B, \quad\text{for some}\quad
  \xi_j\in f^j\omega^i \quad\text{if $c^j\in\cS$}.
\end{align*}
If $(c^j,p^j)$ has $c^j\in\cS$, then
from~\eqref{eq:atomsizesing}, ~\eqref{eq:distD_length}
and~\eqref{eq:escapeint1} the quotient $
\frac{|f^{j+1}\omega^i|}{|f^j\omega^i|}$ is bigger than
\begin{align}\label{eq:cases_p}
  \frac{d(M(c^j,p^j)^+,c^j)^{\beta_1-1}}{B}
    \ge
  \frac{|M(c^j,p^j+1)|^{\frac{\beta_1-1}{1+\epsilon_1\beta_1}}}{BK_2^{1-\beta_1}}
  \ge
    \begin{cases}
      \frac{|M(c^j,p^j)|^{\beta_3-1}}{BK_2^{1-\beta_1}}
      \text{  if  } \rho(c)\le p\le\rho_0
      \\
      |M(c^j,p^j)|^{\beta_3-1} \text{  if  }p>\rho_0
    \end{cases}
.
\end{align}
In particular, for $j=t_i+T(a_{k_i})$ we necessarily have
$c^j=\tilde c_i$ and $p^j=p_i$ by the construction of the
refinement, then
\begin{align*}
  |f^{t_i+T+1}\omega^i|\ge
  \frac{|M(\tilde  c_i,p_i+1)|^{\beta_2-1}}
  {BK_2^{1-\beta_1}} |f^{t_i+T}\omega^i|
  \ge
  \frac{|M(c_i,p_i+1)|^{\beta_2-1}}
  {BK_2^{1-\beta_1}K_1^{1-\beta_2}} |M(\tilde c_i,p_i)|
  >
  |M(c_i,p_i)|^{\beta_3}.
\end{align*}
Finally, if $c^j\in\cD\setminus\cS$ and $p^j>\rho_0$ for
$j>t_i+T$, then from~\eqref{eq:conexao} there are no returns
during times $j+1,\dots, j+T$ where $T=T(c^j)$, hence
$  |f^{j+T+1}\omega^i|
  \ge
  \sigma^T|f^j\omega^i|
$.
Therefore, denoting by
$t_i<\ell_1<\dots<\ell_s<t_{i+1}$ the free iterates that
have an host interval near a discontinuity and
$T_m=T(c^{\ell_m}), m=1,\dots, s$, we obtain
the relation
\begin{align*}
1\ge
  &|f^{t_{i+1}}\omega_i|
  =
    |f^{t_i+T}\omega_i|
    \prod_{j=t_i+T}^{\ell_1-1}
    \frac{|f^{j+1}\omega_i|}{|f^j\omega_i|}
   \left( \prod_{m=1}^s\prod_{j=\ell_m}^{\ell_{m+1}-1}
    \frac{|f^{j+1}\omega_i|}{|f^j\omega_i|}\right)
    \prod_{j=\ell_s+T_s}^{t_{i+1}-1}
    \frac{|f^{j+1}\omega_i|}{|f^j\omega_i|}
  \\
  &\ge
    |M(c_i,p_i)|^{\beta_3}
    \prod\{|M(c^j,p^j)|^{\beta_3-1}:t_i+T\le j <t_{i+1},
    c^j\in\cS
    \text{ or } j=\ell_m, 1\le m\le s\}.
\end{align*}
Now using~\eqref{eq:distD_length},~\eqref{eq:escapeint1}
and~\eqref{eq:double} we get that
$\sum_{j=t_i}^{t_{i+1}-1} \log\dist_\delta(f^j\omega_i,\cD)$
is bounded from below by
\begin{align*}
  \log&\left(|M(c_i,p_i)| \prod\{|M(c^j,p^j)|
    : t_i+T\le j <t_{i+1}, c^j\in\cS
        \text{ or } j=\ell_m, 1\le m\le s\}\right)
  \\
  &\ge
    \log|M(c_i,p_i)|+\frac{\beta_3}{1-\beta_3}\log|M(c_i,p_i)|
    =\frac1{1-\beta_3}\log|M(c_i,p_i)|,
\end{align*}
that is
\begin{align}
  \label{eq:slowapprox0}
  \sum_{j=t_i}^{t_{i+1}-1} -\log\dist_\delta(f^j\omega_i,\cD)
  \le
  \frac1{\beta_3-1}\log|M(c_i,p_i)|.
\end{align}
Finally, if $c_i\in\cS$, then we need not use the initial
sequence of $T$ iterates, so we obtain,
using~\eqref{eq:cases_p} perhaps with $p\le\rho_0$, a
similar inequality to~\eqref{eq:slowapprox0} with a
different constant.

\subsubsection{Upper bound for the recurrence frequency}
\label{sec:upper-bound-recurr}

Consequently, we obtain the following upper bound for the
recurrence frequency to $\cD$ of the orbits of points of
$\omega\in\cP_n$ with given $n\ge1$, return times
$0=t_0<t_1<\dots<t_u\le n$ and respective depths
$(c_1,p_1),\dots,(c_u,p_u)$
\begin{align}
  \label{eq:slowapproxdepth}
  S_n\Delta_\delta(\omega)
  =
  \sum_{j=0}^{n-1}\Delta_\delta(f^j\omega,\cD)
  &\le
    -C_1\sum_{j=0} ^{n-1}
    \sum_{p_i>\rho_0}\log|M(c_i,p_i)|
\end{align}
for all $\delta$ satisfying \eqref{eq:trunca}, where $C_1>0$
is a constant not depending on $\omega$ or $n$ or on the
return depths, not even on $\rho_0$ after this threshold is
fixed as in Subsection~\ref{sec:full-set-conditions}.

\subsection{Expected value of return depths and
  exponentially slow recurrence}
\label{sec:expect-value-splitt}

Here we complete the proof of
Theorem~\ref{mthm:expslowapprox}. We estimate the expected
value of deep returns up to $n$ iterates of the dynamics,
proving the following statement, where we denote
\begin{align*}
  \cD_n^\delta(x)=
  -\sum_{j=0} ^{n}
    \sum_{i=1,\dots,u\atop  p_i>\Theta}\log|M(c_i,p_i)|
\end{align*}
for $\Theta\in\ZZ^+$ big enough (to be defined in the
statement of Lemma~\ref{le:expected} below);
$\delta=\delta(\Theta)>0$ is such  that
\begin{align*}
  p>\Theta \iff d(M(c,p),\cD)<\delta, c\in\cD 
  \quad\text{hence}\quad
  \delta=\Theta^{-1/\epsilon_1\beta_1}=a_\Theta,
\end{align*}
and $x\in\omega\in\cP_n$ with
$u=u_n(\omega)\le n$ and the return depths
$(c_1,p_1),\dots,(c_u,p_u)$.

To state the result precisely, let
$S(\rho,\xi)=(\#\cD)\cdot \sum_{p\ge\rho}|M(c_0,p)|^\xi$
for any fixed\footnote{Recall
  Remark~\ref{rmk:escape}: the length of $M(c,p)$ does not
  depend on $c$ for $p\ge\rho_0$.} $c_0\in\cD$  and
$\rho\ge\rho_0$. Note that\footnote{See the definition of
  $C_0$ after Lemma~\ref{le:sizedepth}: the restriction on
  $\xi$ ensures $S(\rho,\xi)\approx\sum_{p>\rho}p^{-\theta}$
  with $\theta>1$.} this is well-defined for all
$\xi>\beta_2+\frac{1+\epsilon_1\beta_1}{\epsilon_1\beta_1}$.

\begin{lemma}
  \label{le:expected}
  Let $\epsilon_0>0$ be given and $z>0$ be such that
  $z<(1+\epsilon_1\beta_1)^{-1}$ and
  $\beta_3-2\beta_2-z>\frac{1+\epsilon_1\beta_1}{\epsilon_1\beta_1}$.
  Then for a big enough $\rho_0\in\ZZ^+$ satisfying all the
  conditions in Subsection~\ref{sec:full-set-conditions}
  together with $S(\rho_0,\beta_3-\beta_2)<\epsilon_0/2$,
  and for $\Theta>\rho_0$ such that
  $S(\Theta,\beta_3-\beta_2-z)<\epsilon_0/2$, there exists a
  constant $K>0$ such that
  $\int_{[\cD_n^\delta(x)\ge n\epsilon/C_1]} e^{z
    \cD_n^\delta(x)} \, dx \le K e^{\epsilon_0 n}$ for all
  $n>\epsilon_0^{-1}|\log|M(c_0,\rho_0)||$.
\end{lemma}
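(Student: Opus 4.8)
The plan is to compute the exponential moment by decomposing the domain of integration into the atoms of the partition $\cP_n$ constructed in Subsection~\ref{sec:dynamic-refinem-algo}: on each $\omega\in\cP_n$ the function $\cD_n^\delta$ is constant, equal to $-\sum_{p_i>\Theta}\log|M(c_i,p_i)|$ where $(c_1,p_1),\dots,(c_u,p_u)$ are the return depths of $\omega$, so that $e^{z\cD_n^\delta}$ restricted to $\omega$ equals $\prod_{p_i>\Theta}|M(c_i,p_i)|^{-z}$. Grouping the atoms by the number $u$ of escape/return times, the times $0=t_0<t_1<\dots<t_u\le n$ and the depths, this gives
\[
\int_{[\cD_n^\delta\ge n\epsilon/C_1]}\!\!\!e^{z\cD_n^\delta}\,dx
=\sum{}^{*}\,\big|A^{u}_{(c_1,p_1),\dots,(c_u,p_u)}(n)\big|\,\prod_{p_i>\Theta}|M(c_i,p_i)|^{-z},
\]
the sum $\sum^{*}$ ranging over the combinatorial types whose very-deep depths satisfy $-\sum_{p_i>\Theta}\log|M(c_i,p_i)|\ge n\epsilon/C_1$.

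Next I would insert the bound \eqref{eq:probreturns} for $|A^{u}_{\cdots}(n)|$, so that each summand is at most $C_0\,e^{-\zeta_0(n-R)}\,\sigma^{r-R}\prod_{\rho_0<p_i\le\Theta}|M(c_i,p_i)|^{\beta_3-\beta_2}\prod_{p_i>\Theta}|M(c_i,p_i)|^{\beta_3-\beta_2-z}$. The two arithmetic hypotheses on $z$ in the statement are exactly what makes the exponents $\beta_3-\beta_2$ and $\beta_3-\beta_2-z$ fall in the range where the tails $S(\rho_0,\beta_3-\beta_2)$ and $S(\Theta,\beta_3-\beta_2-z)$ of the convergent series $\sum_p p^{-\theta}$ ($\theta>1$, as in the footnote) are defined and, by the prescribed choice of the thresholds, satisfy $S(\rho_0,\beta_3-\beta_2)<\epsilon_0/2$ and $S(\Theta,\beta_3-\beta_2-z)<\epsilon_0/2$; the condition $z<(1+\epsilon_1\beta_1)^{-1}$ also guarantees, via \eqref{eq:distD_length}, that each very-deep factor $|M(c_i,p_i)|^{-z}$ is comparable to $d(\cdot,\cD)^{-z(1+\epsilon_1\beta_1)}$ with exponent $<1$, hence individually Lebesgue-integrable near $\cD$.

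Then I would sum over all combinatorial types, organizing the sum by first choosing the positions $t_1<\dots<t_u$, which costs at most $\binom{n}{u}\le n^{u}/u!$, and then summing the depth products: the sum over the depth of each return with depth in $(\rho_0,\Theta]$ is $\le S(\rho_0,\beta_3-\beta_2)$, the sum over each very-deep return's depth is $\le S(\Theta,\beta_3-\beta_2-z)$, and each return with depth $\le\rho_0$ (escape or shallow-singular) has at most $(\#\cD)\rho_0$ depth choices, paid for by the factors $e^{-\zeta_0(n-R)}$ and $\sigma^{r-R}$ together with Remark~\ref{rmk:zeta0}, which bounds $n-R$ below by a multiple of the number of escape times and bounds the number of iterates between consecutive escape times. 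The resulting series over $u$ exponentiates, yielding a bound of the form $C_0\exp(w\,n)$ with $w=w(\rho_0,\Theta)$ a per-return weight that can be driven below $\epsilon_0$ by fixing first $\rho_0$ and then $\Theta$ large; this gives $\int_{[\cD_n^\delta\ge n\epsilon/C_1]}e^{z\cD_n^\delta}\,dx\le C_0\,e^{\epsilon_0 n}$, so $K=C_0$ works, the lower bound $n>\epsilon_0^{-1}|\log|M(c_0,\rho_0)||$ absorbing the additive correction of size $\sim|\log|M(c_0,\rho_0)||$ produced by the minimal number of iterates forced by a return of depth exceeding $\rho_0$.

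The main obstacle is exactly the combinatorial accounting of the last step: reconciling the very weak decay $e^{-\zeta_0(n-R)}$ — recall $\zeta_0=\zeta(\rho_0)/L_0\to0$ as $\rho_0\to\infty$ by Remark~\ref{rmk:zeta0}(1) — and the surplus-expansion factor $\sigma^{r-R}$ against the many ways of placing and choosing the shallow, escape and deep returns, and verifying that the base of the resulting exponential in $n$ is \emph{genuinely} $\le\epsilon_0$ rather than merely finite. This forces a careful split of the returns into escape, shallow-singular, deep and very-deep, each treated with the constant proper to it, all constants being uniform once $\rho_0$ (and then $\Theta$) is fixed, together with the length/distance comparison \eqref{eq:distD_length}, the nesting estimates \eqref{eq:comparecVesc}–\eqref{eq:compareV}, and the iterate bounds of Remark~\ref{rmk:zeta0}.
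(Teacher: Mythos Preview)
Your outline is the paper's: decompose the integral over atoms of $\cP_n$, insert the bound \eqref{eq:probreturns}, and sum over combinatorial types. The divergence is only in the final bookkeeping. You propose to absorb the shallow/escape returns (depth $\le\rho_0$) using the prefactor $e^{-\zeta_0(n-R)}\sigma^{r-R}$ and then, quite rightly, worry that $\zeta_0=\zeta(\rho_0)/L_0\to0$ as $\rho_0\to\infty$ may be too weak to offset the $O((\#\cD)\rho_0)$ choices per escape and the $\binom{n}{u}$ placements. The paper never attempts this balance: both exponential prefactors in \eqref{eq:probreturns} are simply discarded (each is $\le1$), the $v=0$ piece is handled by a separate elementary lemma (Lemma~\ref{le:nodeep}), and what remains is organised as a clean double binomial
\[
C_0\sum_{u=1}^{n}\binom{n}{u}\sum_{v=1}^{u}\binom{u}{v}\,S(\rho_0,\beta_3-\beta_2)^{u-v}\,S(\Theta,\beta_3-\beta_2-z)^{v}
\;=\;C_0\bigl(1+S(\rho_0,\beta_3-\beta_2)+S(\Theta,\beta_3-\beta_2-z)\bigr)^{n}.
\]
Hence the rate is $\upsilon=\log(1+S_1+S_2)$, pushed below $\epsilon_0$ by first choosing $\rho_0$ large (so that $S_1<\epsilon_0/2$) and then $\Theta>\rho_0$ large (so that $S_2<\epsilon_0/2$). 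All the smallness comes from the summable tails $S(\cdot,\cdot)$, never from $e^{-\zeta_0}$; consequently the four-way split you sketch and the delicate balancing against $\zeta_0\to0$ in your last paragraph are unnecessary. Your bound $\binom{n}{u}\le n^u/u!$ is also not needed: the exact binomial identity does the work.
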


This result provides the exponential slow recurrence bound
\eqref{eq:exptail} as follows.

\subsubsection{Measure of bad recurrence}
\label{sec:measure-points-with}

This is now a direct consequence of Chebyshev's inequality
together with \eqref{eq:slowapproxdepth} and
Lemma~\ref{le:expected}: given $\epsilon>0$ we have
\begin{align*}
  \{x\in M: S_n\Delta_\delta(x) \ge n\epsilon\}
  \subseteq
\{x:  \cD_n^\delta(x)\ge n\epsilon/C_1\}
  =
  [\cD_n^\delta\ge n\epsilon/C_1]
  =
  \{x:e^{z\cD_n^\delta(x)}\ge e^{n\epsilon/C_1}\}.
\end{align*}
Hence, we choose $0<\epsilon_0<\epsilon/C_1$ and find
$\Theta>\rho_0$ big enough so that 
\begin{align*}
 \lambda \{x\in M: S_n\Delta_\delta(x)
  \ge n\epsilon\}
  \le
  e^{-n\epsilon/C}\int_{[\cD_n^\delta\ge n\epsilon/C_1]} e^{z\cD_n^\delta}\,dx
  =
  K e^{-n\big(\epsilon/C_1-\epsilon_0\big)}
\end{align*}
which is exponentially small with rate
$\xi=\epsilon/C_1-\epsilon_0$ for all
$n>C_1\epsilon^{-1}|\log|M(c_0,\rho_0)||$.

This concludes the proof of
Theorem~\ref{mthm:expslowapprox}, except for the proof of
Lemma~\ref{le:expected}, which comprises the rest of this
section.

\subsubsection{Expected value of deep return depths}
\label{sec:probab-deep-returns}

Now we start the proof of Lemma~\ref{le:expected} and along
the way also prove Corollary~\ref{mcor:expmapsholes}.

We fix $\epsilon_0,z>0$ and $\Theta>\rho_0$ and also
$n$ as in the statement of the lemma.

Then we have two possibilities for $\omega\in\cP_n$ such
that $\cD_n^\delta(\omega)>n\epsilon_0$: either $\omega$ has
no deep return in the iterates from $1$ to $n-1$, or
$\omega$ admits some deep return time $t\in\{1,\dots,n-1\}$.

\subsubsection*{No deep returns and
  Corollary~\ref{mcor:expmapsholes}}

If there are no deep return times from $1$ to $n-1$, we have
$\cD_n^\delta(\omega)=-\log|M(c_0,p_0)|>n\epsilon_0$ where
$\omega^0=M(c_0,p_0)\supset\omega$ and
$|\omega^0|<e^{-n\epsilon_0}<1$. Hence the subset
\begin{align*}
  G_n(\delta)=\{x\in M: \cD_n^\delta(x)>n\epsilon_0
  \qand
  v(\omega)=0\}
\end{align*}
of all the points belonging to such intervals
satisfies
\begin{align*}
  G_n(\delta)\supseteq H_n(\delta)=\{x\in M: d(f^jx,\cD)>\delta,
  \forall 0<j<n\}.
\end{align*}

\begin{lemma}\label{le:nodeep}
  Given $0<\delta<a_{\rho_0+1}$ there exists $\tilde K>0$
  such that
  $|G_n(\delta)|<\tilde K e^{-\tilde \epsilon_0 n}$ for each
  $n>(1-\gamma)\log\rho_0+\log(\sigma K_0^{-1})$,
  where
  $\tilde\epsilon_0=\frac{1+2\epsilon_1\beta_1}{1+\epsilon_1\beta_1}$.
\end{lemma}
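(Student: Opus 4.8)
The plan is to pin down exactly which atoms of $\cP_n$ make up $G_n(\delta)$, reduce the estimate to a tail sum over deep atoms of the initial partition $\cP_0$, and then control that sum by combining the polynomial size/distance estimates of Subsection~\ref{sec:global-initial-parti} with the expansion of $f$ out of the singular region.

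\smallskip
\noindent\emph{Locating $G_n(\delta)$.} First I would unpack the definition. If $\omega\in\cP_n$ has $v(\omega)=0$, i.e. no return of depth larger than $\Theta$ occurs among the iterates $1,\dots,n-1$, then by the refinement algorithm the only term that can be nonzero in $\cD_n^\delta$ on $\omega$ is the one coming from the initial atom $\omega^0=M(c_0,p_0)\supseteq\omega$; hence $\cD_n^\delta\equiv-\log|M(c_0,p_0)|$ on $\omega$, and the requirement $\cD_n^\delta>n\epsilon_0$ forces $p_0>\Theta$ together with $|M(c_0,p_0)|<e^{-n\epsilon_0}$. Thus
\begin{align*}
  G_n(\delta)\ \subseteq\ \bigcup_{c_0\in\cD}\ \bigcup_{\substack{p_0>\Theta\\ |M(c_0,p_0)|<e^{-n\epsilon_0}}}\ \bigl\{x\in M(c_0,p_0):\ v(\omega(x))=0\bigr\},
\end{align*}
and it remains to bound the total Lebesgue measure of the right-hand side.

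\smallskip
\noindent\emph{Measure inside one deep atom, and the tail sum.} Fix such an $\omega^0=M(c_0,p_0)$ with $p_0>\Theta$. By~\eqref{eq:distD_length} and Lemma~\ref{le:an} one has $|M(c_0,p_0)|\approx p_0^{-q}$ with $q=1+1/(\epsilon_1\beta_1)$ and $d(M(c_0,p_0),\cD)\approx a_{p_0}$; taking only this into account already gives $\big|\bigcup_{p_0\ge P_n}M(c_0,p_0)\big|=a_{P_n}$ with $P_n\approx e^{n\epsilon_0/q}$, which is an exponentially small bound but with rate only proportional to $\epsilon_0$. To upgrade it to the $\epsilon_0$-independent rate $\tilde\epsilon_0=\frac{1+2\epsilon_1\beta_1}{1+\epsilon_1\beta_1}=1+1/q$ I would feed in the dynamics: on $\omega^0$ the first iterate of $f$ (for $c_0\in\cD\setminus\cS$ after first applying the connecting diffeomorphism $f^{T(c_0)}$ onto a one-sided neighbourhood of a singular point, by item~(6) of Theorem~\ref{thm:propert-singhyp-attractor} and~\eqref{eq:connectsing}) expands lengths by a factor $\gtrsim a_{p_0}^{\beta_1-1}\approx|M(c_0,p_0)|^{\beta_2-1}$ thanks to~\eqref{eq:der0}--\eqref{eq:nondegexp}, while the remaining iterates expand by at least $\sigma$; bounded distortion (Lemma~\ref{le:bddist}) together with $|f^n\omega|\le1$ then bound each surviving $\cP_n$-atom $\omega\subseteq\omega^0$ by $|\omega|\lesssim\sigma^{-n}|M(c_0,p_0)|^{1-\beta_2}$, and summing via the measure estimates~\eqref{eq:comparecVesc}--\eqref{eq:Vgeneral} of Lemma~\ref{le:sizedepth} (which absorb the branching and the intermediate shallow returns) one gets $|\{x\in M(c_0,p_0):v(\omega(x))=0\}|\lesssim|M(c_0,p_0)|^{1-\beta_2}$ multiplied by an extra exponentially small factor. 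Summing over $p_0>\Theta$ with $|M(c_0,p_0)|<e^{-n\epsilon_0}$ and over the finite set $c_0\in\cD$, using $|M(c,p)|\approx p^{-q}$, produces $|G_n(\delta)|<\tilde K e^{-\tilde\epsilon_0 n}$; the threshold $n>(1-\gamma)\log\rho_0+\log(\sigma K_0^{-1})$, i.e. $e^{-n}\lesssim\sigma^{-1}|M(c,\rho_0)|$, is precisely what makes the first-step estimate effective for the shallowest deep atoms ($p_0$ just above $\Theta\ge\rho_0$), hence uniformly over all deep atoms.

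\smallskip
\noindent\emph{Main obstacle and a consequence.} The crux of the argument is the last bookkeeping step: the bare size estimate degenerates as $\epsilon_0\to0$ (and as $\delta\to0$), so the uniform exponent $\tilde\epsilon_0$ can only be extracted by inserting the expansion out of the singular/discontinuity region quantitatively and at exactly the right place, while simultaneously handling the two families of deep atoms — near $\cS$ and near $\cD\setminus\cS$ — in parallel through the finitely many connecting maps $f^{T(c)}$ and the uniformly bounded connection times $T(c)\le T_0$. Finally I would record that, since $H_n(\delta)=\{x:d(f^jx,\cD)>\delta,\ \forall\,0<j<n\}\subseteq G_n(\delta)$, the same estimate proves Corollary~\ref{mcor:expmapsholes}.
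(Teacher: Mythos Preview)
Your setup is correct — on $G_n(\delta)$ one has $\cD_n^\delta(\omega)=-\log|\omega^0|$, so the whole question is a tail estimate over the initial atoms $\omega^0=M(c_0,p_0)$ with $|\omega^0|<e^{-n\epsilon_0}$. But the route you take from there is different from the paper's and does not close.

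The paper's argument is a one-line Chebyshev trick: fix $\epsilon_0=1$ and $z=\tfrac12(1+\epsilon_1\beta_1)^{-1}$, and compute
\[
\int_{G_n(\delta)} e^{z\cD_n^\delta}\,dx
=\sum_{\omega\in\cP_n\cap G_n(\delta)}|\omega^0|^{-z}|\omega|
\;\le\;\sum_{|\omega^0|<e^{-n}}|\omega^0|^{1-z},
\]
using nothing more than disjointness of the $\omega$'s inside each $\omega^0$. The right-hand side is a convergent tail of $\sum_p p^{-q(1-z)}$ starting at $p\gtrsim e^{n/q}$, hence $\lesssim e^{-n/(2(1+\epsilon_1\beta_1))}$; Chebyshev then gives the stated bound on $|G_n(\delta)|$. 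No dynamics beyond the atom sizes~\eqref{eq:atomsizediscont} is used, and in particular neither bounded distortion, nor Lemma~\ref{le:sizedepth}, nor the singular expansion~\eqref{eq:nondegexp} enters.

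Your approach instead tries to earn the exponent dynamically: bound each $\omega\subset\omega^0$ by $|\omega|\lesssim\sigma^{-n}|M(c_0,p_0)|^{1-\beta_2}$ via the first singular step plus uniform expansion, and then ``absorb the branching'' through~\eqref{eq:comparecVesc}--\eqref{eq:Vgeneral}. The gap is precisely here. The single-atom bound is fine, but the number of $\cP_n$-atoms inside $\omega^0$ can grow like $q^n$, and the estimates of Lemma~\ref{le:sizedepth}, once you sum over all return times and all shallow depths $\rho_0<p_i\le\Theta$ (as you must for $v=0$), contribute a factor $\binom{n}{u}S(\rho_0,\beta_3-\beta_2)^u$ which sums to $(1+S(\rho_0,\beta_3-\beta_2))^n>1$ — exactly the combinatorial blow-up the paper handles only later, in the deep-return part of Lemma~\ref{le:expected}, and there only because the $e^{-\zeta_0(n-R)}\sigma^{-R+r}$ factors are available to compensate. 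For the no-deep-return set those factors are not enough to beat the branching and still leave the claimed $e^{-\tilde\epsilon_0 n}$; your sentence ``multiplied by an extra exponentially small factor'' is where the argument stops being a proof. The exponential tilt $e^{z\cD_n^\delta}$ is the missing idea: it converts the problem into a pure tail sum with no branching to control.
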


\begin{proof}
  We can choose $\Theta>\rho_0$ so that $\delta<a_\Theta$,
  set $z=\frac12(1+\epsilon_1\beta_1)^{-1}$ and
  $\epsilon_0=1$, and then calculate
\begin{align*}
  \int_{G_n(\delta)}e^{z\cD_n(x)}\,dx
  &=
    \sum_{\omega\in\cP_n\atop\omega\in
    G_n(\delta)}|\omega^0|^{-z}|\omega|
    <
    \sum_{|\omega_0|<e^{-n}}|\omega_0|^{1-z}
    <\hspace{-1cm}
    \sum_{p^{-(1+\epsilon_1\beta_1)/\epsilon_1\beta_1}<K_0K_1e^{-n}}
    \hspace{-1cm}\frac{K_0}{\sigma}
    p^{-\frac{1+\epsilon_1\beta_1}{\epsilon_1\beta_1}(1-z)},
\end{align*}
where we used $p>\rho_0$ by the choice of $n$
and~\eqref{eq:atomsizediscont}. It is not difficult to see
that the above series is bounded by
$\tilde
K\exp\big(-\frac12 n/(1+\epsilon_1\beta_1)\big)$
for some constant $\tilde K>0$ not depending on $n$. Thus
$ |G_n(\delta)| \le e^{-n}\int_{G_n(\delta)}e^{z\cD_n^\delta(x)}\,dx
\le
\tilde K
\exp\left(n\left(\frac{1+2\epsilon_1\beta_1}{1+\epsilon_1\beta_1}\right)\right)
=\tilde K e^{-\tilde\epsilon_0n} $ by Chebyshev's Inequality.
\end{proof}
This shows in particular that
$|H_n(\delta)|<\tilde K e^{-\tilde\epsilon_0n}$ for all
sufficiently big $n>1$, which proves
Corollary~\ref{mcor:expmapsholes}.

\subsubsection*{Deep returns and Lemma~\ref{le:expected}}

From now on we assume without loss of generality that
$0<\epsilon_0<1$, and also that $\omega$ admits some deep
return time $t\in\{1,\dots,n-1\}$.

Thus we consider $\omega\in\cP_n$ and $u\in\{1,\dots,n\}$, but
also $v\in\{1,\dots,u\}$, that is, there exists a sequence
of deep returns
for $\omega$ at indexes $1\le r_1 < \dots < r_v\le u$
between the return times $0=t_0<t_1<\dots<t_u\le n$.

More precisely, among the return depths
$\rho_1,\dots,\rho_u$ at times $t_1<\dots<t_u$, there are
exactly $v$ return times such that for $i=1,\dots,v$
\[
 t_{r_i} \mbox{  satisfies  }
 f^{t_{r_i}}(x) 
 \in M(c_{r_i},\rho_{r_i})
\mbox{ for all }
x\in\omega \mbox{ and  } \rho_{r_i} > \Theta, c_{r_i}\in\cD
\]
for $\Theta\in\ZZ^+$, $\Theta>\rho_0$; while
$\rho_j\le\Theta$ for
$j\in\{1,\dots,u\}\setminus\{r_1,\dots, r_v\}$.

We let $u_n(x), v_n(x)$ for $x\in\omega\in\cP_n$ be the
number of returns and deep returns of $\omega$ until the
$n$th iterate. Then we define
\begin{align*}
  A^{u,v}
  &=
    A^{u,v}_{\rho_1,\dots,\rho_v}(n)
  =
    \big\{
    x\in M: u_n(x)=u, \, v_n(x)=v\,\text{ and }
    p_{r_i}=\rho_i,\, i=1,\dots,v
    \big\}
\end{align*}
the set of points $x\in\omega\in\cP_n$ which in $n$ iterates
have $u$ returns and $v$ deep returns among these, with the
specified deep return depths $\rho_1,\dots,\rho_v$.

From \eqref{eq:probreturns} we know how to estimate the
probability of a certain sequence of returns and have seen
that the estimate does not depend on the particular order of
the returns nor on their return times, but only on their
depths.  Using \eqref{eq:probreturns} we estimate the
measure of $A^{u,v}$ considering all possible combinations
of the events $\cV_u$ which are included in $A^{u,v}$. Note
that for any given $v\le u$ there are $\binom{u}{v}$ ways of
having $v$ deep returns among $u$ return situations and
$\binom{n-1}{u}$ choices of the times of the return
situations. Hence the Lebesgue measure of
$A^{u,v}$
is bounded from above by 
\begin{align*}
 |A^{u,v}| 
  &\le
    \binom{n-1}{u}
    \sum_{\omega\in\cP_n\cap A^{u,v}\atop \omega\subset\omega^0\in\cP_0}
    \binom{u}{v}
  |\omega^0|^{1-\beta_2}
  e^{-\zeta_0(n-R)}
  \sigma^{-R+r}
  \prod_{p_i>\rho_0}
    |M(c_i,\rho_i)|^{\beta_3-\beta_2},
\end{align*}
where $c_i=c_i(\omega)$ is the element of
$\cD$ associated to the
$i$th return time, $r=r(\omega)=\#\{0<i\le u:
p_i(\omega)>\rho_0\}$ and
$R=R(\omega)$ is the total number of iterates between each
of the
$r$ return times counted above and the next return
time. Counting all the possible depths of the non-deep
return times involved in the last product we obtain
\begin{align}\label{eq:Auv}
  |A^{u,v}|
  &\le\hspace{-0.2cm}
    \sum_{\omega_0\in\cP_0} \hspace{-0.2cm}
    \binom{n}{u}\hspace{-0.6cm}
    \sum_{\substack{\omega_0\supset\omega\in\cP_n\\v_n(\omega)=v,
  u_n(\omega)=u}} \hspace{-0.4cm}
  \binom{u}{v}\hspace{-0.4cm}
  \sum_{\substack{\rho_0<p_j(\omega)\le\Theta\\c_j\in\cD, 1\le j\le u-v}}
  \prod_{j=1}^{u-v} |M(c_j,p_j)|^{\beta_3-\beta_2} \hspace{-0.2cm}
  \sum_{c_i\in\cD\atop1\le i\le v}
  \prod_{i=1}^v
  |M(c_i,\rho_i)|^{\beta_3-\beta_2}
  |\omega^0|^{1-\beta_2}\nonumber
  \\
  &\le
    C_0\binom{n}{u}
    \left(
    \#\cD\hspace{-.3cm}\sum_{\rho_0<p_j\le\Theta}|M(c_0,p)|^{\beta_3-\beta_2}
    \right)^{u-v}
    \binom{u}{v}
    (\#\cD)^v
    \prod_{i=1}^v
    |M(c_i,\rho_i)|^{\beta_3-\beta_2}\nonumber
  \\
  &\le
    C_0
    \binom{n}{u} \binom{u}{v} S(\rho_0,\beta_3-\beta_2)^{u-v}  (\#\cD)^v
    \prod_{i=1}^v
    |M(c_0,\rho_i)|^{\beta_3-\beta_2},
\end{align}
where $C_0$ was defined after Lemma~\ref{le:sizedepth}.
We can now complete the proof of Lemma~\ref{le:expected}.
By definition and using Lemma~\ref{le:nodeep}
\begin{align*}
  \int_{[\cD_n^\delta\ge n\epsilon/C_1]}\hspace{-1cm} e^{z \cD_n^\delta(x)} \, dx
  &<
    e^{-\tilde\epsilon_0n}+
    \sum_{\omega\in\cP_n \atop 0< v_n(\omega)\le
    u_n(\omega)<n}
    e^{z\cD_n^\delta(\omega)}\cdot|\omega|
\end{align*}
and from \eqref{eq:Auv} and the definition of $\cD_n^\delta$
we bound the last summation from above by
\begin{align*}
  C_0\sum_{u=1}^{n}
  &\binom{n}{u}
    \sum_{v=1}^u
    \binom{u}{v}
    S(\rho_0,\beta_3-\beta_2)^{u-v}
    \sum_{\rho_i>\Theta\atop1\le i\le v}
    (\#\cD)^v
    \prod_{i=1}^v
    |M(c_0,\rho_i)|^{\beta_3-\beta_2-z}
  \\
  &\le
    C_0\sum_{u=1}^{n}
    \binom{n}{u}
    \sum_{v=1}^u
    \binom{u}{v}
    S(\rho_0,\beta_3-\beta_2)^{u-v}
    \left(
    \#\cD
    \sum_{\rho>\Theta}
    |M(c_0,\rho)|^{\beta_3-\beta_2-z}
    \right)^v
  \\
  &=
    C_0\sum_{u=1}^{n}
    \binom{n}{u}
    \big(
    S(\rho_0,\beta_3-\beta_2)
    +
    S(\Theta,\beta_3-\beta_2-z)
    \big)^u
  \\
  &=
    C_0\big(
    1+    S(\rho_0,\beta_3-\beta_2)
    +
    S(\Theta,\beta_3-\beta_2-z)
    \big)^n
    =
    C_0e^{\upsilon n},
\end{align*}
where
$\upsilon=\log \big( 1+ S(\rho_0,\beta_3-\beta_2) +
S(\Theta,\beta_3-\beta_2-z) \big)$.

This value
$\upsilon>0$ can be made smaller than any given
$\epsilon_0>0$ by choosing
$\rho_0\in\ZZ^+$ big enough to satisfy all conditions in
subsection~\ref{sec:full-set-conditions} together with
$S(\rho_0,\beta_3-\beta_2)<\frac{\epsilon_0}{2}$, and then
choosing another integer
$\Theta>\rho_0$ so that
$S(\Theta,\beta_3-\beta_2-z)<\frac{\epsilon_0}{2}$, since
both $\beta_3-\beta_2$ and
$\beta_3-\beta_2-z$ satisfy the conditions for convergence
of the series.

Finally, since
$ e^{-\tilde\epsilon_0n}+C_0e^{\upsilon n} = C_0e^{\upsilon
  n}(1+C_0^{-1}e^{-(\tilde\epsilon_0+\upsilon)n}) \le K
e^{\upsilon n} $ for a constant $K=K(\epsilon_0)>0$ not
depending on $n$, we have completed the proof of
Lemma~\ref{le:expected} and, with it, the proof of
Theorem~\ref{mthm:expslowapprox}.


\def\cprime{$'$}

\bibliographystyle{abbrv} 

\bibliography{../bibliobase/bibliography}

\begin{thebibliography}{10}

\bibitem{Ze03}
J.~Alves.
\newblock {\em {Statistical analysis of non-uniformly expanding dynamical
  systems}}.
\newblock {Publica\c c{\~{o}}es Matem{\'a}ticas do IMPA. [IMPA Mathematical
  Publications]}. {Instituto de Matem{\'a}tica Pura e Aplicada (IMPA), Rio de
  Janeiro}, {2003}.
\newblock {XXIV Col{\'o}quio Brasileiro de Matem{\'a}tica. [24th Brazilian
  Mathematics Colloquium]}.

\bibitem{AA03}
J.~F. Alves and V.~Araujo.
\newblock {Random perturbations of nonuniformly expanding maps}.
\newblock {\em {Ast{\'e}risque}}, {286}:{25--62}, {2003}.

\bibitem{ABV00}
J.~F. Alves, C.~Bonatti, and M.~Viana.
\newblock {SRB measures for partially hyperbolic systems whose central
  direction is mostly expanding}.
\newblock {\em {Invent. Math.}}, {140}({2}):{351--398}, {2000}.

\bibitem{AFLV11}
J.~F. Alves, J.~M. Freitas, S.~Luzzatto, and S.~Vaienti.
\newblock From rates of mixing to recurrence times via large deviations.
\newblock {\em Adv. Math.}, 228(2):1203--1236, 2011.

\bibitem{alves-luzzatto-pinheiro2004}
J.~F. Alves, S.~Luzzatto, and V.~Pinheiro.
\newblock {Lyapunov exponents and rates of mixing for one-dimensional maps}.
\newblock {\em {Ergodic Theory Dynam. Systems}}, {24}({3}):{637--657}, {2004}.

\bibitem{alves-luzzatto-pinheiro2005}
J.~F. Alves, S.~Luzzatto, and V.~Pinheiro.
\newblock {Markov structures and decay of correlations for non-uniformly
  expanding dynamical systems}.
\newblock {\em {Ann. Inst. H. Poincar{\'e} Anal. Non Lin{\'e}aire}},
  {22}({6}):{817--839}, {2005}.

\bibitem{araujo2006a}
V.~Ara{\'u}jo.
\newblock Large deviations bound for semiflows over a non-uniformly expanding
  base.
\newblock {\em Bull. Braz. Math. Soc. (N.S.)}, 38(3):335--376, 2007.

\bibitem{ArArbSal}
V.~Araujo, A.~Arbieto, and L.~Salgado.
\newblock Dominated splittings for flows with singularities.
\newblock {\em Nonlinearity}, 26(8):2391, 2013.

\bibitem{ArGalPac}
V.~Araujo, S.~Galatolo, and M.~J. Pacifico.
\newblock Decay of correlations for maps with uniformly contracting fibers and
  logarithm law for singular hyperbolic attractors.
\newblock {\em Mathematische Zeitschrift}, 276(3-4):1001--1048, 2014.

\bibitem{ArGalPac16}
V.~Ara{\'u}jo, S.~Galatolo, and M.~J. Pacifico.
\newblock Erratum to: Decay of correlations for maps with uniformly contracting
  fibers and logarithm law for singular hyperbolic attractors.
\newblock {\em Mathematische Zeitschrift}, 282(1):615--621, 2016.

\bibitem{ArMel16}
V.~{Ara{\'u}jo} and I.~{Melbourne}.
\newblock {Exponential decay of correlations for nonuniformly hyperbolic flows
  with a $C^{1+\alpha}$ stable foliation, including the classical Lorenz
  attractor}.
\newblock {\em Annales Henri Poincar\'e}, pages 2975--3004, 2016.

\bibitem{ArMel17}
V.~Araujo and I.~Melbourne.
\newblock Existence and smoothness of the stable foliation for sectional
  hyperbolic attractors.
\newblock {\em Bulletin of the London Mathematical Society}, 49(2):351--367,
  2017.

\bibitem{ArMel18}
V.~{Araujo} and I.~{Melbourne}.
\newblock {Mixing properties and statistical limit theorems for singular
  hyperbolic flows without a smooth stable foliation}.
\newblock {\em ArXiv e-prints}, 1711.08665, Nov. 2017.

\bibitem{AMV15}
V.~Araujo, I.~Melbourne, and P.~Varandas.
\newblock Rapid mixing for the lorenz attractor and statistical limit laws for
  their time-1 maps.
\newblock {\em Communications in Mathematical Physics}, 340(3):901--938, 2015.

\bibitem{araujo-pacifico2006}
V.~Ara{\'u}jo and M.~J. Pacifico.
\newblock {Large deviations for non-uniformly expanding maps}.
\newblock {\em {J. Stat. Phys.}}, {125}({2}):{415--457}, {2006}.

\bibitem{ArPa04}
V.~Araujo and M.~J. Pacifico.
\newblock {Physical measures for infinite-modal maps}.
\newblock {\em Fundamenta Mathematica}, 203:211--262, 2009.

\bibitem{AraPac2010}
V.~Ara{\'u}jo and M.~J. Pacifico.
\newblock {\em Three-dimensional flows}, volume~53 of {\em Ergebnisse der
  Mathematik und ihrer Grenzgebiete. 3. Folge. A Series of Modern Surveys in
  Mathematics [Results in Mathematics and Related Areas. 3rd Series. A Series
  of Modern Surveys in Mathematics]}.
\newblock Springer, Heidelberg, 2010.
\newblock With a foreword by Marcelo Viana.

\bibitem{APPV}
V.~Ara{\'u}jo, E.~R. Pujals, M.~J. Pacifico, and M.~Viana.
\newblock Singular-hyperbolic attractors are chaotic.
\newblock {\em Transactions of the A.M.S.}, 361:2431--2485, 2009.

\bibitem{BarPes2007}
L.~Barreira and Y.~Pesin.
\newblock {\em Nonuniform hyperbolicity}, volume 115 of {\em Encyclopedia of
  Mathematics and its Applications}.
\newblock Cambridge University Press, Cambridge, 2007.
\newblock Dynamics of systems with nonzero Lyapunov exponents.

\bibitem{BaPe13}
L.~Barreira and Y.~Pesin.
\newblock {\em Introduction to smooth ergodic theory}, volume 148 of {\em
  Graduate Studies in Mathematics}.
\newblock American Mathematical Society, Providence, RI, 2013.

\bibitem{BC85}
M.~Benedicks and L.~Carleson.
\newblock {On iterations of \/$1-ax^2$ on $(-1,1)$}.
\newblock {\em {Annals of Math.}}, {122}:{1--25}, {1985}.

\bibitem{BC91}
M.~Benedicks and L.~Carleson.
\newblock {The dynamics of the H{\'e}non map}.
\newblock {\em {Annals of Math.}}, {133}:{73--169}, {1991}.

\bibitem{BeY92}
M.~Benedicks and L.~S. Young.
\newblock {Absolutely continuous invariant measures and random perturbations
  for certain one-dimensional maps}.
\newblock {\em {Ergod. Th. \& Dynam. Sys.}}, {12}:{13--37}, {1992}.

\bibitem{BeY93}
M.~Benedicks and L.~S. Young.
\newblock {SBR-measures for certain H{\'e}non maps}.
\newblock {\em {Invent. Math.}}, {112}:{541--576}, {1993}.

\bibitem{BeY99}
M.~Benedicks and L.~S. Young.
\newblock {Markov extensions and decay of correlations for certain H{\'e}non
  maps}.
\newblock {\em {Ast{\'e}risque}}, {261}:{13--56}, {2000}.

\bibitem{Bo75}
R.~Bowen.
\newblock {\em {Equilibrium states and the ergodic theory of Anosov
  diffeomorphisms}}, volume {470} of {\em {Lect. Notes in Math.}}
\newblock {Springer Verlag}, {1975}.

\bibitem{BR75}
R.~Bowen and D.~Ruelle.
\newblock {The ergodic theory of Axiom A flows}.
\newblock {\em {Invent. Math.}}, {29}:{181--202}, {1975}.

\bibitem{CasVar13}
A.~Castro and P.~Varandas.
\newblock Equilibrium states for non-uniformly expanding maps: decay of
  correlations and strong stability.
\newblock {\em Ann. Inst. H. Poincar\'e Anal. Non Lin\'eaire}, 30(2):225--249,
  2013.

\bibitem{CT88}
P.~Collet and C.~Tresser.
\newblock {Ergodic theory and continuity of the Bowen-Ruelle measure for
  geometrical Lorenz flows}.
\newblock {\em {Fyzika}}, {20}:{33--48}, {1988}.

\bibitem{MS93}
W.~{de Melo} and S.~{van Strien}.
\newblock {\em {One-dimensional dynamics}}.
\newblock {Springer Verlag}, {1993}.

\bibitem{OHL2006}
K.~D{\'i}az-Ordaz, M.~P. Holland, and S.~Luzzatto.
\newblock {Statistical properties of one-dimensional maps with critical points
  and singularities}.
\newblock {\em {Stoch. Dyn.}}, {6}({4}):{423--458}, {2006}.

\bibitem{FMT}
M.~Field, I.~Melbourne, and A.~T{\"o}rok.
\newblock {Stability of mixing and rapid mixing for hyperbolic flows}.
\newblock {\em {Annals of Mathematics}}, {166}:{269{--}291}, {2007}.

\bibitem{freitas}
J.~M. Freitas.
\newblock {Continuity of SRB measure and entropy for {B}enedicks-{C}arleson
  quadratic maps}.
\newblock {\em {Nonlinearity}}, {18}:{831--854}, {2005}.

\bibitem{gouezel}
S.~Gou{\"e}zel.
\newblock {Decay of correlations for nonuniformly expanding systems}.
\newblock {\em {Bull. Soc. Math. France}}, {134}({1}):{1--31}, {2006}.

\bibitem{gouezel2010}
S.~Gouezel.
\newblock Almost sure invariance principle for dynamical systems by spectral
  methods.
\newblock {\em Ann. Probab.}, 38(4):1639--1671, 07 2010.

\bibitem{HK82}
F.~Hofbauer and G.~Keller.
\newblock {Ergodic properties of invariant measures for piecewise monotonic
  transformations}.
\newblock {\em {Math. Z.}}, {180}({1}):{119--140}, {1982}.

\bibitem{HoMel}
M.~Holland and I.~Melbourne.
\newblock Central limit theorems and invariance principles for {L}orenz
  attractors.
\newblock {\em J. Lond. Math. Soc. (2)}, 76(2):345--364, 2007.

\bibitem{KH95}
A.~Katok and B.~Hasselblatt.
\newblock {\em {Introduction to the modern theory of dynamical systems}},
  volume~{54} of {\em {Encyclopeadia Appl. Math.}}
\newblock {Cambridge University Press}, {Cambridge}, {1995}.

\bibitem{Ke85}
G.~Keller.
\newblock {Generalized bounded variation and applications to piecewise
  monotonic transformations}.
\newblock {\em {Z. Wahrsch. Verw. Gebiete}}, {69}({3}):{461--478}, {1985}.

\bibitem{kifer90}
Y.~Kifer.
\newblock {Large deviations in dynamical systems and stochastic processes}.
\newblock {\em {Transactions of the Americal Mathematical Society}},
  {321}({2}):{505--524}, {1990}.

\bibitem{Ledrappier1984}
F.~Ledrappier.
\newblock Propri{\'e}t{\'e}s ergodiques des mesures de sina{\"i}.
\newblock {\em Publications Math{\'e}matiques de l'Institut des Hautes
  {\'E}tudes Scientifiques}, 59(1):163--188, 1984.

\bibitem{ledrappier_strelcyn_1982}
F.~Ledrappier and J.-M. Strelcyn.
\newblock A proof of the estimation from below in {P}esin's entropy formula.
\newblock {\em Ergodic Theory and Dynamical Systems}, 2(2):203--219, 1982.

\bibitem{LY85}
F.~Ledrappier and L.~S. Young.
\newblock {The metric entropy of diffeomorphisms I. Characterization of
  measures satisfying Pesin{'}s entropy formula}.
\newblock {\em {Ann. of Math}}, {122}:{509--539}, {1985}.

\bibitem{LuzzMelb13}
S.~Luzzatto and I.~Melbourne.
\newblock Statistical properties and decay of correlations for interval maps
  with critical points and singularities.
\newblock {\em Communications in Mathematical Physics}, 320(1):21--35, May
  2013.

\bibitem{LV00}
S.~Luzzatto and M.~Viana.
\newblock {Positive Lyapunov exponents for Lorenz-like families with
  criticalities}.
\newblock {\em {Ast{\'e}risque}}, {261}:{201--237}, {2000}.
\newblock {G{\'e}om{\'e}trie complexe et syst{\`e}mes dynamiques (Orsay,
  1995)}.

\bibitem{Man87}
R.~Ma{\~n}{\'e}.
\newblock {\em {Ergodic theory and differentiable dynamics}}.
\newblock {Springer Verlag}, {New York}, {1987}.

\bibitem{Mel07}
I.~Melbourne.
\newblock {Rapid decay of correlations for nonuniformly hyperbolic flows}.
\newblock {\em {Trans. Amer. Math. Soc.}}, {359}({5}):{2421--2441}, {2007}.

\bibitem{MelNicol05}
I.~Melbourne and M.~Nicol.
\newblock Almost sure invariance principle for nonuniformly hyperbolic systems.
\newblock {\em Comm. Math. Phys.}, 260(1):131--146, 2005.

\bibitem{MeMor06}
R.~Metzger and C.~Morales.
\newblock Sectional-hyperbolic systems.
\newblock {\em Ergodic Theory and Dynamical System}, 28:1587--1597, 2008.

\bibitem{MV93}
L.~Mora and M.~Viana.
\newblock {Abundance of strange attractors}.
\newblock {\em {Acta Math.}}, {171}({1}):{1--71}, {1993}.

\bibitem{Morales07}
C.~A. Morales.
\newblock {Examples of singular-hyperbolic attracting sets}.
\newblock {\em {Dynamical Systems, An International Journal}},
  {22}({3}):{339{--}349}, {2007}.

\bibitem{MPP99}
C.~A. Morales, M.~J. Pacifico, and E.~R. Pujals.
\newblock Singular hyperbolic systems.
\newblock {\em Proc. Amer. Math. Soc.}, 127(11):3393--3401, 1999.

\bibitem{PRV98}
M.~J. Pacifico, A.~Rovella, and M.~Viana.
\newblock {Infinite-modal maps with global chaotic behavior}.
\newblock {\em {Ann. of Math. (2)}}, {148}({2}):{441--484}, {1998}.
\newblock {Corrigendum in Annals of Math. 149, page 705, 1999}.

\bibitem{PM82}
J.~Palis and W.~{de Melo}.
\newblock {\em {Geometric Theory of Dynamical Systems}}.
\newblock {Springer Verlag}, {1982}.

\bibitem{Palmer}
K.~Palmer.
\newblock {\em Shadowing in Dynamical Systems: Theory and Applications}, volume
  501 of {\em Mathematics and Its Applications}.
\newblock Springer, 2010.

\bibitem{PS82}
Y.~Pesin and Y.~Sinai.
\newblock {Gibbs measures for partially hyperbolic attractors}.
\newblock {\em {Ergod. Th. \& Dynam. Sys.}}, {2}:{417--438}, {1982}.

\bibitem{Pesin2004}
Y.~B. Pesin.
\newblock {\em Lectures on partial hyperbolicity and stable ergodicity}.
\newblock Zurich Lectures in Advanced Mathematics. European Mathematical
  Society (EMS), Zurich, 2004.

\bibitem{PhilippStout75}
W.~Philipp and W.~Stout.
\newblock {\em Almost Sure Invariance Principles for Partial Sums of Weakly
  Dependent Random Variables}.
\newblock American Mathematical Society: Memoirs of the American Mathematical
  Society. American Mathematical Society, 1975.

\bibitem{Ru76}
D.~Ruelle.
\newblock {A measure associated with Axiom A attractors}.
\newblock {\em {Amer. J. Math.}}, {98}:{619--654}, {1976}.

\bibitem{Ru78}
D.~Ruelle.
\newblock {An inequality for the entropy of differentiable maps.}
\newblock {\em {Bol. Soc. Bras. Mat.}}, {9}:{83--87}, {1978}.

\bibitem{Ru89}
D.~Ruelle.
\newblock {The thermodynamical formalism for expanding maps}.
\newblock {\em {Comm. Math. Phys.}}, {125}:{239--262}, {1989}.

\bibitem{ruelle2004}
D.~Ruelle.
\newblock {\em {Thermodynamic formalism. The mathematical structures of
  equilibrium statistical mechanics}}.
\newblock {Cambridge Mathematical Library}. {Cambridge University Press},
  {Cambridge}, {2nd edition} edition, {2004}.

\bibitem{sataev2009}
E.~A. Sataev.
\newblock Some properties of singular hyperbolic attractors.
\newblock {\em Sbornik: Mathematics}, 200(1):35, 2009.

\bibitem{Sataev2010}
E.~A. Sataev.
\newblock Invariant measures for singular hyperbolic attractors.
\newblock {\em Sbornik: Mathematics}, 201(3):419, 2010.

\bibitem{Sh87}
M.~Shub.
\newblock {\em {Global stability of dynamical systems}}.
\newblock {Springer Verlag}, {1987}.

\bibitem{Si72}
Y.~Sinai.
\newblock {Gibbs measures in ergodic theory}.
\newblock {\em {Russian Math. Surveys}}, {27}:{21--69}, {1972}.

\bibitem{Tu99}
W.~Tucker.
\newblock {The Lorenz attractor exists}.
\newblock {\em {C. R. Acad. Sci. Paris}}, {328, S{\'e}rie I}:{1197--1202},
  {1999}.

\bibitem{viana2000i}
M.~Viana.
\newblock {What{'}s new on Lorenz strange attractor}.
\newblock {\em {Mathematical Intelligencer}}, {22}({3}):{6--19}, {2000}.

\bibitem{wadd96}
S.~Waddington.
\newblock {Large deviations asymptotics for Anosov flows}.
\newblock {\em {Annales de l{'}Institut Henri Poincare, Section C}},
  {13}({4}):{445--484}, {1996}.

\bibitem{Yo90}
L.~S. Young.
\newblock {Some large deviation results for dynamical systems}.
\newblock {\em {Trans. Amer. Math. Soc.}}, {318}({2}):{525--543}, {1990}.

\bibitem{Yo98}
L.~S. Young.
\newblock {Statistical properties of dynamical systems with some
  hyperbolicity}.
\newblock {\em {Annals of Math.}}, {147}:{585--650}, {1998}.

\end{thebibliography}

\end{document}